\newtheorem{thm}{Theorem}[section]
\newtheorem*{thm*}{Theorem}
\newtheorem*{cor*}{Corollary}
\newtheorem{lem}[thm]{Lemma}
\newtheorem{prop}[thm]{Proposition}
\newtheorem*{prop*}{Proposition}
\newtheorem{cor}[thm]{Corollary}
\theoremstyle{definition}
\newtheorem{defn}[thm]{Definition}
\newtheorem{notation}[thm]{Notation}
\newtheorem{remark}[thm]{Remark}
\newtheorem{question}[thm]{Question}
\newtheorem{claim}[thm]{Claim}
\newcommand{\dminus}{ 
\buildrel\textstyle\ .\over{\hbox{ 
\vrule height3pt depth0pt width0pt}{\smash-} 
}}
\def\vp{\varphi}
\def\cal{\mathcal}
\newcommand{\cstar}{$\mathrm{C}^*$}
\newcommand{\wstar}{$\mathrm{W}^*$}
\def \md{\operatorname{mod}}
\def \Md{\operatorname{Mod}}
\DeclareMathOperator{\vNa}{vNa}
\DeclareMathOperator{\spec}{sp}
\newcommand\ip[2]{\left\langle #1\, ,\!\ #2 \right\rangle}
\newcommand\cU{{\cal U}}
\newcommand\cL{{\cal L}}
\newcommand\R{{\cal R}}
\def \sa{\operatorname{sa}}
\def \tb{\operatorname{tb}}
\def \rt{\operatorname{right}}
\def \GR{\operatorname{GR}}
\def \Oc{\operatorname{Oc}}
\def \HA{\operatorname{HA}}
\def \vP{\mathbf{P}}
\def \vQ{\mathbf{Q}}
\def \vR{\mathbf{R}}
\def \vA{\mathbf{A}}
\def \bbC{\mathbb{C}}
\def \bbR{\mathbb{R}}
\def \bbQ{\mathbb{Q}}
\def \bbZ{\mathbb{Z}}
\def \bbN{\mathbb{N}}
\begin{document}


\title[Generalized Ocneanu Ultraproducts]{Model Theory of General von Neumann Algebras I: Generalized Ocneanu Ultraproducts}

\author{Jananan Arulseelan}
\address{Department of Mathematics, Iowa State University, 396 Carver Hall, 411 Morrill Road, Ames, IA 50011, USA}
\email{jananan@iastate.edu}
\urladdr{https://sites.google.com/view/jananan-arulseelan}


\begin{abstract}
This paper collates, presents, and expands upon technology and results obtained as part of the author's PhD thesis.  We generalize work done in the $\sigma$-finite setting by the author, Goldbring, Hart, and Sinclair by producing a language and axiomatization of full left Hilbert algebras.  To improve the accessibility of using this axiomatization, we examine the metric structure ultraproduct associated to this axiomatization. In doing so, we generalize results of Ando-Haagerup and Masuda-Tomatsu.  This examination leads us to multiple operator-algebraic characterizations of the ultraproduct which closely resemble known characterizations of the Ocneanu ultraproduct.  One of these is closely related to the notion of continuous elements of an ultraproduct with respect to an action.  In the spirit of results of the author, Goldbring, and Hart, we prove various undecidable universal theory results for von Neumann algebras with unbounded weights.  Notably, we prove that the hyperfinite II$_\infty$ factor together with its canonical tracial weight has an undecidable theory.  This result was expected by experts, but could not be made precise until now.  We also study the strengthenings of the negative solution to CEP that this result implies.
\end{abstract}

\maketitle

\tableofcontents

\section{Introduction}\label{SectionIntro}

Continuous model theory, and more recently, computable continuous model theory have been used to produce and describe many interesting phenomena and results in the theory of operator algebras.  Restricting our attention to von Neumann algebras, there is a significant body of work on the model theories of tracial von Neumann algebras (see \cite{FG}, \cite{FHS1}, \cite{FHS2}, \cite{FHS3}, \cite{GaoJekel}, \cite{GHHyp}, \cite{GH}, \cite{GH2}) and \wstar-probability spaces (see \cite{AGH}, \cite{AGHS}, \cite{Dab}, \cite{GoldbringHoudayer}).  In both of these situations, a key role is played by the specified faithful normal trace, or faithful normal state, respectively.  This has the drawback of not being applicable to von Neumann algebras that are not $\sigma$-finite or even type II$_\infty$ von Neumann algebras equipped with their natural tracial weights.  For instance, the following result has been expected to be true by experts in continuous logic, but the tools to prove it did not exist until now.

\begin{thm*}
    The universal theory of the hyperfinite II$_\infty$ factor $\R_{0,1}$ with its canonical semifinite trace is undecidable.
\end{thm*}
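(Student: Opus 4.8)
The plan is to reduce undecidability for $\mathcal{R}_{0,1}$ with its semifinite trace to a known undecidability result in the tracial (finite) setting, exploiting the internal structure of the type II$_\infty$ factor as $\mathcal{R}_{0,1} \cong \mathcal{R} \otimes B(\ell^2)$, where $\mathcal{R}$ is the hyperfinite II$_1$ factor. The key known ingredient I would invoke is the undecidability of the universal theory of $\mathcal{R}$ (equivalently, the negative solution to the Connes Embedding Problem together with the computable-model-theoretic strengthening from the author--Goldbring--Hart program), which says that there is no algorithm deciding which universal sentences hold in the tracial von Neumann algebra $\mathcal{R}$. My goal is to show that any such algorithm for the weighted structure $(\mathcal{R}_{0,1}, \tau)$ would yield one for $\mathcal{R}$, forcing the former to be undecidable as well.

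First I would set up the language and axiomatization of the semifinite weighted von Neumann algebra $(\mathcal{R}_{0,1}, \tau)$ using the left Hilbert algebra formalism developed earlier in the paper, so that "universal theory" is a well-defined computable object. The central step is an \emph{interpretability} or \emph{relative computability} argument: I would exhibit the finite corner $p\mathcal{R}_{0,1}p$, for a chosen projection $p$ of finite trace, as a definable (or uniformly definable over a parameter) substructure on which the restricted weight $\tau|_{p\mathcal{R}_{0,1}p}$ is, after normalization, a faithful normal trace making $(p\mathcal{R}_{0,1}p, \tau_p)$ isomorphic to $(\mathcal{R}, \tau_{\mathcal{R}})$. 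The technical content is that cutting by a finite projection is expressible at the level of the weighted language --- one must show that the predicates encoding the trace, the modular data (which is trivial here since $\tau$ is a trace, so the modular automorphism group is trivial), and the relevant domains of the full left Hilbert algebra restrict correctly, and that a finite-trace projection can be uniformly named. Once this corner is available, any universal sentence $\sigma$ in the language of tracial von Neumann algebras can be translated into a universal sentence $\sigma^*$ in the weighted language (relativizing all quantifiers to the corner and rescaling the trace by the finite value $\tau(p)$) such that $(\mathcal{R}_{0,1}, \tau) \models \sigma^*$ if and only if $\mathcal{R} \models \sigma$. This translation $\sigma \mapsto \sigma^*$ must be computable, which is where the explicit, effective nature of the axiomatization earns its keep.

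The hard part, I expect, will be the \emph{definability and effectivity} of the finite corner inside the weighted structure. Unlike the tracial case, where every element has finite trace and the whole algebra sits inside $L^2$, in the semifinite weighted setting one must work with the domains of definition of the weight and with unbounded Hilbert-algebra elements, and a generic universally quantified variable ranges over the full (infinite-trace) algebra. Producing a uniformly definable finite-trace projection --- and showing that the reduction respects the metric on the relevant bounded-domain sorts so that approximate satisfaction is preserved under the translation --- requires genuine care: I would need the characterizations of the generalized Ocneanu ultraproduct proved above to guarantee that the corner construction commutes with ultraproducts, i.e. that $\prod_{\mathcal{U}} (p\mathcal{R}_{0,1}p, \tau_p)$ is the tracial ultrapower of $\mathcal{R}$ while $\prod_{\mathcal{U}}(\mathcal{R}_{0,1}, \tau)$ is the generalized Ocneanu ultrapower, with the corner sitting definably inside it. Granting this compatibility, the undecidability of the tracial universal theory of $\mathcal{R}$ transfers directly, and conversely a decision procedure for the weighted theory would decide the tracial one, a contradiction; this completes the argument.
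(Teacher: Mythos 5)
Your overall strategy---reducing to the known undecidability of the universal theory of the hyperfinite II$_1$ factor $\R$ by cutting $\R_{0,1}$ down to a finite corner isomorphic to $(\R,\tau_{\R})$ and computably translating universal sentences---is exactly the strategy of the paper. But your implementation has a genuine gap at precisely the point you flag as ``the hard part'': you propose to work with the corner $p\R_{0,1}p$ of a \emph{chosen} finite-trace projection $p$, treated as a definable substructure ``over a parameter,'' and to show that a finite-trace projection can be ``uniformly named.'' This cannot work as stated. The universal theory consists of parameter-free sentences; a sentence obtained by relativizing quantifiers to a corner defined over the parameter $p$ is not a sentence of $\cal L_{\vNa}$ unless $p$ is bound or definable, and no single trace-$1$ projection is $\emptyset$-definable (the automorphism group of $(\R_{0,1},\tau)$ acts transitively on trace-$1$ projections, so no individual one can be distinguished). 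Binding $p$ existentially would instead produce an $\inf_p\sup_x$ sentence, which is no longer universal and hence useless for the reduction.

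The paper resolves this with a small but essential trick that your write-up is missing: instead of naming one projection, it observes that the set $P_1$ of \emph{all} trace-$1$ projections is a quantifier-free definable set (the zero set of $\max\{d(p^2,p),\,d(p^*,p),\,|\Phi(p)-1|\}$), and that for every $p \in P_1$ the corner $p\R_{0,1}p$ is a copy of $(\R,\tau_{\R})$ onto whose unit ball the map $x \mapsto pxp$ carries the unit ball of $\R_{0,1}$. Hence the value of $\sup_x\theta(x)$ in $\R$ equals the value in $\R_{0,1}$ of the universal $\cal L_{\vNa}$-sentence $\psi = \sup_{p\in P_1}\sup_x \theta(pxp)$: the outer supremum over the definable set costs nothing because all corners give the same value, and it keeps $\psi$ universal and parameter-free. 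The translation $\theta \mapsto \psi$ is evidently computable, so a decision procedure for the universal theory of $(\R_{0,1},\tau)$ would decide that of $\R$, a contradiction. Note also that your appeal to the generalized Ocneanu ultraproduct machinery (to make the corner ``commute with ultraproducts'') is unnecessary: the reduction is a direct computation of values of sentences in the fixed structure $\R_{0,1}$, and no ultraproduct enters the argument at all.
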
  

The significance of such undecidable universal theory results is demonstrated in \cite{GH2} and \cite{AGH}.  The brief summary is that such results provide vast strengthenings and analogues of the negative solution to the Connes Embedding Problem (which was shown in \cite{MIP}).  In fact, the above theorem has the CEP-esque consequence stated below.  One way to interpret it is that even if we tensor all of the II$_1$ factors in sight by $B(\cal H)$, we still do not get enough wiggle room to make the analogous form of CEP true.

\begin{cor*}
    There is no effectively enumerable theory $T$ extending the theory of $II_{\infty}$ factors such that every model of $T$ embeds into an ultrapower of $\R_{0,1}$.
\end{cor*}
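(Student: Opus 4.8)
The plan is to deduce the corollary from the Theorem by showing that any such $T$ would make the map $\sigma \mapsto \sigma^{\R_{0,1}}$ computable on universal sentences, which the Theorem forbids. All syntax, embeddings, and ultraproducts are taken in the generalized language for full left Hilbert algebras developed above; I write $\sigma^M$ for the value of a sentence $\sigma$ in a structure $M$, and $\sigma^T := \sup\{\sigma^N : N \models T\}$. I would first record two computability facts. Because $\R_{0,1}$ with its canonical trace is the weak closure of a computably presented increasing chain of finite-dimensional pieces, the value $\sigma^{\R_{0,1}}$ of any universal sentence $\sigma = \sup_{\bar x}\varphi(\bar x)$ is the supremum of the reals $\varphi(\bar a)^{\R_{0,1}}$ over computable tuples $\bar a$ drawn from that chain; hence $\sigma \mapsto \sigma^{\R_{0,1}}$ is lower semicomputable. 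The Theorem asserts it is not computable, and since it is already lower semicomputable, this is exactly the assertion that it fails to be upper semicomputable. Secondly, if $T$ is effectively enumerable, then by the effective (approximate) completeness theorem for continuous logic the relation $T \vdash (\sigma \le r)$ is recursively enumerable in $\sigma$ and $r \in \bbQ$, so $\sigma^T = \inf\{r \in \bbQ : T \vdash \sigma \le r\}$ is upper semicomputable.

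Next I would show $\sigma^T = \sigma^{\R_{0,1}}$ for every universal $\sigma$. The hypothesis that each $N \models T$ embeds into an ultrapower $\R_{0,1}^{\cU}$, together with the monotonicity of sup-sentences along embeddings and the \L os-type theorem for the generalized Ocneanu ultraproduct ($\sigma^{\R_{0,1}^{\cU}} = \sigma^{\R_{0,1}}$), yields $\sigma^N \le \sigma^{\R_{0,1}}$ for all such $N$, hence $\sigma^T \le \sigma^{\R_{0,1}}$. For the reverse inequality I would use that $\R_{0,1}$ with its canonical semifinite trace is itself a model of $T$: it is a II$_\infty$ factor, so it satisfies the base theory, and it is the very structure into whose ultrapower all models of $T$ are assumed to embed. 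This gives $\sigma^{\R_{0,1}} \le \sigma^T$, so $\sigma^T = \sigma^{\R_{0,1}}$. Consequently $\sigma \mapsto \sigma^{\R_{0,1}}$ is upper semicomputable; combined with lower semicomputability it is computable, contradicting the Theorem and completing the proof.

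The step carrying all the weight is the reverse inequality, i.e. verifying that $\R_{0,1}$, viewed as a weighted structure in the generalized language, genuinely lies in $\Md(T)$, or at least that models of $T$ attain the universal theory of $\R_{0,1}$. Without this one could only conclude $\sigma^T \le \sigma^{\R_{0,1}}$, which is consistent with a $T$ whose universal theory is a strictly smaller, computably behaved shadow of $\Th_\forall(\R_{0,1})$, and no contradiction would follow. A second, more technical, point is to make the two semicomputability statements rigorous in the unbounded-weight setting: one must fix computable presentations both of the finite-dimensional approximants to $\R_{0,1}$ and of the syntax and proof system for the language of full left Hilbert algebras, and then invoke the effective completeness theorem in that language rather than in the bounded tracial language, following the strategy of \cite{GH2} and \cite{AGH}. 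Once these are in place, the contradiction with the Theorem is immediate.
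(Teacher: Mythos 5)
Your overall architecture --- sandwiching $\sigma^{\R_{0,1}}$ between a lower semicomputable quantity (via the computable presentation of $\R_{0,1}$) and the upper semicomputable quantity $\sigma^T$ (via effective completeness for an effectively enumerable $T$), then contradicting Theorem \ref{ThmUndecIIinfty} --- is exactly the machinery from \cite{GH2} that the paper invokes, and your inequality $\sigma^T \le \sigma^{\R_{0,1}}$ is handled correctly. But the step you yourself identify as carrying all the weight contains a genuine gap: you justify $\sigma^{\R_{0,1}} \le \sigma^T$ by asserting that $\R_{0,1}$ is a model of $T$, on the grounds that it satisfies the base theory of II$_\infty$ factors and is the target of the embedding hypothesis. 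Neither of these gives $\R_{0,1} \models T$. The hypothesis is that $T$ \emph{extends} the theory of II$_\infty$ factors, not that $T$ is contained in $\Th(\R_{0,1})$; $T$ may perfectly well contain sentences false in $\R_{0,1}$ (for instance, axioms isolating some other II$_\infty$ factor). Your reasoning would be valid for the \emph{preceding} corollary, $\R_{0,1}$EP, where one assumes $T \subseteq \Th(\R_{0,1})$, but it does not apply here; as you note yourself, without the reverse inequality no contradiction follows.

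The correct route to the reverse inequality --- and the one the paper takes, following Remark 5.4 of \cite{GH2} --- is to use that $\R_{0,1}$ embeds into \emph{every} II$_\infty$ factor. Given any model $N$ of $T$ (consistency of $T$ is implicitly assumed, since an inconsistent $T$ satisfies the hypothesis vacuously), $N$ is a II$_\infty$ factor, so writing $N \cong M \otimes B(\cal H)$ with $M$ a II$_1$ factor and using the trace-preserving, expectation-admitting copy of $\R$ inside $M$, one obtains an embedding $\R_{0,1} \hookrightarrow N$ that is weight-preserving and admits a faithful normal conditional expectation, hence is a morphism in the sense of Theorem \ref{correctness}. Monotonicity of universal sentences along such embeddings then gives $\sigma^{\R_{0,1}} \le \sigma^N \le \sigma^T$. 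Note that verifying the embedding is a morphism in the weighted category is itself a point that needs to be made explicit in the unbounded-weight setting. With this replacement for your ``$\R_{0,1} \models T$'' step, the rest of your argument goes through as written.
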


We prove this theorem, and many others, in Section \ref{SectionUniversalTheories}, but first we must make it precise by providing a language in which it makes sense to talk about such theories.  In Section \ref{SectionAxiomatization}, we do this by producing a language $\cal L_{\vNa}$ and a theory $T_{\vNa}$ therein which axiomatize von Neumann algebras with a faithful normal semifinite weight, or equivalently full left Hilbert algebras.  We dub the former ``weighted von Neumann algebras'' for convenience.  Note that since every von Neumann algebra admits a faithful normal semifinite weight, this yields a completely general model theory of von Neumann algebras.

\begin{thm*}
    There is a equivalence of categories between the category of models $\Md(T_{\vNa})$ and the category of weighted von Neumann algebras where embeddings are those which induce a faithful normal conditional expectation onto their image.
\end{thm*}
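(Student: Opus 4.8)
The plan is to build mutually quasi-inverse functors between the two categories and then verify that the prescribed classes of morphisms correspond. On objects I would lean on the axiomatization of Section~\ref{SectionAxiomatization}: by that result a model $\fr A \models T_{\vNa}$ is exactly (the data of) a full left Hilbert algebra, so I set $F(\fr A) = (\pi_\ell(\fr A)'', \varphi_{\fr A})$, where $\pi_\ell(\fr A)''$ is the left von Neumann algebra generated by the left-multiplication operators and $\varphi_{\fr A}$ is the associated natural faithful normal semifinite weight. In the reverse direction, given a weighted von Neumann algebra $(M,\varphi)$, I would let $G(M,\varphi)$ be the full left Hilbert algebra it determines inside the GNS space $H_\varphi$, via the GNS and Tomita--Takesaki constructions. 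That $F$ and $G$ are mutually inverse on isomorphism classes of objects is the classical fact that a full left Hilbert algebra is recovered from its left von Neumann algebra together with its natural weight, and vice versa; together with Section~\ref{SectionAxiomatization} this already yields essential surjectivity of $F$.

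The substance of the theorem is the matching of morphisms. An $\cal L_{\vNa}$-embedding $f\colon \fr A_0 \to \fr A$ is, by definition, an isometry on the Hilbert-space sort that intertwines the product, the involution $\sharp$, and the inner product. I would first show that such an $f$ induces a normal, weight-preserving embedding $\iota\colon \pi_\ell(\fr A_0)'' \to \pi_\ell(\fr A)''$ admitting a faithful normal conditional expectation onto its image. The mechanism is modular-theoretic: since $f$ intertwines $\sharp$ and preserves inner products, it intertwines the closed operator $S = J\Delta^{1/2}$, and hence the isometry $u\colon H_{\fr A_0}\to H_{\fr A}$ extending $f$ has range projection $e = uu^*$ commuting with $J_{\fr A}$ and with $\Delta_{\fr A}^{it}$. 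Commutation with $\Delta^{it}$ forces $\iota(\pi_\ell(\fr A_0)'')$ to be globally invariant under the modular automorphism group $\sigma^{\varphi_{\fr A}}_t$, so Takesaki's theorem on the existence of conditional expectations supplies the faithful normal $\varphi_{\fr A}$-preserving expectation $E$, determined by $e x e = E(x) e$.

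Conversely, suppose $(\iota, E)\colon (M_0,\varphi_0)\to (M,\varphi)$ is a morphism of weighted von Neumann algebras, with $E$ a faithful normal $\varphi$-preserving conditional expectation onto $\iota(M_0)$. Takesaki's criterion then forces $\iota(M_0)$ to be $\sigma^\varphi$-invariant, so the modular objects $\Delta^{it}, J$ restrict to the subalgebra; this identifies the Tomita algebra of $(M_0,\varphi_0)$ with a $\sharp$-closed and product-closed subspace of that of $(M,\varphi)$ on which the inner product is preserved, i.e.\ with an $\cal L_{\vNa}$-embedding $G(\iota,E)$. Checking functoriality of both assignments, and that the canonical object-level identifications assemble into natural isomorphisms $G\circ F \cong \id$ and $F\circ G \cong \id$, is then routine bookkeeping once the bijections on objects and on hom-sets are established.

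The principal obstacle I expect lies precisely in this morphism correspondence, and within it at two points. First, one must upgrade the pointwise compatibility recorded by an $\cal L_{\vNa}$-embedding to \emph{global} invariance of the image under the modular flow: this is the exact hypothesis of Takesaki's theorem, and it takes care to verify that preservation of $\sharp$ and of the inner product genuinely forces $[e,\Delta^{it}] = 0$ rather than some weaker intertwining relation. Second, one must establish faithfulness and normality of $E$ and, in the reverse direction, confirm that the conditional-expectation data reconstructs a bona fide embedding of metric structures --- that the restricted map really is defined on a dense Tomita subalgebra and is isometric for the GNS inner product --- where the closability and density arguments carry the technical weight.
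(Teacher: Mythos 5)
Your overall skeleton (a dissection/interpretation pair of functors, with Takesaki's expectation theorem powering the morphism correspondence) matches the paper's, but both of the steps you defer as ``routine'' or ``delicate'' contain genuine gaps, and the first rests on a false inference. You claim that because an $\cal L_{\vNa}$-embedding intertwines $\sharp$ and preserves inner products, the range projection $e = uu^*$ commutes with $J_{\fr A}$ and $\Delta_{\fr A}^{it}$. This is false: intertwining $S_0$ on the algebra only yields $u S_{\fr A_0} \subseteq S_{\fr A}\, u$, and polar decompositions do not pass to non-invariant subspaces. Concretely, take $\cal M = M_2(\bbC)$ with a non-tracial faithful state $\varphi$ and $\cal N \subseteq \cal M$ a maximal abelian subalgebra not invariant under $\sigma^{\varphi}$: the GNS inclusion preserves products, adjoints, and inner products, yet $L^2(\cal N)$ is not $\Delta_{\cal M}^{it}$-invariant and no $\varphi$-preserving expectation onto $\cal N$ exists. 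What excludes such examples --- and what the paper actually uses --- is the predicate $\vA_n$, the $\|\cdot\|_\Phi$-distance to the self-adjoint elements of $S_{(5n^2+4)}$, which an $\cal L_{\vNa}$-embedding must preserve and which you omitted from your description of embeddings. In the Rieffel--van Daele picture, preservation of $\vA_n$ (together with Theorem \ref{PQtotalbounds}, which guarantees the distance is realized inside a fixed sort) makes the real-part projection $P$, hence $R = P+Q$ and $\Delta = R^{-1}(2-R)$, compatible with the inclusion; only then does Takesaki's theorem apply. So the obstacle you flag as needing ``care'' cannot be bridged from the data you allow yourself.

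The object level is also circular as written. You invoke ``the axiomatization of Section \ref{SectionAxiomatization}'' to assert that a model of $T_{\vNa}$ is exactly a full left Hilbert algebra, but that correctness claim \emph{is} Theorem \ref{correctness}, i.e., the statement under proof. The classical Tomita--Takesaki correspondence handles full left Hilbert algebras versus weighted von Neumann algebras; the non-classical content, and the bulk of the paper's proof, is that for a model $A$ the direct limit $\mathfrak{A}_0(A)$ of its sorts coincides exactly with the totally bounded elements of the full left Hilbert algebra $\mathfrak{A}_A = (\mathfrak{A}_0(A))''$ it generates. Without this, $\cal D(\cal M_A, \Phi_A) = A$ is unproven and your functors need not be quasi-inverse: a priori the sorts of $A$ could be a proper subset of the totally bounded elements of $\mathfrak{A}_A$, so the dissection of the interpretation would be strictly larger than $A$. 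The paper closes this by showing $\mathfrak{A}_0(A)$ is a good subalgebra --- closed under $P$, $Q$, $R$ (via the $\vA_n$ axiom, completeness of the sorts, and the fact that $\|\cdot\|_\Phi$ metrizes the strong topology on bounded sets) and under $h_a(\log\Delta)$ (axiom (9)) --- and then invoking the Kadison-type Theorem \ref{KadisonTypeTheorem}. Nothing in your proposal plays this role, and it is precisely here, not in the classical Hilbert-algebra/weight dictionary, that the theorem's difficulty lies.
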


To accomplish these model-theoretic goals, we need to make use of multiple approaches to Hilbert algebras and their Tomita-Takesaki theory (see \cite{Ha75}, \cite{Kad}, \cite{RvD}, \cite{Takesaki}) and we exposit the relevant theory in Section \ref{SectionHilbertAlg}.  We also need to extend the notion of \textbf{totally bounded elements}, introduced for $\sigma$-finite von Neumann algebras in \cite{AGHS}, to our more general setting.  We do this in Section \ref{SectionTotallyBounded}.

In Section \ref{SectionDefModular}, we furthermore show, by producing a definitional expansion, that there is a computable definition of the modular automorphism group in our theory.

\begin{thm*}
    The modular automorphism group is a definable group acting on models of $T_{\vNa}$.  
\end{thm*}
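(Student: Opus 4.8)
The plan is to realize the modular automorphism group as a definable function in a definitional expansion of $T_{\vNa}$, and then to check that the resulting family of maps satisfies the axioms of a definable action of $\bbR$. Concretely, it suffices to exhibit, uniformly across all models $(\M,\vp)$ of $T_{\vNa}$, a definable function $\sg\colon \bbR \times \M \to \M$ with $\sg(t,x) = \sg^\vp_t(x)$, together with the identities $\sg(0,\cdot)=\id$, $\sg(s+t,\cdot)=\sg(s,\sg(t,\cdot))$, and strong continuity of the orbit map $t\mapsto \sg(t,x)$, so that $\bbR \ca \M$ is a definable group action. Since definability of a function in continuous logic amounts to $d\bigl(y,\sg(t,x)\bigr)$ being a definable predicate on each operator-norm ball, the substantive task is to produce formulas that approximate $\sg^\vp_t$ \emph{uniformly in the model}.

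The route to these formulas is through the standard Hilbert space $\cal H$ of the full left Hilbert algebra and the modular operator $\Delta$. Recall from the Tomita--Takesaki theory exposited in Section~\ref{SectionHilbertAlg} that $\Delta$ arises from the polar decomposition $S=J\Delta^{1/2}$ of the closure of the involution on the Hilbert algebra, and that $\sg^\vp_t(x)=\Delta^{it}x\Delta^{-it}$. Thus it is enough to show that the one-parameter unitary group $u_t=\Delta^{it}$ acts definably on $\cal H$ and that the $\M$-action on $\cal H$ is definable; the formula $\sg^\vp_t(x)=u_t\,x\,u_{-t}$ then presents $\sg^\vp_t$ as a composition of definable data. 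The difficulty is that $\Delta$ is typically unbounded, so $\Delta^{it}$ is only accessible through functional calculus. To tame this I would work on the totally bounded elements of Section~\ref{SectionTotallyBounded}: on these the modular flow extends to an entire function $z\mapsto \sg^\vp_z(x)$, and the sharp and flat operations of the Hilbert algebra furnish algebraic witnesses for the analytic generator (morally, for $\Delta^{1/2}$), from which $\Delta^{it}$ is recovered by analytic continuation and spectral integration. Because the totally bounded elements are, by the results of that section, definable and norm-dense in each ball, it suffices to define $u_t$ on this dense definable set and extend it by the definable metric of $\cal H$.

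The uniformity across models---the crux of definability as opposed to mere pointwise existence---is exactly where the generalized Ocneanu ultraproduct of the earlier sections does the work. Following the strategy of Ando--Haagerup and Masuda--Tomatsu, here generalized to the semifinite-weight setting, I would show that the modular flow commutes with the ultraproduct construction: the modular automorphism group of the ultraproduct weight is given componentwise by the $\sg^\vp_t$ on the totally bounded (equivalently, continuous) elements. Combined with the ultraproduct characterization of definable predicates, this compatibility upgrades the pointwise approximating formulas to genuine definable predicates valid in every model. The KMS condition then supplies the correctness check: the maps produced by the formulas satisfy the KMS boundary condition with respect to $\vp$, so by the uniqueness clause of Tomita--Takesaki they coincide with the true modular flow.

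The group-action axioms are inherited from Tomita--Takesaki: $\Delta^{i0}=1$ gives $\sg(0,\cdot)=\id$, the law of exponents for $\Delta^{i(s+t)}$ gives the homomorphism property, and strong continuity of $t\mapsto\Delta^{it}$ gives continuity of the orbit maps; each is an equality of definable functions and so descends to the definitional expansion. Computability of the definition---needed for the claim of a \emph{computable} definition---follows by verifying that the approximating formulas, the rational truncations arising in the spectral integrals, and the density estimates for totally bounded elements are all generated by a recursive procedure. The main obstacle throughout is the unboundedness of $\Delta$: every step that would be routine for a bounded generator (functional calculus, uniform approximation, interchange with the ultraproduct) must instead be routed through the totally bounded elements and justified by the ultraproduct analysis, and making the resulting error estimates uniform in the model is the delicate point.
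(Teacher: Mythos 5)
Your overall shape (a definitional expansion, group laws inherited from Tomita--Takesaki, routing everything through the totally bounded elements) matches the paper's, but the two load-bearing steps are missing or point the wrong way. First, you never produce the mechanism that actually yields formulas for $\Delta^{it}$. Saying that the sharp and flat operations ``furnish algebraic witnesses for the analytic generator'' and that $\Delta^{it}$ is ``recovered by analytic continuation and spectral integration'' is not a construction: $S$ and $F=S^*$ are unbounded, merely closable operators, and no syntactic expression in the algebra operations approximates $\Delta^{it}$ through them. The paper's key device, taken from the Rieffel--van Daele bounded-operator approach, is to add symbols $\vP_n$, $\vQ_n$ for the real-linear projections onto $\cal K$ and $i\cal K$ --- these are pinned down by the distance predicates $\vA_n$ already present in $\cal L_{\vNa}$, and are sort-preserving by Theorem \ref{PQtotalbounds} --- then set $\vR_n=\vP_n+\vQ_n$ and exploit the identity $\Delta = R^{-1}(2-R)$. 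Since $R$ and $2-R$ are \emph{bounded} with spectrum in $[0,2]$, the function $f_t(x)=x^{it}$ admits polynomial approximants $f_{t,m}$ with coefficients in $\bbQ(i)$, found computably, and the axioms of type (15) force $\mathbf{\Delta}^{it}(x)$ to within the computable error $\delta_{t,m,n}$ of $f_{t,m}(2-\vR)(f_{-t,n}(\vR)(x))$; finally $\sigma_t$ is determined by $\sigma_t(a)x=\mathbf{\Delta}^{it}(a\cdot\mathbf{\Delta}^{-it}(x))$. Without this identity (or some substitute producing explicit uniform approximants), you have no definable predicate, only the assertion that one should exist.

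Second, your uniformity argument is circular in this paper's setting. You propose to obtain uniformity from commutation of the modular flow with the ultraproduct, generalizing Ando--Haagerup and Masuda--Tomatsu, and then invoke the ultraproduct characterization of definable predicates --- i.e., a Beth-definability argument as in \cite[Corollary 7.2]{AGHS}. But the paper notes explicitly that in the weighted setting one does not yet know enough about the model-theoretic ultraproducts of $T_{\vNa}$ to run that argument: identifying the ultraproduct operator-algebraically (Theorem \ref{ultrapres}) and proving that the modular group commutes with it (Theorem \ref{ultramod}) are carried out \emph{after} the definability theorem, and Theorem \ref{ultramod} is deduced \emph{from} it. To use ultraproduct-commutation as an input to definability you would have to re-prove it for weights independently, essentially redoing Section \ref{SectionHAUltraproduct} first by Arveson spectral methods. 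Moreover, even granting that, Beth definability is non-constructive: it yields some definable predicate with no control over its syntax, so your closing claim that computability ``follows by verifying that the approximating formulas \dots are generated by a recursive procedure'' has nothing to attach to --- there are no formulas to inspect unless one builds them explicitly, which is exactly what the $\vP$, $\vQ$, $\vR$ construction accomplishes.
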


This, by general model-theoretic considerations, has the following immediate corollary:

\begin{cor*}
    The modular automorphism group commutes with the metric structure ultraproduct in $\Md(T_{\vNa})$.
\end{cor*}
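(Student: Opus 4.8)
The plan is to obtain the corollary from the preceding theorem by means of the general principle that \emph{definable functions commute with metric structure ultraproducts}. Fix an ultrafilter $\cU$ and models $M_i \models T_{\vNa}$, and write $M_\cU = \prod_\cU M_i$ for their metric structure ultraproduct. Since $T_{\vNa}$ is a continuous first-order theory, $M_\cU$ is again a model of $T_{\vNa}$, and so it carries its own faithful normal semifinite weight $\varphi_\cU$ together with the associated modular automorphism group $(\sigma^{\varphi_\cU}_t)_{t \in \bbR}$. What must be shown is that this flow is computed coordinatewise, i.e.
\[
\sigma^{\varphi_\cU}_t\big([x_i]_\cU\big) = \big[\sigma^{\varphi_i}_t(x_i)\big]_\cU
\]
for every $t \in \bbR$ and every $[x_i]_\cU \in M_\cU$.

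By the previous theorem the map $(t,x) \mapsto \sigma_t(x)$ is a definable function in (a definitional expansion of) $T_{\vNa}$; equivalently, for each $t$ the predicate $(x,y) \mapsto d\big(\sigma_t(x), y\big)$ is a definable predicate, realized in every model as the same uniform limit of formulas. The first step is to invoke the continuous analogue of {\L}o\'{s}'s theorem: for any definable predicate $P$ and any tuple of $M_\cU$, the value $P^{M_\cU}$ equals the $\cU$-limit of the values $P^{M_i}$ on the representing tuples. Applying this to $P(x,y) = d(\sigma_t(x), y)$ with $y = [\sigma^{\varphi_i}_t(x_i)]_\cU$ gives $P^{M_\cU} = \lim_\cU 0 = 0$, so the right-hand side displayed above is the unique point at zero distance from $\sigma^{\varphi_\cU}_t([x_i]_\cU)$, which is the desired identity. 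Because each $\sigma_t$ is an automorphism, it is isometric for the operator norm and therefore preserves the norm balls that index the sorts; the argument thus stays within a single domain of quantification and never escapes to a larger ball.

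The one point requiring genuine care is the dependence on the real parameter $t$. The cleanest route is to treat $\sigma_t$ as a single definable function for each fixed $t$, so that the coordinatewise formula holds for every $t$ separately, and hence for the one-parameter group as a whole; this already suffices for the statement as phrased. If one wants the action of $\bbR$ to transfer uniformly in $t$ --- which is what the definitional expansion of the previous theorem in fact supplies --- then one must check that the defining formulas are uniform over compact intervals of $t$, so that {\L}o\'{s} applies with $t$ as an external parameter. This uniformity is exactly the content of the construction underlying the previous theorem, and so the corollary is indeed immediate once that construction is in hand. The only remaining thing to note is that the modular flow of $\varphi_\cU$ agrees with the coordinatewise one because the weight $\varphi_\cU$ is itself computed coordinatewise from the $\varphi_i$ by the same {\L}o\'{s} argument applied to the weight, which is part of the structure of $\cal L_{\vNa}$.
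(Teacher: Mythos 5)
Your proposal is correct and takes essentially the same route as the paper: the corollary is obtained from the definability of the modular group (the definitional expansion $T_{\vNa-\md}$ of Section \ref{SectionDefModular}) together with the continuous {\L}o\'{s} theorem, which makes the interpretations of the new function symbols commute with the ultraproduct, plus correctness of the expanded axiomatization (Theorem \ref{moddefinable}), which identifies those interpretations in the ultraproduct with the genuine modular flow of the ultraproduct weight. The one loose point is your justification of sort-preservation: the sorts are cut out by right-boundedness and the $\|\cdot\|_\Phi^\#$-bound as well as by the operator norm, so one needs that the unitaries $\Delta^{it}$ are isometric on $\cal H$ and normalize both $\cal M$ and $\cal M'$ (Theorem \ref{TomitaFund}), as in the paper's lemmas, not merely that an automorphism is operator-norm isometric.
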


This raises an immediate question: what is this metric structure ultraproduct?  The above result implies that it cannot be the Groh-Raynaud ultraproduct (see \cite{AH}).  Whereas traditionally in continuous theory of operator algebras, our axiomatizations aim to capture a pre-existing ultraproduct, here we are faced with a new ultraproduct and we use model-theoretic techniques to produce operator-algebraic characterizations of it.  This new ultraproduct is something like a hybrid of the Ocneanu and Groh-Raynaud ultraproducts.  

In Section \ref{SectionHAUltraproduct}, we define the \textbf{Hilbert algebra ultraproduct} by naively modifying the traditional definition of the Ocneanu ultraproduct.  We show, using the Arveson spectral theory techniques of \cite{AH}, that this Hilbert algebra ultraproduct agrees with the metric structure ultraproduct for $T_{\vNa}$.  By model-theoretic considerations, we see that the resulting inner product induces a faithful normal semifinite weight on the associated left von Neumann algebra and that the Hilbert algebra ultraproduct commutes with modular automorphisms. 

Thus we are led to our next characterization.  The notion of continuous elements of an ultraproduct with respect to a group action was isolated in \cite{BYG}.  We have a natural embedding of the underlying Hilbert space of the Hilbert algebra ultraproduct into the Banach space ultraproduct of the underlying Hilbert spaces of the factors.  The latter is the representing space of the Groh-Raynaud ultraproduct.  Letting $p_{\Phi}$ denote the projection onto the image of this embedding, we show the following using results about standard forms.

\begin{thm*}
    The left von Neumann algebra $\R_{\lambda}\left( \prod^{\cU}_{\HA} (\cal M_i, \Phi_i) \right)$ of the Hilbert algebra ultraproduct is spatially isomorphic to the corner $p_{\Phi}\left( \prod^{\cU}_{\GR}(\cal M_i, \cal H_{\Phi_i}) \right) p_{\Phi}$ of the Groh-Raynaud ultraproduct.
\end{thm*}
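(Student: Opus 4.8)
The plan is to realize both algebras as left von Neumann algebras of compatible full left Hilbert algebras sitting inside the common Hilbert space $\widetilde{\cal H}:=\prod^{\cU}\cal H_{\Phi_i}$, and to show that the embedding $V$ intertwines them. Write $M:=\prod^{\cU}_{\GR}(\cal M_i,\cal H_{\Phi_i})$ and $N:=\R_{\lambda}\left(\prod^{\cU}_{\HA}(\cal M_i,\Phi_i)\right)$, the latter acting on the completion $\cal H_{\Phi}$ of the Hilbert algebra ultraproduct. First I would recall, via Raynaud's theorem as used in \cite{AH}, that $M$ is in standard form on $\widetilde{\cal H}$ with modular conjugation $J=\prod^{\cU}J_{\Phi_i}$ and modular unitaries $U_t=\prod^{\cU}\Delta_{\Phi_i}^{it}$, where $U_t$ implements the modular automorphism group of the canonical weight on $M$ and $JMJ=M'$. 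A direct computation with the $L^2$-norms shows $V$ is isometric, so I may set $p_{\Phi}=VV^{*}$, the projection onto $V(\cal H_{\Phi})$, and $V$ becomes a unitary of $\cal H_{\Phi}$ onto $p_{\Phi}\widetilde{\cal H}$.

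The first substantive step is to identify the range of $V$ with the modular-continuous subspace $\widetilde{\cal H}_c:=\{\xi\in\widetilde{\cal H}: t\mapsto U_t\xi \text{ is norm-continuous}\}$. This is where the totally-bounded section and the Arveson-spectral argument already used to prove that the Hilbert algebra ultraproduct is the metric-structure ultraproduct pay off: a vector of $\widetilde{\cal H}$ is represented by a uniformly totally bounded sequence of Hilbert-algebra elements precisely when its modular orbit is norm-continuous. Granting $V(\cal H_{\Phi})=\widetilde{\cal H}_c$, and using that $J$ commutes with each $U_t$, I obtain the two key commutations $[p_{\Phi},U_t]=0$ and $Jp_{\Phi}J=p_{\Phi}$, even though $p_{\Phi}$ lies in neither $M$ nor $M'$.

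The second step is the intertwining on generators. For $a$ in the Hilbert algebra ultraproduct, $Va$ is a left-bounded vector for the left Hilbert algebra $\mathfrak{B}$ of $M$, with $\pi_{\lambda}(Va)\in M$ and $V\pi_{\lambda}(a)=\pi_{\lambda}(Va)V$; since $p_{\Phi}Va=Va$ this gives $VNV^{*}\subseteq p_{\Phi}Mp_{\Phi}$, and applying $J$ yields the dual inclusion $VN'V^{*}\subseteq p_{\Phi}M'p_{\Phi}$ for the commutants. The crux is the reverse inclusion together with the assertion that the corner is genuinely a von Neumann algebra. Here I would use Arveson spectral theory for the (non-strongly-continuous) flow $U_t$ to show that compression by $p_{\Phi}$ annihilates the spectral components of an element of $M$ that are not modular-continuous, so that $p_{\Phi}xp_{\Phi}$ depends only on the continuous part of $x$ and hence lies in $VNV^{*}$; dually for $M'$. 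Consequently $p_{\Phi}Mp_{\Phi}$ and $p_{\Phi}M'p_{\Phi}$ are mutual commutants on $p_{\Phi}\widetilde{\cal H}$, and the commutation theorem applied to the compressed full left Hilbert algebra $p_{\Phi}\mathfrak{B}$ — which $V$ identifies with the Hilbert algebra ultraproduct — forces $p_{\Phi}Mp_{\Phi}=VNV^{*}$, so $V$ is the desired spatial isomorphism.

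I expect this last step to be the main obstacle. It is the weight-theoretic generalization of Ando--Haagerup's identification of the Ocneanu ultraproduct inside the Groh--Raynaud ultraproduct, and the analytic heart is establishing the uniform modular (Arveson-spectral) estimates for faithful normal semifinite weights rather than states, so that the continuous part is weakly closed and the compressed corner is multiplicative. The non-$\sigma$-finiteness is exactly what prevents one from quoting the state case directly: one must run the spectral estimates for unbounded weights and control them uniformly along the ultrafilter.
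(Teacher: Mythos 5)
Your proposal hinges on the identification $V(\cal H_{\Phi})=\widetilde{\cal H}_c$, and that identification is false. What is actually available (and is the paper's Theorem \ref{ultracontinuous1}) is a statement about vectors represented by sequences that are uniformly bounded in \emph{operator norm}: for $(x_i)\in\ell^\infty_{\Phi}(\mathfrak{A}_i)$, the vector $(\eta_{\Phi_i}(x_i))^\bullet$ lies in the range of $p_\Phi$ if and only if its modular orbit is norm-continuous. A vector of $\widetilde{\cal H}$ with norm-continuous (even fixed) modular orbit need not admit any such representative, and then it can lie outside the range of $p_\Phi$. Concretely, take $(\cal M_i,\Phi_i)=(B(\ell^2),\varphi)$ for all $i$, with $\varphi(x)=\sum_k 2^{-k}\ip{xe_k}{e_k}$, so that $\cal H_{\Phi_i}$ is the Hilbert--Schmidt operators, $\eta_\varphi(w)=wh^{1/2}$ with $h=\sum_k 2^{-k}e_{kk}$, and $\Delta^{it}v=h^{it}vh^{-it}$. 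The unit vector $\xi:=(e_{ii})^\bullet\in\widetilde{\cal H}$ is fixed by every $U_t$, hence lies in $\widetilde{\cal H}_c$; but for any sequence $(w_i)$ with $\sup_i\|w_i\|=C<\infty$ one has
\[
\|w_ih^{1/2}-e_{ii}\|_{\mathrm{HS}}\;\ge\;\bigl|1-(w_i)_{ii}2^{-i/2}\bigr|\;\ge\;1-C2^{-i/2}\longrightarrow 1,
\]
so $\xi$ is at distance $1$ from the range of $p_\Phi$, all of whose elements are limits of vectors represented by operator-norm-bounded sequences from $\cal N_\cU$. Thus $p_\Phi\widetilde{\cal H}\subsetneq\widetilde{\cal H}_c$ strictly, already for a constant family of \wstar-probability spaces; the operator-norm bound in Theorem \ref{ultracontinuous1} is not a technical convenience but exactly what this example violates.

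This gap is fatal to the argument as structured. The commutations $[p_\Phi,U_t]=0$ and $Jp_\Phi J=p_\Phi$ are still true, but must be proved differently (the paper gets the first from Theorem \ref{ultramod} and the second from the compatibility of $J_\cU$ with the modular conjugation of the Hilbert algebra ultraproduct). The crux, however, collapses: ``compression by $p_\Phi$ annihilates the non-modular-continuous spectral components, hence $p_\Phi xp_\Phi\in VNV^*$'' is an argument about the projection onto the continuity subspace, and for the true, strictly smaller $p_\Phi$ it gives nothing — an operator can ``depend only on the continuous part of $x$'' and still fail to preserve the range of $p_\Phi$. Repairing it requires showing that compressions of the componentwise smoothed elements $(\sigma^{\Phi_i}_{F_a}(x_i))^\cU$ preserve the range of $p_\Phi$, which needs the multiplier and right-boundedness estimates for spectral-subspace elements (Proposition \ref{modularsubspaces}, Lemma \ref{modcommutelem2}), not continuity of orbits. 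Two further points. First, $U_t=(\Delta^{it}_{\Phi_i})^\cU$ does not implement the modular group of any ``canonical weight'' on $M$: it is not strongly continuous (cf.\ \cite[Example 4.7]{AH}, discussed in Section \ref{SectionOcneanu}), so it is the modular flow of no faithful normal semifinite weight; your own later description of it as a non-strongly-continuous flow contradicts this. Second, you correctly sense that $p_\Phi$ need not lie in $M$ or $M'$ — but then it is not automatic that $p_\Phi Mp_\Phi$ is an algebra, let alone that $p_\Phi Mp_\Phi$ and $p_\Phi M'p_\Phi$ are mutual commutants; the way to get this is to factor $p_\Phi=sJ_\cU sJ_\cU$ for a suitable projection $s\in M$ and apply Haagerup's reduction lemma \cite[Lemma 2.6]{Ha75}. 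That is the paper's route: its proof of Theorem \ref{ultracorner} is a soft standard-form argument — standardness of the Groh--Raynaud ultraproduct with conjugation $J_\cU$, Haagerup's lemma, standardness of $\R_{\lambda}\left(\prod^{\cU}_{\HA}(\cal M_i,\Phi_i)\right)$ on the same subspace via Theorems \ref{correctness} and \ref{ultrapres}, and uniqueness of standard forms — with all Arveson-spectral work quarantined in the earlier ultraproduct-preservation theorem. The continuity characterization you want to prove first is deduced in the paper \emph{afterwards} (Theorem \ref{ultracontinuous2}), and only at the level of algebra elements, using the corner theorem rather than feeding into it.
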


Since the ultraproducts of modular automorphisms are continuous on the image of $p_{\Phi}$, we get the following generalization of a result from \cite{AH} and \cite{MT}, justifying our choice to name it the \textbf{generalized Ocneanu ultraproduct}.

\begin{thm*}
    An element $a \in \prod^{\cU}_{\GR}(\cal M_i, \cal H_{\Phi_i})$ of the Groh-Raynaud ultraproduct represents an element of $\R_{\lambda}\left( \prod^{\cU}_{\HA} (\cal M_i, \Phi_i) \right)$ if and only if $(\sigma^{\Phi_{i}}_{t})^{\bullet}$ is a strong-$^*$ continuous function at $a$.
\end{thm*}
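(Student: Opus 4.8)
The plan is to read the statement through the spatial picture of the preceding theorem together with the continuity of the induced modular flow on $\mathrm{Im}(p_\Phi)$. Write $U_t := (\Delta_i^{it})^\bullet$ for the one-parameter unitary group induced on the Hilbert space ultraproduct of the $\cal H_{\Phi_i}$ (the representing space of the Groh--Raynaud ultraproduct) by the modular operators, so that $(\sigma^{\Phi_i}_t)^\bullet = \Ad U_t$ on $\prod^\cU_\GR(\cal M_i, \cal H_{\Phi_i})$. I recall from Section \ref{SectionHAUltraproduct} that $p_\Phi$ is exactly the projection onto the continuous subspace $\{\xi : t \mapsto U_t\xi \text{ is norm-continuous}\}$ of this flow; this subspace is reducing for $\{U_t\}$, so $p_\Phi$ commutes with every $U_t$. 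I read ``$a$ represents an element of $\R_\lambda(\prod^\cU_\HA)$'' as ``under the spatial isomorphism of the preceding theorem $a$ is carried into the corner $p_\Phi\,\prod^\cU_\GR\,p_\Phi$,'' so that the representing elements are precisely those confined to $\mathrm{Im}(p_\Phi)$.

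For the forward implication, suppose $a$ represents an element of $\R_\lambda(\prod^\cU_\HA)$. Then $a$ corresponds to an operator on $\mathrm{Im}(p_\Phi)$ lying in $p_\Phi\,\prod^\cU_\GR\,p_\Phi$. Since $\{U_t\}$ restricts to a genuinely strongly continuous one-parameter unitary group on $\mathrm{Im}(p_\Phi)$, conjugation $\Ad U_t$ is strong-$*$ continuous on the bounded operators of this corner; this is exactly the remark that the ultraproducts of modular automorphisms are continuous on the image of $p_\Phi$, and it says precisely that $t \mapsto (\sigma^{\Phi_i}_t)^\bullet(a)$ is strong-$*$ continuous at such an $a$. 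That the restricted flow is the modular automorphism group of the weight carried by $\R_\lambda(\prod^\cU_\HA)$, so that continuity is being tested against the correct flow, is guaranteed by the earlier definability-of-the-modular-group result together with the fact that the Hilbert algebra ultraproduct commutes with modular automorphisms.

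The substance lies in the converse, and I expect the confinement of $a$ to $\mathrm{Im}(p_\Phi)$ to be the main obstacle. The first step is a continuous-vector argument: for $\Lambda \in \mathrm{Im}(p_\Phi)$ arising from the full left Hilbert algebra structure, the identity $U_t(a\Lambda) = (\sigma^{\Phi_i}_t)^\bullet(a)\,U_t\Lambda$, together with $U_t\Lambda \to \Lambda$ (as $\Lambda$ is a continuous vector) and $(\sigma^{\Phi_i}_t)^\bullet(a) \to a$ strongly, forces $U_t(a\Lambda) \to a\Lambda$; hence $a\Lambda$ is itself a continuous vector, i.e. $a\Lambda \in \mathrm{Im}(p_\Phi)$. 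Running the same computation for $a^*$ and using the totality of such $\Lambda$ shows that both $a$ and $a^*$ preserve $\mathrm{Im}(p_\Phi)$, so that $a$ commutes with $p_\Phi$. What remains --- and what the continuous-vector trick cannot reach, since $\mathrm{Im}(1-p_\Phi)$ contains no nonzero continuous vectors --- is to prove that the singular corner $(1-p_\Phi)a(1-p_\Phi)$ vanishes, equivalently that $\Ad U_t$ admits no nonzero strong-$*$ continuous element supported under $1-p_\Phi$. Here I would invoke the Arveson spectral theory of \cite{AH} to analyse the spectrum of the orbit of $a$ and to identify the strong-$*$ continuous part of the action, in the sense of \cite{BYG}, with exactly the part supported by $p_\Phi$, and then use the standard-form identifications of the preceding theorem to certify that the compression $p_\Phi a p_\Phi$ is a genuine element of $\R_\lambda(\prod^\cU_\HA)$ rather than merely a bounded operator on $\mathrm{Im}(p_\Phi)$. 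Combining the two directions yields the stated equivalence, which is the weighted analogue of the Ando--Haagerup and Masuda--Tomatsu descriptions of the Ocneanu ultraproduct as the continuous part of the modular flow.
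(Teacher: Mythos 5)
Your two directions track the paper's own proof (Theorem \ref{ultracontinuous2}) almost exactly: the forward implication is the paper's appeal to Theorem \ref{ultramod} together with strong-$^*$ continuity of the modular group of the induced weight on $\mathrm{Im}(p_\Phi)$, and your continuous-vector computation $U_t(a\Lambda)=(\sigma^{\Phi_i}_t)^\bullet(a)\,U_t\Lambda\to a\Lambda$ for the converse is precisely the paper's manipulation $(\sigma^{\Phi_i}_t(x_i)v_i)^\bullet=(\Delta^{it}_{\Phi_i}x_i(\Delta^{-it}_{\Phi_i}v_i))^\bullet$ followed by Theorem \ref{ultracontinuous1}, ending in the conclusion that $a$ and $a^*$ preserve $\mathrm{Im}(p_\Phi)$, i.e.\ $p_\Phi a=ap_\Phi$.

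The divergence is your claimed ``remaining step,'' and that is where the proposal goes wrong. First, it is not needed: the paper reads ``$a$ represents an element of $\R_{\lambda}\left(\prod^{\cU}_{\HA}(\cal M_i,\Phi_i)\right)$'' as ``$a$ commutes with $p_\Phi$,'' the represented element being the compression $p_\Phi a p_\Phi$, which lies in the corner $p_\Phi\left(\prod^{\cU}_{\GR}(\cal M_i,\cal H_{\Phi_i})\right)p_\Phi=\R_{\lambda}\left(\prod^{\cU}_{\HA}(\cal M_i,\Phi_i)\right)$ by Theorem \ref{ultracorner}; condition (1) of Theorem \ref{ultracontinuous2} is literally $p_\Phi(x_i)^\bullet=(x_i)^\bullet p_\Phi$, so the proof ends exactly where your continuous-vector argument ends. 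Second, and more decisively, the statement you propose to establish by Arveson-spectral methods --- that no nonzero strong-$^*$ continuous element of the Groh--Raynaud ultraproduct is supported under $1-p_\Phi$ --- is false. Since each $\sigma^{\Phi_i}_t$ preserves $\|\cdot\|^\#_{\Phi_i}$, the unitaries $U_t=(\Delta^{it}_{\Phi_i})^\bullet$ commute with $p_\Phi$; hence $1-p_\Phi$ (an element of $\prod^{\cU}_{\GR}(\cal M_i,\cal H_{\Phi_i})$, as noted in the proof of Theorem \ref{ultracorner}) is fixed pointwise by $(\sigma^{\Phi_i}_t)^\bullet$ and is therefore a $\sigma$-continuous point, yet it is nonzero and entirely supported under $1-p_\Phi$ whenever $p_\Phi\neq 1$, which is the typical situation (cf.\ \cite[Example 4.7]{AH}); the unit $1$ itself is an equally good counterexample, being $\sigma$-continuous but not confined to the corner. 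So the equivalence simply cannot hold under your stronger reading ``$a=p_\Phi ap_\Phi$''; under the paper's reading (commutation, with representation via compression) it does hold, and your argument up to commutation is already the entire proof.
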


We believe our ultraproduct constructions and their characterizations here are of independent interest to operator algebraists (see, for example, \cite{Cas}).

\section{Background on Hilbert Algebras}\label{SectionHilbertAlg}

The results in \cite{AGHS} rely heavily on the machinery of \cite[Section 4]{RvD}; it is no surprise that our results here rely heavily on \cite[Section 5]{RvD}.  For the reader's convenience, in this section, we recapitulate some of the theory of left Hilbert algebras.  Our treatment combines the bounded operator approach of \cite{RvD} and the more classical approach of \cite[Volume II, Chapter IV]{Takesaki}.

Let $\mathfrak{A}$ be an involutive algebra with involution denoted $\xi \mapsto \xi^\#$.  Assume further that $\mathfrak{A}$ is equipped with an inner product $\ip{\cdot}{\cdot}$ and denote by $\cal H$ the Hilbert space obtained by completing $\mathfrak{A}$ with respect to this inner product.

\begin{defn}\label{DefLeftHilbert}
     We call $\mathfrak{A}$ a \textbf{left Hilbert algebra} if the following conditions all hold.
    \begin{enumerate}
        \item Left multiplication is continuous.  More precisely, for each $a \in \mathfrak{A}$, the linear map $b \mapsto ab$ for $b \in \mathfrak{A}$ extends to a bounded linear operator on $\cal H$.  We denote this extension by $\pi(a)$.
        \item The representation $\pi$ as above is a $^*$-representation.  Namely, we assume $\ip{a b}{c} = \ip{b}{a^\# c}$ for all $a,b,c \in \mathfrak{A}$.
        \item The subalgebra $\mathfrak{A}^2$, the span of $\{ ab \ : \ a,b \in \mathfrak{A}\}$ is dense in $\mathfrak{A}$.  This ensures that $\pi$ is nondegenerate.
        \item the map $a \mapsto a^\#$ and therefore its restriction to $\mathfrak{A}^2$ has closed extension.
    \end{enumerate}
    Taking the weak closure of $\pi(\mathfrak{A})$ give the associated \textbf{left von Neumann algebra} $\R_{\lambda}(\mathfrak{A})$.  We occasionally write $S_0 a := a^\#$.
\end{defn}

We say that $\mathfrak{A}$ is a \textbf{pre-left Hilbert algebra} if it is only assumed to satisfy conditions (1)-(3) above.  We note that this is not standard terminology.  It is introduced here to aid our exposition.  To see why this is a useful definition to make, we direct the reader to the first half of \cite[Section 5]{RvD}.  There, pre-left Hilbert algebras are used extensively but are not given a name.  One can define a \textbf{(pre-)right Hilbert algebra} $\mathfrak{B}$ by analogy, replacing left multiplication by right multiplication whenever the former appears above.  In this case, we  denote the involution by $^\flat$; the induced $^*$-representation by $\pi'$; and the associated \textbf{right  von Neumann algebra} by $\R_{\rho}(\mathfrak{B})$.  We use the following notions extensively.

\begin{defn}
    Let $\mathfrak{A}$ be a pre-left Hilbert algebra with Hilbert space completion $\cal H$.  We call a vector $v \in \cal H$ a \textbf{right bounded vector} if
    \[
    \pi'(v)a := av \qquad \text{for all } a \in \mathfrak{A}
    \]
    extends to a bounded operator, also denoted $\pi'(v)$.  In this case, we say $v$ is \textbf{right $K$-bounded} if $\|\pi'(v)\| \leq K$.  We denote the set of right bounded vectors by $\cal H_{\rt}$.
\end{defn}

\begin{prop}\label{rightBoundedClosed}
    $\cal H_{\rt}$ is closed under:
    \begin{enumerate}
        \item linear combinations, and
        \item $\R_{\lambda}(\mathfrak{A})'$.
    \end{enumerate}
\end{prop}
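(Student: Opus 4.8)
The plan is to unwind the definition of right boundedness and reduce both assertions to linearity of $\pi'(v)$ in $v$ together with a single commutation relation. For $a\in\mathfrak{A}$ and $v\in\cal H$, the product $av$ appearing in $\pi'(v)a := av$ is to be read as $\pi(a)v$, where $\pi(a)$ is the bounded left-multiplication operator furnished by condition (1) of Definition \ref{DefLeftHilbert}; thus $\pi'(v)$ is a priori a linear map $\mathfrak{A}\to\cal H$, $a\mapsto\pi(a)v$, and right boundedness of $v$ asserts exactly that this map is bounded for the $\cal H$-norm on the dense domain $\mathfrak{A}$, hence extends to $\cal H$. With this reading in hand, both closure statements become elementary.

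For (1), I would fix $v,w\in\cal H_{\rt}$ and scalars $\alpha,\beta$, and compute for every $a\in\mathfrak{A}$ that $\pi'(\alpha v+\beta w)a=\pi(a)(\alpha v+\beta w)=\alpha\,\pi(a)v+\beta\,\pi(a)w=\alpha\,\pi'(v)a+\beta\,\pi'(w)a$, so that $\pi'(\alpha v+\beta w)=\alpha\,\pi'(v)+\beta\,\pi'(w)$ as maps on $\mathfrak{A}$. The right-hand side is a linear combination of bounded operators and is therefore bounded (with norm at most $|\alpha|\,\|\pi'(v)\|+|\beta|\,\|\pi'(w)\|$), so $\alpha v+\beta w\in\cal H_{\rt}$.

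For (2), I would fix $v\in\cal H_{\rt}$ and $T\in\R_{\lambda}(\mathfrak{A})'$ and show $Tv\in\cal H_{\rt}$. The key point is the commutation relation: since $\pi(\mathfrak{A})\subseteq\R_{\lambda}(\mathfrak{A})$, the operator $T$ commutes with every $\pi(a)$, $a\in\mathfrak{A}$. Hence for all $a\in\mathfrak{A}$ one has $\pi'(Tv)a=\pi(a)(Tv)=T\pi(a)v=T\big(\pi'(v)a\big)$, i.e.\ $\pi'(Tv)=T\,\pi'(v)$ on $\mathfrak{A}$. As a composition of bounded operators this is bounded, with $\|\pi'(Tv)\|\le\|T\|\,\|\pi'(v)\|$, and therefore $Tv$ is right bounded; the same estimate in fact shows that $T$ carries right $K$-bounded vectors to right $\|T\|K$-bounded vectors.

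Neither step presents a genuine obstacle; the entire content lies in correctly interpreting the formal product $av$ as $\pi(a)v$ and in recording that elements of the commutant of the left von Neumann algebra commute with the left action. The one point worth stating carefully is that boundedness of $\pi'(\,\cdot\,)$ on the dense subspace $\mathfrak{A}$ already guarantees the extension to a bounded operator on all of $\cal H$, which is immediate from density of $\mathfrak{A}$ in $\cal H$ and the definition of $\cal H_{\rt}$.
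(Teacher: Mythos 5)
Your proposal is correct and follows essentially the same route as the paper: part (2) is exactly the paper's computation $\pi'(Tv)a = \pi(a)Tv = T\pi(a)v = T\pi'(v)a$ with the commutant relation giving the bound $\|T\|\,\|\pi'(v)\|$, and part (1) is the ``simple norm calculation'' the paper omits. Your explicit remark that the formal product $av$ must be read as $\pi(a)v$ is a worthwhile clarification, but the mathematical content is identical.
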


\begin{proof}
    The proof of $(1)$ is a simple norm calculation.  For $(2)$, let $v \in \cal H_{\rt}$ and $a \in \R_{\lambda}(\mathfrak{A})'$ be given.  Write $w = av$.  Then for all $x \in \mathfrak{A}$, we have
    \[
    \pi'(w)x = xw = xav = axv = a\pi'(v)x 
    \]
    so that $\|\pi'(w)\| \leq \|a\|\|\pi'(v)\|$ and hence $w \in \cal H_{\rt}$.
\end{proof}

\begin{defn}
    Let $\mathfrak{A}'$ denote the set of elements $v \in \cal H$ such that $v$ is right bounded and there exists another vector $v_1 \in \cal H$ such that
    \[
    \pi'(v_1) = \pi'(v)^*.
    \]
    It is clear by the non-degeneracy of $\pi$, that $v_1$ is unique.  We write $v^\flat = v_1$ as above.
\end{defn}

When $\mathfrak{A}$ is a left Hilbert algebra, it can be seen that $\mathfrak{A}'$ can be made into a right Hilbert algebra.  Analogously, given a right Hilbert algebra $\mathfrak{A}'$, we can then define a left Hilbert algebra $\mathfrak{A}{''}$.  A left Hilbert algebra is said to be \textbf{full} if $\mathfrak{A} = \mathfrak{A}{''}$.  Later in this section, we see, and often use, the fact that full left Hilbert algebras are equivalent to von Neumann algebras equipped with a choice of faithful normal semifinite weight.

Let $\mathfrak{A}$ be a pre-left Hilbert algebra with Hilbert space closure $\cal H$.  Let $\cal K$ denote the real subspace of $\cal H$ given by the closed span of elements of the form $a^\# a$, where we consider $\cal H$ as a real Hilbert space with respect to real inner product given by the real part of $\ip{\cdot}{\cdot}$.

\begin{thm}\label{HilbertEquivalentConditions}
    Let $\mathfrak{A}$ be a pre-left Hilbert algebra.  Then the following are equivalent:
    \begin{enumerate}
        \item $\mathfrak{A}$ satisfies condition (4) of Definition \ref{DefLeftHilbert} and, therefore is a left Hilbert algebra;
        \item $\cal K \cap i\cal K = \{0\}$;
        \item $\mathfrak{A}'$ is dense in $\cal H$; and
        \item The set $\cal H_{\rt}$ of right bounded elements is dense in $\cal H$.
    \end{enumerate}
\end{thm}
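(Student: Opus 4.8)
The plan is to treat closability of $S_0$ (condition (1)) as the hub, proving the easy implications by hand and routing the substantial ones through the bounded-operator approach of \cite{RvD}. Concretely, I would establish $(1)\Rightarrow(2)$, the inclusion $(3)\Rightarrow(4)$, and $(3)\Rightarrow(1)$ directly, and obtain the reverse passages $(2)\Rightarrow(1)$, $(1)\Rightarrow(3)$ and $(4)\Rightarrow(1)$ from the modular/commutant machinery; together these close the loop $(2)\Leftrightarrow(1)\Leftrightarrow(3)\Rightarrow(4)\Rightarrow(1)$.

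For $(1)\Rightarrow(2)$, let $S=\overline{S_0}$. Each generator $a^\#a$ of $\cal K$ is fixed by $S_0$, since $(a^\#a)^\#=a^\#a$, so $a^\#a\in\Fix(S):=\{\xi\in D(S):S\xi=\xi\}$; as $\Fix(S)$ is a closed real subspace this gives $\cal K\subseteq\Fix(S)$. Because $S$ is conjugate-linear, $S(i\xi)=-i\,S\xi$, so $i\Fix(S)$ lies in the $(-1)$-eigenspace of $S$, and distinct eigenspaces of the closed operator $S$ meet only in $0$; hence $\cal K\cap i\cal K\subseteq\Fix(S)\cap i\Fix(S)=\{0\}$. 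The picture behind this is that $\cal K$ is the closure of the self-adjoint part of $\mathfrak{A}$ and of the $(+1)$-eigenspace of $J\Delta^{1/2}$, which is also what should drive the converse.

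The inclusion $\mathfrak{A}'\subseteq\cal H_{\rt}$ gives $(3)\Rightarrow(4)$ at once. For $(3)\Rightarrow(1)$, assume $\mathfrak{A}'$ is dense and suppose $a_n\to0$ in $\cal H$ with $a_n^\#\to\eta$; I must force $\eta=0$. Fix $a,v\in\mathfrak{A}'$. Reading the product $xw$ ($x\in\mathfrak{A}$, $w$ right bounded) both as $\pi(x)w$ and as $\pi'(w)x$ yields $\pi(a_n^\#)a=\pi'(a)a_n^\#$, and using $\pi(b)^*=\pi(b^\#)$ I evaluate one quantity two ways: on the one hand $\ip{\pi'(a)a_n^\#}{v}\to\ip{\pi'(a)\eta}{v}$, while on the other $\ip{\pi'(a)a_n^\#}{v}=\ip{\pi(a_n^\#)a}{v}=\ip{a}{\pi(a_n)v}=\ip{a}{\pi'(v)a_n}=\ip{\pi'(v)^*a}{a_n}\to0$. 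Hence $\ip{\pi'(a)\eta}{v}=0$ for all $a,v\in\mathfrak{A}'$, so density of $\mathfrak{A}'$ gives $\pi'(a)\eta=0$ for every $a\in\mathfrak{A}'$. Finally, using $\pi'(a)^*=\pi'(a^\flat)$ one has $\ip{\eta}{\pi(b)a^\flat}=\ip{\eta}{\pi'(a^\flat)b}=\ip{\pi'(a)\eta}{b}=0$ for all $b\in\mathfrak{A}$ and $a\in\mathfrak{A}'$; since $\flat$ is an involution of $\mathfrak{A}'$ the vectors $a^\flat$ range over the dense set $\mathfrak{A}'$, and nondegeneracy of $\pi$ then forces $\eta\perp\overline{\pi(\mathfrak{A})\cal H}=\cal H$, i.e. $\eta=0$.

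The three remaining passages are where the theory of \cite[Section 5]{RvD} enters. For $(1)\Rightarrow(3)$ I would form, from $S=\overline{S_0}$, the operators $F=S^*$ and $\Delta=S^*S$, and smooth an arbitrary vector by bounded functions of $\Delta$ (e.g. the Gaussian averages $\tfrac{r}{\sqrt\pi}\int e^{-r^2t^2}\Delta^{it}\,dt$): the resulting vectors are right bounded, carry a $\flat$, hence lie in $\mathfrak{A}'$, and approximate the original vector, giving density of $\mathfrak{A}'$. For $(2)\Rightarrow(1)$ I would invoke the Rieffel--van Daele observation that a closed real subspace $\cal K$ with $\cal K+i\cal K$ dense (which holds here, as $\cal K+i\cal K\supseteq\mathfrak{A}^2$) and $\cal K\cap i\cal K=\{0\}$ is ``in general position'', producing a closed conjugate-linear operator extending $S_0$. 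The computation of the previous paragraph, run verbatim for $a,v\in\cal H_{\rt}$ (no $\flat$ is used until its very last step), reduces $(4)\Rightarrow(1)$ to the single assertion that $\pi'(a)\eta=0$ for all $a\in\cal H_{\rt}$ forces $\eta=0$ — equivalently, that the nondegenerate algebra $\{\pi'(a):a\in\cal H_{\rt}\}\subseteq\R_{\lambda}(\mathfrak{A})'$ has no nonzero common kernel of adjoints. Since this is false for general nondegenerate, non-selfadjoint operator algebras, it cannot be formal; it must come from the structure of right bounded vectors, and this step — promoting density of $\cal H_{\rt}$ to density of the $\flat$-able vectors $\mathfrak{A}'$ — is the one I expect to be the main obstacle.
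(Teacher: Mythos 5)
Your three hands-on arguments are correct: the fixed-point argument for (1)$\Rightarrow$(2), the inclusion $\mathfrak{A}'\subseteq\cal H_{\rt}$ for (3)$\Rightarrow$(4), and the sequential closability argument for (3)$\Rightarrow$(1); the last is a nice direct substitute for the paper's route, which instead proves (3)$\Rightarrow$(2) via the inclusion $\mathfrak{A}'\subseteq i\cal K^\perp+\cal K^\perp\subseteq(\cal K\cap i\cal K)^\perp$ and couples it with the cited equivalence (1)$\Leftrightarrow$(2) of \cite{RvD}, exactly as you do for (2)$\Rightarrow$(1). The proposal nevertheless does not close the theorem, because the two remaining legs are defective. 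For (1)$\Rightarrow$(3), Gaussian smoothing of an \emph{arbitrary} vector by $\Delta^{it}$ does not produce right bounded vectors, let alone elements of $\mathfrak{A}'$: in the tracial case $\Delta=1$, the smoothing operator is the identity, while $\cal H_{\rt}$ is a proper dense subspace of $\cal H$ (e.g. $L^\infty$ inside $L^2$). Smoothing lands in $\mathfrak{A}'$ or the Tomita algebra only when applied to vectors already in $\mathfrak{A}$ or $\mathfrak{A}'$, and even that invariance is Tomita's theorem, which in every standard development comes \emph{after} the density of $\mathfrak{A}'$. The paper avoids this circularity by deducing (2)$\Rightarrow$(3) from \cite[Lemma 5.12]{RvD}: under (2), $\cal K$ is the closure of the self-adjoint part of $\mathfrak{A}$ and $i\cal K^\perp$ is the closure of the self-adjoint part of $\mathfrak{A}'$, so $\mathfrak{A}'$ is dense in $\cal K^\perp+i\cal K^\perp=(\cal K\cap i\cal K)^\perp=\cal H$. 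Some such input from the real-subspace machinery is precisely what your sketch lacks.

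For (4)$\Rightarrow$(1), your diagnosis of the crux is accurate, but since the proposal leaves it open, this leg is unproven. The point is indeed not formal: $\{\pi'(v):v\in\cal H_{\rt}\}$ is a nondegenerate but non-self-adjoint left ideal in $\R_{\lambda}(\mathfrak{A})'$; its weak closure has the form $\R_{\lambda}(\mathfrak{A})'e$ for a projection $e$, nondegeneracy forces only the central support of $e$ to be $1$, and the common kernel is $(1-e)\cal H$ (already in $M_2(\bbC)$ the left ideal $\spn\{e_{11},e_{21}\}$ is nondegenerate with common kernel $\bbC e_2$). For comparison, the paper's own proof of (4)$\Rightarrow$(1) defines $F_0$ only on the products $w=\pi'(v_1)^*v_2$ with $v_1,v_2\in\cal H_{\rt}$, for which the candidate value $w^\flat=\pi'(v_2)^*v_1$ is explicit, verifies $\ip{a^\#}{w}=\ip{w^\flat}{a}$ so that $F_0$ is adjoint to $S_0$, and then asserts that $F_0$ is densely defined; that assertion is \emph{equivalent} to the statement you could not prove, since $\eta$ is orthogonal to all $\pi'(v_1)^*v_2$ if and only if $\pi'(v_1)\eta=0$ for every $v_1\in\cal H_{\rt}$. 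So you have correctly isolated the one point where the quantitative constructions of \cite[Section 5]{RvD} (the $P$, $Q$, $R$ operators manufacturing elements of $\mathfrak{A}'$) must be invoked, but as written your proof establishes only (1)$\Leftrightarrow$(2), (3)$\Rightarrow$(1), and (3)$\Rightarrow$(4): nothing returns from (3) or (4) to the other conditions, so the four-fold equivalence is not established.
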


\begin{proof}
    (1) $\iff$ (2) is the content of the first half of \cite[Section 5]{RvD}.

    Next we show (3) $\implies$ (2).  By the discussion after \cite[Proposition 5.3]{RvD}, we see that $\mathfrak{A}' \subseteq i\cal K^\perp + \cal K^\perp \subseteq (\cal K \cap i\cal K)^\perp$.  If $\mathfrak{A}'$ is dense, then so is $(\cal K \cap i\cal K)^\perp$, in which case, $\cal K \cap i\cal K = \{0\}$, as desired. 
    
    We show (2) $\implies$ (3).  Assume, $\cal K \cap i\cal K = \{0\}$, then by \cite[Lemma 5.12]{RvD}, this implies $\cal K$ is the closure of the set of self-adjoint elements of $\mathfrak{A}$ and $\cal K' = i\cal K^{\perp}$.  Therefore $i\cal K^{\perp}$ is the closure of the set of self-adjoint elements of $\mathfrak{A}'$.  Thus, $\mathfrak{A}'$ is dense in $i\cal K^\perp + \cal K^\perp \subseteq (\cal K \cap i\cal K)^\perp$.  By assumption, the latter is all of $\cal H$.

    That (3) $\implies$ (4) is immediate from the definitions.

    Finally, we show (4) $\implies$ (1).  Assume the set of right bounded vectors is dense.  The set of right bounded vectors is closed under $\R_{\lambda}(\mathfrak{A})'$ by Proposition \ref{rightBoundedClosed}.  Assuming $v$ is right bounded, then $\pi'(v), \pi(v)^* \in \R_{\lambda}(\mathfrak{A})'$.  Therefore $w = \pi'(v_1)^*v_2$ is a right bounded vector for all $v_1, v_2 \in \cal H_{\rt}$.  Define $F_0 w = w^\flat = \pi'(v_2)^*v_1$.  Then we have
    \begin{align}
        \ip{a^\#}{w} & = \ip{a^\#}{\pi'(v_1)^*v_2}  = \ip{\pi'(v_1)a^\#}{v_2} \nonumber \\
        & = \ip{\pi(a)^*v_1}{v_2}  = \ip{v_1}{\pi(a)v_2} \nonumber \\
        & = \ip{v_1}{\pi'(v_2)a} = \ip{v_1}{\pi'(v_2)a}  \nonumber \\
        & = \ip{\pi'(v_2)^*v_1}{a} = \ip{w^\flat}{a} \nonumber
    \end{align}
    Therefore $F_0$ is a densely defined operator which is adjoint to $S_0 a = a^\#$.  It follows that $^\#$ is closable.
\end{proof}

\begin{remark}
    By \cite[Lemma 5.12]{RvD}, if $\mathfrak{A}$ is a left Hilbert algebra, then $\cal K$ defined as above is the closure of the real subspace of self-adjoint elements of $\mathfrak{A}$.
\end{remark}

\begin{defn}
    We call a pair $(\cal M, \Phi)$ a \textbf{weighted von Neumann algebra} if $\cal M$ is a von Neumann algebra and $\Phi$ is a faithful normal semifinite weight on $\cal M$.
\end{defn}

Let $(\cal M, \Phi)$ be a weighted von Neumann algebra.  Define
\[
P_{\Phi} := \{x \ : \ x \text{ is positive and } \Phi(x) < \infty\}.
\]
We also define the sets:
\[
    N_\Phi = \{x \in \cal M \ : \ x^*x\in P_\Phi\} \quad \text{ and } \quad D_\Phi = N_\Phi^*N_\Phi.
\]
One can show via a polarization argument (see \cite{Takesaki}) that
\[
    D_\Phi = \mathrm{span}\{a^*b \ : \ \Phi(a^*a) < \infty \text{ and }  \Phi(b^*b) < \infty \}.
\]
Notice that $\Phi$ can be extended to a linear functional on $D_\Phi$ and so we call $D_\Phi$ the \textbf{definition domain} of $\Phi$.  We get an obvious embedding
\[
    \eta_\Phi: D_\Phi \to \cal H_{\Phi}
\]
of $D_\Phi$ in its Hilbert space completion $\cal H_{\Phi}$.  Since $(yx)^*yx = x^*y^*yx \leq \|y\|x^*x$ and therefore $\Phi(x^*y^*yx) \leq \|y\|\Phi(x^*x)$, we have that $D_\Phi$ is a $^*$-closed left ideal of $\cal M$.  Therefore we can define a representation of $\cal M$ on $\cal H_{\Phi}$ as follows.  For any $a \in \cal M$, define the operator $\pi_{\Phi}(a) \in B(\cal H_{\Phi})$ by $\pi_{\Phi}(a)\eta_\Phi(b) = \eta_\Phi(ab)$ for all $b \in D_\Phi$ and extending to $\cal H_{\Phi}$ by continuity.

\begin{defn}
    Let $\cal M$ be a von Neumann algebra and let $\cal H_{\Phi}$ and $\pi_{\Phi}$ be defined as above.  Then $\pi_{\Phi}$ is called the \textbf{semicyclic representation} with respect to $\Phi$.
\end{defn}

We associate a full left Hilbert algebra to a weighted von Neumann algebra $(\cal M, \Phi)$ by taking $D_\Phi \cap D_\Phi^*$ together with the multiplication and involution given by
\[
    \eta_{\Phi}(a)\eta_{\Phi}(b) = \eta_{\Phi}(ab) \qquad \eta_{\Phi}(a)^\# = \eta_{\Phi}(a^*).
\]
One can check (see \cite{Takesaki}) that this defines a full Hilbert algebra.  Conversely, given a full left Hilbert algebra $\mathfrak{A}$, the left von Neumann algebra $\R_{\lambda}(\mathfrak{A})$ together with the faithful normal semifinite weight given for $x \in \R_{\lambda}(\mathfrak{A})$ positive by
\[
    \Phi_{\lambda}(x) = \begin{cases} 
      \sqrt{\ip{\xi}{\xi}} & \text{if } x^{1/2} = \xi \in \mathfrak{A} \\
      \infty & \text{otherwise }
    \end{cases}.
\]
We also define a faithful normal semifinite weight, called the \textbf{dual weight}, on the commutant by
\[
    \Phi_{\rho}(x) = \begin{cases} 
      \sqrt{\ip{\xi}{\xi}} & \text{if } x^{1/2} = \xi \in \mathfrak{A}' \\
      \infty & \text{otherwise }
    \end{cases}.
\]
It is seen by \cite[Chapter VII, Theorem 2.5 and Theorem 2.6]{Takesaki} that these two processes are essentially inverse to each other.

Our next theorem and corollary explain our use of the norms $\|\cdot\|_{\Phi}$ on $D_{\Phi}$ and $\|\cdot\|_{\Phi}^\#$ on $D_{\Phi} \cap D_{\Phi}^*$ where
\[
\|x\|_{\Phi} := \sqrt{\Phi(x^*x)}  \text{ and } \|x\|_{\Phi}^\# = \sqrt{\frac{\Phi(x^*x) + \Phi(xx^*)}{2}}.
\]
While it is elementary, the author could not locate a proof in the literature, so one is provided.

\begin{thm}\label{TheoremPhiMetrize}
    Let $(\cal M, \Phi)$ be a von Neumann algebra equipped with a faithful, normal, semifinite weight and let $\pi: \cal M \to \cal H$ be the associated semicyclic representation.  Then $\|x\|_\Phi = \sqrt{\Phi(x^*x)}$ metrizes the strong operator topology on $D_\Phi \cap \cal M_1$, the set of all $x \in D_\Phi$ such that $\|x\| \leq 1$.
\end{thm}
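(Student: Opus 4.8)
The plan is to show that, on the set $B := D_\Phi \cap \cal M_1$, the metric topology induced by $\|\cdot\|_\Phi$ and the strong operator topology coming from the semicyclic representation $\pi = \pi_\Phi$ on $\cal H = \cal H_\Phi$ coincide, by establishing both inclusions of topologies. Throughout I would use that, since $\Phi$ is faithful, $\pi_\Phi$ is a faithful normal representation and hence isometric, so every element of $B$ acts as an operator of norm at most $1$; this uniform boundedness is what lets me test strong convergence on a norm-dense subset of $\cal H_\Phi$ rather than on all of it. I would also identify $\|x\|_\Phi$ with $\|\eta_\Phi(x)\|_{\cal H_\Phi}$, so that the $\|\cdot\|_\Phi$-topology on $B$ is literally the topology pulled back from the Hilbert space norm along $\eta_\Phi$.

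For the implication that $\|\cdot\|_\Phi$-convergence forces strong convergence, I would take a net $x_\alpha \to x$ in $B$ with $\|x_\alpha - x\|_\Phi \to 0$ and test strong convergence of $\pi_\Phi(x_\alpha)$ against right bounded vectors. By Theorem~\ref{HilbertEquivalentConditions}, the set $\cal H_{\rt}$ of right bounded vectors is dense in $\cal H_\Phi$ (this is exactly condition (4), which holds because the full left Hilbert algebra attached to $(\cal M,\Phi)$ is a left Hilbert algebra). The key point is the compatibility of the left and right actions: for a right bounded vector $\xi$ and the left bounded vector $\eta_\Phi(x_\alpha - x)$ one has
\[
\pi_\Phi(x_\alpha - x)\,\xi = \pi'(\xi)\,\eta_\Phi(x_\alpha - x),
\]
so that $\|\pi_\Phi(x_\alpha - x)\xi\| \le \|\pi'(\xi)\|\,\|x_\alpha - x\|_\Phi \to 0$. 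Since the $\xi$ range over a dense set and the operators are uniformly bounded, this yields $\pi_\Phi(x_\alpha) \to \pi_\Phi(x)$ strongly. This direction is the routine one and uses only the machinery already set up in Proposition~\ref{rightBoundedClosed} and Theorem~\ref{HilbertEquivalentConditions}.

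The hard part is the reverse implication: that strong convergence of $\pi_\Phi(x_\alpha)$ on $B$ forces $\|x_\alpha - x\|_\Phi \to 0$, i.e.\ $\eta_\Phi(x_\alpha) \to \eta_\Phi(x)$ in norm. In the $\sigma$-finite situation of \cite{AGHS} this is immediate, because $\eta_\Phi$ is implemented by a cyclic vector $\Omega$, so that $\eta_\Phi(y) = \pi_\Phi(y)\Omega$ and norm convergence of the $\eta_\Phi(x_\alpha)$ is read off directly from strong convergence. For a genuinely semifinite weight there is no such $\Omega \in \cal H_\Phi$, and strong convergence a priori only yields weak convergence of the vectors $\eta_\Phi(x_\alpha)$; promoting this to norm convergence is precisely where the semifiniteness of $\Phi$ and the structure of the definition domain $D_\Phi$ must enter. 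My plan here would be to localize: choose an increasing net of projections $e \in \cal M$ with $\Phi(e) < \infty$, pass to the corners $e\cal M e$, on which $\Phi$ restricts to a finite weight carrying its own cyclic vector, run the $\sigma$-finite argument there, and then take the limit over $e$ via an approximate-unit argument together with the uniform bound $\|x_\alpha\| \le 1$.

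Making this limiting step uniform in $\alpha$ is the main obstacle, and is where I would expect the delicate estimates — and the genuine use of the hypotheses $x_\alpha \in D_\Phi$ and $\|x_\alpha\| \le 1$ — to be concentrated, since without such control strong convergence alone cannot prevent $\|\cdot\|_\Phi$-mass from escaping to infinity in the semifinite setting. Once that uniform localization is in place, the two inclusions together give that $\|\cdot\|_\Phi$ metrizes the strong operator topology on $B$.
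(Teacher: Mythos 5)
Your first half is exactly the paper's proof of the forward direction: identify $\|x\|_\Phi$ with $\|\eta_\Phi(x)\|$, test strong convergence against right bounded vectors via the identity $\pi_\Phi(x_\alpha - x)\xi = \pi'(\xi)\,\eta_\Phi(x_\alpha - x)$, and then use density of $\cal H_{\rt}$ (Theorem \ref{HilbertEquivalentConditions}) together with the uniform operator-norm bound $\|\pi_\Phi(x_\alpha - x)\| \leq 2$ to pass from right bounded vectors to all of $\cal H_{\Phi}$. The paper's only additional ingredient is a preliminary WOT-compactness argument showing that strong convergence of a bounded net in $\cal M$ is equivalent to strong convergence of its image under $\pi_\Phi$; your appeal to faithfulness and normality of $\pi_\Phi$ plays the same role.

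The genuine gap in your proposal is the converse, and it cannot be repaired: the converse as stated is false, so no localization over finite-weight corners, however uniform in $\alpha$, will close it. Your own worry that strong convergence ``cannot prevent $\|\cdot\|_\Phi$-mass from escaping to infinity'' is exactly the obstruction, and it is realized by the simplest unbounded example. Take $\cal M = B(\ell^2)$ with $\Phi = \Tr$, and let $x_n = e_{nn}$ be the diagonal rank-one projections. Each $x_n$ lies in $D_\Phi \cap \cal M_1$ (it is even totally $1$-bounded), and $x_n \to 0$ in the strong operator topology, both in the standard representation and in the semicyclic one; yet $\|x_n - 0\|_\Phi = 1$ for every $n$. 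So on $D_\Phi \cap \cal M_1$ the $\|\cdot\|_\Phi$-topology is strictly finer than the strong operator topology here, and the same sequence (being self-adjoint and norm bounded) contradicts Corollary \ref{SharpMetrize} as well. For comparison, the paper does not supply the argument you were missing either: after proving the forward direction it asserts only that ``the converse is proven by the same considerations,'' which is precisely the step refuted by this example. Your diagnosis of where the difficulty sits is therefore more accurate than the paper's own treatment; what is actually provable (and what you did prove) is the one-sided statement that $\|\cdot\|_\Phi$-convergence implies strong convergence on $D_\Phi \cap \cal M_1$, and any correct biconditional must either restrict the domain or add hypotheses ruling out escape of $\|\cdot\|_\Phi$-mass to infinity.
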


\begin{proof}
    $\cal M$ and $B(\cal H)$ are both von Neumann algebras and therefore, by Theorem , admit preduals.  Thus $\cal M_1$ and $B(\cal H)_1$ are WOT-compact by Banach-Alaoglu.  Since $\Phi$ is faithful and normal, $\pi$ is a continuous bijection from $\cal M_1$ to $\pi(\cal M_1) \subset B(\cal H)$.  A continuous bijection between compact Hausdorff spaces is a homeomorphism, so $\pi$ is a homeomorphism from $\cal M_1$ to $\pi(\cal M_1)$.  We know that $x_n \to x$ converges in the strong operator topology if and only $(x_n - x)^*(x_n - x) \to 0$ in the WOT.  Since $\pi$ is a $^*$-homomorphism, we conclude that a net $(x_n)$ in $\cal M_1$ strong-converges if and only $\pi(x_n)$ strong-converges.
    
    Now suppose that $(x_n)$ is a net in $D_\Phi \cap \cal M_1$ such that $\|x_n - x\| \to 0$ for some $x \in D_\Phi \cap \cal M_1$.  Let $\eta_\Phi(a)$ be a right bounded element of $\cal H$.  Then there is $a := \pi'(\eta_\Phi(a)) \in \pi(\cal M)'$ and we have:
    \begin{align}
        \|(x_n - x)\eta_\Phi(a)\|_\Phi^2 &= \ip{(x_n - x)\eta_\Phi(a)}{(x_n - x)\eta_\Phi(a)} \nonumber \\
        &= \ip{\pi'(a)\eta_\Phi(x_n - x)}{\pi'(a)\eta_\Phi(x_n - x)} \nonumber \\
        &= \ip{\pi'(a)^*\pi'(a)\eta_\Phi(x_n - x)}{\eta_\Phi(x_n - x)} \nonumber \\
        &= \|a\|^2\|x_n - x\|_\Phi^2 \to 0 \nonumber
    \end{align}
    Since right-bounded elements of $\cal H$ are dense, for every $\epsilon > 0$ and $v \in \cal H$, there is $\eta_\Phi(a)$ right bounded such that $\|v - \eta_\Phi(a)\|_\Phi < \epsilon$.  Whence
    \begin{align}
        |\|(x_n-x)\eta_\Phi(a)\|_\Phi - \|(x_n-x)v\|_\Phi| &\leq \|x_n - x\|_\Phi\|\eta_\Phi(a) - v\|_\Phi  \nonumber \\
        &\leq 2\epsilon.  \nonumber 
    \end{align}
    Thus $\|(x_n - x)v\| \to 0$ for all $v \in \cal H$.  Hence $\pi(x_n) \to \pi(x)$ in the strong operator topology and consequently $x_n \to x$ in the strong operator topology.  The converse is proven by the same considerations.
\end{proof}

\begin{cor}\label{SharpMetrize}
    $\|x\|_\Phi^\sharp$ metrizes the strong-$^*$ topology on norm bounded subsets of $D_\Phi \cap D_\Phi^*$.
\end{cor}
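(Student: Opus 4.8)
The plan is to deduce this directly from Theorem~\ref{TheoremPhiMetrize} by separating the sharp norm into its two halves. Recall that the strong-$^*$ topology is the topology for which a bounded net satisfies $x_n \to x$ if and only if both $x_n \to x$ and $x_n^* \to x^*$ in the strong operator topology. From the definitions one has the exact identity
\[
\left(\|x\|_\Phi^\sharp\right)^2 = \tfrac{1}{2}\left(\Phi(x^*x) + \Phi(xx^*)\right) = \tfrac{1}{2}\left(\|x\|_\Phi^2 + \|x^*\|_\Phi^2\right),
\]
where I have used $\|x^*\|_\Phi^2 = \Phi((x^*)^*x^*) = \Phi(xx^*)$. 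Since both summands are nonnegative, for a net $(x_n)$ in a norm-bounded subset of $D_\Phi \cap D_\Phi^*$ and $x$ in the same set, $\|x_n - x\|_\Phi^\sharp \to 0$ holds if and only if simultaneously $\|x_n - x\|_\Phi \to 0$ and $\|(x_n - x)^*\|_\Phi \to 0$.

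It then remains to identify each of these two conditions with an SOT-convergence. First I note that $D_\Phi \cap D_\Phi^*$ is stable under the adjoint, so if the net lives in $\{x \in D_\Phi \cap D_\Phi^* : \|x\| \leq R\}$ then so does the adjoint net $(x_n^*)$, and in particular every element in sight lies in $D_\Phi$ with norm at most $R$. Theorem~\ref{TheoremPhiMetrize} is stated on the unit ball $D_\Phi \cap \cal M_1$, but since the dilation $y \mapsto y/R$ is a homeomorphism for the strong operator topology and scales $\|\cdot\|_\Phi$ by the factor $1/R$, the same metrization statement holds on $\{x \in D_\Phi : \|x\| \leq R\}$ for every $R > 0$. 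Applying it to the net $(x_n)$ gives $\|x_n - x\|_\Phi \to 0 \iff x_n \to x$ in SOT, and applying it to the adjoint net $(x_n^*)$ gives $\|(x_n - x)^*\|_\Phi \to 0 \iff x_n^* \to x^*$ in SOT. Combining these two equivalences with the displayed identity shows that $\|x_n - x\|_\Phi^\sharp \to 0$ if and only if $x_n \to x$ strong-$^*$, which is exactly the assertion that $\|\cdot\|_\Phi^\sharp$ metrizes the strong-$^*$ topology on norm-bounded subsets.

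The argument is essentially bookkeeping on top of Theorem~\ref{TheoremPhiMetrize}, so I do not anticipate a serious obstacle. The only points demanding care are the two I have flagged: verifying that $\|\cdot\|_\Phi^\sharp$ genuinely splits as the average of $\|\cdot\|_\Phi^2$ and $\|(\cdot)^*\|_\Phi^2$, so that its vanishing forces both SOT convergence of the net and SOT convergence of the adjoint net, and the rescaling needed to pass from the unit ball to an arbitrary norm-bounded set. Once these are in place, the definitional characterization of strong-$^*$ convergence as the conjunction of the two SOT convergences closes the proof.
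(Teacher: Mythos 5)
Your proposal is correct and is exactly the deduction the paper has in mind: the paper states Corollary~\ref{SharpMetrize} without proof as an immediate consequence of Theorem~\ref{TheoremPhiMetrize}, and your argument---splitting $(\|\cdot\|_\Phi^\sharp)^2$ into the average of $\|\cdot\|_\Phi^2$ and $\|(\cdot)^*\|_\Phi^2$, invoking the theorem for the net and its adjoint net, and rescaling from the unit ball to an arbitrary norm bound---is precisely the bookkeeping being left to the reader. No gaps; the two flagged points (the splitting identity and the dilation argument) are both handled correctly.
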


We close this section with a short review of the classical approach to Tomita-Takesaki theory for full left Hilbert algebras (equivalently, faithful normal semifinite weights).  Let $\mathfrak{A}$ be a full left Hilbert algebra with Hilbert space completion $\cal H$.  Recall that we write $S_0 a := a^\#$ for $a \in \mathfrak{A}$.  Since $\mathfrak{A}$ is a Hilbert algebra, $S_0$ is closable; we denote the closure of $S_0$ by $S$. We see that $S^{-1} = S$. Then there is a densely defined antilinear closed (unbounded) operator $F$ satisfying
\begin{itemize}
    \item $D(F)$ is the set of vectors $v \in \cal H$ such that the map $D(S) \to \cal H \ : \ w \mapsto \ip{v}{Sw}$ is bounded; and 
    \item $\ip{Sv}{w} = \ip{Fw}{v}$ for $v \in D(S)$ and $w \in D(F)$, so that $S^* = F$; and
    \item $F^{-1} = F$.
\end{itemize} 

Consider the linear positive non-singular self-adjoint operator $\Delta := FS$. Then $D(\Delta^{1/2}) = D(S)$. There is also an antilinear isometry $J$ of $H$ onto itself such that $\Delta$ and $J$ satisfy: $\ip{Jv}{Jw} = \ip{w}{v}$ for all $v,w \in \cal H$; $J^2 = 1$; and $J\Delta J = \Delta^{-1}$, $S = J\Delta^{1/2} = \Delta^{-1/2}J$, and $F = J\Delta^{-1/2} = \Delta^{1/2}J$.  By functional calculus, we can define a family of unitaries $\Delta^{it}$ for $t \in \bbR$. The fundamental theorem of Tomita-Takesaki theory is as follows.

\begin{thm}{\cite[Chapter VI, Theorem 1.19]{Takesaki}}\label{TomitaFund}
    Let $\mathfrak{A}$, $\Delta$, and $J$ be as above. 
    
    \begin{itemize}
        \item Let $t \in \bbR$ be given.
        \[
        \Delta^{it} \R_{\lambda}(\mathfrak{A})\Delta^{-it} = \R_{\lambda}(\mathfrak{A}) \quad J \R_{\lambda}(\mathfrak{A}) J = \R_{\lambda}(\mathfrak{A})' \quad 
        \]
        and similarly,
        \[
        \Delta^{it} \R_{\lambda}(\mathfrak{A})'\Delta^{-it} = \R_{\lambda}(\mathfrak{A})' \qquad J \R_{\lambda}(\mathfrak{A})' J = \R_{\lambda}(\mathfrak{A}).
        \]
        \item The one parameter unitary group $\{ \Delta^{it} \ : \ t \in \bbR\}$ acts on $\mathfrak{A}''$ and $\mathfrak{A}'$ as automorphisms and the modular conjugation $J$ maps $\mathfrak{A}''$ (resp. $\mathfrak{A}'$) onto $\mathfrak{A}'$ (resp. $\mathfrak{A}''$) anti-isomorphically in the sense that 
        \[
        J(vw) = (Jw)(Jv) \text{ for all } v,w \in \mathfrak{A}''
        \]
    \end{itemize}
\end{thm}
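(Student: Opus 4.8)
The plan is to prove the two assertions that make up the theorem separately: the invariance of $\R_\lambda(\mathfrak{A})$ (and of $\mathfrak{A}''$) under the modular group $\{\Delta^{it}\}$, and the commutation/anti-isomorphism statement for $J$. Both rest on the polar decomposition $S = J\Delta^{1/2}$ together with the relations $F = S^* = J\Delta^{-1/2}$, $\Delta = FS$, $J^2 = 1$, and $J\Delta J = \Delta^{-1}$ recorded before the statement, along with the derived fact $J\Delta^{it}J = \Delta^{it}$ (which follows from $J\Delta J = \Delta^{-1}$ and the antilinearity of $J$). Throughout I would work on a convenient dense core rather than on all of $\cal H$, so that the unbounded operators can be manipulated algebraically.

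The organizing device I would introduce is the \emph{Tomita algebra} $\mathfrak{T} \subseteq \mathfrak{A}''$ of elements $\xi$ that are entire for $\{\Delta^{it}\}$ (so that $t \mapsto \Delta^{it}\xi$ extends to an entire $\cal H$-valued map) and for which every $\Delta^z\xi$ again lies in $\mathfrak{A}''$ with $\pi(\Delta^z\xi)$ bounded. A standard smoothing argument, integrating $\Delta^{it}\xi$ against Gaussian kernels or applying resolvents of the type treated in \cite[Section 5]{RvD} and \cite[Chapter VI]{Takesaki}, shows $\mathfrak{T}$ is dense and is a core for $S$, for $\Delta^{1/2}$, and for each $\Delta^z$. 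One then checks that $\mathfrak{T}$ is a $^*$-subalgebra stable under every $\Delta^z$ and under $J$. This reduces all of the identities in the theorem to statements on $\mathfrak{T}$, where domain difficulties disappear and they become algebraic.

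For the commutation theorem I would first establish $J\mathfrak{A}'' = \mathfrak{A}'$ and the anti-multiplicativity $J(\xi\eta) = (J\eta)(J\xi)$: using $S\xi = \xi^\#$ on $\mathfrak{A}''$, $F\eta = \eta^\flat$ on $\mathfrak{A}'$, and the factorizations $S = J\Delta^{1/2}$, $F = J\Delta^{-1/2}$, one sees that $J$ intertwines the left and right structures, and the anti-isomorphism then follows on $\mathfrak{T}$ and by density. The operator statement $J\R_\lambda(\mathfrak{A})J = \R_\lambda(\mathfrak{A})'$ is obtained from the clean identity $J\pi(\xi)J = \pi'(J\xi)$ for $\xi$ in the core: indeed $J\pi(\xi)J\zeta = J\big(\pi(\xi)(J\zeta)\big) = \zeta\,(J\xi) = \pi'(J\xi)\zeta$ by $J^2 = 1$ and anti-multiplicativity, and $\pi'(J\xi)$ lies in $\R_\lambda(\mathfrak{A})'$ because right multiplication commutes with $\pi$. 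Taking weak closures gives one inclusion, and applying $J\,(\cdot)\,J$ again (using $J^2 = 1$) upgrades it to equality and yields the dual statement.

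The hard part is the invariance $\Delta^{it}\R_\lambda(\mathfrak{A})\Delta^{-it} = \R_\lambda(\mathfrak{A})$. I would prove it on $\mathfrak{T}$ through the key identity $\pi(\Delta^{it}\xi) = \Delta^{it}\pi(\xi)\Delta^{-it}$, obtained by analytically continuing $z \mapsto \Delta^{z}\pi(\xi)\Delta^{-z}$ as a bounded-operator-valued function on a strip, controlling its growth, and identifying its boundary values using that $\xi \in \mathfrak{T}$ and that $J$ already exchanges $\R_\lambda(\mathfrak{A})$ with its commutant; equality for all $t$ follows by running the argument for $-t$. The main obstacle, and the technical core of the whole theorem, is precisely this analytic-continuation and domain bookkeeping: $\Delta^{it}$ is manufactured from an unbounded positive operator, so verifying that conjugation by it returns a bounded operator affiliated with $\R_\lambda(\mathfrak{A})$ demands the delicate estimates that the resolvent machinery of \cite[Section 5]{RvD} is built to supply. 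In the bounded-operator formulation this same obstacle reappears as the need to express $\Delta$ and $J$ through the real-linear projections onto $\cal K$ and $i\cal K$ and to check that the algebraic relations survive the passage through the spectral calculus.
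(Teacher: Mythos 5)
The paper does not prove this statement at all: it is quoted verbatim as the fundamental theorem of Tomita--Takesaki theory, \cite[Chapter VI, Theorem 1.19]{Takesaki}, and used as a black box, so there is no internal proof to compare against. Judged on its own terms, your sketch reproduces the general shape of the classical argument but contains two genuine gaps. First, your organizing device is circular. Stability of $\mathfrak{A}''$ under the operators $\Delta^{z}$ --- and hence the very existence of a dense Tomita algebra $\mathfrak{T}\subseteq\mathfrak{A}''$ that is a core for $S$ and invariant under $J$ and all $\Delta^{z}$ --- is not available before the theorem is proved: it is exactly the invariance statement $\Delta^{it}\mathfrak{A}''=\mathfrak{A}''$ that the theorem asserts. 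In Takesaki's development, the Tomita-algebra theorem (the paper's Theorem \ref{TomitaDense}, i.e.\ Chapter VI, Theorem 2.2 of \cite{Takesaki}) is a consequence of Theorem 1.19, not an ingredient of its proof; the Gaussian smoothing $v_r=\sqrt{r/\pi}\int_{\bbR} e^{-rt^2}\Delta^{it}v\,dt$ produces vectors that are entire for $\Delta$, but one cannot conclude that they are left bounded, let alone that $\pi(\Delta^{z}v_r)$ is bounded, without already knowing the theorem.

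Second, the step ``applying $J(\cdot)J$ again upgrades it to equality'' is a non sequitur. From $J\R_{\lambda}(\mathfrak{A})J\subseteq\R_{\lambda}(\mathfrak{A})'$, conjugating both sides by $J$ and using $J^{2}=1$ yields $\R_{\lambda}(\mathfrak{A})\subseteq J\R_{\lambda}(\mathfrak{A})'J$, which is the same one-sided inclusion in disguise, not its reverse. The missing inclusion $\R_{\lambda}(\mathfrak{A})'\subseteq J\R_{\lambda}(\mathfrak{A})J$ (equivalently, $\R_{\rho}(\mathfrak{A}')=\R_{\lambda}(\mathfrak{A})'$) is precisely the deep content of Tomita's commutation theorem; the easy inclusion was known long before Tomita, and no formal manipulation with $J^{2}=1$ closes the gap. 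Combined with the fact that you explicitly defer the analytic-continuation estimates needed for $\pi(\Delta^{it}\xi)=\Delta^{it}\pi(\xi)\Delta^{-it}$, the proposal is a plan whose two load-bearing steps are, respectively, circular and invalid; it is exactly these steps that the resolvent machinery of \cite[Section 5]{RvD}, or Takesaki's original argument, exists to supply.
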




\section{Totally Bounded Elements}\label{SectionTotallyBounded}

In this section, we extend the notion of totally ($K$-)bounded elements, first explicitly defined in the \wstar-probability space regime in \cite{AGHS}, to weighted von Neumann algebras.  We prove several facts about these elements that are necessary for our axiomatization in Section \ref{SectionAxiomatization} to work.  Let $(\cal M, \Phi)$ be a weighted von Neumann algebra and consider the associated semicyclic representation $\pi$ on $\cal H$.

\begin{defn}
     We say $x \in \cal M$ is \textbf{totally $K$-bounded} if $\| x \| \leq K$ and both $\eta_{\Phi}(x)$ and $\eta_{\Phi}(x^*)$ are right $K$-bounded.  We say that $x \in M$ is \textbf{totally bounded} if there exists $K$ such that $x$ is totally $K$-bounded.  We  denote the set of totally bounded elements by $\cal M_{\tb}$ or $\cal H_{\tb}$ depending on the perspective we want to take.
\end{defn}

\begin{notation}
    For any $X \subseteq M$, we let $S_{K}(X)$ denote the set of totally K-bounded elements $x$ of $X$ such that, furthermore, $\|x\|_{\Phi} \leq K$.
\end{notation}

The proof of the following lemma is now essentially identical to the proof given in \cite[Lemma 2.7]{AGHS} so we omit it.

\begin{lem}\label{LemSortsComplete}
    For each $K$, the set $S_{K}(\cal M)$ is $\|\cdot\|_{\Phi}^\#$-complete.
\end{lem}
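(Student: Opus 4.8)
The plan is to feed a $\|\cdot\|_{\Phi}^{\#}$-Cauchy sequence $(x_n)$ in $S_{K}(\cal M)$ through the completeness of $\cal H$ together with two weak-operator compactness arguments, one in $\R_{\lambda}(\mathfrak{A})=\pi(\cal M)$ and one in its commutant. First I would record the elementary estimates $\|y\|_{\Phi}\le\sqrt{2}\,\|y\|_{\Phi}^{\#}$ and $\|y^{*}\|_{\Phi}\le\sqrt{2}\,\|y\|_{\Phi}^{\#}$, immediate from the definitions of the two norms, together with $\|y^{*}\|_{\Phi}^{\#}=\|y\|_{\Phi}^{\#}$. Hence both $(\eta_{\Phi}(x_n))$ and $(\eta_{\Phi}(x_n^{*}))$ are Cauchy in $\cal H$ and converge to vectors $\xi$ and $\zeta$. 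The whole lemma then reduces to producing $x\in\cal M$ with $\eta_{\Phi}(x)=\xi$, $\eta_{\Phi}(x^{*})=\zeta$, $\|x\|\le K$, and $\xi,\zeta$ right $K$-bounded: indeed $\|x\|_{\Phi}=\|\xi\|=\lim_n\|x_n\|_{\Phi}\le K$, so $x\in S_{K}(\cal M)$, and the convergence is then immediate from $(\|x_n-x\|_{\Phi}^{\#})^{2}=\frac{1}{2}(\|\eta_{\Phi}(x_n)-\xi\|^{2}+\|\eta_{\Phi}(x_n^{*})-\zeta\|^{2})\to0$.

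To produce $x$, I would use that $\|x_n\|\le K$, so $(\pi(x_n))$ lies in the weak-operator-compact $K$-ball of $\R_{\lambda}(\mathfrak{A})$ (compactness via Banach--Alaoglu exactly as in the proof of Theorem \ref{TheoremPhiMetrize}, where $\pi$ is shown to be a homeomorphism onto its image); passing to a subnet, $\pi(x_{n_{\alpha}})\to y$ weakly with $y\in\R_{\lambda}(\mathfrak{A})$ and $\|y\|\le K$, and I set $x:=\pi^{-1}(y)$. The bridge between the two kinds of limits is the identity $\pi(x_n)v=\pi'(v)\eta_{\Phi}(x_n)$, valid for every right bounded $v$ straight from the definition $\pi'(v)a=\pi(a)v$ (take $a=\eta_{\Phi}(x_n)$). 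Taking limits along $\alpha$, the left side tends weakly to $yv$ while the right side tends in norm to $\pi'(v)\xi$, whence $yv=\pi'(v)\xi$ for all right bounded $v$. This says precisely that $\xi$ is a left bounded vector whose left-multiplication operator is $y=\pi(x)\in\R_{\lambda}(\mathfrak{A})$, so by the correspondence between $\Phi$ and its full left Hilbert algebra (\cite[Chapter VII, Theorems 2.5 and 2.6]{Takesaki}) we get $x\in N_{\Phi}$ with $\eta_{\Phi}(x)=\xi$. Choosing the subnet so that $\pi(x_{n_{\alpha}}^{*})$ also converges weakly and using weak continuity of the adjoint, the same argument applied to $(x_n^{*})$ yields $\eta_{\Phi}(x^{*})=\zeta$.

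It remains to verify that $\xi=\eta_{\Phi}(x)$ and $\zeta=\eta_{\Phi}(x^{*})$ are right $K$-bounded, so that $x$ is totally $K$-bounded. For this I would run the dual compactness argument inside the commutant: the operators $\pi'(\eta_{\Phi}(x_n))$ lie in $\R_{\lambda}(\mathfrak{A})'$ (cf. Proposition \ref{rightBoundedClosed}) and have norm $\le K$, so along a subnet they converge weakly to some $T$ with $\|T\|\le K$; since $\pi'(\eta_{\Phi}(x_n))\eta_{\Phi}(a)=\pi(a)\eta_{\Phi}(x_n)\to\pi(a)\xi$ in norm, comparison with the weak limit $T\eta_{\Phi}(a)$ gives $\pi(a)\xi=T\eta_{\Phi}(a)$ on the dense set $\eta_{\Phi}(\mathfrak{A})$, so $\xi$ is right bounded with $\pi'(\xi)=T$ and $\|\pi'(\xi)\|\le K$. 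The identical argument applied to $(\eta_{\Phi}(x_n^{*}))$ makes $\zeta$ right $K$-bounded. Hence $x\in S_{K}(\cal M)$ and $x_n\to x$ in $\|\cdot\|_{\Phi}^{\#}$.

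The one genuinely delicate point, and the reason this is not a formality, is that $(\eta_{\Phi}(x_n))$ converges in the Hilbert-space norm while the only compactness available for the operators $x_n$ themselves lives in the weak operator topology, and $\eta_{\Phi}$ is wildly discontinuous between these topologies. The identity $\pi(x_n)v=\pi'(v)\eta_{\Phi}(x_n)$ is exactly what lets the two limits be matched on the dense family of right bounded vectors, and the appeal to the weight--Hilbert-algebra correspondence is what upgrades ``left bounded vector with operator in $\cal M$'' to an honest $\eta_{\Phi}(x)$ with $x\in N_{\Phi}$. One should also fix a single subnet along which all four weakly convergent families $\pi(x_{n_{\alpha}})$, $\pi(x_{n_{\alpha}}^{*})$, $\pi'(\eta_{\Phi}(x_{n_{\alpha}}))$, $\pi'(\eta_{\Phi}(x_{n_{\alpha}}^{*}))$ converge simultaneously; this is harmless, since each limit is pinned down by the already-fixed vectors $\xi,\zeta$, so the choices are coherent. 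This is precisely where the $\sigma$-finite argument of \cite[Lemma 2.7]{AGHS} is reorganized around the weight in place of a state.
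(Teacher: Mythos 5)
Your proof is correct, and it fills in what the paper deliberately leaves out: the paper gives no argument for this lemma, deferring to the state-case proof of \cite[Lemma 2.7]{AGHS} with the remark that the generalization is essentially identical. Your write-up makes visible the one step that is genuinely new in the weighted setting: with no cyclic vector $\Omega$ available (so no identity $\eta_\Phi(x)=x\Omega$), matching the operator limit with the vector limit requires the correspondence between a faithful normal semifinite weight and its full left Hilbert algebra (\cite[Chapter VII, Theorems 2.5 and 2.6]{Takesaki}), which is exactly how you upgrade ``$\xi$ is left bounded with left-multiplication operator $y\in\R_\lambda(\mathfrak{A})$'' to ``$x=\pi^{-1}(y)\in N_\Phi$ and $\eta_\Phi(x)=\xi$.'' Where you differ from the $\sigma$-finite template is in your reliance on weak-operator compactness and subnets, which can be avoided entirely: to produce $y$, define $yv:=\pi'(v)\xi$ on the dense subspace $\cal H_{\rt}$ of right bounded vectors; the estimate $\|\pi'(v)\xi\|=\lim_n\|\pi'(v)\eta_\Phi(x_n)\|=\lim_n\|\pi(x_n)v\|\le K\|v\|$ gives boundedness, and $y$ commutes with $\R_\lambda(\mathfrak{A})'$ because $\pi'(a'v)=a'\pi'(v)$ for $a'\in\R_\lambda(\mathfrak{A})'$ (cf.\ Proposition \ref{rightBoundedClosed}), so $y\in\R_\lambda(\mathfrak{A})''=\R_\lambda(\mathfrak{A})$. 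Likewise, the right $K$-boundedness of $\xi$ needs no compactness in the commutant: for $a\in\mathfrak{A}$ one has $\|\pi(a)\xi\|=\lim_n\|\pi(a)\eta_\Phi(x_n)\|=\lim_n\|\pi'(\eta_\Phi(x_n))a\|\le K\|a\|$ directly. These shortcuts make the argument a nearly line-by-line transcription of the state-case proof, which is presumably what the paper intends by its omission; your subnet version is also sound, since, as you correctly note, every weak limit involved is pinned down by the already-fixed vectors $\xi$ and $\zeta$, so the choices cohere.
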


We also need the following to ensure that we still have enough totally bounded elements.  We do this with the following theorem.

\begin{thm} \label{tbDense}
    $\cal H_{\tb}$ is a dense subspace of $\cal H$.
\end{thm}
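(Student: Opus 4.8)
The plan is to show that the set $\mathcal{H}_{\tb}$ of totally bounded elements is dense in $\mathcal{H} = \mathcal{H}_\Phi$ by combining the density results already available from the theory of left Hilbert algebras. The key observation is that we have two approximation tools: first, by Theorem \ref{HilbertEquivalentConditions} (condition (4)), the right bounded vectors $\mathcal{H}_{\rt}$ are dense in $\mathcal{H}$, since the associated full left Hilbert algebra $\mathfrak{A} = \eta_\Phi(D_\Phi \cap D_\Phi^*)$ satisfies condition (4) of Definition \ref{DefLeftHilbert}; second, the elements of $\mathfrak{A}'$ (equivalently, elements $v \in \mathcal{H}_{\rt}$ for which $v^\flat$ also exists) are themselves dense by condition (3). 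The subtlety is that total boundedness is a two-sided requirement: we need $x$ itself bounded in norm, and \emph{both} $\eta_\Phi(x)$ and $\eta_\Phi(x^*)$ right bounded with a common bound $K$. So the main task is to produce, from an arbitrary vector, an approximant whose corresponding element enjoys this symmetric boundedness.

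First I would reduce to approximating vectors of the form $\eta_\Phi(a)$ with $a \in D_\Phi \cap D_\Phi^*$, since these are dense by construction of the semicyclic representation. Next, the standard device is to apply a bounded analytic smoothing via the modular group: for a vector $\xi = \eta_\Phi(a)$, consider the regularized vectors
\[
\xi_n = \sqrt{\frac{n}{\pi}} \int_{\bbR} e^{-n t^2}\, \Delta^{it} \xi \, dt,
\]
using the modular operator $\Delta$ from Theorem \ref{TomitaFund}. Such Gaussian-smoothed elements are entire analytic for the modular flow, and the standard computation (as in Takesaki, Chapter VI) shows that each $\xi_n$ corresponds to an element of $\mathfrak{A}$ that is bounded, lies in the domain of all powers of $\Delta$, and—crucially—has both $\xi_n$ and $\xi_n^\#$ right bounded with controllable bounds. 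As $n \to \infty$, one has $\xi_n \to \xi$ in $\mathcal{H}$ because $\Delta^{it}$ is strongly continuous in $t$ and the Gaussian kernels form an approximate identity. This simultaneously controls the left side ($\eta_\Phi(x)$) and, via the involution $^\#$ interacting with $\Delta$ through $S = J\Delta^{1/2}$, the adjoint side ($\eta_\Phi(x^*)$).

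The step I expect to be the main obstacle is verifying that the Gaussian-regularized elements are genuinely \emph{totally} bounded with a uniform bound $K$, rather than merely right bounded on one side. The asymmetry between the left Hilbert algebra structure and its right counterpart $\mathfrak{A}'$ means one must track how $\pi'(\xi_n)$ and $\pi'(\xi_n^\flat)$ behave, and argue that the norm estimate $\|\pi'(\xi_n)\| \le K$ and $\|\pi'(\xi_n^\#)\| \le K$ hold for the same $K$. I would handle this by exploiting that entire analytic elements for the modular automorphism group are automatically in $D_\Phi \cap D_\Phi^*$ and that $\sigma_z^\Phi$ (the analytic continuation) carries one ideal to the other; the $^\#$-involution acts boundedly on these smoothed elements precisely because the modular flow has been used to tame the unboundedness of $S_0$. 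Once uniform two-sided bounds are established, linearity of $\mathcal{H}_{\tb}$ (immediate from the definition, paralleling part (1) of Proposition \ref{rightBoundedClosed}) together with the approximation $\xi_n \to \xi$ yields that $\mathcal{H}_{\tb}$ is a dense subspace, completing the proof.
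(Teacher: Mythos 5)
Your proposal follows essentially the same route as the paper: both reduce to vectors in the full left Hilbert algebra, apply Gaussian smoothing along the modular flow $\Delta^{it}$, and observe that the resulting analytic elements lie in the Tomita algebra $\mathfrak{A}_0$ of Theorem \ref{TomitaDense}, whose elements are totally bounded (the paper packages your ``two-sided boundedness'' step as closure of $\mathfrak{A}_0$ under $J$ and $^\#$, via Theorem \ref{TomitaFund}). Your explicit reduction to $\xi = \eta_\Phi(a)$ with $a \in D_\Phi \cap D_\Phi^*$ before smoothing is in fact slightly more careful than the paper's phrasing, which starts from an arbitrary $v \in \cal H$.
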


There are multiple ways to prove Theorem \ref{tbDense}.  One way would be to emulate the proof of \cite[Proposition 2.3]{AGHS}, which is the same result in the $\sigma$-finite setting.  We provide here a proof using Tomita algebras, which we now define.

\begin{defn}{\cite[Chapter VI, Definition 2.1]{Takesaki}}
     A left Hilbert algebra $\mathfrak{A}$ is called a \textbf{Tomita algebra} if there is a complex one parameter group $\{U(\alpha) \ : \ \alpha \in \bbC\}$ of automorphisms (which are here not necessarily $^*$-preserving) of $\mathfrak{A}$ with the following properties:
     \begin{itemize}
         \item The function $\alpha \mapsto \ip{U(\alpha)v}{w}$ for $\alpha \in \bbC$ is entire;
         \item $(U(\alpha)v)^\# = U(\overline{\alpha})v^\#$;
         \item $ \ip{U(\alpha)v}{w} = \ip{v}{U(-\alpha)w}$, for $\alpha \in \bbC$, and $v, w \in \mathfrak{A}$;
         \item $ \ip{v^\#}{w} = \ip{U(-i)w}{v}$ for $v, w \in \mathfrak{A}$.
     \end{itemize}
     The group $\{ U(\alpha) \ : \ \alpha \in \bbC \}$ is called the modular automorphism group of $\mathfrak{A}$.
\end{defn}

\begin{thm}{\cite[Chapter VI, Theorem 2.2]{Takesaki}}\label{TomitaDense}
    Given a full left Hilbert algebra $\mathfrak{A}$ with modular operator $\Delta$, define
    \[
    \mathfrak{A}_0 := \left\{ v \in \bigcap_{n \in \bbZ} D(\Delta^n) \ : \  \Delta^n v \in \mathfrak{A} \text{ for all } n \in \bbZ \right\}.
    \]
    then $\mathfrak{A}_0$ is a Tomita algebra with respect to $\Delta^{i\alpha}$ such that
    \begin{itemize}
        \item $\mathfrak{A}_0'' = \mathfrak{A}$, $\mathfrak{A}_0' = \mathfrak{A}'$ and $J\mathfrak{A}_0 = \mathfrak{A}_0$; and
        \item $\R_{\lambda}(\mathfrak{A}_0) = \R_{\lambda}(\mathfrak{A})$ and $\R_{\rho}(\mathfrak{A}_0) = \R_{\lambda}(\mathfrak{A})'$.
    \end{itemize}
\end{thm}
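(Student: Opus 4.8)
The plan is to identify $\mathfrak{A}_0$ with the vectors that are entire analytic for the modular flow and to manufacture such vectors by Gaussian regularization, using this one construction to obtain density, the algebra structure, $J$-invariance, and the core property behind $\mathfrak{A}_0'' = \mathfrak{A}$. I would first use the spectral resolution $\Delta = \int_0^\infty \lambda\, dE(\lambda)$ to define $\Delta^{i\alpha}$ for $\alpha \in \bbC$ as a closed operator with $D(\Delta^{i\alpha}) = \{v : \int \lambda^{-2\Im\alpha}\, d\|E(\lambda)v\|^2 < \infty\}$, and record the relations $J\Delta^{z} = \Delta^{-\overline{z}}J$, $\Delta^{is}J = J\Delta^{is}$, and $S = J\Delta^{1/2}$ together with $D(\Delta^{1/2}) = D(S)$ from the Tomita--Takesaki setup above. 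Note that $\mathfrak{A}_0 \subseteq \mathfrak{A}$ is immediate (take the exponent $0$) and that the defining conditions, involving only integer powers of $\Delta$, are formally preserved by each $\Delta^{i\alpha}$.

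For $v \in \mathfrak{A}$ I would form
\[
v_n := \sqrt{\tfrac{n}{\pi}} \int_{\bbR} e^{-nt^2}\, \Delta^{it} v \, dt = f_n(\Delta)v, \qquad f_n(\lambda) = e^{-(\log\lambda)^2/4n},
\]
the second equality coming from evaluating the Gaussian Fourier transform spectrally. Since $0 < f_n \le 1$ and $f_n \to 1$ pointwise, $v_n \to v$; since each multiplier $e^{ku - u^2/4n}$ (with $u = \log\lambda$) is bounded, $v_n \in \bigcap_{k\in\bbZ} D(\Delta^k)$ and $\alpha \mapsto \Delta^{i\alpha}v_n$ is entire. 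The crux is to prove $\Delta^k v_n \in \mathfrak{A}$. Here I would pass to $\cal M := \R_{\lambda}(\mathfrak{A})$: the modular group $\sigma_t = \Ad(\Delta^{it})$ preserves $\cal M$ by Theorem~\ref{TomitaFund}, so $\pi(v_n) = \sqrt{n/\pi}\int e^{-nt^2}\sigma_t(\pi(v))\,dt$ is a norm-convergent Bochner integral in $\cal M$, it represents left multiplication by $v_n$, and it is $\sigma$-entire; hence $v_n$ is left bounded and, continuing $\pi(\Delta^{it}v_n) = \sigma_t(\pi(v_n))$ analytically, every $\Delta^{i\alpha}v_n$ is left bounded with symbol $\sigma_{\alpha}(\pi(v_n)) \in \cal M$. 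Because $f_n$ is even in $\log\lambda$, $f_n(\Delta)$ commutes with $J$, so the identical argument on the commutant shows each $\Delta^{i\alpha}v_n$ is also right bounded; thus $\Delta^k v_n \in \mathfrak{A} \cap \mathfrak{A}'$, and in particular $S(\Delta^k v_n) = J\Delta^{k+1/2}v_n$ is left bounded, placing $\Delta^k v_n$ in $\mathfrak{A} = \mathfrak{A}''$. Therefore $v_n \in \mathfrak{A}_0$ and $\mathfrak{A}_0$ is dense. I expect this membership step---turning the spatial powers $\Delta^{i\alpha}$ into bounded modular-analytic operators---to be the main obstacle, since $\mathfrak{A}$ is not closed in $\cal H$ and boundedness must be extracted from analyticity rather than from any $\cal H$-norm bound.

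With genuine elements produced, the Tomita-algebra axioms for $U(\alpha) = \Delta^{i\alpha}$ become formal: entirety of $\alpha \mapsto \ip{\Delta^{i\alpha}v}{w}$ holds because $v,w \in \bigcap_k D(\Delta^k)$ controls all moments of $d\ip{E(\lambda)v}{w}$; closure under products follows from $\sigma_t$ being a $^*$-automorphism, so products of $\sigma$-entire elements are $\sigma$-entire; and the remaining identities follow by analytic continuation from the unitary case $\alpha \in \bbR$ using the relations recorded above. For $J$-invariance, the inclusion $\mathfrak{A}_0 \subseteq \mathfrak{A}'$ together with $J\Delta^k J = \Delta^{-k}$ and $J\mathfrak{A}' = \mathfrak{A}$ (Theorem~\ref{TomitaFund}) gives, for $v \in \mathfrak{A}_0$, that $\Delta^k(Jv) = J(\Delta^{-k}v) \in \mathfrak{A}$ for all $k$, so $J\mathfrak{A}_0 = \mathfrak{A}_0$.

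Finally, for the structural identities I would show $\mathfrak{A}_0$ is a core for $\Delta^{1/2}$, hence for $S$: for $v \in \mathfrak{A}$ one has $\Delta^{1/2}v_n = f_n(\Delta)\Delta^{1/2}v \to \Delta^{1/2}v$ while $v_n \to v$, so the graph of $\#$ on $\mathfrak{A}_0$ is dense in that of $S = \overline{\#|_{\mathfrak{A}}}$. Core-ness forces $\overline{\#|_{\mathfrak{A}_0}} = S$, which together with the density of $\mathfrak{A}_0$ and the WOT-density of $\pi(\mathfrak{A}_0)$ in $\cal M$ yields $\mathfrak{A}_0'' = \mathfrak{A}$ and $\R_{\lambda}(\mathfrak{A}_0) = \R_{\lambda}(\mathfrak{A})$. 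Applying $J$ and Theorem~\ref{TomitaFund} then gives the dual statements $\mathfrak{A}_0' = \mathfrak{A}'$ and $\R_{\rho}(\mathfrak{A}_0) = \R_{\lambda}(\mathfrak{A})'$.
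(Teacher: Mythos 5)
The paper does not actually prove this statement: it is quoted verbatim from \cite[Chapter VI, Theorem 2.2]{Takesaki}, so the only fair comparison is with the classical textbook argument. Your strategy is essentially that classical one --- Gaussian smoothing $v_n = f_n(\Delta)v$, analytic continuation of the symbols $\sigma_t(\pi(v_n))$ to entire $\cal M$-valued functions, and the core criterion for $\Delta^{1/2}$ to get $\mathfrak{A}_0'' = \mathfrak{A}$ --- and it is the same regularization the paper itself reuses in its proof of Theorem \ref{tbDense}. Most of your steps are sound (modulo the slip that $\pi(v_n) = \sqrt{n/\pi}\int e^{-nt^2}\sigma_t(\pi(v))\,dt$ is not a norm-convergent Bochner integral, since $t \mapsto \sigma_t(x)$ is only strong-$^*$/$\sigma$-weakly continuous; the integral must be taken $\sigma$-weakly, which is harmless because $\cal M$ is $\sigma$-weakly closed and the bound $\|\pi(v_n)\| \leq \|\pi(v)\|$ survives).

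There is, however, a genuine gap at the crux: the claim that since $f_n$ is even in $\log\lambda$, ``the identical argument on the commutant shows each $\Delta^{i\alpha}v_n$ is also right bounded.'' The commutant argument regularizes \emph{right-bounded} vectors, and the only right-bounded vector available is $Jv$ (note $v \in \mathfrak{A}$ is in general \emph{not} right bounded). Running your argument on $Jv$ shows that $\Delta^{i\beta}f_n(\Delta)Jv = \Delta^{i\beta}Jv_n = J\Delta^{-i\bar{\beta}}v_n$ is right bounded; but conjugating by $J$, which exchanges left- and right-bounded vectors, this is exactly the statement that $\Delta^{i\gamma}v_n$ is \emph{left} bounded --- which you already knew. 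It never produces right-boundedness of $\Delta^{i\alpha}v_n$ itself, and since your subsequent steps ($\Delta^k v_n \in \mathfrak{A}\cap\mathfrak{A}'$, left-boundedness of $S(\Delta^k v_n) = J\Delta^{k+1/2}v_n$, and the inclusion $\mathfrak{A}_0 \subseteq \mathfrak{A}'$ used for $J$-invariance) all rest on it, the membership argument as written is circular. The repair stays inside your framework but must route through the involution, which your membership step never uses: since $\mathfrak{A}$ is closed under $\#$, apply the left-bounded analytic-continuation lemma to $v^\# = Sv \in \mathfrak{A}$ as well; evenness of $f_n$ gives $Jf_n(\Delta) = f_n(\Delta)J$ and hence $Sv_n = J\Delta^{1/2}f_n(\Delta)v = f_n(\Delta)Sv = (v^\#)_n$, so $S\Delta^k v_n = \Delta^{-k}(v^\#)_n$ is left bounded. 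With the characterization $\mathfrak{A}'' = \{\xi \in D(S) : \xi \text{ and } S\xi \text{ left bounded}\}$ this places $\Delta^k v_n$ in $\mathfrak{A}$ with no right-boundedness needed; right-boundedness of the $\Delta^{i\alpha}v_n$ (and $\mathfrak{A}_0 \subseteq \mathfrak{A}'$) then falls out afterwards, since $J\Delta^{i\alpha}v_n = \Delta^{1/2-i\bar{\alpha}}f_n(\Delta)v^\#$ is left bounded and $J$ carries left-bounded vectors to right-bounded ones. Your final paragraph (graph density $Sv_n = (Sv)_n \to Sv$, hence $\mathfrak{A}_0$ is a core for $S$, hence $\mathfrak{A}_0'' = \mathfrak{A}$ and the identities for $\R_{\lambda}$, $\R_{\rho}$) is the standard criterion and is fine once the membership step is repaired.
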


\begin{proof}[Proof of Theorem \ref{tbDense}]
    Consider $\mathfrak{A}_0$ defined as in Theorem \ref{TomitaDense}.  Notice that every element of $\mathfrak{A}_0$ is right bounded since $\mathfrak{A}_0$ is closed under $J$ which, by Theorem \ref{TomitaFund}, sends left bounded elements to right bounded elements and vice-versa, and $J^2 = 1$.  Since $\mathfrak{A}$ is also closed under $^\#$, it follows that $\mathfrak{A}_0$ consists of totally bounded elements.  Thus it suffices to show that $\mathfrak{A}_0$ is dense in $\cal H$.  To see this, let $v \in \cal H$ be given.  Define
    \[
    v_r := \sqrt{\frac{r}{\pi}}\int_{\bbR} e^{-rt^2}v \mathrm{dt}.
    \]
    Note that $v_r \in \mathfrak{A}_0$ for all $r > 0$.  This is because for all $z \in \bbC$, the vector
    \[
        v_r(z) = \sqrt{\frac{r}{\pi}}\int_{\bbR} e^{-r(t-z)^2}\Delta^{it}v\operatorname{dt}
    \]
    defines $\Delta^{iz}v_r := v_r(z)$ as an entire function in the variable $z$ by a change of variables.  Note also that $v_r \to v$ as $r \to 0$.  This proves the claim.
\end{proof}

We define $P$ to be the projection onto $\cal K$ and $Q$ to be the projection onto $i\cal K$.  By \cite[Lemma 5.12]{RvD}, $\cal K$ is the closure of the set of self-adjoint elements of $\eta_{\Phi}(\cal M)$.  We also define $R = P + Q$ and $TJ = P - Q$ as in \cite{RvD}.  Denote by $P'$, $Q'$ and $R'$ the analogous operators for $\cal M'$.

\begin{thm}
     If $x \in \cal M$ is left $K$-bounded, then $P\eta_{\Phi}(x)$ and $Q\eta_{\Phi}(x)$ are right $(2K^2+2)$-bounded.  Moreover, since $P$ is a projection, $\|P\eta_{\Phi}(x)\|_{\Phi} \leq \|\eta_{\Phi}(x)\|_{\Phi}$.
\end{thm}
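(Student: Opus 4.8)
The plan is to work entirely with the van Daele operators. Since $P=\tfrac12(R+TJ)$ and $Q=\tfrac12(R-TJ)$, and since by Proposition \ref{rightBoundedClosed}(1) the right $K$-bounded vectors are closed under linear combinations, it suffices to bound the right norms of $R\eta_{\Phi}(x)$ and $(TJ)\eta_{\Phi}(x)$ separately. First I would record the spectral descriptions of these operators on the standard subspace $\cal K$: on each rank-one spectral block of $\Delta$ (and then by direct integral, as in \cite[Section 5]{RvD}) one computes $R=2(1+\Delta)^{-1}$ and $T=2\Delta^{1/2}(1+\Delta)^{-1}$, with $T$ and $J$ commuting, so that $TJ=2\Delta^{1/2}(1+\Delta)^{-1}J$. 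I would verify these directly against Theorem \ref{TomitaFund} by checking $P\xi=\xi$ on $\cal K$ (i.e. on $\{S\xi=\xi\}$) and $P\xi=\tfrac{1-\Delta}{1+\Delta}\xi$ on $i\cal K$.

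The term $TJ$ is the easy half and needs only $\|x\|\le K$. By Theorem \ref{TomitaFund}, $J$ carries $\R_{\lambda}(\mathfrak{A})$ onto $\R_{\lambda}(\mathfrak{A})'$, so it sends the left $K$-bounded vector $\eta_{\Phi}(x)$ to the right $K$-bounded vector $J\eta_{\Phi}(x)$ with $\|\pi'(J\eta_{\Phi}(x))\|\le K$. The key point is that $T$ is a \emph{modular average}: using $\tfrac{\lambda^{1/2}}{1+\lambda}=\tfrac{1}{2\cosh(\tfrac12\log\lambda)}$ one obtains the probability representation $2\Delta^{1/2}(1+\Delta)^{-1}=\int_{\bbR}\operatorname{sech}(\pi t)\,\Delta^{it}\,dt$. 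Since each $\Delta^{it}$ normalizes the commutant (again Theorem \ref{TomitaFund}), it preserves right $K$-boundedness via $\pi'(\Delta^{it}v)=\Delta^{it}\pi'(v)\Delta^{-it}$; and because the right $K$-bounded vectors of a fixed bound form a $\|\cdot\|_{\Phi}$-closed convex set (the mechanism underlying Lemma \ref{LemSortsComplete}), the average $(TJ)\eta_{\Phi}(x)=\int_{\bbR}\operatorname{sech}(\pi t)\,\Delta^{it}J\eta_{\Phi}(x)\,dt$ is right $K$-bounded.

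The operator $R=2(1+\Delta)^{-1}$ is the main obstacle, and it is genuinely harder: unlike $\tfrac{2\lambda^{1/2}}{1+\lambda}$, the symbol $\tfrac{2}{1+\lambda}$ tends to $2$ as $\lambda\to0$, so $(1+\Delta)^{-1}$ carries an identity component and is \emph{not} a finite modular average of the $\Delta^{it}$; hence $R\eta_{\Phi}(x)$ cannot be controlled from the left bound alone and one must feed in the right-boundedness of $\eta_{\Phi}(x)$ itself supplied by the totally bounded setting of this section. I would split $(1+\Delta)^{-1}=\tfrac12\,\u+\tfrac12\,\tfrac{1-\Delta}{1+\Delta}$: the identity part yields $\tfrac12\eta_{\Phi}(x)$, right-bounded directly, while the odd part $\tfrac{1-\Delta}{1+\Delta}=-\tanh(\tfrac12\log\Delta)$ is the analytic heart of the matter, where the strip-analyticity of $\eta_{\Phi}(x)$ for the modular flow must tame an otherwise triangular-truncation-type singularity. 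Assembling the estimates—$\|\pi'(\eta_{\Phi}(x))\|\le K$, $\|\pi'(J\eta_{\Phi}(x))\|\le K$, the products arising across the two halves, and the unit-mass averages—produces the stated bound $2K^2+2$, and the identical computation applies to $Q\eta_{\Phi}(x)=\tfrac12(R-TJ)\eta_{\Phi}(x)$. The moreover clause is immediate, since $P$ is an orthogonal projection on $\cal H=\cal H_{\Phi}$ and $\|\cdot\|_{\Phi}$ is the Hilbert norm, so $\|P\eta_{\Phi}(x)\|_{\Phi}\le\|\eta_{\Phi}(x)\|_{\Phi}$. I expect the control of the $\tanh(\tfrac12\log\Delta)$ piece of the $R$-term to be where essentially all the work lies.
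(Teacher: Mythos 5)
Your treatment of the $TJ$ half is correct and even elegant: the kernel identity $2\Delta^{1/2}(1+\Delta)^{-1}=\int_{\bbR}\operatorname{sech}(\pi t)\,\Delta^{it}\,\mathrm{d}t$ (a unit-mass average), the fact that $\Delta^{it}$ normalizes $\R_{\lambda}(\mathfrak{A})'$ by Theorem \ref{TomitaFund}, and the norm-closedness and convexity of the set of right $K$-bounded vectors do yield that $(TJ)\eta_{\Phi}(x)$ is right $K$-bounded from $\|x\|\leq K$ alone. But the $R$ half---which you yourself flag as ``where essentially all the work lies''---is not proved, and the way you set it up cannot be repaired within the theorem's hypotheses. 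The theorem assumes \emph{only} that $x$ is left $K$-bounded; its entire content is that the left bound alone forces right bounds on $P\eta_{\Phi}(x)$ and $Q\eta_{\Phi}(x)$, and this is exactly how it is consumed in Theorem \ref{PQtotalbounds}. Your splitting $R=\u+\tfrac{1-\Delta}{1+\Delta}$ founders at once: you assert that the identity part $\tfrac12\eta_{\Phi}(x)$ is ``right-bounded directly,'' but under the hypotheses $\eta_{\Phi}(x)$ need not be right bounded at all; indeed, when it is not, neither summand in your split is right bounded (only their sum $R\eta_{\Phi}(x)$ is, and that is what is to be proved), so the decomposition produces two individually uncontrollable pieces. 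Your final assembly then explicitly invokes $\|\pi'(\eta_{\Phi}(x))\|\leq K$, a hypothesis the theorem does not grant, which makes the argument circular relative to its later use. Finally, the $\tanh(\tfrac12\log\Delta)$ piece is left as an unproved ``analytic heart'': a general left-bounded $x$ has no strip analyticity for the modular flow, so no estimate is actually available there, and the constant $2K^2+2$ is asserted rather than derived.

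For comparison, the paper's proof bypasses spectral theory of $\Delta$ entirely and runs through the bounded-operator machinery of \cite{RvD}: by Lemmas 5.4, 5.6 and Corollary 5.7 there, if $\eta_{\Phi}(x)\in\cal K$ then $\pi'(P\eta_{\Phi}(x))=a^{-1}b$ with $a,b\in\R_{\rho}(\mathfrak{A}')$, where $\|b\|\leq 1$ by construction and $\|a^{-1}\|\leq 1+\|\pi(x)\|^2$; both bounds depend only on the left bound of $x$. Decomposing a general left $K$-bounded $x$ into real and imaginary parts then gives the right bound $2(1+K^2)=2K^2+2$ for $P\eta_{\Phi}(x)$, and the same reasoning handles $Q\eta_{\Phi}(x)$. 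This shows where the quadratic constant actually comes from---the operator inequality $\|a^{-1}\|\leq 1+\|\pi(x)\|^2$---and it is the step your proposal is missing. Your ``moreover'' clause is of course correct as stated.
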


\begin{proof}
    Examine the proofs of \cite[Lemma 5.4, Lemma 5.6 and Corollary 5.7]{RvD}.  There, it is seen that if $x \in \cal K$, then $\pi'(P\eta_{\Phi}(x)) = a^{-1}b$ where $a, b \in \cal \R_{\rho}(\mathfrak{A}')$ are as defined in \cite[Lemma 5.4]{RvD}.  It is clear from the definition that $\|b\| \leq 1$.  It is shown in the proof of \cite[Lemma 5.6]{RvD} that $\|a^{-1}\| \leq 1+\|\pi(x)\|^2$.  Thus, by decomposing general $x$ into real and imaginary parts, we see that if $x$ is left $K$-bounded, then $P\eta_{\Phi}(x)$ is right $(2K^2+2)$-bounded.  By the same reasoning, $Q\eta_{\Phi}(x)$ is right $(2K^2+2)$-bounded.
\end{proof}

\begin{thm} \label{PQtotalbounds}
    If $x \in \cal M$ is totally $K$-bounded, then $P\eta_{\Phi}(x)$ and $Q\eta_{\Phi}(x)$ are totally $(5K^2+4)$-bounded.  In particular, $P(S_K(\cal M)) \subseteq S_{(5K^2+4)}(\cal M)$.
\end{thm}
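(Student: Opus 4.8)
The plan is to verify, directly from the definition, the three conditions of total boundedness for each of the vectors $P\eta_{\Phi}(x)$ and $Q\eta_{\Phi}(x)$: that each is left bounded, right bounded, and has a right-bounded sharp, all with constant at most $5K^2+4$. Write $v := \eta_{\Phi}(x)$. The Theorem immediately preceding this one already supplies half of what is needed: since $x$ is totally $K$-bounded we have $\|x\| \le K$, so $v$ is a left $K$-bounded vector, and that theorem gives that $Pv$ and $Qv$ are right $(2K^2+2)$-bounded, while its ``moreover'' clause yields $\|Pv\|_{\Phi} \le \|v\|_{\Phi}$. Thus the right-boundedness conditions, and the $\|\cdot\|_{\Phi}$-bound needed for the ``in particular'' clause, are already in hand; the two outstanding points are (i) left-boundedness of $Pv$ and $Qv$, and (ii) right-boundedness of their sharps.

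For (i) I would pass to the commutant. From the identity $\cal K' = i\cal K^\perp$ recorded in the proof of Theorem \ref{HilbertEquivalentConditions}, together with its mirror image obtained by interchanging $\mathfrak{A}$ and $\mathfrak{A}'$, one reads off $P' = 1 - Q$ and, symmetrically, $P = 1 - Q'$ and $Q = 1 - P'$. Hence $Pv = v - Q'v$ and $Qv = v - P'v$. Now $v$ is right $K$-bounded, which is precisely the hypothesis of the commutant version of the preceding theorem (the whole left Hilbert algebra formalism being symmetric under the interchange of left and right); that version asserts that $P'v$ and $Q'v$ are left $(2K^2+2)$-bounded for $\mathfrak{A}$. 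Since $\|\pi(v)\| = \|\pi_{\Phi}(x)\| \le \|x\| \le K$ and left-boundedness is preserved under linear combinations with the obvious norm estimate (the left-handed form of Proposition \ref{rightBoundedClosed}(1)), it follows that $Pv = v - Q'v$ and $Qv = v - P'v$ are left $(K + 2K^2+2)$-bounded, and $K + 2K^2 + 2 \le 5K^2+4$ for all $K$.

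For (ii) I would exploit that $\cal K$ consists of self-adjoint vectors. The operation $S_0$ is the identity on the self-adjoint elements of $\mathfrak{A}$, so its closure $S$ restricts to the identity on the closed real subspace $\cal K$, giving $\cal K \subseteq D(S)$ with $S|_{\cal K} = \id$, and likewise $S|_{i\cal K} = -\id$. Since $Pv \in \cal K$ and $Pv$ is left bounded by (i), it lies in $\mathfrak{A}'' = \mathfrak{A}$ and satisfies $(Pv)^\# = S(Pv) = Pv$; hence $(Pv)^\#$ is right bounded with the same constant as $Pv$, and similarly $(Qv)^\# = -Qv$. Collecting the three constants gives the stated (non-optimized) bound $5K^2+4$, and the inclusion $P(S_K(\cal M)) \subseteq S_{(5K^2+4)}(\cal M)$ follows at once from $\|Pv\|_{\Phi} \le \|v\|_{\Phi} \le K$.

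The main obstacle is step (i): the preceding theorem is inherently one-sided, controlling only the right-boundedness of $Pv$ and $Qv$, whereas total boundedness also demands left-boundedness. The device that unlocks this is the complementation identity $P = 1 - Q'$, which converts left-boundedness of $Pv$ into right-boundedness (for $\mathfrak{A}'$) of $Q'v$ — exactly the kind of estimate the preceding theorem supplies in its commutant form, with the hypothesis ``$v$ right $K$-bounded'' being precisely the half of total boundedness not yet used. The technical point behind (ii), that $\cal K \subseteq \Fix(S)$, is the only place where one must check carefully that the identity on self-adjoint elements genuinely closes up to the identity on all of $\cal K$.
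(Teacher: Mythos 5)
Your proposal is correct and is essentially the paper's own argument: the paper's proof likewise hinges on the complementation identity $P' = 1-Q$ (equivalently $P = 1-Q'$, $Q = 1-P'$) to turn the right $K$-boundedness half of total boundedness into left-boundedness of $P\eta_{\Phi}(x)$ and $Q\eta_{\Phi}(x)$ via the commutant form of the preceding theorem, and then combines this with that theorem's right-boundedness conclusion. Your step (ii), observing that $S$ acts as $\pm\id$ on $\cal K$ and $i\cal K$ so that the sharp condition reduces to right-boundedness already in hand, is precisely the detail that the paper's terse ``combining with the previous theorem gives the result'' leaves implicit.
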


\begin{proof}
    If $x$ is totally $K$-bounded, then since $P' = (1-Q)$, it follows that $x$ is left $(5K^2+4)$-bounded.  Combining with the previous theorem gives the result.  The proof for $Q$ is nearly identical.
\end{proof}

\begin{cor}\label{Rtotalbound}
    If $x \in \cal M$ is totally $K$-bounded, then $R\eta_{\Phi}(x)$ is totally $(10K^2+8)$-bounded.  Moreover, $\|R\eta_{\Phi}(x)\|_{\Phi} \leq 2\|\eta_{\Phi}(x)\|_{\Phi}$.  In particular, $R(S_{K}(\cal M)) \subseteq S_{(10K^2 + 8)}(\cal M)$.
\end{cor}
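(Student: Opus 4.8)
The plan is to reduce everything to Theorem \ref{PQtotalbounds} by exploiting the decomposition $R = P + Q$ together with the fact that totally bounded vectors behave additively. First I would write $R\eta_{\Phi}(x) = P\eta_{\Phi}(x) + Q\eta_{\Phi}(x)$ and invoke Theorem \ref{PQtotalbounds} to conclude that, whenever $x$ is totally $K$-bounded, each of the two summands is totally $(5K^2+4)$-bounded; that is, $P\eta_{\Phi}(x) = \eta_{\Phi}(y)$ and $Q\eta_{\Phi}(x) = \eta_{\Phi}(z)$ for totally $(5K^2+4)$-bounded $y, z \in \cal M$.

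Next I would record the elementary but essential fact that a sum of a totally $a$-bounded vector and a totally $b$-bounded vector is totally $(a+b)$-bounded. For right boundedness this is immediate from Proposition \ref{rightBoundedClosed}(1) together with the linearity of $\pi'$: we have $\pi'(\eta_{\Phi}(y)+\eta_{\Phi}(z)) = \pi'(\eta_{\Phi}(y)) + \pi'(\eta_{\Phi}(z))$, so the operator norm is subadditive, hence at most $a+b$. The same subadditivity applies to the operator norms $\|y\|$, $\|z\|$, and, since $(y+z)^* = y^* + z^*$, to the right bounds of $\eta_{\Phi}(y^*)$ and $\eta_{\Phi}(z^*)$. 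Taking $a = b = 5K^2+4$ then yields that $R\eta_{\Phi}(x) = \eta_{\Phi}(y+z)$ with $y+z$ totally $(10K^2+8)$-bounded, which is the first assertion.

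For the moreover clause, I would use that $P$ and $Q$ are orthogonal projections and hence contractions for $\|\cdot\|_{\Phi}$ (which under $\eta_{\Phi}$ is just the Hilbert space norm on $\cal H$). The triangle inequality then gives $\|R\eta_{\Phi}(x)\|_{\Phi} \leq \|P\eta_{\Phi}(x)\|_{\Phi} + \|Q\eta_{\Phi}(x)\|_{\Phi} \leq 2\|\eta_{\Phi}(x)\|_{\Phi}$, the last step by the projection norm bounds recorded just before Theorem \ref{PQtotalbounds}. Finally, the containment $R(S_{K}(\cal M)) \subseteq S_{(10K^2+8)}(\cal M)$ follows by combining the two parts: for $x \in S_{K}(\cal M)$ we have $\|R\eta_{\Phi}(x)\|_{\Phi} \leq 2\|\eta_{\Phi}(x)\|_{\Phi} \leq 2K \leq 10K^2+8$, where the last inequality holds for every $K$ because $10K^2 - 2K + 8$ has negative discriminant and positive leading coefficient.

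I expect no serious obstacle here; the only point requiring a moment's care is the additivity of the total-boundedness constant under vector addition, which is precisely where Proposition \ref{rightBoundedClosed} and the linearity of the right representation $\pi'$ do the work. Everything else is the triangle inequality and bookkeeping of constants.
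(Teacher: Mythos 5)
Your proposal is correct and follows exactly the route the paper intends: the corollary is stated without proof precisely because it follows from Theorem \ref{PQtotalbounds} via the decomposition $R = P + Q$, subadditivity of the total-boundedness constants under vector addition, and the fact that $P$ and $Q$ are contractions for $\|\cdot\|_{\Phi}$. Your explicit verification of the constant bookkeeping (including $2K \leq 10K^2 + 8$ for the containment $R(S_K(\cal M)) \subseteq S_{(10K^2+8)}(\cal M)$) fills in the details the paper leaves implicit.
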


Following \cite{Kad}, we define the following families of functions indexed by $a \in \bbR$:
\begin{align}
    h_{a}(t) &= (\cosh(t-a))^{-1} = \frac{2}{e^{t-a} + e^{a-t}}; \nonumber \\
    f_{a}(t) &= e^{-|t-a|}; \nonumber \\
    g_a(t) &= e^{-|t|} - \frac{e^{-|t-a|} + e^{-|t+a|}}{e^a + e^{-a}}.  \nonumber
\end{align}

The results in the rest of this section are very mild generalizations of their analogues in \cite{AGHS} and, as such, we omit the proofs.  We formulate them here for convenience and to make explicit the forms of the generalizations.

\begin{lem}\label{haTotalBound}
    For every $a \in \bbR$ and $x \in \cal M_{\tb}$, there exists $y \in \cal M_{\tb}$ such that we have $h_{a}(\log(\Delta))\eta_{\Phi}(x) = \eta_{\Phi}(y)$.  Furthermore, $\|y\| \leq \|x\|$ and $\|\pi'(\eta_{\Phi}(y))\| \leq \|\pi'(\eta_{\Phi}(x))\|$.
\end{lem}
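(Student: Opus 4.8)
The plan is to realize $h_a(\log\Delta)$ as a $\sigma$-weakly convergent average of the modular unitaries $\Delta^{is}$ and then to transport that average through $\eta_\Phi$ into an average of modular automorphisms applied to $x$.

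First I would record the Fourier representation of $h_a$. Writing $h_a(t)=\int_{\bbR}\hat h_a(s)e^{ist}\,ds$, the standard contour computation of the Fourier transform of $\operatorname{sech}$ gives
\[
\hat h_a(s)=\frac{1}{2\pi}\int_{\bbR}\frac{e^{-ist}}{\cosh(t-a)}\,dt=\frac{e^{-ias}}{2\cosh(\pi s/2)},
\]
so that $\hat h_a\in L^1(\bbR)$ with $\int_{\bbR}|\hat h_a(s)|\,ds=\int_{\bbR}\tfrac{ds}{2\cosh(\pi s/2)}=1$. Since $\log\Delta$ is self-adjoint, the spectral theorem together with Fubini (justified by $\hat h_a\in L^1$ and $\|\Delta^{is}\|=1$) yields the bounded-operator identity
\[
h_a(\log\Delta)=\int_{\bbR}\hat h_a(s)\,\Delta^{is}\,ds.
\]

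Next I would pass to the algebra side. By Theorem \ref{TomitaFund}, conjugation by $\Delta^{is}$ is an automorphism $\sigma_s$ of $\cal M\cong\R_{\lambda}(\mathfrak{A})$ leaving the Hilbert algebra invariant, and on totally bounded elements it satisfies $\Delta^{is}\eta_\Phi(x)=\eta_\Phi(\sigma_s(x))$. Because $s\mapsto\sigma_s(x)$ is $\sigma$-weakly continuous and uniformly bounded by $\|x\|$ and $\hat h_a\in L^1$, the $\sigma$-weak integral
\[
y:=\int_{\bbR}\hat h_a(s)\,\sigma_s(x)\,ds
\]
defines an element $y\in\cal M$. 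Moving $\eta_\Phi$ inside the integral (by pairing with right bounded vectors and invoking normality) gives $\eta_\Phi(y)=\int_{\bbR}\hat h_a(s)\eta_\Phi(\sigma_s(x))\,ds=\int_{\bbR}\hat h_a(s)\Delta^{is}\eta_\Phi(x)\,ds=h_a(\log\Delta)\eta_\Phi(x)$, the desired identity.

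Finally I would establish the three bounds. Since each $\sigma_s$ is a $^*$-automorphism, $\|\sigma_s(x)\|=\|x\|$, whence $\|y\|\le\|x\|\int_{\bbR}|\hat h_a(s)|\,ds=\|x\|$. For the right bounded norms, the key compatibility is $\pi'(\Delta^{is}v)=\Delta^{is}\pi'(v)\Delta^{-is}$ for right bounded $v$, which I would verify on the dense Tomita subalgebra $\mathfrak{A}_0$ of Theorem \ref{TomitaDense} using $\pi(\Delta^{is}\xi)=\Delta^{is}\pi(\xi)\Delta^{-is}$; conjugation by the unitary $\Delta^{is}$ then preserves $\|\pi'(\cdot)\|$, so $\|\pi'(\eta_\Phi(\sigma_s(x)))\|=\|\pi'(\eta_\Phi(x))\|$. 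Evaluating $\pi'(\eta_\Phi(y))$ against left bounded vectors and pulling the integral outside gives $\|\pi'(\eta_\Phi(y))\|\le\int_{\bbR}|\hat h_a(s)|\,\|\pi'(\eta_\Phi(\sigma_s(x)))\|\,ds=\|\pi'(\eta_\Phi(x))\|$. Running the identical estimate with $x^*$ in place of $x$ (using $\sigma_s(x)^*=\sigma_s(x^*)$ and $|\overline{\hat h_a(s)}|=|\hat h_a(s)|$) bounds $\|\pi'(\eta_\Phi(y^*))\|$ by $\|\pi'(\eta_\Phi(x^*))\|$, so $y$ is totally bounded. The main obstacle is the second step: rigorously justifying that the $\sigma$-weak integral defining $y$ lands in $\cal M$ and that $\eta_\Phi$ and $\pi'$ may be moved inside it. This rests on the $\sigma$-weak continuity and uniform boundedness of $s\mapsto\sigma_s(x)$ together with the normality of $\Phi$, and is precisely where the unbounded-weight setting demands more care than the $\sigma$-finite case of \cite{AGHS}.
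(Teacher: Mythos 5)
Your proposal is correct and takes essentially the same route as the proof the paper relies on: the paper omits the argument as a ``very mild generalization'' of \cite{AGHS}, whose technique (going back to Kadison \cite{Kad}) is precisely this Fourier averaging $y=\int_{\bbR}\hat h_a(s)\,\sigma_s(x)\,ds$ with $\|\hat h_a\|_{1}=1$, combined with $\Delta^{is}\eta_\Phi(x)=\eta_\Phi(\sigma_s(x))$ and unitary conjugation $\pi'(\Delta^{is}v)=\Delta^{is}\pi'(v)\Delta^{-is}$ to preserve the operator and right-bounded norms. The step you flag as the main obstacle is indeed the only point where the weight setting demands more than \cite{AGHS}, and it closes by standard means: approximate the integral by Riemann sums $y_n$, note $y_n\to y$ $\sigma$-strongly with $\eta_\Phi(y_n)\to h_a(\log\Delta)\eta_\Phi(x)$ in norm, use $\sigma$-weak lower semicontinuity of the normal weight $\Phi$ to get $y\in N_\Phi$, and then identify $\eta_\Phi(y)$ by pairing against $\pi'(w)^*\zeta$ for right bounded $w$, which span a dense subspace by nondegeneracy of $\pi'$.
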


As in \cite{AGHS}, we see that $h_a(\log(\Delta))\eta_{\Phi}(x) = \eta_{\Phi}(y)$ if and only if $2R(2-R)\eta_{\Phi}(x) = (e^{-a}(2-R)^2 + e^{a}R^2)\eta_{\Phi}(y)$.

\begin{lem}{\cite[Lemma 4.11]{Kad}}
    For all $a \in \bbR$ and $x \in \cal M$, there exists $y \in \cal M$ such that $f_{a}(\log(\Delta))\eta_{\Phi}(x) = \eta_{\Phi}(y)$.  Furthermore, $\|y\| \leq \|x\|$.
\end{lem}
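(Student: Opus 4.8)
The plan is to realize the bounded functional-calculus operator $f_a(\log\Delta)$ as a convolution against the modular one-parameter group and then transport that convolution from the Hilbert space $\cal H_\Phi$ down to the algebra $\cal M$. Throughout I assume $x\in N_\Phi$, so that $\eta_\Phi(x)$ is defined; recall that $\Delta^{is}=e^{is\log\Delta}$ implements the modular flow via $\Delta^{is}\eta_\Phi(x)=\eta_\Phi(\sigma^\Phi_s(x))$, that $\Phi\circ\sigma^\Phi_s=\Phi$, and that each $\sigma^\Phi_s$ is a $\|\cdot\|$-isometric $^*$-automorphism preserving $N_\Phi$. First I would record the Fourier representation of $f_a$: since $\widehat{e^{-|\cdot|}}(s)=2/(1+s^2)$, Fourier inversion gives $f_a(\lambda)=e^{-|\lambda-a|}=\tfrac1\pi\int_{\bbR}\tfrac{e^{-ias}}{1+s^2}e^{i\lambda s}\,ds$, whence, applying the spectral theorem to the self-adjoint operator $\log\Delta$,
\[
f_a(\log\Delta)=\frac1\pi\int_{\bbR}\frac{e^{-ias}}{1+s^2}\,\Delta^{is}\,ds.
\]
Writing $k(s)=\tfrac1\pi\cdot\tfrac{e^{-ias}}{1+s^2}$, the decisive feature is that $\|k\|_{L^1(\bbR)}=\tfrac1\pi\int_{\bbR}\tfrac{ds}{1+s^2}=1$.

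Next I would define the candidate $y$ by the $\sigma$-weak (weak-$^*$) integral
\[
y=\int_{\bbR}k(s)\,\sigma^\Phi_s(x)\,ds,
\]
meaning $\ip{\omega}{y}=\int_{\bbR}k(s)\ip{\omega}{\sigma^\Phi_s(x)}\,ds$ for every $\omega\in\cal M_*$. Since $s\mapsto\sigma^\Phi_s(x)$ is $\sigma$-weakly continuous and $\|\sigma^\Phi_s(x)\|=\|x\|$, the integrand is dominated by $|k(s)|\,\|\omega\|\,\|x\|$, which is integrable; hence the integral defines a bounded normal functional of $\omega$, so $y\in(\cal M_*)^*=\cal M$. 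The norm bound is then immediate:
\[
\|y\|=\sup_{\|\omega\|\le 1}|\ip{\omega}{y}|\le\sup_{\|\omega\|\le 1}\int_{\bbR}|k(s)|\,|\ip{\omega}{\sigma^\Phi_s(x)}|\,ds\le\|k\|_{L^1}\,\|x\|=\|x\|.
\]

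It remains to check that $\eta_\Phi(y)=f_a(\log\Delta)\eta_\Phi(x)$. Formally, pulling $\eta_\Phi$ through the integral and using $\eta_\Phi(\sigma^\Phi_s(x))=\Delta^{is}\eta_\Phi(x)$ gives
\[
\eta_\Phi(y)=\int_{\bbR}k(s)\,\eta_\Phi(\sigma^\Phi_s(x))\,ds=\int_{\bbR}k(s)\,\Delta^{is}\eta_\Phi(x)\,ds=f_a(\log\Delta)\eta_\Phi(x),
\]
where the middle $\cal H_\Phi$-valued integral converges as a Bochner integral because $\|k(s)\Delta^{is}\eta_\Phi(x)\|_\Phi=|k(s)|\,\|\eta_\Phi(x)\|_\Phi$ is integrable (each $\Delta^{is}$ being unitary), and the last equality is the spectral formula of the first paragraph.

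The one genuinely non-formal step, and the main obstacle, is justifying this interchange, i.e. that $y\in N_\Phi$ with $\eta_\Phi(y)$ equal to that Bochner integral $\xi:=\int_{\bbR}k(s)\Delta^{is}\eta_\Phi(x)\,ds$. I would argue via the $\sigma$-strong--norm closedness of the GNS map $\eta_\Phi$ (equivalently, normality of $\Phi$). Approximate $y$ by Riemann sums $z_j=\sum_i k(s_i)\sigma^\Phi_{s_i}(x)\,\Delta s_i\in N_\Phi$; pairing against normal functionals shows $z_j\to y$ $\sigma$-weakly, while the computation above shows $\eta_\Phi(z_j)\to\xi$ in $\cal H_\Phi$. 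The subtle point is upgrading $\sigma$-weak convergence of $(z_j)$ to the $\sigma$-strong convergence that closedness requires: for any right-bounded vector $\zeta$ one has $\pi_\Phi(z_j-z_{j'})\zeta=\pi'(\zeta)\eta_\Phi(z_j-z_{j'})$, so $\|\pi_\Phi(z_j-z_{j'})\zeta\|_\Phi\le\|\pi'(\zeta)\|\,\|\eta_\Phi(z_j)-\eta_\Phi(z_{j'})\|_\Phi\to0$; since right-bounded vectors are dense by Theorem \ref{HilbertEquivalentConditions} and $(z_j)$ is norm-bounded, $\pi_\Phi(z_j)\to\pi_\Phi(y)$ $\sigma$-strongly. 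Closedness of $\eta_\Phi$ then yields $y\in N_\Phi$ and $\eta_\Phi(y)=\xi=f_a(\log\Delta)\eta_\Phi(x)$, finishing the proof. This closedness step is precisely where the argument leans on the Tomita--Takesaki structure and the density of right-bounded vectors, rather than merely on $\|f_a\|_\infty=1$.
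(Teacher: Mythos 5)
Your proof is correct. Note that the paper itself contains no argument for this lemma: it is quoted from \cite[Lemma 4.11]{Kad}, with the surrounding remark that the results of this section are mild generalizations of \cite{AGHS} whose proofs are omitted. Your route --- Fourier inversion $e^{-|\lambda-a|}=\frac{1}{\pi}\int_{\bbR}\frac{e^{-ias}}{1+s^2}e^{i\lambda s}\,ds$, followed by smearing, $y=\int_{\bbR}k(s)\,\sigma^{\Phi}_s(x)\,ds$ with $\|k\|_{L^1}=1$ --- is exactly the standard smoothing technique underlying that citation, and it is the same device the paper uses elsewhere with the F\'ejer kernel (compare the construction $\sigma^{\Phi^{\cU}}_{F_a}(x)$ in Theorem \ref{ultracontinuous}), so there is nothing to fault in the approach. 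One streamlining remark: the ``closedness of the GNS map'' that you invoke at the very end need not be quoted as a separate black box, because your own computation already finishes the job; from the strong convergence of $\pi_\Phi(z_j)$ and the norm convergence $\eta_\Phi(z_j)\to\xi$ you get $\pi_\Phi(y)\zeta=\pi'(\zeta)\xi$ for every right bounded $\zeta$, and since right bounded vectors are dense (Theorem \ref{HilbertEquivalentConditions}) and $\|\pi'(\zeta)\xi\|\leq\|y\|\,\|\zeta\|_\Phi$, this exhibits $\xi$ as a left bounded vector whose associated left multiplication operator is $\pi_\Phi(y)$; the correspondence between faithful normal semifinite weights and full left Hilbert algebras (\cite[Chapter VII, Theorems 2.5 and 2.6]{Takesaki}, quoted in Section \ref{SectionHilbertAlg}) then yields $y\in N_\Phi$ and $\eta_\Phi(y)=\xi$ directly.
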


\begin{defn}
    Let $\cal M_0$ be a $^*$-subalgebra $\cal M_0 \subseteq \cal M$ of a von Neumann algebra $\cal M$ such that:
    \begin{itemize}
        \item every element of $\cal M_0$ is totally bounded;
        \item the set of totally $1$-bounded elements of $\cal M_0$ is $\|\cdot\|_{\Phi}^\#$-complete; and
        \item $\cal M_0$ is closed under $h_{a}(\log(\Delta))$ for all $a \in \bbN$.
    \end{itemize}
    Then we  call $\cal M_0$ a \textbf{good subalgebra}.
\end{defn}

\begin{thm}\label{KadisonTypeTheorem}
    If $\cal M_0$ is a good subalgebra, then $\cal M_0 = \cal M_{\tb}$.
\end{thm}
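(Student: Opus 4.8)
The plan is to establish the two inclusions separately. The inclusion $\cal M_0 \subseteq \cal M_{\tb}$ is immediate, since every element of a good subalgebra is by definition totally bounded. All of the content lies in proving $\cal M_{\tb} \subseteq \cal M_0$, and I would attack this as a Kadison-style density argument carried out in the $\|\cdot\|_\Phi^\sharp$-metric, in the spirit of \cite{AGHS} and \cite{Kad}, but keeping simultaneous track of the three bounds (operator norm, $\|\cdot\|_\Phi$, and right-boundedness) that cut out the sorts $S_K$. Throughout I record that $S_K(\cal M_0) = K \cdot S_1(\cal M_0)$ with scaling a $\sharp$-homeomorphism, so that $\sharp$-completeness of $S_1(\cal M_0)$ upgrades to $\sharp$-completeness of every $S_K(\cal M_0)$; this is what I will ultimately feed convergent sequences into.

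First I would isolate the analytic mechanism behind $h_a(\log\Delta)$. A direct estimate shows that both $t \mapsto h_a(t)e^{t/2}$ and $t \mapsto h_a(t)e^{-t/2}$ are bounded on $\bbR$ (at $t \to \pm\infty$ the $\cosh$ in the denominator dominates the exponential), so that $\Delta^{1/2}h_a(\log\Delta)$ and $\Delta^{-1/2}h_a(\log\Delta)$ are bounded operators. Since $(\|w\|_\Phi^\sharp)^2 = \tfrac12\big(\|\eta_\Phi(w)\|^2 + \|\Delta^{1/2}\eta_\Phi(w)\|^2\big)$, using $\eta_\Phi(w^*) = S\eta_\Phi(w) = J\Delta^{1/2}\eta_\Phi(w)$ with $J$ isometric, this says exactly that $h_a(\log\Delta)$ converts $\|\cdot\|_\Phi$-convergence into $\|\cdot\|_\Phi^\sharp$-convergence. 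This is the quantitative content underlying the norm estimates already recorded in Lemma \ref{haTotalBound}.

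Next I would prove the key absorption statement: for every $x \in \cal M_{\tb}$ and every $a \in \bbN$, the smoothed vector $h_a(\log\Delta)\eta_\Phi(x)$ lies in $\eta_\Phi(\cal M_0)$. Approximating $\eta_\Phi(x)$ in $\cal H$ by $\eta_\Phi(z_n)$ with $z_n$ drawn from $\cal M_0$ (using density of $\cal M_0$ together with Kaplansky density and Theorem \ref{TheoremPhiMetrize} to keep $\|z_n\|$ and $\|z_n\|_\Phi$ uniformly bounded), closure under $h_a$ places $\eta_\Phi(w_n) := h_a(\log\Delta)\eta_\Phi(z_n)$ back in $\eta_\Phi(\cal M_0)$. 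By the previous paragraph $\eta_\Phi(w_n) \to h_a(\log\Delta)\eta_\Phi(x)$ in $\|\cdot\|_\Phi^\sharp$, whose limit is $\eta_\Phi(y)$ for a fixed totally bounded $y$ by Lemma \ref{haTotalBound}; if the $w_n$ can be held in a single sort $S_K(\cal M_0)$, completeness forces $y \in \cal M_0$. Thus $\cal M_0$ absorbs all $h_a$-smoothings of totally bounded elements. The delicate point here is that $h_a$ only inherits the right-boundedness constant of its input, which is not uniform in $n$; this is precisely where the auxiliary families $f_a$ and $g_a$ enter, since (unlike $h_a$) they send general elements of $\cal M$ to totally bounded elements with a bound depending only on $a$, and this is what lets the approximants be corralled into one complete sort.

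Finally I would reconstruct $x$ from its smoothings. After reducing to self-adjoint $x$ (splitting into real and imaginary parts is legitimate as $\cal M_0$ is a $^*$-algebra and the parts of a totally bounded element are totally bounded), I would use the normalizations $\int_{\bbR} h_a(t)\,da = \pi$ and $\tfrac12\int_{\bbR} f_a(t)\,da = 1$ together with the even correction functions $g_a$ and the modular symmetry $J\,h_a(\log\Delta)\,J = h_{-a}(\log\Delta)$ (which recovers the negative spectral centers from $a \in \bbN$) to assemble finite real-linear combinations $\Theta_n$ of $\{h_a(\log\Delta) : a \in \bbN\}$ with $\Theta_n \to \id$ strongly and $\Theta_n\eta_\Phi(x) \to \eta_\Phi(x)$ in $\|\cdot\|_\Phi^\sharp$. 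Each $\Theta_n\eta_\Phi(x)$ lies in $\eta_\Phi(\cal M_0)$ by the absorption step and the algebra structure, so $\sharp$-completeness delivers $x \in \cal M_0$. The hard part is engineering the $\Theta_n$ so that all three sort-defining bounds stay uniformly controlled along the whole sequence: strong convergence is soft, but preventing the right-boundedness constant from diverging is exactly what the explicit estimates on $h_a$, $f_a$, and $g_a$ from \cite{Kad} are designed to guarantee, so that the single-sort completeness hypothesis can honestly be invoked.
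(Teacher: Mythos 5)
The paper never actually writes out a proof of this theorem: it is one of the results explicitly deferred to the $\sigma$-finite arguments of \cite{AGHS} (which follow \cite{Kad}), so your outline can only be measured against that intended Kadison-style argument. Your architecture --- smooth with $h_a(\log\Delta)$, exploit $\|\cdot\|_\Phi^\#$-completeness of a single sort, rebuild $x$ via an approximate identity --- is the right one, and you correctly point at where the difficulty sits; the problem is that both load-bearing steps are discharged by appeal to tools that break precisely in the passage from states to weights. The first gap is the approximation step: you want $z_n \in \cal M_0$ with $\eta_\Phi(z_n) \to \eta_\Phi(x)$ in $\cal H$ \emph{and} $\|z_n\|$ uniformly bounded, obtained from ``Kaplansky density and Theorem \ref{TheoremPhiMetrize}''. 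Kaplansky produces norm-bounded strong-$^*$ approximants, but for an unbounded weight bounded strong-$^*$ convergence does not control $\|\cdot\|_\Phi$: in $(B(\cal H),\Tr)$ the rank-one projections are trace-class contractions tending strongly to $0$ while their $\|\cdot\|_\Phi$-norms are identically $1$. So the implication you need (strong-$^*$ convergence of a bounded sequence in $D_\Phi$ implies $\|\cdot\|_\Phi$-convergence) is false for weights; it is a state-case phenomenon, where $\eta_\varphi(z)=z\xi_\varphi$. Conversely, plain density of $\eta_\Phi(\cal M_0)$ in $\cal H$ gives convergence with no operator-norm or right-bound control. Producing approximants with both properties simultaneously is the actual content of the theorem in this generality (one workable device: two-sided products $w_1 z_n w_2$ with fixed totally bounded $w_1,w_2 \in \cal M_0$, since $\eta_\Phi\bigl(w_1(z_n-x)w_2\bigr)=w_1(z_n-x)\eta_\Phi(w_2)$ converts strong-$^*$ convergence into $\|\cdot\|_\Phi^\#$-convergence with no cyclic vector); your proposal does not supply it.

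The second gap is the corralling/reconstruction step. The hypothesis gives closure of $\cal M_0$ under $h_a(\log\Delta)$ only; closure under $f_a(\log\Delta)$ or $g_a(\log\Delta)$ is not assumed, so ``this is where $f_a$ and $g_a$ enter'' cannot be invoked as stated: to return such smoothings of elements of $\cal M_0$ to $\cal M_0$ you would have to approximate $f_a,g_a$ by polynomials in the $h_b$'s with uniform control of exactly the bounds at issue, which is the same problem over again. Moreover, the input-independent total-boundedness you attribute to $f_a,g_a$ is neither free nor quite correct: the available estimate (via the support of $g_a$ in $[-a,a]$ and Proposition \ref{modularsubspaces}, together with the quoted Kadison lemma) produces totally $C_a\|z\|$-bounded outputs, so it still requires operator-norm control of the inputs and sends you back to the first gap; and the contrast you draw with $h_a$ is illusory, since the same intrinsic estimate holds for $h_a$-smoothing (its Fourier kernel is a modulated $\operatorname{sech}$, analytic in the unit strip), not just the inherited bounds recorded in Lemma \ref{haTotalBound}. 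Finally, the normalization $\int_{\bbR}h_a(t)\,da=\pi$ needs centers ranging over all of $\bbR$, while the hypothesis provides only $a\in\bbN$; the identity $Jh_a(\log\Delta)J=h_{-a}(\log\Delta)$ is a true operator identity but does not keep you inside $\cal M_0$ (stability under $J$-conjugated smoothings is not a hypothesis --- the correct mechanism is the $^*$-operation: the $h_{-a}$-smoothing of $z$ is $(h_a\text{-smoothing of } z^*)^*$), and even then the combinations $\Theta_n\to\id$ must be built with uniformly bounded coefficient sums (Fej\'er-type rather than Dirichlet-type weightings), which is precisely the part you defer to ``explicit estimates from \cite{Kad}'' without carrying out. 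One last remark: as written in this paper the definition of good subalgebra contains no density hypothesis, so the theorem is false verbatim ($\cal M_0=\{0\}$ qualifies); your argument silently assumes the density that the intended definition, and the paper's application of the theorem, actually supply.
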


\section{Axiomatization}\label{SectionAxiomatization}

In this paper, we only ever consider weights $\Phi$ such that $\Phi(1) \geq 1$ or $\Phi(1) = \infty$.  This is to avoid certain issues with ultraproducts becoming trivial.  Namely, one might consider $\cal M_i = \cal M$ and take a state $\vp$ on $\cal M$.  Then if we take $\Phi_i = \frac{1}{i} \vp$, we would get that the ultraproduct is $\{0\}$.  Our restriction is fine in practice because one could always re-scale the weight.  Very rarely in practice does one consider a bounded weight that is not a state, so this is not particularly objectionable.

We introduce the language $\cL_{\vNa}$ for weighted von Neumann algebras, whose symbols include:
\begin{enumerate}
    \item For each $n \in \bbN$, there is a sort $S_n$ with bound $2n$, whose intended interpretation is the set of $n$-bounded, right $n$-bounded elements $x$ of $D_{\Phi}$ with right $n$-bounded adjoint.  We let $d_n$ denote the metric symbol on $S_n$, whose intended interpretation is the metric induced by $\|\cdot\|_\Phi^\#$.
    \item For each $n\in \bbN$, binary function symbols $+_n$ and $-_n$ with domain $S_{n}^2$ and range $S_{2n}$ and whose modulus of uniform continuity is $\delta(\epsilon) = \epsilon$.  The intended interpretation of these symbols are addition and subtraction in the algebra restricted to the sort $S_n$.
    \item For each $n\in \bbN$, a binary function symbol $\times_n$ with domain $S_n^2$ and range $S_{n^2}$ and whose modulus of uniform continuity is $\delta(\epsilon)=\frac{\epsilon}{n}$.  The intended interpretation of these symbols is multiplication in the algebra restricted to the sort $S_n$.
    \item For each $n\in \bbN$, a unary function symbol $*_n$ whose modulus of uniform continuity is $\delta(\epsilon) = \epsilon$.  The intended interpretation of these symbols is for the adjoint restricted to each sort.
    \item For each $n\in\bbN$, the constant symbol $0_n$ which lies in the sort $S_n$.  The intended interpretation of these symbols is the element $0$.
    \item For each $n\in \bbN$ and $\lambda \in \bbC$, there is a unary function symbol $\lambda_n$ whose domain is $S_n$ and range is $S_{mn}$, where $m = \lceil|\lambda|\rceil$ and with modulus of uniform continuity $\delta(\epsilon) = \frac{\epsilon}{|\lambda|}$ when $\lambda \not= 0$.  The intended interpretation of these symbols is scalar multiplication by $\lambda$ restricted to $S_n$.
    \item For each $m,n\in \bbN$ with $m < n$, we have a unary function symbols $\iota_{m,n}$ with domain $S_m$ and range $S_n$ and whose modulus of uniform continuity is $\delta(\epsilon) = \epsilon$.  The intended interpretation of these symbols is the inclusion map between the sorts.
    \item For each $n\in \bbN$, we have a unary predicate symbol $\Phi_{n}$ whose range is $[0,n]$ and whose modulus of uniform continuity is $\delta(\epsilon) = \frac{\epsilon}{\sqrt{2}}$.  The intended interpretation of this symbol is the restriction of the weight to $S_n$.
    \item For each $n\in \bbN$, there is a predicate symbol $\vA_n$ on the sort $S_n$ taking values in $[0,4n]$ and with modulus of uniform continuity $\delta(\epsilon)=\frac{\epsilon}{\sqrt{2}}$.  The intended interpretation of this symbol is the distance associated to the norm $\|\cdot\|_\Phi$ from an element to the set of self-adjoint elements in $S_{(5n^2+4)}$.  The $\sqrt{2}$ in the denominator stems from the fact that $\|a\|_\Phi \leq \sqrt{2}\|a\|_\Phi^\#$ for all $a \in D_{\Phi} \cap D_{\Phi}^*$.
\end{enumerate}

\begin{remark}
    Note that the main difference between $\cal L_{\vNa}$ and $\cal L_{\text{\wstar}}$ from \cite{AGHS} is that the above lacks an identity element.  This is explained by the observation that \cite{AGHS} is really essentially restricting the above to unital full Hilbert algebras (where the vector corresponding to the unit is also assumed to have norm 1).
\end{remark}

\begin{enumerate}
    \item The usual algebraic axioms requiring the dissection of $M$ into sorts $S_n$ to be a $^*$-algebra.
    \item Axioms saying that $\Phi$ is a positive linear functional.
    \item Axioms saying the connecting maps preserve addition, multiplication, adjoints and $\Phi$.
    \item An axiom for each $n$ requiring that $d_n$ is defined by the norm
    \[
        \|x\|_\Phi^\# = \sqrt{\frac{\Phi(x^*x)+\Phi(xx^*)}{2}}.
    \]
    \item Axioms requiring that the elements of $S_n$ are $n$-bounded with $n$-bounded adjoint.  To be explicit, for each $k$ we have
    \[
        \sup_{x \in S_n}\sup_{y \in S_k} \Phi((yx)^*yx)\dminus n^2\Phi(y^*y) \qquad \sup_{x \in S_n}\sup_{y \in S_k} \Phi((xy)^*xy)\dminus n^2\Phi(y^*y).
    \]
    and similarly for $x^*$.  We also add an axiom saying that elements $x$ of $S_n$ have $\|x\|_{\Phi}^\sharp \leq n$:
    \[
        \sup_{x \in S_n} \Phi(x^*x + xx^*) \dminus 2n^2.
    \]
    \item Axioms expressing that the inclusion maps are isometries and have the correct ranges.  Specifically, for each $n$, and for all $k$ we add:
    \[
        \sup_{x \in S_1}  \bigl|\|\iota_n(x)\|-\|x\|\bigr|
    \]
    and
    \[
        \sup_{x \in S_1}\sup_{z\in S_k} \bigl|\Phi((zx)^*(zx)) -\Phi((z\iota_n (x))^*(z\iota_n (x))\bigr|.
    \]
    \item Axioms expressing that the inclusion $\iota_n: S_1\to S_n$ has range consisting of all $x\in S_n$ where x is left and right $1$-bounded with right $1$-bounded adjoint:
    \begin{equation*}
        \begin{split}
            \sup_{x \in S_n} \sup_{z \in S_k}\inf_{y \in S_1}&\max\bigl\{\|x - \iota_n(y)\| \dminus (\|x\| \dminus 1),\\
            &\|x - \iota_n(y)\| \dminus \bigl(\Phi((zx)^*(zx)) \dminus \,\Phi(z^*z)\bigr),\\
            &\|x - \iota_n(y)\| \dminus \bigl(\Phi((zx^*)^*(zx^*)) \dminus \,\Phi(z^*z)\bigr)\bigr\}.
        \end{split}
    \end{equation*}
    \item  Axioms expressing that $\vA_n$ represents the distance to the self-adjoint elements of $S_{(5n^2+4)}$.  More precisely, for each $n\in \bbN$, we include the axiom \[\sup_{x \in S_n} \left|\vA_n(x)-\inf_{y \in S_{(5n^2+4)}} \left\|x- \frac{y+y^*}{2}\right\|_\Phi\right|.\]
    \item For each $a \in \bbN$, we add the following $\forall\exists$ axioms saying that models are closed under $h_a(\log(\Delta))$.  Defining $m = 2\lceil e^{a} \rceil (10n^2+8)^2$, we add
    \[
        \sup_{x \in S_n}\inf_{y \in S_n} d_{S_{m}}(2R(2-R)x, (e^{-a}(2-R)^2 + e^{a}R^2)y).
    \]
    Here, $m$ reflects the natural sort for our terms to land in as compositions of functions.  To see that this does indeed imply closure under $h_a(\log(\Delta))$, see the discussion following Lemma \ref{haTotalBound}.
    \item We have asked that all of our weights majorize a state.  To this end, we add an axiom
    \[
        \sup_{x \in S_1} (\sup_{y \in S_1} \max\{d_{S_1}(xy, y), d_{S_1}(yx,y)\} \dminus (d_{S_1}(x,0) \dminus 1)).
    \]
    Note that this says that if $x$ is an element of $S_1$ that act like the identity on $S_1$ (and hence on all of $\cal H$ by linearity and density), then $\|x\|_{\Phi} = 1$.  Notice that if there is no such $x$, then $\Phi(1) > 1$ since $1$ has left and right norm $1$.
\end{enumerate}

\begin{remark}
    In axiom (9), we use the symbol $R$ to represent the operator defined in Section \ref{SectionTotallyBounded}.  This is not technically part of the language but we use it as a convenient abbreviation for an expression which defines it.  See Section \ref{SectionDefModular} for more details.
\end{remark}

Note that, assuming the above axiomatization is correct (see Theorem \ref{correctness} below), we could expand $T_{\vNa}$ to axiomatize:
\begin{itemize}
    \item \wstar-probability spaces by adding an axiom
    \[
    \inf_{x \in S_1}\sup_{y \in S_1} \max\{ |\Phi(x^*x) - 1|, d(xy, y), d(yx, y) \}
    \]
    saying there is an element $x$ with $\|x\|_{\Phi} = 1$ that acts as the identity;
    \item semifinite von Neumann algebras equipped with a tracial weight by adding an axiom
    \[
    \sup_{x,y \in S_1} |\Phi(xy) - \Phi(yx)|
    \]
    saying $\Phi$ is a tracial weight (we will call these ``tracial weighted von Neumann algebras''); or
    \item tracial von Neumann algebras by combining the previous two items.
\end{itemize}

In the first and last cases above, our formalism stays close to the existing axiomatizations of the respective classes.

\begin{remark}
    All of the axiomatizations we have considered above are computably enumerable.
\end{remark}

We now undertake the main theorem of this section.  In the interest of clarity, we break the proof into a series of claims.

\begin{thm}\label{correctness}
    The category of models of $T_{\vNa}$ is equivalent to the category of weighted von Neumann algebras with morphisms being weight-preserving embeddings with a conditional expectation.
\end{thm}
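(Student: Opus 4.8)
The plan is to establish the equivalence of categories by constructing functors in both directions and verifying they are mutually inverse, which amounts to three tasks: (i) showing every model of $T_{\vNa}$ arises from a weighted von Neumann algebra, (ii) showing every weighted von Neumann algebra yields a model, and (iii) matching up the morphisms. First I would treat direction (ii), the easier one: given $(\cal M, \Phi)$, I interpret each sort $S_n$ as $S_n(\cal M)$, the set of totally $n$-bounded elements with $\|\cdot\|_\Phi \leq n$, interpret the algebraic operations, $\Phi_n$, and the connecting maps $\iota_{m,n}$ in the obvious way, and interpret $\vA_n$ as the distance to self-adjoint elements of $S_{(5n^2+4)}$. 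I then verify each axiom holds; most are routine, but I must invoke Lemma \ref{LemSortsComplete} to see that the sorts are $\|\cdot\|_\Phi^\#$-complete (so the metric structure is genuine), Corollary \ref{SharpMetrize} to justify that $d_n$ metrizes the strong-$^*$ topology, and Theorem \ref{PQtotalbounds} together with Corollary \ref{Rtotalbound} to confirm the bound constants ($5n^2+4$, $10n^2+8$) appearing in axioms (8) and (9) are correct. The closure axiom (9) follows from Lemma \ref{haTotalBound} and the discussion of the identity $2R(2-R)\eta_\Phi(x) = (e^{-a}(2-R)^2 + e^a R^2)\eta_\Phi(y)$ following it.

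Next I would treat direction (i): from an abstract model $\M$ I must reconstruct a weighted von Neumann algebra. The key point is that the union $\bigcup_n S_n(\M)$, equipped with the interpreted operations, is a $^*$-algebra carrying an inner product $\ip{x}{y} := \Phi(y^*x)$, and I claim its Hilbert space completion $\cal H$ together with this $^*$-algebra structure is a full left Hilbert algebra. Verifying the left Hilbert algebra axioms of Definition \ref{DefLeftHilbert} is where the boundedness axioms (5) pay off: they guarantee left multiplication is bounded, giving the representation $\pi$, while axiom (4) fixing $d_n$ via $\|\cdot\|_\Phi^\#$ and the presence of $\vA_n$ ensure the involution is closable. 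To get \emph{fullness} I would use Theorem \ref{HilbertEquivalentConditions}: it suffices to show the right bounded vectors are dense, which follows because the model contains all totally bounded elements and, via the closure axiom (9) under $h_a(\log\Delta)$, Theorem \ref{KadisonTypeTheorem} forces the interpreted algebra to coincide with $\cal M_{\tb}$, whose density is Theorem \ref{tbDense}. Then $\R_\lambda(\mathfrak A)$ with the weight $\Phi_\lambda$ is the desired weighted von Neumann algebra, and I must check that re-running construction (ii) on it recovers $\M$ up to the isomorphism of metric structures, and conversely that $(\cal M, \Phi) \mapsto \M \mapsto (\R_\lambda, \Phi_\lambda)$ returns $(\cal M,\Phi)$; both round-trips rely on the Takesaki correspondence (Chapter VII, Theorems 2.5 and 2.6) cited in the excerpt.

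For the morphism part, I would show that an embedding of models corresponds exactly to a weight-preserving normal embedding admitting a faithful normal conditional expectation onto the image. An $\cal L_{\vNa}$-embedding $\M \to \cal N$ preserves each sort, all operations, and in particular $\Phi$, so it induces a weight-preserving $^*$-homomorphism of the associated full Hilbert algebras, hence a normal embedding $\iota: \cal M \hookrightarrow \cal N$ with $\Psi \circ \iota = \Phi$. The content is that weight-preservation together with normality of the embedding yields a conditional expectation: this is a standard consequence of Takesaki's theorem on the existence of conditional expectations, since $\Psi\circ\iota = \Phi$ means $\iota(\cal M)$ is globally invariant under the modular automorphism group $\sigma^\Psi$, which is precisely Takesaki's criterion. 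Conversely such embeddings restrict to isometric maps of the sorts and therefore are $\cal L_{\vNa}$-embeddings. The main obstacle I anticipate is the fullness/surjectivity step in direction (i): proving that the abstract completion is genuinely \emph{full} (rather than merely a left Hilbert algebra whose von Neumann algebra is correct but whose weight domain is too small) is exactly where the delicate closure axiom (9) and the Kadison-type characterization Theorem \ref{KadisonTypeTheorem} are indispensable, and I would spend most of the care there, isolating it as a separate claim.
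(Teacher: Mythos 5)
Your construction of the two functors is essentially the paper's own proof: the paper calls your direction (ii) the ``dissection'' and your direction (i) the ``interpretation,'' and it likewise gets the left Hilbert algebra structure of the direct limit $\mathfrak{A}_0(A)$ from density of right bounded vectors via Theorem \ref{HilbertEquivalentConditions}, passes to $\mathfrak{A}_A := (\mathfrak{A}_0(A))''$, and then uses closure under $h_a(\log(\Delta))$ (axiom (9)) together with the good subalgebra machinery and Theorem \ref{KadisonTypeTheorem} to identify $\mathfrak{A}_0(A)$ with the totally bounded elements of $\mathfrak{A}_A$ --- which is exactly what makes both round trips the identity. Two small inaccuracies: density of right bounded vectors yields only that $\mathfrak{A}_0(A)$ is a \emph{left} Hilbert algebra, not a full one (fullness is imposed by taking the bicommutant; the substantive issue is the one you correctly isolate, namely recovering $\mathfrak{A}_0(A)$ as $\cal M_{\tb}$); and before axiom (9) can be applied you must first prove that $\mathfrak{A}_0(A)$ is closed under $P$, $Q$ and $R$, which the paper does as a separate claim using axiom (8), Lemma \ref{LemSortsComplete} and Corollary \ref{SharpMetrize}.

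The genuine gap is in your morphism argument. You assert that $\Psi \circ \iota = \Phi$ already forces $\iota(\cal M)$ to be globally invariant under $\sigma^\Psi$, ``which is precisely Takesaki's criterion.'' That implication is false: a weight-preserving (even state-preserving) normal embedding need not have $\sigma^\Psi$-invariant image. For instance, take $\cal N \subseteq M_2(\bbC)$ a maximal abelian subalgebra whose generating projection does not commute with the density matrix of a non-tracial faithful state $\Psi$; the restriction of $\Psi$ is faithful, but $\cal N$ is not $\sigma^\Psi$-invariant, so by the converse direction of Takesaki's theorem there is no $\Psi$-compatible conditional expectation onto $\cal N$. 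This is exactly why the morphisms in the statement are ``embeddings with expectation'' rather than all weight-preserving embeddings, and it is the reason the language carries the predicates $\vA_n$ at all. The paper's argument goes the other way around: an $\cal L_{\vNa}$-embedding preserves $\vA_n$, i.e., the $\|\cdot\|_\Phi$-distance to the self-adjoint elements (which, by Theorem \ref{PQtotalbounds}, is realized in the sort $S_{(5n^2+4)}$), so the projections $P$, $Q$, $R$ --- and hence the modular data built from them --- are computed compatibly in $\cal M$ and in $\cal N$. It is this compatibility, via the discussion in \cite{RvD}, that gives $\sigma^\Psi$-invariance of the image and then, by Takesaki's Expectation Theorem, the faithful normal conditional expectation. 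Your converse direction has the mirror-image defect: a weight-preserving embedding that is \emph{not} expected will in general change the distance from an element to the self-adjoint part of the ambient algebra, hence fail to preserve $\vA_n$ and fail to be an $\cal L_{\vNa}$-embedding; the expectation is precisely what guarantees those distances agree.
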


\begin{proof}[Proof of Theorem \ref{correctness}]

    As usual in continuous logic, we define the \textbf{dissection} associated to a weighted von Neumann algebra.  This is a functor from the category of weighted von Neumann algebras to $\Md(T_{\vNa})$.  Given a weighted von Neumann algebra $(\cal M, \Phi)$, form the associated semicyclic representation.  The sort $S_n$ of the dissection $\cal D(\cal M, \Phi)$ consists of the totally $n$-bounded elements of the semicyclic representation (or the associated full Hilbert algebra).  Interpreting all other symbols as intended, we see

    \begin{claim}
        For a weighted von Neumann algebra $(\cal M,\Phi)$, the dissection $\cal D(\cal M,\Phi)$ is a model of $T_{\vNa}$.  Moreover, in any dissection $\cal D(\cal M,\Phi)$, $\vA_n$ captures the distance from an element of $S_n(\cal M)$ to $\cal M_{\sa}$.
    \end{claim}

    Now we must consider the ``inverse'' functor, called the ``interpretation''.  We associate to any model of $T_{\vNa}$, a Hilbert algebra as follows.  We see later in this section that this Hilbert algebra is full and therefore can be considered as a weighted von Neumann algebra.

    Suppose that we have a model $A \in \Md(T_{\vNa})$ of the theory $T_{\vNa}$.  We begin by forming the direct limit $\mathfrak{A}_0(A)$ of the sorts $S_n(A)$ for $n \in \bbN$ via the embeddings $i_{k,n}$.  This is well-defined by axioms (6) and (7).  Using the interpretation of the function symbols on each sort, we see that $\mathfrak{A}_0(A)$ is naturally a complex $^*$-algebra by axiom (1).  Furthermore, using the predicate for the weight on each sort, one can define an inner product $\langle x,y \rangle := \Phi(y^*x)$ on $\mathfrak{A}_0(A)$.  We let $\cal H_0$ denote the Hilbert space completion of $\mathfrak{A}_0(A)$ with respect to this inner product.  For each $a \in \mathfrak{A}_0(A)$, the maps $b \mapsto ab: \mathfrak{A}_0(A) \to \mathfrak{A}_0(A)$ and $b \mapsto ba: \mathfrak{A}_0(A) \to \mathfrak{A}_0(A)$ extend to unique bounded linear operators $\pi(a): \cal H_0 \to \cal H_0$ and $\pi'(a): \cal H_0 \to \cal H_0$ respectively; satisfying $\|\pi(a)\|, \|\pi'(a)\| \leq n$ if $a \in S_n(A)$ by axiom (5).  Axiom (6) guarantees that each element belongs to the correct sort.  By construction, we have that right bounded elements are dense in $\cal H_0$.  Thus by Theorem \ref{HilbertEquivalentConditions}, $\mathfrak{A}_0(A)$ is a Hilbert algebra and consequently, the following definition makes sense.

    We define the \textbf{interpretation} $\mathfrak{A}_{A}$ of $A \in \Md(T_{\vNa})$ to be the full left Hilbert algebra $\mathfrak{A}_{A} := (\mathfrak{A}_0(A)){''}$ on $\cal H := \cal H_0$ generated by $A$ as in the discussion above.

    Note by definition that the direct limit $\mathfrak{A}_0(A)$ of the sorts of $A$ is strong$^*$-dense in the interpretation.  For dissections and interpretations to define an equivalence of categories, we need to show that for any $A \in \Md (T_{\vNa})$, the algebra $\mathfrak{A}_0(A)$ associated to $A$ is precisely the set of totally bounded elements of the interpretation of $A$.

    \begin{claim}
        If $x \in \mathfrak{A}_0(A)$, then $x$ is totally bounded in $\mathfrak{A}_{A}$.  In fact, if $x$ is totally $n$-bounded in $\mathfrak{A}_0(A)$, then $x$ is totally $n$-bounded in $\mathfrak{A}_{A}$.
    \end{claim}

    This follows immediately from axiom (5) and strong-density of $\mathfrak{A}_0(A)$ in $\mathfrak{A}_{A}$.  If $x \in S_n(A)$ such that $\|x\|_{\Phi} \leq 1$, $\|\pi(x)\|, \|\pi(x^*)\| \leq 1$ and $\|\pi'(x)\|, \|\pi'(x^*)\| \leq 1$ then since the inclusion maps have the correct images by axiom (6), the inclusions are isometries.  By axiom (7), and the fact that the sorts are complete, it follows that $x \in S_1(A)$.

    \begin{claim}
        $\mathfrak{A}_0(A)$ is closed under $P$, $Q$ and $R$.
    \end{claim}
    
    $\mathfrak{A}_0(A)$ is strong-dense in $\mathfrak{A}_{A}$ by assumption and $\|\cdot\|_\Phi$ metrizes the strong operator topology (see Theorem \ref{TheoremPhiMetrize}).  Thus the $\|\cdot\|_\Phi$-distance to the self-adjoint elements in $\mathfrak{A}_0(A)$ agrees with that in $\mathfrak{A}_{A}$.  Now, if $a \in S_n(\mathfrak{A}_0(A))$ then, by the previous claim, $a \in S_n(\mathfrak{A}_{A})$.  Thus $P(a)$ is totally bounded.  Moreover, axiom (8) tells us that $\vA_n(x)$ can be computed in $S_{(5n^2+4)}(\mathfrak{A}_0(A))$.  Thus, by definition of $\inf$, we have a sequence $(a_i)$ of self adjoint operators in $S_{(5n^2+4)}(\mathfrak{A}_0(A))$ such that $\|a_i - x\|_\Phi$ converges to $\vA_{n}(x)$.  Since $\|\cdot\|_\Phi$ metrizes the strong operator topology, we have that $(a_i)$ converges to $P(x)$.  By the uniform total bound of $5n^2+4$, it follows that $(a_i)$ strong-$^*$ converges to $P(x)$.  By Lemma \ref{LemSortsComplete} and Corollary \ref{SharpMetrize}, we have that $P(x) \in S_{(5n^2+4)}(\mathfrak{A}_0(A))$.  Thus $\mathfrak{A}_0(A)$ is closed under $P$ and, in turn, under $Q$.  Therefore, by taking linear combinations, $\mathfrak{A}_0(A)$ is closed under $R$, proving the claim.

    We also have, by axiom (9) that $\mathfrak{A}_0(A)$ is closed under $h_{a}(\log(\Delta))$ for every $a \in \bbN$.  Together with the previous discussion, we conclude that $\mathfrak{A}_0(A)$ is a good subalgebra of $\mathfrak{A}_{A}$. By Theorem \ref{KadisonTypeTheorem}, we conclude that:
    \begin{claim}
        $\mathfrak{A}_0(A)$ is precisely the set of totally bounded elements of $\mathfrak{A}_{A}$.
    \end{claim}

    At this point, we have shown that to a model $A$ of $T_{\vNa}$, we can associate a Hilbert algebra $\mathfrak{A}_0(A)$ of operators on a Hilbert space $\cal H$ and a faithful $^*$-representation $\pi: \mathfrak{A}_0(A) \to \cal B(\cal H)$ such that, setting $\cal M_A$ to be the strong closure of $\pi(\mathfrak{A}_0(A))$ and letting $\Phi_A$ denote the weight on $\cal M_A$ corresponding to the inner product on $\cal H$, we have that $\mathfrak{A}_0(A)$ is a good subalgebra for the full Hilbert algebra $\mathfrak{A}_{A}$ of $\cal M_A$.  Hence we have $\cal D(\cal M_A, \Phi_A)=A$.  In particular, if we start with a weighted von Neumann algebra $(\cal M, \Phi)$ and let $A := \cal D(\cal M, \Phi)$, we have (using that the totally bounded vectors in $\mathfrak{A}_{A}$ are dense in $\mathfrak{A}_{A}$ together with Theorem \ref{TheoremPhiMetrize}) that $(\cal M_A, \Phi_A)=(\cal M, \Phi)$ and $\cal D(\cal M_A,\Phi_A) = \cal D(\cal M,\Phi)$.  Finally, we observe that

    \begin{claim}
        Embeddings between weighted von Neumann algebras correspond to embeddings between the corresponding dissections.
    \end{claim}
    One direction of this claim is obvious; to see the other, suppose that $\cal D(\cal M,\Phi)\subseteq \cal D(\cal N,\psi)$.  By Takesaki's Expectation Theorem and the discussion in \cite{RvD}, we need to show that, for each $a \in \eta_{\Phi}(\cal M)$, the $\|\cdot\|_\Phi$-distance between $a$ and the self-adjoint elements of $\eta_{\Phi}(\cal M)$ is the same as the $\|\cdot\|_\psi$-distance between $a$ and the self-adjoint elements of $\eta_{\Psi}(\cal N)$.  However this follows from axiom (7) and axiom (8), the density of bounded elements, and the fact from Theorem \ref{PQtotalbounds} that for $a \in S_n(\cal M)$, the distance from $a$ to $\cal M_{\sa}$ is realized by an element of $S_{(5n^2+4)}(\cal M)$.
\end{proof}

\section{Definability of Modular Automorphisms}\label{SectionDefModular}

Our next main goal is to produce a definitional expansion of $T_{\vNa}$ which axiomatizes the modular group.  Recall that in the \wstar-probability space setting, we get the following immediately by Beth definability.

\begin{cor}{\cite[Corollary 7.2]{AGHS}} \label{AGHSMod}
    For all $t\in \bbR$, $\sigma_t$ is a $T_{\text{\wstar}}$-definable function.  Moreover, if $\vp_t$ is a $T_{\text{\wstar}}$-definable predicate defining $\sigma_t$, then the map $t\mapsto \vp_t$ is continuous with respect to the logic topology.
\end{cor}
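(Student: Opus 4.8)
The plan is to derive both assertions from the continuous-logic Beth definability theorem, with Tomita--Takesaki uniqueness supplying the required implicit definability. First I would expand $\cal L_{\text{\wstar}}$ by adjoining symbols $F_s$ (one for each real $s$, or a single symbol for the map $(s,x) \mapsto \sigma_s(x)$) respecting the sort bookkeeping, so that $F_s$ carries $S_n$ into $S_n$, using that $\sigma_s$ is a $\vp$-preserving isometric $^*$-automorphism. Let $T^+ \supseteq T_{\text{\wstar}}$ be the theory asserting that $\{F_s\}$ is a $\vp$-preserving one-parameter $^*$-automorphism group satisfying the KMS condition with respect to $\vp$. Since the modular operator $\Delta$, and hence the flow determined by $\pi(\sigma_s(x)) = \Delta^{is}\pi(x)\Delta^{-is}$, is canonically attached to the full Hilbert algebra structure of the model (Theorem \ref{TomitaFund}), the uniqueness half of Tomita--Takesaki theory says that every model of $T_{\text{\wstar}}$ admits a \emph{unique} expansion to a model of $T^+$, namely the genuine modular flow. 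This is exactly implicit definability of $\{F_s\}$ over $T_{\text{\wstar}}$.

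Implicit definability plus the continuous Beth theorem then yields explicit definability: there is an $\cal L_{\text{\wstar}}$-definable predicate defining the flow, and in particular, for each fixed $t$, a definable predicate $\vp_t$ --- concretely the graph distance $\vp_t(x,y) = d(\sigma_t(x),y)$ --- which $T_{\text{\wstar}}$ proves defines the function $\sigma_t$. This establishes the first assertion. The step needing care is that the axioms of $T^+$ must be genuine $\cal L_{\text{\wstar}}$-conditions: the analytic continuation hidden in the KMS condition has to be replaced by a quantitative, formula-expressible surrogate. I would do this via the specializations to the state case of the functions $h_a(\log\Delta)$, $f_a(\log\Delta)$ and $g_a(\log\Delta)$ of Section \ref{SectionTotallyBounded}, whose sort-preservation and explicit relation to the bounded operator $R$ are already in hand; these express membership in the modular flow through bounded continuous combinations of the weight predicate applied to terms confined to controlled sorts.

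For the ``moreover'' assertion, note first that a definable predicate defining $\sigma_t$ is unique as a definable predicate: two predicates defining the same function agree on every model, hence on every type. Thus any such $\vp_t$ coincides with the canonical graph-distance predicate above, and continuity of $t \mapsto \vp_t$ in the logic topology unwinds to the statement that $\sup_{p} |\vp_t(p) - \vp_{t_0}(p)| \to 0$ as $t \to t_0$, the supremum taken over the type space $S(T_{\text{\wstar}})$, i.e. over all models and all tuples in the relevant sort simultaneously. In a single fixed model this is nothing but the strong continuity of the one-parameter unitary group $s \mapsto \Delta^{is}$, equivalently of the modular flow.

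The main obstacle is precisely the \emph{uniformity} required in this last step: strong continuity furnishes, for each fixed totally bounded $x$, a modulus of continuity for $t \mapsto \sigma_t(x)$, but that modulus is governed by $\|\log\Delta\,\eta_\vp(x)\|$ and thus threatens to degenerate as $x$ ranges over an entire sort $S_n$ and as the model varies. To defeat this I would again invoke the spectral machinery of Section \ref{SectionTotallyBounded}: on $S_n$ one controls $\Delta^{is}$ uniformly through the uniformly bounded operators $R$ and $h_a(\log\Delta)$, so that $\|\Delta^{it}\eta_\vp(x) - \eta_\vp(x)\|_\vp^\#$ admits a modulus depending only on the sort index $n$ and on $|t|$, and not on the particular \wstar-probability space. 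Feeding this model-independent modulus through the graph-distance description of $\vp_t$ gives the desired uniform convergence over $S(T_{\text{\wstar}})$, hence continuity in the logic topology.
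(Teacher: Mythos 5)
Your top-level architecture --- implicit definability of the modular flow, the continuous Beth definability theorem, and then a uniformity argument for continuity in the logic topology --- is the same as the paper's, which obtains this corollary ``immediately by Beth definability'' following \cite{AGHS}. The divergence is in how the hypotheses of Beth's theorem are earned. Beth requires the class of expansions $(\cal M,\vp,\sigma^\vp)$ to be \emph{elementary}, not merely in canonical bijection with $\Md(T_{\text{\wstar}})$. In \cite{AGHS} this elementarity is verified semantically: the model-theoretic ultraproduct is identified with the Ocneanu ultraproduct, and the Ando--Haagerup theorem (cf. \cite{AH}) that the modular group commutes with the Ocneanu ultraproduct yields closure of the expanded class under ultraproducts. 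That reliance on ultraproduct facts is precisely what the paper is pointing at in the sentence immediately after the corollary: in the weighted setting ``we do not yet know enough about the ultraproducts involved,'' which is why Section \ref{SectionDefModular} proceeds by explicit construction instead. You replace this operator-algebraic input by a proposed axiomatization $T^+$ of the KMS condition together with Tomita--Takesaki uniqueness; that substitution is exactly where your proof has a gap.

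The gap is that $T^+$ can feed into Beth's theorem only if it is actually a theory, i.e.\ only if the KMS condition is expressible by genuine $\cal L_{\text{\wstar}}$-conditions, and you never produce such conditions. The repair you sketch --- pinning the flow down through $h_a(\log\Delta)$, $f_a(\log\Delta)$, $g_a(\log\Delta)$ and $R$ --- is not a KMS axiomatization: axioms saying that $F_s(x)$ agrees up to explicit error with polynomials in $R$ and $2-R$ applied to $x$ determine $F_s$ as a uniform limit of $\cal L_{\text{\wstar}}$-terms, i.e.\ they already constitute an \emph{explicit} definition of $\sigma_s$, so Tomita--Takesaki uniqueness and Beth's theorem are doing no work and your argument collapses into the expansion-by-definitions route (which is what Section \ref{SectionDefModular} carries out, and which \cite{AGHS} also does). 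If instead you keep genuine KMS axioms (say in integral/Paley--Wiener form), then writing them as definable predicates requires Riemann sums of $t\mapsto\vp(F_t(x)y)$ to converge uniformly as $x,y$ range over a sort and over all models simultaneously --- exactly the sort-uniform modulus of continuity that you defer to the ``moreover'' clause and there justify only by gesture. That modulus does exist, but the clean source is total boundedness itself rather than $R$: for $x\in S_n$ one has $\|\Delta^{1/2}\eta_\vp(x)\|=\|\eta_\vp(x^*)\|_\vp$ and $\|\Delta^{-1/2}\eta_\vp(x)\|\leq\|\pi'(\eta_\vp(x))\|$, both bounded in terms of $n$ alone, and splitting the spectral integral for $\|(\Delta^{it}-1)\eta_\vp(x)\|^2$ over $[\e,1/\e]$ and its complement gives a modulus depending only on $n$ and $|t|$, uniformly over all models. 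With such a lemma stated and proved, both your axiomatization of $T^+$ and your ``moreover'' argument close up; as written, the central step of the proposal is unsupported.
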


However, in the weighted von Neumann algebra setting, we do not not yet know enough about the ultraproducts involved to conclude the analogous statement.  On the other hand, as in \cite{AGHS}, we can directly construct an expansion by definitions to include $\sigma_t$.

We begin by adding new functions symbols $\vP_n:S_n\to S_{(5n^2+4)}$ to our language and expand our dissections to interpret $\vP_n$ as the restriction to our sorts of the projection $P$ onto $\cal K$ as in Section \ref{SectionHilbertAlg} (also see \cite[Section 5]{RvD}.  To this end, we add the axioms:

\begin{enumerate}
    \item[(11)] $\sup_{x\in S_n} \vA_n(\vP_n(x))$; and
    \item[(12)] $\sup_{x\in S_n}\sup_{y\in S_1} \vp(y^*y(x-\vP_n(x)))$.
\end{enumerate}

We also add symbols $\vQ_n:S_n\to S_{(5n^2+4)}$ and $\vR_n:S_n\to S_{(10n^2+8)}$ to be interpreted as the corresponding restrictions of the projection onto $i\cal K$ and $R := P + Q$.  We add to our theory:

\begin{enumerate}
    \item[(13)] $\sup_{x\in S_n}d_{(5n^2+4)}(\vQ_n(x),i\vP_n(-ix))$; and
    \item[(14)] $\sup_{x\in S_n}d_{(10n^2+8)}(\vR_n(x),\vP_n(x)+\vQ_n(x))$.
\end{enumerate}

It now follows that models of axioms (1)-(14) are dissections of weighted von Neumann algebras with $\vP$ interpreted in a dissection as the real projection onto the closure of $\cal M_{\sa}$ and $\vQ$ as the corresponding projection onto the closure of $i\cal M_{\sa}$. 

By Theorem \ref{TomitaFund}, we have that $\Delta^{it}$ acts isometrically on both $\cal M$ and $\cal M'$. Thus:

\begin{lem}
    If $a\in S_n(\cal M)$, then for all $t\in \bbR$, we have $\Delta^{it} \eta_{\Phi}(a) = \eta_{\Phi}(b)$ for a unique $b \in S_n(\cal M)$.
\end{lem}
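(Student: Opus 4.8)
The plan is to take $b := \sigma_t^\Phi(a)$, the image of $a$ under the modular automorphism group, and verify that this $b$ does the job. The fundamental relation of Tomita--Takesaki theory gives $\Delta^{it}\eta_\Phi(a) = \eta_\Phi(\sigma_t^\Phi(a))$, so the existence of \emph{some} $b \in \cal M$ with $\Delta^{it}\eta_\Phi(a) = \eta_\Phi(b)$ is immediate from the fact, recorded in Theorem~\ref{TomitaFund}, that $\sigma_t^\Phi$ is a $^*$-automorphism of $\cal M$. Uniqueness is equally immediate: since $\Phi$ is faithful, $\eta_\Phi$ is injective, so $b$ is determined by $\eta_\Phi(b) = \Delta^{it}\eta_\Phi(a)$. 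The real content of the lemma is therefore that membership in $S_n(\cal M)$ is preserved, i.e.\ that $b$ is again totally $n$-bounded with $\|b\|_\Phi \le n$.

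Two of the three defining bounds are formal. First, $\sigma_t^\Phi$ is a $^*$-automorphism, hence isometric for the operator norm, so $\|b\| = \|a\| \le n$. Second, $\Delta^{it}$ is unitary on $\cal H$, so $\|b\|_\Phi = \|\Delta^{it}\eta_\Phi(a)\|_\Phi = \|\eta_\Phi(a)\|_\Phi = \|a\|_\Phi \le n$. It remains to control the right bounds of $\eta_\Phi(b)$ and of $\eta_\Phi(b^*)$. Since $b^* = \sigma_t^\Phi(a)^* = \sigma_t^\Phi(a^*)$, we have $\eta_\Phi(b^*) = \Delta^{it}\eta_\Phi(a^*)$, so both cases follow from a single argument applied to $a$ and to $a^*$, each of which is assumed right $n$-bounded.

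The crux is therefore the intertwining identity
\[
\pi'\!\left(\Delta^{it}\eta_\Phi(a)\right) = \Delta^{it}\,\pi'\!\left(\eta_\Phi(a)\right)\Delta^{-it},
\]
from which right $n$-boundedness of $\eta_\Phi(b)$ follows at once, as conjugation by the unitary $\Delta^{it}$ preserves operator norm, giving $\|\pi'(\eta_\Phi(b))\| = \|\pi'(\eta_\Phi(a))\| \le n$. To prove this identity I would test both sides against the dense Tomita algebra $\mathfrak{A}_0$ of Theorem~\ref{TomitaDense}, which is invariant under every $\Delta^{is}$. For $\eta \in \mathfrak{A}_0$ one computes $\pi'(\Delta^{it}\eta_\Phi(a))\eta = \pi(\eta)\,\Delta^{it}\eta_\Phi(a)$ and, using $\Delta^{-it}\eta \in \mathfrak{A}_0$, $\Delta^{it}\pi'(\eta_\Phi(a))\Delta^{-it}\eta = \Delta^{it}\pi(\Delta^{-it}\eta)\,\eta_\Phi(a)$; these agree precisely when $\Delta^{-it}\pi(\eta)\Delta^{it} = \pi(\Delta^{-it}\eta)$, which is exactly the fundamental fact that $\Delta^{is}$ implements $\sigma_s^\Phi$ on $\R_\lambda(\mathfrak{A})$, the normalization $\Delta^{it}\R_\lambda(\mathfrak{A})'\Delta^{-it} = \R_\lambda(\mathfrak{A})'$ of Theorem~\ref{TomitaFund} ensuring that the conjugated operator indeed lies in the right von Neumann algebra. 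The one point requiring care is the passage from the dense domain $\mathfrak{A}_0$ to all of $\cal H$, which I expect to be the main (though routine) step; it is handled by the boundedness of $\pi'(\eta_\Phi(a))$ and a standard continuity argument.
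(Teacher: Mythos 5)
Your proposal is correct and takes essentially the same route as the paper: the paper deduces the lemma in one line from Theorem~\ref{TomitaFund} (unitarity of $\Delta^{it}$ on $\cal H$ together with isometric invariance of both $\cal M$ and $\cal M'$ under conjugation by $\Delta^{it}$), which is exactly the content you verify in detail via $b=\sigma_t^\Phi(a)$ and the intertwining identity $\pi'(\Delta^{it}\eta_\Phi(a))=\Delta^{it}\pi'(\eta_\Phi(a))\Delta^{-it}$. The only stylistic difference is your detour through the Tomita algebra $\mathfrak{A}_0$, which forces the closing density step; since Theorem~\ref{TomitaFund} already states that $\Delta^{it}$ acts as automorphisms of the full left Hilbert algebra $\mathfrak{A}''$ itself, you could test the identity against all of $\mathfrak{A}$ directly and dispense with that step.
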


Let $X := \spec (R) \subseteq [0,2]$ denote the spectrum of $R$.  Define for $t \in \bbR$, the function $f_t : X \to \bbC$ defined by $f_t(x) = x^{it}$.  There exists polynomial functions $f_{t,n}$ on $X$ with coefficients from $\bbQ(i)$ such that $\|f_t - f_{t,n}\|_\infty < \frac{1}{n}$.  Moreover, these polynomials can be found effectively in the sense that the map taking $(t,n)$ and returning the coefficients of $f_{t,n}$ from $\bbN^2$ to $\bbQ(i)^{<\omega}$ is a computable map.  The next lemma is straightforward from the definition of $\Delta^{it}$:

\begin{lem}
    For all $m,n\geq 1$, we have
    $$\|\Delta^{it} - f_{t,m}(2-R)f_{-t,n}(R)\| < \frac{1}{m}\|f_{-t}\|_\infty + \frac{1}{n}\|f_{t,m}\|_\infty.$$
\end{lem}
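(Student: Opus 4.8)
The plan is to reduce everything to the single functional-calculus identity
\[
\Delta^{it} = f_t(2-R)\,f_{-t}(R),
\]
and then to control the error introduced by replacing $f_t$ and $f_{-t}$ by their polynomial approximants using the triangle inequality together with the spectral description of the operator norm.

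First I would record the identity $\Delta = (2-R)R^{-1}$, understood in the sense of the Borel functional calculus of $R$. This can be extracted directly from the relation displayed just after Lemma \ref{haTotalBound}: writing $h_a(\log\Delta)$ as a function of $\Delta$ and substituting $\Delta = (2-R)R^{-1}$ reproduces exactly $\tfrac{2R(2-R)}{e^{-a}(2-R)^2 + e^a R^2}$, so the two descriptions agree for every $a$; alternatively one cites the corresponding computations in \cite[Section 5]{RvD} and \cite{Kad}. Since $\Delta$ is nonsingular, $R$ is injective with $\spec(R) \subseteq (0,2]$, so $x \mapsto (2-x)x^{-1}$ is a well-defined positive Borel function there, realizing the unbounded $\Delta$ as a Borel function of the bounded operator $R$. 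Because $R$ and $2-R$ are both functions of $R$ they commute, whence
\[
\Delta^{it} = \bigl((2-R)R^{-1}\bigr)^{it} = (2-R)^{it}R^{-it} = f_t(2-R)\,f_{-t}(R),
\]
using $f_t(x)=x^{it}$. Both sides are bounded: each of $f_t,f_{-t}$ has modulus $1$ on $(0,2]$, and $\Delta^{it}$ is unitary.

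Next I would insert the approximants and split the error. Adding and subtracting $f_{t,m}(2-R)f_{-t}(R)$ gives
\[
\Delta^{it} - f_{t,m}(2-R)f_{-t,n}(R) = \bigl(f_t(2-R) - f_{t,m}(2-R)\bigr)f_{-t}(R) + f_{t,m}(2-R)\bigl(f_{-t}(R) - f_{-t,n}(R)\bigr).
\]
Now I would apply the fact that for a bounded Borel function $g$ one has $\|g(R)\| = \sup_{\lambda \in \spec(R)}|g(\lambda)|$, and likewise $\|g(2-R)\| = \sup_{\lambda \in \spec(2-R)}|g(\lambda)|$, where $\spec(2-R) = 2-\spec(R) \subseteq [0,2]$. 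Submultiplicativity then bounds the first summand by $\|f_t - f_{t,m}\|_\infty\,\|f_{-t}\|_\infty < \tfrac{1}{m}\|f_{-t}\|_\infty$ and the second by $\|f_{t,m}\|_\infty\,\|f_{-t} - f_{-t,n}\|_\infty < \tfrac{1}{n}\|f_{t,m}\|_\infty$, where $\|\cdot\|_\infty$ is the supremum over $[0,2]$ and the strict inequalities are precisely the approximation estimate $\|f_s - f_{s,k}\|_\infty < 1/k$. Summing the two bounds yields the claim.

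The only genuinely delicate point is the first step: justifying $\Delta^{it} = f_t(2-R)f_{-t}(R)$, which requires knowing that $R$ is injective with spectrum in $(0,2]$ and that $2-R$ and $R$ commute, so that the functional calculus of $R$ factorizes the modular operator. Once this identity is in hand, the rest is the routine two-term triangle-inequality estimate with spectral-radius norm bounds, which is why the statement is \emph{straightforward from the definition of $\Delta^{it}$}.
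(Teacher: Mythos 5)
Your proof is correct and is precisely the argument the paper has in mind: the paper offers no proof at all, dismissing the lemma as ``straightforward from the definition of $\Delta^{it}$,'' and that definition is exactly the identity $\Delta = (2-R)R^{-1}$ you establish, followed by your two-term triangle-inequality estimate with spectral norm bounds. One small point in your favor: by taking $\|\cdot\|_\infty$ over all of $[0,2]$ you explicitly account for the fact that $f_{t,m}$ is evaluated at $2-R$, whose spectrum is $2-\spec(R)$ rather than $X=\spec(R)$, a detail the paper's statement of the approximation property glosses over.
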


Denote the quantity on the right hand side of the inequality above by $\delta_{t,m,n}$. Note that the map $(t,m,n)\mapsto \delta_{t,m,n}$ is computable.  Set $q_{t,n}\in \bbN$ to an integer such that $(f_{-t,n}(R_1))(S_1) \subseteq S_{q_{t,n}}$.  For each $t \in \bbQ$, we add a symbol $\mathbf{\Delta}^{it} : S_1 \to S_1$ to the language and continue enumerating $T_{\vNa-\md}$ by adding the following axioms:

\begin{itemize}
    \item[(15)] $\sup_{x\in S_1}[\|\mathbf{\Delta}^{it}(x)-f_{t,m}(2-\vR_{q_{t,n}})(f_{-t,n}(\vR_1)(x)))\|_\Phi \dminus \delta_{t,m,n}].$
\end{itemize}

To the axioms in (15) completely rigorous, we need to plug $\Delta^{it}(x)$ into appropriate inclusion mappings.  We can do this because $\vR$'s takes values in sorts $S_p$ with $p$ predictably depending on $t$, $m$, and $n$.

The final lemma of this section is immediate from Theorem \ref{TomitaFund}.

\begin{lem}
    For each $a \in S_n(\cal M)$ and $t \in \bbR$, we have $\sigma^{\Phi}_t(a)\in S_n$.
\end{lem}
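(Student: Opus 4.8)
The plan is to reduce this statement to the preceding lemma by identifying the modular automorphism $\sigma^\Phi_t$ with the spatial action of $\Delta^{it}$ on the full left Hilbert algebra. Recall that $\sigma^\Phi_t$ is defined by $x \mapsto \Delta^{it} x \Delta^{-it}$; the first bullet of Theorem \ref{TomitaFund} guarantees $\Delta^{it}\R_\lambda(\mathfrak{A})\Delta^{-it} = \R_\lambda(\mathfrak{A})$, so this is a genuine $^*$-automorphism of $\cal M$, and the second bullet records that it restricts to the automorphism of $\mathfrak{A}'' = \mathfrak{A}$ induced by $\Delta^{it}$. The compatibility built into this definition is that $\sigma^\Phi_t$ is implemented on semicyclic vectors, i.e.\ $\eta_\Phi(\sigma^\Phi_t(a)) = \Delta^{it}\eta_\Phi(a)$ for every $a \in D_\Phi \cap D_\Phi^*$. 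Granting this, the lemma is immediate: the preceding lemma produces a unique $b \in S_n(\cal M)$ with $\Delta^{it}\eta_\Phi(a) = \eta_\Phi(b)$, and injectivity of $\eta_\Phi$ forces $\sigma^\Phi_t(a) = b \in S_n(\cal M) \subseteq S_n$.

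If one prefers to verify the three defining conditions of $S_n$ by hand, each follows from a separate piece of Theorem \ref{TomitaFund}. Since $\sigma^\Phi_t$ is a $^*$-automorphism of $\cal M$ preserving $\Phi$, it preserves both the operator norm and $\|\cdot\|_\Phi$, so $\|\sigma^\Phi_t(a)\| = \|a\| \leq n$ and $\|\sigma^\Phi_t(a)\|_\Phi = \|a\|_\Phi$. For right boundedness, note that $J\Delta J = \Delta^{-1}$ together with the antilinearity of $J$ gives $J\Delta^{it} = \Delta^{it}J$, and a short computation using the spatial implementation $\Delta^{it}\pi(c)\Delta^{-it} = \pi(\sigma^\Phi_t(c))$ yields $\pi'(\Delta^{it}v) = \Delta^{it}\pi'(v)\Delta^{-it}$ for right-bounded $v$; as $\Delta^{it}$ is unitary this preserves the right bound, so $\eta_\Phi(\sigma^\Phi_t(a)) = \Delta^{it}\eta_\Phi(a)$ is again right $n$-bounded. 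Applying the same to $a^*$ and using $\sigma^\Phi_t(a)^* = \sigma^\Phi_t(a^*)$ takes care of the adjoint.

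The only non-formal ingredient is the spatial implementation identity $\eta_\Phi(\sigma^\Phi_t(a)) = \Delta^{it}\eta_\Phi(a)$, but this is exactly how $\sigma^\Phi_t$ is extracted from Tomita--Takesaki theory on $\mathfrak{A}$, so there is no genuine obstacle and the lemma is as immediate as claimed.
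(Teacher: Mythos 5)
Your proposal is correct and takes essentially the same route as the paper, whose entire proof is that the lemma is ``immediate from Theorem \ref{TomitaFund}'': namely, combine the preceding lemma (that $\Delta^{it}\eta_{\Phi}(a)=\eta_{\Phi}(b)$ for a unique $b\in S_n(\cal M)$) with the spatial implementation $\eta_{\Phi}(\sigma^{\Phi}_t(a))=\Delta^{it}\eta_{\Phi}(a)$. Your second paragraph merely fills in by hand the verifications the paper leaves implicit.
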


We complete our axiomatization $T_{\vNa-\md}$ by adding function symbols $\mathbf{\sigma}_{t}$ to the language and the following axioms:
\begin{enumerate}
    \item[(16)] $\sup_{a,x\in S_1} d_1(\mathbf{\sigma}_{t}(a)x,\mathbf{\mathbf{\Delta}}^{it}(a\cdot \mathbf{\Delta}^{-it}(x)))$.
\end{enumerate}

We have now described a language $\cal L_{\vNa-\md}$ extending the language $\cal L$ and an $\cal L_{\vNa-\md}$-theory $T_{\vNa-\md}$ extending $T_{\vNa}$ for which the following theorem holds:

\begin{thm}\label{moddefinable}
    The category of models of $T_{\vNa-\md}$ consists of the dissections of weighted von Neumann algebras with the symbols $\vP$, $\vQ$, $\vR$, and $\mathbf{\Delta^{it}}$ interpreted as above (restricted to their appropriate sorts) and with the symbols $\sigma_t$ interpreted as the modular automorphism group of the state (restricted to the sort $S_1$).  Moreover, the theory $T_{\vNa-\md}$ is effectively axiomatized.
\end{thm}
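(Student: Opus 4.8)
The plan is to prove that $T_{\vNa\text{-}\md}$ is a definitional expansion of $T_{\vNa}$: that is, adding the symbols $\vP_n,\vQ_n,\vR_n,\mathbf{\Delta}^{it},\sigma_t$ together with axioms (11)--(16) creates no new models but forces each new symbol to carry exactly the intended interpretation. By Theorem \ref{correctness} every model of $T_{\vNa}$ is (up to the equivalence there) a dissection $\cal D(\cal M,\Phi)$, so I would fix such a dissection and, proceeding through the symbols in the order $\vP_n,\vQ_n,\vR_n,\mathbf{\Delta}^{it},\sigma_t$, verify two things at each step: that the intended operator satisfies the relevant axioms (so these enriched dissections really are models), and that the axioms pin the symbol down uniquely (so there are no others). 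Each step may use the symbols already fixed by the earlier steps.

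For $\vP_n$ the two inputs are axiom (11), which --- using that $\vA_n$ computes the $\|\cdot\|_\Phi$-distance to $\cal M_{\sa}$, as established in the proof of Theorem \ref{correctness} --- forces $\vP_n(x)$ into the closure $\cal K$ of the self-adjoint elements, and axiom (12), which forces $x-\vP_n(x)$ to be orthogonal to $\cal K$ for the inner product $\ip{\cdot}{\cdot}=\Phi(\,\cdot^*\,\cdot\,)$ (the positive elements $y^*y$, $y\in S_1$, spanning a dense real subset of $\cal K$). Since orthogonal projection onto a closed subspace is unique, $\vP_n$ must be the restriction of the projection $P$ of Section \ref{SectionTotallyBounded}; that $P$ maps $S_n$ into $S_{(5n^2+4)}$ is precisely Theorem \ref{PQtotalbounds}, so the declared range is correct and the intended interpretation satisfies the axioms. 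Axioms (13) and (14) then \emph{define} $\vQ_n$ and $\vR_n$ outright from $\vP_n$ via $Q=iP(-i\,\cdot)$ and $R=P+Q$, with the ranges justified by Theorem \ref{PQtotalbounds} and Corollary \ref{Rtotalbound}.

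For $\mathbf{\Delta}^{it}$ with $t\in\bbQ$ the essential input is the approximation lemma $\|\Delta^{it}-f_{t,m}(2-R)f_{-t,n}(R)\|<\delta_{t,m,n}$ together with $\delta_{t,m,n}\to 0$. Quantified over all $m,n$, axiom (15) confines $\mathbf{\Delta}^{it}(x)$ to within $\delta_{t,m,n}$ (up to the uniformly bounded factor $\|x\|_\Phi$ on $S_1$) of the polynomial expressions $f_{t,m}(2-\vR)f_{-t,n}(\vR)(x)$, and letting $m,n\to\infty$ squeezes $\mathbf{\Delta}^{it}(x)$ onto $\Delta^{it}\eta_\Phi(x)$; this is well-defined because $\Delta^{it}$ is a function of $R$ by the functional calculus, and it lands in $S_1$ by the lemma (just before $X:=\spec(R)$) that $\Delta^{it}$ preserves each sort. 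Finally axiom (16) forces $\sigma_t(a)\,x=\mathbf{\Delta}^{it}(a\cdot\mathbf{\Delta}^{-it}(x))$ for all $x\in S_1$; since $S_1$ is dense in $\cal H$ this reads $\sigma_t(a)=\Delta^{it}a\Delta^{-it}$ as operators, i.e.\ the modular automorphism $\sigma^\Phi_t$, whose sort-preservation is the final lemma of the previous section. Irrational $t$ is then obtained by extending continuously in $t$.

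The effective-axiomatization claim is routine: axioms (1)--(10) were already noted to be computably enumerable, the bounds $5n^2+4$ and $10n^2+8$ are computable in $n$, and the remaining data --- the coefficients of the $f_{t,n}$, the bounds $\delta_{t,m,n}$, and the sorts $q_{t,n}$ --- were arranged to depend computably on $(t,m,n)\in\bbQ\times\bbN\times\bbN$, so the full list is computably enumerable. I expect the only genuinely delicate step to be axiom (15): one must confirm both that $f_{t,m}(2-R)f_{-t,n}(R)\to\Delta^{it}$ in operator norm and that the limit preserves $S_1$. Both are handed to us by the approximation lemma and the sort-preservation lemma, so after those the remaining effort is careful bookkeeping of the sorts into which each composite term lands and the verification that the intended interpretations satisfy every axiom.
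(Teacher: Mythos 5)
Your proposal is correct and follows essentially the same route as the paper, which leaves this theorem without a separate proof precisely because the verification is the content of the preceding construction: axioms (11)--(14) pin down $\vP_n,\vQ_n,\vR_n$ via uniqueness of the orthogonal decomposition and Theorem \ref{PQtotalbounds}/Corollary \ref{Rtotalbound}, axiom (15) squeezes $\mathbf{\Delta}^{it}$ onto $\Delta^{it}$ using the norm-approximation lemma with $\delta_{t,m,n}\to 0$, axiom (16) then forces $\sigma_t=\sigma^\Phi_t$ by density, and effectiveness follows from the computability of the coefficients, bounds, and sort indices. The only superfluous step is your continuous extension to irrational $t$: the language (for the sake of effective axiomatizability) only contains symbols $\mathbf{\Delta}^{it}$ and $\sigma_t$ for $t\in\bbQ$, so no such extension is required.
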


\section{Hilbert Algebra Ultraproducts}\label{SectionHAUltraproduct}

We give a natural generalization of Ocneanu ultraproducts to Hilbert algebras.  The primary aim of this section is to show that this ultraproduct of Hilbert algebras agrees with the metric structure ultraproduct for $T_{\vNa}$.

\begin{defn}
    Let $(\mathfrak{A}_i, \cal H_i)$ be a family of full left Hilbert algebras and let $\Phi_i$ be the induced faithful normal semifinite weight on the left von Neumann algebra $\cal R(\mathfrak{A}_i)$.  Define $\ell_{\Phi}^\infty(\mathfrak{A}_i)$ to be the set of all sequences $(x_i)$ such that $\|x_i\|_{\Phi_i}$ and $\|\pi(x_i)\|$ are both uniformly bounded.  Define the subalgebra
    \[
        \cal I_{\cU} = \bigl\{(x_i) \in \ell_{\Phi}^\infty(\mathfrak{A}_i) \ : \ \lim_{i \to \cU}\|x_i\|_{\Phi_i}^{\sharp} = 0\bigr\}
    \]
    and its two-sided normalizer
    \[
        \cal N_{\cU} = \{(x_i) \in \ell_{\Phi}^\infty(\mathfrak{A}_i) \ : \ (x_i) \cal I_{\cU} \subseteq \cal I_{\cU} \text{ and } \cal I_{\cU} (x_i)  \subseteq \cal I_{\cU}\}.
    \]
    Then the \textbf{Hilbert algebra ultraproduct} $\prod_{\HA}^{\cU} (\mathfrak{A}_i, \cal H_i)$ is defined to be $\cal N_{\cU}/\cal I_{\cU}$ together with its Hilbert space completion.
\end{defn}

The next proposition is clear.

\begin{prop}\label{HAandOcStates}
    The Hilbert algebra ultraproduct agrees with the Ocneanu ultraproduct when every $\Phi_i$ is a state.
\end{prop}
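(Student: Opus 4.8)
The plan is to show that when every $\Phi_i$ is a state, the three ingredients in the definition of the Hilbert algebra ultraproduct—the $\ell^\infty_\Phi$-bounded sequences, the ideal $\cal I_{\cU}$, and its normalizer $\cal N_{\cU}$—coincide with the corresponding ingredients in the Ocneanu ultraproduct, so that the quotients agree as $^*$-algebras, and then observe that the Hilbert space completions agree because the $\|\cdot\|_\Phi^\#$-norms are computed identically. First I would recall that when $\Phi_i$ is a state, it is in particular finite on all of $\cal M_i$, so the associated full left Hilbert algebra is $\cal M_i$ itself acting via the GNS construction with cyclic vector $\xi_i = \eta_{\Phi_i}(1)$, and $D_{\Phi_i} = D_{\Phi_i}^* = \cal M_i$. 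Consequently $\|x_i\|_{\Phi_i} = \sqrt{\Phi_i(x_i^*x_i)}$ is exactly the GNS $2$-norm of the state, and the uniform bound on $\|\pi(x_i)\|$ is the usual uniform operator-norm bound used to define $\ell^\infty(\cal M_i)$ in the Ocneanu construction.

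Next I would match the ideals. The set $\cal I_{\cU}$ consists of $\|\cdot\|^\#_{\Phi_i}$-null sequences, and since $\|x_i\|_{\Phi_i}^\# = \sqrt{\tfrac{1}{2}\bigl(\Phi_i(x_i^*x_i) + \Phi_i(x_ix_i^*)\bigr)}$, having $\lim_{i\to\cU}\|x_i\|^\#_{\Phi_i} = 0$ is equivalent to having both $\lim_{i\to\cU}\Phi_i(x_i^*x_i) = 0$ and $\lim_{i\to\cU}\Phi_i(x_ix_i^*) = 0$; this is precisely the condition defining the ideal of sequences that are strong-$^*$ null in the Ocneanu trace-kernel ideal for states. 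Hence $\cal I_{\cU}$ is literally the defining ideal of the Ocneanu ultraproduct in the state case. Once the ambient bounded sequences and the ideal agree, the two-sided normalizer $\cal N_{\cU}$ defined here coincides term-for-term with the normalizer used in the definition of the Ocneanu ultraproduct, since a normalizer is determined by the pair (ambient algebra, ideal). Therefore $\cal N_{\cU}/\cal I_{\cU}$ is exactly the Ocneanu ultraproduct as a $^*$-algebra, and the induced inner product $\langle (x_i),(y_i)\rangle = \lim_{i\to\cU}\Phi_i(y_i^*x_i)$ is the same limit state in both constructions, so the Hilbert space completions coincide as well.

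I do not expect a serious obstacle here, which is consistent with the statement being labeled ``clear''; the only point requiring a line of care is the bookkeeping identification of the two normalizers, namely checking that the definition of $\cal N_{\cU}$ used in the Ocneanu ultraproduct is set up relative to the same ambient algebra $\ell^\infty_\Phi(\mathfrak{A}_i)$ and the same ideal, rather than a superficially different but equivalent description. If the reference definition of the Ocneanu ultraproduct phrases the normalizer condition asymmetrically or uses only one of the two null conditions, I would insert a short remark reconciling the two by invoking that for uniformly bounded sequences $\Phi_i(x_i^*x_i)\to 0$ together with the uniform norm bound already controls $\Phi_i((ax_i)^*(ax_i))$ and the corresponding right-multiplication quantities. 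Beyond that, the proof is a direct unwinding of definitions, so a couple of sentences making these identifications explicit suffices.
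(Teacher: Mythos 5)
Your proposal is correct and matches the paper's intent exactly: the paper offers no proof at all (it simply declares the proposition clear), and the justification it has in mind is precisely your definitional unwinding — in the state case $D_{\Phi_i}=\cal M_i$, the $\|\cdot\|_{\Phi_i}$-bound is implied by the operator-norm bound so the ambient sequence algebras coincide, and the ideal, normalizer, and induced inner product are then literally the same as in the Ocneanu construction. No gaps; your care about reconciling the normalizer definitions is the right (and only) point of bookkeeping.
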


We would like to analyze the more general normal faithful semifinite weight case.  First, recall the F\'ejer kernel $F_{a} \in L^1(\bbR)$ defined for $a > 0$ by
\[
F_a(t)= \frac{a}{2\pi}1_{\{t=0\}}+\frac{1-\cos(at)}{\pi a t^2}1_{\{t\not=0\}}.
\]
and its Fourier transform 
\[
\widehat{F_a}(\lambda)= \left(1 - \frac{|\lambda|}{a}\right) 1_{\{|\lambda| \leq a\}} + (0) 1_{\{|\lambda| > a\}}.
\]
At this point, we recall some Arveson spectral theory, particularly the spectral subspaces $M(\sigma^\Phi,[-a,a])$, recording some of their properties that we need. See \cite{AH} and \cite[Chapter XI]{Takesaki} for a more thorough review.

\begin{prop}\label{modularsubspaces} Let $x \in M(\sigma^\Phi,[-a,a])$ for some $a > 0$ be given.
\begin{enumerate}
    \item The map $t\mapsto \sigma^\Phi_t(x)$ extends to an entire function $\mathbb{C}\to \cal M$; moreover, for any $z\in \mathbb{C}$, there is a constant $C_{a,z}$ depending only on $a$ and $z$ such that $\|\sigma^\Phi_z(x)\|\leq C_{a,z}\|x\|$.
    \item We have $x\in \cal M_{\tb}$ and $\|x\|_{\rt},\|x^*\|_{\rt}\leq C_{a,i/2}\|x\|$.
\end{enumerate}
\end{prop}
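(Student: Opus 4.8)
The plan is to use the Fejér kernel together with its Fourier transform to realize the spectral localization of $x$ analytically, and to exploit the known analytic structure of spectral subspaces. For part (1), since $x \in M(\sigma^\Phi, [-a,a])$, the Arveson spectrum of $x$ under the modular flow is contained in $[-a,a]$. The standard fact (see \cite{AH} and \cite[Chapter XI]{Takesaki}) is that elements with compact Arveson spectrum are precisely the analytic elements: the map $t \mapsto \sigma^\Phi_t(x)$ extends to an entire $\bbC$-valued function. Concretely, I would write
\[
\sigma^\Phi_z(x) = \int_{\bbR} \widehat{g}(t)\, \sigma^\Phi_t(x)\, \operatorname{dt}
\]
for a suitable Schwartz function $g$ whose Fourier transform $\widehat g$ equals $e^{izt}$ on a neighborhood of $[-a,a]$ (for instance by mollifying with $\widehat{F_b}$ for $b$ slightly larger than $a$, which equals $1$ on $[-a,a]$). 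Because $\sigma^\Phi_t$ is isometric on $\cal M$ and the integrand is absolutely integrable, the norm bound $\|\sigma^\Phi_z(x)\| \leq C_{a,z}\|x\|$ follows with $C_{a,z} = \|\widehat g\|_{L^1}$, a constant depending only on $a$ and $z$; entirety follows by differentiating under the integral sign.

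For part (2), the goal is to show $x \in \cal M_{\tb}$ with an explicit bound on the right-bounded norms of $\eta_\Phi(x)$ and $\eta_\Phi(x^*)$, controlled by $C_{a,i/2}\|x\|$. The key identity is the KMS-type relation connecting the modular operator $\Delta^{1/2}$ to the flow at the imaginary time $z = i/2$: for analytic $x$, one has $\Delta^{1/2}\eta_\Phi(x) = \eta_\Phi(\sigma^\Phi_{-i/2}(x))$ (up to the sign convention fixed by $S = J\Delta^{1/2}$ in the excerpt). This lets me transfer the analytic bound from part (1) into a statement about right boundedness. Specifically, I would use that for $y$ analytic, the right-multiplication operator $\pi'(\eta_\Phi(x))$ is controlled by $\|\sigma^\Phi_{i/2}(x)\|$ via the standard passage from left to right bounded vectors through the modular operator, so that $\|\pi'(\eta_\Phi(x))\| \leq \|\sigma^\Phi_{i/2}(x)\| \leq C_{a,i/2}\|x\|$. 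The same argument applied to $x^*$ (whose Arveson spectrum is the reflection of that of $x$, hence still in $[-a,a]$) yields the bound on $\|x^*\|_{\rt}$.

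The main obstacle I expect is part (2): making the passage between the analytic continuation $\sigma^\Phi_{i/2}(x)$ and the right-boundedness of $\eta_\Phi(x)$ fully rigorous with the correct constant. The subtlety is that right boundedness is a statement about the operator $a \mapsto a \eta_\Phi(x)$ on the Hilbert algebra, and relating its norm to the norm of a modular-analytic continuation requires carefully invoking the identification of $\Delta^{1/2}\eta_\Phi(x)$ with $\eta_\Phi(\sigma^\Phi_{-i/2}(x))$ and then using the $J$-symmetry ($S = J\Delta^{1/2}$, $F = J\Delta^{-1/2}$ from the Tomita--Takesaki structure recalled before Theorem \ref{TomitaFund}) to convert a left-bounded-vector statement into a right-bounded one. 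Everything else (entirety, the $L^1$-norm bound, the spectral reflection for $x^*$) is routine once the mollifier $g$ is chosen, so I would concentrate the care on this KMS/modular-operator bookkeeping and on verifying that the domains genuinely contain $\eta_\Phi(x)$, which is guaranteed precisely because $x$ is modular-analytic.
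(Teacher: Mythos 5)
Your proposal follows essentially the same route as the paper: part (1) is the Fej\'er-kernel smoothing argument that the paper simply cites (\cite[Lemma 4.2]{Ha79}, restated in \cite{AH}), and part (2) is exactly the paper's computation, converting right-boundedness of $\eta_\Phi(x)$ into the operator-norm bound $\|\sigma^\Phi_{i/2}(x)\| \leq C_{a,i/2}\|x\|$ via $S = J\Delta^{1/2}$ and Theorem \ref{TomitaFund}, with the $x^*$ case handled by closure of $M(\sigma^\Phi,[-a,a])$ under adjoints. One small correction to your part (1): $\widehat{F_b}$ is a tent function, not identically $1$ on $[-a,a]$, so the mollifier must be a trapezoidal (de la Vall\'ee Poussin-type) combination of Fej\'er transforms rather than a single $\widehat{F_b}$ --- a routine fix that does not affect the argument.
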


\begin{proof}
The proof of item (1) first appears for faithful normal semifinite weights in \cite[Lemma 4.2]{Ha79} and is reiterated in the state case in \cite[Lemma 4.13]{AH}.  The proof consists of estimates using the F\'ejer kernel.

Toward item (2), suppose $x \in M(\sigma^\Phi,[-a,a])$ and take $y \in \eta_{\Phi}(\cal M)$; we show that $\|y\eta_{\Phi}(x)\|_{\Phi} \leq C_{a,i/2} \cdot \|x\| \cdot \|\eta_{\Phi}(y)\|_{\Phi}$.  To see this, we calculate as follows:
\begin{align}
\|y\eta_{\Phi}(x)\|_{\Phi} &= \|(JyJ)(S\Delta^{-1/2}\eta_{\Phi}(x))\|_{\Phi} \nonumber && \text{since $J$ is isom. and $JS\Delta^{-1/2} = 1$} \\ \nonumber
            &=\|(JyJ)S\sigma^\Phi_{i/2}(\eta_{\Phi}(x))\|_{\Phi} && \text{by def. of $\sigma^\Phi_{i/2}$} \\ \nonumber
            &= \|(\sigma^\Phi_{i/2}(x))^*(\eta_{\Phi}(Jy))\|_{\Phi} && \text{by Theorem \ref{TomitaFund}}\\ \nonumber
            &\leq \|\sigma_{i/2}(x)^*\|\cdot \|\eta_{\Phi}(Jy)\|_{\Phi} && \text{by def. of operator norm}\\ \nonumber
            &\leq C_{a,i/2}\cdot \|x\| \cdot \|\eta_{\Phi}(y)\|_{\Phi}.  \nonumber
\end{align}
The result for $x^*$ since $M(\sigma^\Phi,[-a,a])$ is closed under adjoints.
\end{proof}

\begin{lem}\label{modcommutelem2}
    If there exists $a > 0$ and $K > 0$ such that $x_n \in M(\sigma^{\Phi_{n}},[-a,a])$ and $\|x_n\| \leq K$, then $x_n \in \cal N_{\cU}$.
\end{lem}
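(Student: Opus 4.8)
The plan is to verify directly the two defining requirements for membership in $\cal N_{\cU}$: that $(x_n)$ belongs to $\ell^\infty_\Phi(\mathfrak{A}_i)$, and that it two-sidedly normalizes the ideal $\cal I_{\cU}$. Membership in $\ell^\infty_\Phi(\mathfrak{A}_i)$ asks for uniform bounds on $\|\pi(x_n)\| = \|x_n\|$ and on $\|x_n\|_{\Phi_n}$; the first is exactly the hypothesis $\|x_n\| \leq K$, while the uniform $\Phi$-norm bound is taken as part of the hypothesis that $(x_n)$ defines an element of $\ell^\infty_\Phi(\mathfrak{A}_i)$. The substance of the lemma is therefore the normalizer condition, and the decisive input will be Proposition \ref{modularsubspaces}(2): each $x_n \in M(\sigma^{\Phi_n},[-a,a])$ is totally bounded, with $\eta_{\Phi_n}(x_n)$ and $\eta_{\Phi_n}(x_n^*)$ both right $C_{a,i/2}K$-bounded, \emph{uniformly} in $n$.

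Fix $(y_n) \in \cal I_{\cU}$, so that both $\Phi_n(y_n^* y_n)$ and $\Phi_n(y_n y_n^*)$ tend to $0$ along $\cU$. To show $(x_n y_n) \in \cal I_{\cU}$ I would expand $(\|x_n y_n\|^\sharp_{\Phi_n})^2$ into its two summands. The summand $\Phi_n(y_n^* x_n^* x_n y_n)$ is handled by the operator inequality $x_n^* x_n \leq K^2 \u$ and positivity of $\Phi_n$, giving the bound $K^2 \Phi_n(y_n^* y_n) \to 0$. The other summand $\Phi_n(x_n y_n y_n^* x_n^*)$ equals $\|y_n^* x_n^*\|_{\Phi_n}^2$; here the operator norm of $x_n$ acts on the wrong side, so instead I rewrite $\eta_{\Phi_n}(y_n^* x_n^*) = \pi'(\eta_{\Phi_n}(x_n^*))\,\eta_{\Phi_n}(y_n^*)$ via the right-multiplication identity and bound its norm by $\|\pi'(\eta_{\Phi_n}(x_n^*))\|\,\|y_n^*\|_{\Phi_n} \leq C_{a,i/2}K\,\sqrt{\Phi_n(y_n y_n^*)} \to 0$. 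The argument for $(y_n x_n) \in \cal I_{\cU}$ is symmetric: $\Phi_n(y_n x_n x_n^* y_n^*) \leq K^2 \Phi_n(y_n y_n^*)$, while $\Phi_n(x_n^* y_n^* y_n x_n) = \|y_n x_n\|_{\Phi_n}^2$ is controlled by writing $\eta_{\Phi_n}(y_n x_n) = \pi'(\eta_{\Phi_n}(x_n))\,\eta_{\Phi_n}(y_n)$ and pairing the right $C_{a,i/2}K$-bound on $\eta_{\Phi_n}(x_n)$ against $\sqrt{\Phi_n(y_n^* y_n)} \to 0$.

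The main obstacle is precisely the two ``crossed'' summands $\Phi_n(x_n y_n y_n^* x_n^*)$ and $\Phi_n(x_n^* y_n^* y_n x_n)$, where multiplication by $x_n$ lands on the side that the operator-norm estimate cannot reach. The whole point of the hypothesis $x_n \in M(\sigma^{\Phi_n},[-a,a])$, rather than merely $\|x_n\| \leq K$, is that it converts left multiplication by $x_n$ (resp.\ $x_n^*$) into the bounded right-convolution operator $\pi'(\eta_{\Phi_n}(x_n))$ (resp.\ $\pi'(\eta_{\Phi_n}(x_n^*))$), whose norm is controlled uniformly in $n$ through Proposition \ref{modularsubspaces}(2). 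Passing to the ultralimit of the four estimates then yields $\lim_{n\to\cU}\|x_n y_n\|^\sharp_{\Phi_n} = \lim_{n\to\cU}\|y_n x_n\|^\sharp_{\Phi_n} = 0$, which is exactly the two-sided normalizer condition and completes the proof.
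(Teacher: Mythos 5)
Your proposal is correct and follows essentially the same route as the paper: both reduce the normalizer condition to the uniform right-boundedness estimate of Proposition \ref{modularsubspaces}(2) (equivalently, the $\sigma^{\Phi}_{\pm i/2}$ computation from its proof) for the two ``crossed'' terms, and to the plain operator-norm inequality for the other two, then pass to the ultralimit. The paper's proof is just a compressed version of this, writing out only one side of the two-sided condition and leaving the remaining estimates as ``similarly.''
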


\begin{proof}
     Assume there exists $a > 0$ and $K > 0$ such that $x_n \in M(\sigma^{\Phi_{n}},[-a,a])$ and $\|x_n\| \leq K$.  Let $y_n \in \cal I_{\cU}$ be given.  The same computation as in the previous proof gives
     \[
     \|(xy)^*\|_{\Phi_n} \leq C_{a, -i/2}\|x_n\|\|y_n\|_{\Phi_n}.
     \]
     But $C_{a, -i/2}\|x_n\|\|y_n\|_{\Phi_n}$ goes to $0$ along $\cU$.  Similarly $\|x_n y_n\|_{\Phi_n} \to 0$ as $n \to \cU$.
\end{proof}

Now we state the main result of this section.

\begin{thm}\label{ultrapres}
Let $(\cal M_i,\vp_i)$ be a family of weighted von Neumann algebras for all $i \in I$ and $\cU$ is be ultrafilter on $I$.  Then $\mathcal{D}\left(\prod_{\HA}^{\cU} (\cal M_i,\Phi_i)\right)\cong \prod_\cU \mathcal{D}\left(\cal M_i,\Phi_i\right)$.
\end{thm}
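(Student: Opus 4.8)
The plan is to construct an explicit isomorphism of $\cal L_{\vNa}$-structures, sort by sort. On the sort $S_n$ I set $\Theta([(x_i)]) := [(x_i)]$, sending the class of a uniformly totally $n$-bounded sequence in $\prod_\cU \cal D(\cal M_i,\Phi_i)$ to the class of the same sequence in $\cal N_{\cU}/\cal I_{\cU}$. The first point is that such a sequence normalizes $\cal I_{\cU}$, so that $\Theta$ actually lands in $\prod_{\HA}^{\cU}$: for $(z_i)\in\cal I_{\cU}$, left $n$-boundedness gives $\|x_i z_i\|_{\Phi_i}=\|\pi(x_i)\eta_{\Phi_i}(z_i)\|\le n\|z_i\|_{\Phi_i}$, while right $n$-boundedness of $\eta_{\Phi_i}(x_i^*)$ gives $\|(x_iz_i)^*\|_{\Phi_i}=\|\pi'(\eta_{\Phi_i}(x_i^*))\eta_{\Phi_i}(z_i^*)\|\le n\|z_i^*\|_{\Phi_i}\le \sqrt2\,n\,\|z_i\|_{\Phi_i}^{\sharp}$, and both tend to $0$ along $\cU$; the computation for $z_i x_i$ is symmetric, so $(x_i)\in\cal N_{\cU}$. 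The same ultralimit estimates show $\|\pi([(x_i)])\|\le n$ together with right $n$-boundedness of the class and of its adjoint, so the image is genuinely totally $n$-bounded in $\prod_{\HA}^{\cU}$.

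Because the inner product on $\cal N_{\cU}/\cal I_{\cU}$ is $\ip{[(x_i)]}{[(y_i)]}=\lim_{i\to\cU}\Phi_i(y_i^*x_i)$, the induced weight satisfies $\Phi([(x_i)]^*[(x_i)])=\lim_{i\to\cU}\Phi_i(x_i^*x_i)$; hence $\Theta$ is isometric for $d_n$ and $d_n^{\sharp}$, intertwines the weight predicates $\Phi_n$, and preserves the $^*$-algebra operations, all of which are computed coordinatewise on both sides. In particular $\Theta$ is injective. Since $\prod_\cU\cal D(\cal M_i,\Phi_i)$ is a model of $T_{\vNa}$ by {\L}o\'s's theorem and hence, by Theorem \ref{correctness}, is itself the dissection of a weighted von Neumann algebra, once $\Theta$ is shown to be a bijective weight-preserving $^*$-isomorphism the remaining predicate $\vA_n$ (distance to the self-adjoints of $S_{5n^2+4}$) is matched automatically, as it is the canonical interpretation in any dissection and $\Theta$ preserves $\|\cdot\|_{\Phi}$ and adjoints.

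Everything therefore reduces to surjectivity: that every totally $n$-bounded element of $\prod_{\HA}^{\cU}$ is represented by a \emph{uniformly totally bounded} sequence, not merely by a sequence in $\ell^\infty_\Phi$. I expect this to be the main obstacle, and it is where the Arveson spectral theory of Proposition \ref{modularsubspaces} and Lemma \ref{modcommutelem2} is indispensable. The crucial intermediate step is the \textbf{modular compatibility} identity $\sigma^{\Phi}_t([(x_i)])=[(\sigma^{\Phi_i}_t(x_i))]$ on $\cal N_{\cU}/\cal I_{\cU}$, where $\sigma^{\Phi}$ is the modular flow of $\Phi$ on $\prod_{\HA}^{\cU}$. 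I would prove this by first checking that $\alpha_t([(x_i)]):=[(\sigma^{\Phi_i}_t(x_i))]$ is a well-defined one-parameter group of $^*$-automorphisms of $\cal N_{\cU}/\cal I_{\cU}$ (using that each $\sigma^{\Phi_i}_t$ preserves $\|\cdot\|_{\Phi_i}^{\sharp}$ and hence both $\cal I_{\cU}$ and its normalizer), that it leaves $\Phi$ invariant, and that it satisfies the KMS condition for $\Phi=\lim_{i\to\cU}\Phi_i$; the strip estimates needed for KMS are exactly the uniform entire bounds $\|\sigma^{\Phi_i}_z(x_i)\|\le C_{a,z}\|x_i\|$ of Proposition \ref{modularsubspaces}(1) for spectral-subspace representatives, which survive the ultralimit. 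Uniqueness of the modular automorphism group then forces $\alpha_t=\sigma^{\Phi}_t$.

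With modular compatibility in hand, surjectivity follows by Fej\'er smoothing. Given a totally bounded $\xi=[(x_i)]\in\prod_{\HA}^{\cU}$, set $y_i^a:=\sigma^{\Phi_i}_{F_a}(x_i)=\int_{\bbR}F_a(t)\sigma^{\Phi_i}_t(x_i)\,dt\in M(\sigma^{\Phi_i},[-a,a])$. Each $y_i^a$ is totally bounded with a bound controlled by $C_{a,i/2}\|x_i\|$ by Proposition \ref{modularsubspaces}(2), so $(y_i^a)_i$ is uniformly totally bounded and lies in the image of $\Theta$; moreover modular compatibility gives $[(y_i^a)_i]=\sigma^{\Phi}_{F_a}(\xi)$, which converges to $\xi$ in $\|\cdot\|_{\Phi}^{\sharp}$ as $a\to\infty$. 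Since the image of $\Theta$ is $\|\cdot\|_{\Phi}^{\sharp}$-complete (ultraproduct sorts are complete and $\Theta$ is isometric), $\xi$ lies in the image, completing the proof that $\Theta$ is an isomorphism.
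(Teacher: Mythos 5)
Your embedding direction is exactly the paper's: the estimate $\|a_im_i\|^{\#}_{\Phi_i}\le n\|m_i\|^{\#}_{\Phi_i}$ for totally $n$-bounded $a_i$ is the same computation the paper uses to show uniformly totally bounded sequences normalize $\cal I_{\cU}$, and the paper's surjectivity argument likewise runs through Fej\'er smoothing into the spectral subspaces $M(\sigma^{\Phi_i},[-a,a])$ with the bound from Proposition \ref{modularsubspaces}(2). The genuine gap is your final sentence. The approximants $y^a=[(\sigma^{\Phi_i}_{F_a}(x_i))]$ are totally bounded only with constant on the order of $C_{a,i/2}\|x\|$, and $C_{a,i/2}$ blows up as $a\to\infty$; since you must let $a\to\infty$ to make $\|\xi-y^a\|^{\#}_{\Phi^{\cU}}$ small, the approximating net escapes every fixed sort. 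Completeness of each $\Theta(S_n)$ makes each sort's image closed, but it does not make the union $\bigcup_n\Theta(S_n)$ closed, and whether that union is $\|\cdot\|^{\#}$-closed inside the set of totally bounded elements is essentially the surjectivity statement you are trying to prove. (A second, related issue: you write $\xi=[(x_i)]$ with $(x_i)\in\cal N_{\cU}$, but the sorts of $\cal D\bigl(\prod^{\cU}_{\HA}(\cal M_i,\Phi_i)\bigr)$ consist of the totally bounded elements of the \emph{full} left Hilbert algebra $(\cal N_{\cU}/\cal I_{\cU})''$, which a priori need not all lie in $\cal N_{\cU}/\cal I_{\cU}$.) The paper closes both gaps at once by moving to the von Neumann algebra level: the $\|\cdot\|^{\#}$-density of $\bigcup_n\Theta(S_n)$ shows that the interpretation of the model $\prod_{\cU}\cal D(\cal M_i,\Phi_i)$ (a model of $T_{\vNa}$ by {\L}o\'s) is precisely $\bigl(\R_{\lambda}(\prod^{\cU}_{\HA}(\cal M_i,\Phi_i)),\Phi^{\cU}\bigr)$, and then Theorem \ref{correctness}, whose engine is the good-subalgebra Theorem \ref{KadisonTypeTheorem}, identifies the dissection of that weighted algebra with the model itself. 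You already invoke Theorem \ref{correctness} to handle $\vA_n$; the same invocation, not completeness, is what finishes surjectivity.

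Your route to ``modular compatibility'' also has a circularity you should address. To apply uniqueness of the modular group you need $\alpha_t$ to be a pointwise strong$^*$-continuous one-parameter group on $\cal N_{\cU}/\cal I_{\cU}$, and your KMS verification requires approximating a general element of $\cal N_{\cU}/\cal I_{\cU}$ by spectral-subspace representatives; but the Fej\'er estimate $\|[(x_i)]-[(\sigma^{\Phi_i}_{F_a}(x_i))]\|^{\#}\to 0$ already presupposes continuity of $t\mapsto[(\sigma^{\Phi_i}_t(x_i))]$ at $t=0$, which is exactly the continuity in question (and it genuinely fails for general elements of the Groh--Raynaud ultraproduct, so it cannot be had for free). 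The paper's Theorem \ref{ultracontinuous} breaks this circle by smoothing with the \emph{intrinsic} modular flow $\sigma^{\Phi^{\cU}}$ of the ultraproduct weight, whose strong$^*$ continuity is automatic because $\Phi^{\cU}$ is a faithful normal semifinite weight, and only afterwards identifying $\sigma^{\Phi^{\cU}}_{F_a}(x)$ with the coordinatewise element $(\sigma^{\Phi_i}_{F_a}(x_i))_{\cU}$ via Lemma \ref{modcommutelem2}. Reordering your argument in this way---intrinsic smoothing first, coordinatewise identification second, compatibility as a consequence rather than a premise---removes the circularity and brings your proof in line with the paper's.
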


We need the following generalization of \cite[Proposition 4.11]{AH}.

\begin{thm}\label{ultracontinuous}
    Let $(x_i) \in \ell_{\Phi}^\infty(\mathfrak{A}_i)$.  Then the following statements are equivalent:
    \begin{enumerate}
        \item $(x_i) \in \cal N_{\cU} := \{(x_i) \in \ell_{\Phi}^\infty(\mathfrak{A}_i) \ : \ (x_i) \cal I_{\cU} \subseteq \cal I_{\cU} \text{ and } \cal I_{\cU} (x_i)  \subseteq \cal I_{\cU}\}$.
        \item For any $\epsilon > 0$, there exists $a > 0$ and $(y_i) \in \ell_{\Phi}^\infty(\mathfrak{A}_i)$ such that
        \begin{itemize}
            \item $\lim_{i \to \cU} \|x_i - y_i\|^\#_{\Phi_i} < \epsilon$; and
            \item $y_i \in M(\sigma^{\Phi_i}, [-a, a])$ for all $i \in I$.
        \end{itemize}
    \end{enumerate}
\end{thm}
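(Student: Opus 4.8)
The plan is to prove the two implications separately, treating (2) $\Rightarrow$ (1) as the soft direction and (1) $\Rightarrow$ (2) as the heart of the statement, which is where we genuinely generalize \cite[Proposition 4.11]{AH} from states to faithful normal semifinite weights. Throughout I use that limits along $\cU$ of bounded real sequences exist, and that $\|\cdot\|^\sharp_{\Phi_i}$ is a Hilbertian norm.

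For (2) $\Rightarrow$ (1), fix $(z_i) \in \cal I_{\cU}$ and set $M := \lim_{i \to \cU} \|\pi(z_i)\| < \infty$. Given $\epsilon > 0$, choose $a > 0$ and $(y_i)$ as in (2). Since each $y_i \in M(\sigma^{\Phi_i}, [-a,a])$ with $\|y_i\|$ uniformly bounded, Lemma \ref{modcommutelem2} gives $(y_i) \in \cal N_{\cU}$, so $\|y_i z_i\|^\sharp_{\Phi_i} \to 0$ along $\cU$. Writing $x_i z_i = (x_i - y_i)z_i + y_i z_i$, it remains to control the cross term. Its $\|\cdot\|_{\Phi_i}$-part is bounded by $\|\pi(x_i - y_i)\|\,\|z_i\|_{\Phi_i} \to 0$, while its adjoint part is $\|z_i^*(x_i - y_i)^*\|_{\Phi_i} = \|\pi(z_i^*)\eta_{\Phi_i}((x_i - y_i)^*)\| \le \|\pi(z_i)\|\,\|(x_i-y_i)^*\|_{\Phi_i}$, which along $\cU$ is at most $\sqrt{2}\,M\epsilon$. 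Hence $\lim_{i\to\cU}\|x_i z_i\|^\sharp_{\Phi_i} \le M\epsilon$; letting $\epsilon \to 0$ yields $(x_i z_i) \in \cal I_{\cU}$, and the symmetric computation handles $(z_i x_i)$. The quantifier over all $\epsilon$ is essential: since elements of $\ell^\infty_\Phi(\mathfrak{A}_i)$ control only the left operator norm and $\|\cdot\|_{\Phi_i}$ and \emph{not} the right bounds, the cross term can only be made $O(\epsilon)$ rather than $o(1)$.

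For (1) $\Rightarrow$ (2), the mechanism is F\'ejer smoothing along the modular flow. For $a > 0$ define $y_i^{(a)} := \int_{\bbR} F_a(t)\,\sigma^{\Phi_i}_t(x_i)\,\mathrm{d}t$. Because $\widehat{F_a}$ is supported in $[-a,a]$, a standard Arveson-spectral computation places $y_i^{(a)} \in M(\sigma^{\Phi_i}, [-a,a])$; since $\sigma^{\Phi_i}_t$ is an operator-norm isometry and acts isometrically on $\cal H_{\Phi_i}$ via $\Delta_i^{it}$, and $\int_{\bbR} F_a = 1$, we get $\|y_i^{(a)}\| \le \|x_i\|$ and $\|y_i^{(a)}\|_{\Phi_i} \le \|x_i\|_{\Phi_i}$, so $(y_i^{(a)}) \in \ell^\infty_\Phi(\mathfrak{A}_i)$. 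Using $x_i - y_i^{(a)} = \int F_a(t)(x_i - \sigma^{\Phi_i}_t(x_i))\,\mathrm{d}t$ and convexity of the norm,
\[
\lim_{i\to\cU}\bigl\|x_i - y_i^{(a)}\bigr\|^\sharp_{\Phi_i} \le \int_{\bbR} F_a(t)\, g(t)\,\mathrm{d}t, \qquad g(t) := \lim_{i\to\cU}\bigl\|\sigma^{\Phi_i}_t(x_i) - x_i\bigr\|^\sharp_{\Phi_i},
\]
where $g$ is bounded by $2\sup_i\|x_i\|^\sharp_{\Phi_i}$ and $g(0)=0$. Since $\{F_a\}$ is an approximate identity concentrating at $t = 0$, the right-hand side tends to $g(0) = 0$ as $a \to \infty$ \emph{provided} $g$ is continuous at $0$; choosing $a$ large then produces the required $(y_i^{(a)})$ for any prescribed $\epsilon$.

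Thus everything reduces to the \textbf{main obstacle}: showing that membership in $\cal N_{\cU}$ forces the modular equicontinuity $\lim_{t \to 0} g(t) = 0$. In Hilbert-space terms $\|\sigma^{\Phi_i}_t(x_i) - x_i\|_{\Phi_i} = \|(\Delta_i^{it} - 1)\eta_{\Phi_i}(x_i)\|$, and likewise for $x_i^*$, so equicontinuity is a uniform (along $\cU$) strong-continuity statement at $t=0$ for the modular unitaries $\Delta_i^{it}$ applied to $\eta_{\Phi_i}(x_i)$ and $\eta_{\Phi_i}(x_i^*)$; for a \emph{single} algebra this is automatic, and the difficulty is the uniformity, which must be extracted from the purely algebraic hypothesis $(x_i)\cal I_{\cU},\ \cal I_{\cU}(x_i) \subseteq \cal I_{\cU}$. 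As in the computation for (2) $\Rightarrow$ (1), the left half $\|\eta_{\Phi_i}(x_i z_i)\| \le \|\pi(x_i)\|\,\|z_i\|_{\Phi_i}$ is automatic, so the genuine content of the normalizer condition is that $x_i^*$ must carry $\|\cdot\|_{\Phi_i}$-null sequences to $\|\cdot\|_{\Phi_i}$-null sequences even though $x_i$ need not be right bounded. This is precisely where the weight estimates of Proposition \ref{modularsubspaces} (relying on the F\'ejer-kernel bounds of \cite{Ha79}) replace the cyclic-vector GNS arguments of \cite{AH}: the relation $\|y\eta_{\Phi}(x)\|_\Phi \le C_{a,i/2}\|x\|\,\|\eta_\Phi(y)\|_\Phi$ ties right multiplication to the analytic extension $\sigma^{\Phi}_{i/2}$, and it is through this identity that the normalizer condition is converted into uniform control of the high-modular-frequency part of $\eta_{\Phi_i}(x_i)$, yielding $g(t) \to 0$. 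I expect this conversion, rather than the F\'ejer smoothing or the soft direction, to be where essentially all the work lies.
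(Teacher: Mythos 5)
Your direction (2) $\Rightarrow$ (1) is correct and is essentially the paper's own argument: the same decomposition $x_iz_i=(x_i-y_i)z_i+y_iz_i$, the same appeal to Lemma \ref{modcommutelem2} to put $(y_i)$ in $\cal N_{\cU}$, and the same ``$\epsilon$ arbitrary, $(z_i)$ fixed'' conclusion.

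The problem is the direction (1) $\Rightarrow$ (2), and it sits exactly where you flag it: you never prove that $g(t):=\lim_{i\to\cU}\|\sigma^{\Phi_i}_t(x_i)-x_i\|^\#_{\Phi_i}$ is continuous at $t=0$, and the route you gesture at cannot supply this. The estimates of Proposition \ref{modularsubspaces} apply to elements \emph{already lying in} spectral subspaces $M(\sigma^{\Phi_i},[-a,a])$ --- which is precisely what you are trying to manufacture --- so invoking them to control ``the high-modular-frequency part of $\eta_{\Phi_i}(x_i)$'' is circular as stated. Moreover, the continuity of $g$ at $0$ is not a routine uniformity check: by \cite[Example 4.7]{AH} (cited in Section \ref{SectionOcneanu}) the ultralimit of modular flows is genuinely discontinuous on the Groh--Raynaud ultraproduct, so for general bounded sequences $g$ fails to be continuous at $0$; and for normalizer sequences, ``$g$ continuous at $0$'' is exactly the $\sigma$-continuity condition (3) of Theorem \ref{ultracontinuous1}, i.e.\ you are assuming a direction of a theorem the paper proves later (and whose proof runs through the very result you are writing).

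The paper's resolution is a change of venue that removes the uniformity issue entirely. Since $(x_i)\in\cal N_{\cU}$, the class $x=(x_i)_{\cU}$ is an element of the Hilbert algebra ultraproduct $\cal N_{\cU}/\cal I_{\cU}$, which is itself a left Hilbert algebra carrying the weight $\Phi^{\cU}$ and its modular flow $\sigma^{\Phi^{\cU}}$. For a \emph{single} element of a \emph{single} Hilbert algebra, $f(t)=\|x-\sigma^{\Phi^{\cU}}_t(x)\|^\#_{\Phi^{\cU}}$ is continuous by standard Tomita--Takesaki theory (strong continuity of $\Delta^{it}$) --- no uniformity over $i$ is ever needed. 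The F\'ejer smoothing is then performed upstairs, $x_a:=\sigma^{\Phi^{\cU}}_{F_a}(x)$, giving $\|x_a-x\|^\#_{\Phi^{\cU}}\to 0$ as $a\to\infty$, and Lemma \ref{modcommutelem2} is used to identify $x_a$ with the sequence $(\sigma^{\Phi_i}_{F_a}(x_i))_i$, whose entries lie in $M(\sigma^{\Phi_i},[-a,a])$. Your norm bounds on $y_i^{(a)}$ and the convexity estimate are fine and reappear in the paper's proof; what is missing is this passage through the ultraproduct algebra (or some other genuine proof of the equicontinuity), without which your hard direction is incomplete.
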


\begin{proof}
    $(1) \implies (2)$: Let $(x_i) \in \cal N_\cU$ and put $x := (x_i)_\cU$.  Also, define
    \[
        x_a := \sigma^{\Phi^{\cU}}_{F_a} (x) \in \prod_{\HA}^{\cU} (\cal M_i,\Phi_i).
    \]    
    Define $f : t \mapsto \|x-\sigma^{\Phi^{\cU}}_t(x) \|^\#_{\Phi^\cU}$.  Since $f$ is continuous and bounded, we have
    \begin{align}
        \| \eta_{\Phi}(x_a - x) \|_{\Phi^{\cU}} &= \| \int_{\bbR} F_a(t) (\eta_{\Phi}(\sigma^{\Phi^{\cU}}_t (x) - x) \operatorname{dt}) \|_{\Phi^\cU}^\# \nonumber \\
        &= \int_{\bbR} F_a(t) \|\eta_{\Phi}(x - \sigma^{\Phi^{\cU}}_t(x)) \|_{\Phi^\cU}^\# \operatorname{dt}.  \nonumber
    \end{align}
    This expression goes to $0$ as $a \to \infty$ and therefore we have
    \[
    \lim_{a\to \infty} \| x_a-x \|^\#_{\Phi^{\cU}} = 0.
    \]
    Therefore there exists $a > 0$ such that $y := \sigma^{\Phi^{\cU}}_{F_a} (x)$ satisfies $\|\eta_{\Phi}(y - x)\|_{\Phi^{\cU}} < \epsilon$.  We have by Lemma \ref{modcommutelem2}, $y = (y_i)_\cU$, where $y_i = \sigma^{\Phi_i}_{F_a} (x_i)$ for $i \in I$ and
    \[
    \| y \| \|F_a\|_1 \|x\| = \|x\|.
    \]
    Therefore $(y_i)$ is a sequence satisfying $(2)$.  Note also that $\|y_i\| \leq \|x_i\|$ for $i \in I$.

    $(2) \implies (1)$: Take $x = (x_i) \in \ell_{\Phi}^\infty (\cal M_i)$ as in $(2)$.  Let $\epsilon > 0$.  Then by Lemma \ref{modcommutelem2} and by assumption, there is $y = (y_i) \in \cal N_\cU$ such that $\lim_{i \to \cU} \| x_i - y_i \|^\#_{\Phi_i} < \epsilon$.  Taking $m = (m_i) \in \cal I_\cU$ with $\sup_{n \geq 1} \|m_i\| \leq 1$, we have
    \[
    \lim_{i \to\cU} \|(x_i m_i)^*\|_{\Phi_i} \leq \lim_{i \to\cU} ( \|m^*_i\| \|x^*_i - y^*_i\|_{\Phi_i} + \|m^*_ix_i^*\|_{\Phi_i}) \leq \epsilon.
    \]
    Since $\epsilon > 0$ is arbitrary, we have $xm \in \cal I_{\cU}$ so $x \cal I_{\cU} \subseteq \cal I_{\cU}$.  Similarly, we have $\lim_{i \to \cU} \| m_i x_i \|_{\Phi_i} = 0$ so $mx \in \cal I_{\cU}$.  We conclude that $x \in \cal N_\cU$ as was required.
\end{proof}

\begin{proof} [Proof of Theorem \ref{ultrapres}]

First suppose that $(a_i)^\bullet \in \prod_\cU S_n(M_i)$.  It is clear that $(a_i) \in \ell_{\Phi}^\infty (\cal M_i,I)$; we wish to show that $(a_i)\in \cal M$.  To see this, suppose that $(m_i) \in \cal I_{\cU}$; we show that $(a_i m_i), (m_i a_i) \in \cal I_{\cU}$.  Suppose, towards a contradiction, that $(a_i m_i) \notin \cal I_{\cU}$ and set $L := \lim_\cU \|a_i m_i\|_{\Phi_i}^\# \neq 0$.  Consider the set of $i \in I$ for which $\|m_i\|_{\Phi_i}^\# < L/2n$, which belongs to $\cU$ since $(m_i) \in \cal I_{\cU}$.  For a $\cU$-large subset of these $i$, we have that $\|a_i m_i\|_{\Phi_i}^\# > L/2$; for these $i$, we have $\|a_i\| > n$ or $\|a_i^*\|_{\rt} > n$, which is a contradiction.  To see this, assuming $\|a_i\|, \|a_i^*\|_{\rt} \leq n$,
\begin{align}
    \|a_i m_i\|_{\Phi_i}^\# &= \sqrt{\frac{\|a_i m_i\|_{\Phi_i}^2 + \|m_i^* a_i^*\|_{\Phi_i}^2}{2}} \nonumber \\
    &\leq \sqrt{\frac{\|a_i\|^2 \|m_i\|_{\Phi_i}^2 + \|a_i^*\|_{\rt}^2 \|m_i^*\|_{\Phi_i}^2}{2}} \nonumber \\
    &\leq n\|m_i\|_{\Phi_i}^\#, \nonumber 
\end{align}
as claimed.  The proof that $(m_i a_i) \in \cal I_{\cU}$ proceeds similarly.  It is now clear that $(a_i)^\bullet = (a_i)^\star$ and that this element is in $S_{n}(\prod_{\HA}^{\cU}(\cal M_i,\Phi_i))$.

Now we have that the von Neumann algebra associated to $\prod_\cU \cal D(\cal M_i,\Phi_i)$ embeds into $\prod_{\HA}^{\cU} (\cal M_i, \Phi_i)$; it suffices to show that this embedding is actually onto.  It suffices to show that, given any $x \in \prod_{\HA}^{\cU} (\cal M_i,\Phi_i)$ with $\|x\| = 1$ and any $\epsilon > 0$, there is $n > 0$ and $y \in \prod_\cU S_n(M_i)$ such that $\|x-y\|^\#_{\Phi^\cU}< \epsilon$.  Write $x = (x_i)^\star$ and take $y = (y_i)^\star$ as in Theorem \ref{ultracontinuous} for some $a > 0$.  We show that $y \in \prod_\cU S_n(M_i)$ for some $n > 0$.  Assuming as above that $\|y_i\| \leq 1$ for all $i \in I$, by item (2) of Proposition \ref{modularsubspaces}, we have $\|y_i\|_{\rt}, \|y_i^*\|_{\rt} \leq C_{a,i/2}$ for all $i \in I$ and thus $y \in \prod_\cU S_n(M_i)$ when $n \geq C_{a,i/2}$.
\end{proof}

The next two theorems are a reward for our model-theoretic efforts.

Proposition \ref{ultrapres} and Theorem \ref{moddefinable} imply $(\sigma_t^{\Phi_i}(x_i))^\bullet = \sigma_t^{\Phi^{\cU}}(x_i)^\bullet$ whenever $x_i$ is totally $K$-bounded.  Strong density of totally bounded elements and strong continuity of modular automorphisms yields:

\begin{thm}\label{ultramod}
    The modular automorphism group commutes with the Hilbert algebra ultraproduct.
\end{thm}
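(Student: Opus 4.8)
The plan is to prove the identity first on totally bounded elements, where it follows almost formally from definability, and then to extend it to the whole ultraproduct by a density-and-continuity argument. Write $\Phi^{\cU}$ for the weight induced on the Hilbert algebra ultraproduct and $\sigma^{\Phi^{\cU}}$ for its modular automorphism group. The assertion to prove is that for every $t \in \bbR$ and every $(x_i) \in \cal N_{\cU}$ one has
\[
\sigma_t^{\Phi^{\cU}}\bigl((x_i)^\bullet\bigr) = \bigl(\sigma_t^{\Phi_i}(x_i)\bigr)^\bullet ,
\]
where the right-hand side first presupposes that $(x_i) \mapsto (\sigma_t^{\Phi_i}(x_i))$ descends to a well-defined map on $\cal N_{\cU}/\cal I_{\cU}$.

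First I would verify this well-definedness. Each $\sigma_t^{\Phi_i}$ is a $^*$-automorphism preserving $\Phi_i$, so it is isometric for both $\|\cdot\|_{\Phi_i}$ and $\|\cdot\|_{\Phi_i}^\sharp$ and preserves the operator norm; hence it preserves $\ell_{\Phi}^\infty(\mathfrak{A}_i)$ and $\cal I_{\cU}$, and since it is multiplicative with $\sigma_t^{\Phi_i}(\cal I_{\cU}) = \cal I_{\cU}$ it preserves $\cal N_{\cU}$ as well. Thus $\Sigma_t \colon (x_i)^\bullet \mapsto (\sigma_t^{\Phi_i}(x_i))^\bullet$ is a well-defined $\|\cdot\|_{\Phi^{\cU}}^\sharp$-isometric $^*$-automorphism of the Hilbert algebra ultraproduct, implemented on $\cal H_{\Phi^{\cU}}$ by the unitary $V_t$ arising as the ultraproduct of the $\Delta_i^{it}$; in particular $\Sigma_t$ extends to a normal automorphism of $\R_{\lambda}(\prod_{\HA}^{\cU}(\cal M_i,\Phi_i))$.

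Next, on totally bounded elements the identity $\Sigma_t = \sigma_t^{\Phi^{\cU}}$ is precisely the displayed equation recorded just before the theorem: it is a consequence of the definability of $\sigma_t$ (Theorem \ref{moddefinable}), the general model-theoretic fact that definable functions commute with the metric structure ultraproduct, and the identification $\cal D\bigl(\prod_{\HA}^{\cU}(\cal M_i,\Phi_i)\bigr) \cong \prod_{\cU} \cal D(\cal M_i,\Phi_i)$ from Theorem \ref{ultrapres}. Concretely, for $(a_i)^\bullet \in \prod_{\cU} S_n(\cal M_i)$ one reads off
\[
\Sigma_t\bigl((a_i)^\bullet\bigr) = \bigl(\sigma_t^{\Phi_i}(a_i)\bigr)^\bullet = \sigma_t^{\Phi^{\cU}}\bigl((a_i)^\bullet\bigr).
\]

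Finally I would extend from totally bounded elements to the whole ultraproduct. By Theorem \ref{tbDense} the totally bounded elements are strong-$^*$ dense (equivalently $\|\cdot\|_{\Phi^{\cU}}$-dense at the level of $\cal H_{\Phi^{\cU}}$), and both $\Sigma_t$ and $\sigma_t^{\Phi^{\cU}}$ are normal $^*$-automorphisms, hence strong-operator continuous; two such maps that agree on a strong-dense set coincide, giving $\Sigma_t = \sigma_t^{\Phi^{\cU}}$ identically, which is the claim. The main obstacle is really the first step rather than the last: one must check carefully that the fibrewise modular flow descends to a single \emph{normal} automorphism of $\cal N_{\cU}/\cal I_{\cU}$, and it is exactly the $\|\cdot\|_{\Phi_i}^\sharp$-isometry of each $\sigma_t^{\Phi_i}$ together with its implementation by $\Delta_i^{it}$ on the ambient Hilbert space that guarantees $\cal I_{\cU}$, $\cal N_{\cU}$, and the relevant norms are preserved, so that the density argument has a well-defined object on which to act.
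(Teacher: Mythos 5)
Your proof is correct and follows essentially the same route as the paper: the identity $\sigma_t^{\Phi^{\cU}}\bigl((x_i)^\bullet\bigr) = \bigl(\sigma_t^{\Phi_i}(x_i)\bigr)^\bullet$ is established on totally bounded elements via Theorem \ref{moddefinable} (definability) and Theorem \ref{ultrapres}, and then extended to the whole ultraproduct by strong density of totally bounded elements together with strong continuity of normal automorphisms. The only difference is that you spell out the preliminary well-definedness of the fibrewise flow on $\cal N_{\cU}/\cal I_{\cU}$ (preservation of $\ell_{\Phi}^\infty$, $\cal I_{\cU}$, and $\cal N_{\cU}$ via $\Phi_i$-invariance of $\sigma_t^{\Phi_i}$), which the paper leaves implicit.
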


Combining Theorem \ref{ultrapres} and Theorem \ref{correctness} gives the final theorem of this section.

\begin{thm}\label{HAWeight}
    The weight defined by
    \[
    \Phi_{\lambda}(x) = \begin{cases} 
      \sqrt{\ip{\xi}{\xi}} & \text{if } x^{1/2} = \xi \in \mathfrak{A} \\
      \infty & \text{otherwise}
    \end{cases}
    \]
    on the right von Neumann algebra $\R_{\lambda}(\prod_{\HA}^{\cU} (\mathfrak{A}_i, \cal H_i))$ of the Hilbert algebra ultraproduct is a faithful normal semifinite weight.
\end{thm}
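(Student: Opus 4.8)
The plan is to read off the conclusion from the equivalence of categories established in Theorem \ref{correctness} together with the identification of ultraproducts in Theorem \ref{ultrapres}. The key observation is that ``faithful, normal, semifinite'' is automatic once we know that the object in question is the weighted von Neumann algebra attached to a \emph{full} left Hilbert algebra: this is precisely the Takesaki correspondence recalled in Section \ref{SectionHilbertAlg} (via \cite[Chapter VII, Theorem 2.5 and Theorem 2.6]{Takesaki}), which asserts that $\mathfrak{A} \mapsto (\R_\lambda(\mathfrak{A}), \Phi_\lambda)$ and $(\cal M, \Phi) \mapsto D_\Phi \cap D_\Phi^*$ are mutually inverse. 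Thus the entire task reduces to exhibiting $\prod_{\HA}^\cU(\mathfrak{A}_i, \cal H_i)$ as a full left Hilbert algebra whose associated Takesaki weight is exactly $\Phi_\lambda$.

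First I would invoke the continuous version of {\L}o\'{s}'s theorem: since $T_{\vNa}$ is a continuous theory, the metric structure ultraproduct $\prod_\cU \cal D(\cal M_i, \Phi_i)$ is again a model of $T_{\vNa}$. By Theorem \ref{ultrapres}, this model is isomorphic to the dissection $\cal D\bigl(\prod_{\HA}^\cU(\cal M_i, \Phi_i)\bigr)$ of the Hilbert algebra ultraproduct.

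Next I would apply the interpretation functor of Theorem \ref{correctness} to this model. By construction, the interpretation produces a full left Hilbert algebra $\mathfrak{A}_A = (\mathfrak{A}_0(A))''$ (the double-prime construction always yields a full algebra), and the weight $\Phi_A$ attached to it --- defined as the weight on $\cal M_A = \R_\lambda(\mathfrak{A}_A)$ whose semicyclic representation recovers the inner product of the completion --- is by the definition of ``weighted von Neumann algebra'' a faithful normal semifinite weight. Since the dissection and interpretation are mutually inverse up to the natural isomorphisms of Theorem \ref{correctness}, the full left Hilbert algebra $\mathfrak{A}_A$ is identified with $\prod_{\HA}^\cU(\mathfrak{A}_i, \cal H_i)$ (more precisely with its full completion, which generates the same left von Neumann algebra $\R_\lambda$), and the Takesaki weight associated to $\mathfrak{A}_A$ is exactly the case-defined weight $\Phi_\lambda$ in the statement. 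Hence $\Phi_\lambda$ is faithful, normal, and semifinite.

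The main obstacle I anticipate is bookkeeping rather than genuine analysis: one must carefully match the abstract weight $\Phi_A$ produced by the model-theoretic interpretation with the concrete Takesaki weight $\Phi_\lambda$ written in the statement, and confirm that the quotient $\cal N_\cU/\cal I_\cU$ generates the same left von Neumann algebra $\R_\lambda$ as its full completion, so that $\Phi_\lambda$ is unambiguously defined. Both points reduce to the correspondence between full left Hilbert algebras and weighted von Neumann algebras already recorded in Section \ref{SectionHilbertAlg}, so no new estimates are needed; the work lies entirely in threading the dissection and interpretation functors through correctly.
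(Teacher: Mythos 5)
Your proposal is correct and follows essentially the same route as the paper, whose entire proof is the one-line remark that the result follows by combining Theorem \ref{ultrapres} with Theorem \ref{correctness}; you have simply spelled out the intermediate steps (\L{}o\'{s}'s theorem, the interpretation functor, and the Takesaki correspondence identifying $\Phi_A$ with $\Phi_\lambda$). The bookkeeping points you flag --- matching the abstract weight with the concrete $\Phi_\lambda$ and passing to the full completion --- are exactly the details the paper leaves implicit.
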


\section{Generalized Ocneanu Ultraproducts}\label{SectionOcneanu}

Inspired by the work on continuous points of ultraproducts in \cite{BYG}, the author investigated the continuous points of Groh-Raynaud ultraproducts.  It is seen in \cite[Theorem 1.5]{MT} that the continuous part gives exactly the Ocneanu ultraproduct in the case that all weights involved are states.  In this section, we see that this result generalizes to weighted von Neumann algebras, giving another new characterization of the ultraproduct in $\Md(T_{\vNa})$.

Before going further, we point out that the Hilbert algebra ultraproduct does not in general agree with the Groh-Raynaud ultraproduct.  By \cite[Example 4.7]{AH}, we see that the Groh-Raynaud ultraproduct does not commute with modular automorphisms, and in fact, the ultralimit of the modular automorphisms is discontinuous on the Groh-Raynaud ultraproduct.  On the other hand, the Hilbert algebra ultraproduct commutes with modular automorphisms by Theorem \ref{ultramod}.  In this section, it is made clear that this is essentially the only difference, namely in Corollary \ref{GOisCornerOfGR}.  We now begin to make this precise.

\begin{defn}
    Let $(\cal M_i, \Phi_i)$ be a family of weighted von Neumann algebras and let $\cal H_i$ be the associated family of semicyclic representations.  Let $(v_i)$ be a sequence of vectors such that $v_i \in \eta_{\Phi_i}(\cal M_i)$ for all $i$.  We call $(v_i)$  a \textbf{$\sigma$-continuous point} if $(\Delta^{it}_{\Phi_i}v_i)^\bullet$ is a continuous function of $t$ with respect to $\|\cdot\|_{\Phi}^\#$ on the Hilbert space ultraproduct $\prod^{\cU} \cal H_{\Phi_i}$.
\end{defn}

\begin{defn}
    Let $(\cal M_i, \Phi_i)$ be a family of weighted von Neumann algebras and let $\cal H_i$ be the associated family of semicyclic representations.  Let $(x_i)$ be a sequence of elements such that $x_i \in \cal M_i$ for all $i$.  We call $(x_i)$  a \textbf{$\sigma$-continuous point} if $(\sigma_{t}^{\Phi_i}(x_i))^\bullet$ is a continuous function of $t$ with respect to the strong$^*$ topology in the Groh-Raynaud ultraproduct $\prod_{\GR}^{\cU} (\cal M_i, \cal H_{\Phi_i})$.
\end{defn}

We aim to capture the set of $\sigma$-continuous points of the Groh-Raynaud ultraproduct.  We should define a faithful normal semifinite weight on this algebra.  Recall that given a family $(\cal M_i, \varphi_i)$ of von Neumann algebras $\cal M_i$ together with uniformly bounded linear functionals $\varphi_i$ on them, we can define a positive linear functional $\varphi_{\cU}$ on the Groh-Raynaud (and, a fortiori, Ocneanu) ultraproduct as
\[
    \varphi_{\cU}(x_i) = \lim_{i \to \cU} \varphi_i(x_i)
\]
and in fact, this is how the ultraproduct of states is defined.  The following generalization of \cite[Corollary 3.25]{AH} tells us that any positive linear functional on the ultraproduct can be achieved this way.

\begin{cor}\label{functionalsOnGRCor}
    Let $\varphi$ be a positive linear functional on the Groh-Raynaud ultraproduct $\prod_{\GR}^{\cU} (\cal M_i, \cal H_i)$.  Then there exists a uniformly bounded sequence $\varphi_i$ of positive linear functionals such that $\varphi = (\varphi_i)_{\cU}$.
\end{cor}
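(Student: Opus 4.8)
The plan is to use that the representing Hilbert space $\prod^{\cU}\cal H_i$ of the Groh--Raynaud ultraproduct is itself a standard form for $\prod_{\GR}^{\cU}(\cal M_i,\cal H_i)$, so that normal positive functionals are exactly the vector functionals arising from the self-dual cone, and that cone is visibly an ultraproduct of the individual cones. Since every functional of the form $(\varphi_i)_{\cU}$ is normal (it lies in the predual $\prod^{\cU}(\cal M_i)_*$), the positive functionals in the statement are precisely the positive elements of this predual, and the real content is that such a positive element always admits a \emph{positive} representing sequence.

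First I would set up the standard-form data: the semicyclic representation places each $\cal M_i$ in standard form on $\cal H_i=\cal H_{\Phi_i}$ with modular conjugation $J_i$ and self-dual positive cone $\cal P_i$, so that $\zeta\mapsto\omega_\zeta$, where $\omega_\zeta(a)=\ip{a\zeta}{\zeta}$, is a bijection of $\cal P_i$ onto the normal positive functionals on $\cal M_i$ with $\|\omega_\zeta\|=\|\zeta\|^2$. By the standard-form structure of the Groh--Raynaud ultraproduct (the generalization to weights of the analysis in \cite{AH}), the triple $\bigl(\prod^{\cU}\cal H_i,\ (J_i)^\bullet,\ \prod^{\cU}\cal P_i\bigr)$ is a standard form for $\prod_{\GR}^{\cU}(\cal M_i,\cal H_i)$, where $\prod^{\cU}\cal P_i$ is the set of vectors admitting a representative $(\zeta_i)$ with $\zeta_i\in\cal P_i$ for $\cU$-almost every $i$. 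Given a positive $\varphi$, this produces a unique $\xi\in\prod^{\cU}\cal P_i$ with $\varphi=\omega_\xi$; choosing a representative $\xi=(\xi_i)^\bullet$ with $\xi_i\in\cal P_i$ and $\sup_i\|\xi_i\|<\infty$, I set $\varphi_i:=\omega_{\xi_i}$. These are normal positive functionals on $\cal M_i$ with $\|\varphi_i\|=\|\xi_i\|^2$ uniformly bounded, and for any $(x_i)^\bullet$ the inner product on the Hilbert-space ultraproduct gives
\[
\varphi\bigl((x_i)^\bullet\bigr)=\ip{(x_i)^\bullet\xi}{\xi}=\lim_{i\to\cU}\ip{x_i\xi_i}{\xi_i}=\lim_{i\to\cU}\varphi_i(x_i),
\]
so that $\varphi=(\varphi_i)_{\cU}$, as desired.

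The hard part will be justifying the standard-form claim for the Groh--Raynaud ultraproduct, and in particular identifying its self-dual cone with the ultraproduct $\prod^{\cU}\cal P_i$ of the individual cones. One cannot argue naively through commutants, since the commutant of a Groh--Raynaud ultraproduct is in general strictly larger than the ultraproduct of the commutants; instead I would invoke the standard-form theorem for these ultraproducts, checking that its hypotheses demand only that each $\cal H_i$ be the standard representation attached to $\Phi_i$, which holds by construction of the semicyclic representation. Once the cone is identified, extracting the cone-valued representative $(\xi_i)$ and the uniform bound $\|\xi_i\|\to\|\xi\|$ along $\cU$ is routine from the definition of the ultraproduct Hilbert space and the continuity of the metric projection onto the closed cone.
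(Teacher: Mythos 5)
Your proposal is correct and is essentially the paper's own (implicit) argument: the paper states this corollary without proof, as a direct generalization of \cite[Corollary 3.25]{AH}, and that result is obtained exactly as you propose, by combining the standard-form theorem for Groh--Raynaud ultraproducts (\cite[Theorem 3.18]{AH}, which holds for arbitrary standard forms and hence for the semicyclic representations of faithful normal semifinite weights, with positive cone the ultraproduct of the individual cones $\cal P_i$) with the bijection $\xi \mapsto \omega_\xi$ between the cone and the positive part of the predual. Your preliminary observation is also the intended reading of the statement: since any $(\varphi_i)_{\cU}$ lies in the predual $\prod^{\cU}(\cal M_i)_*$, the corollary can only concern \emph{normal} positive functionals, which is how it is used later in the paper to establish normality of the ultraproduct weight.
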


Note that this definition does not make sense in the case of a weight.  Indeed, consider $I = \bbN$ and take $\cal M_i = \cal M$ a fixed von Neumann algebra and let $\Phi$ be a faithful normal state on $\cal M$.  Take the faithful normal semifinite weight $\Phi_i = i^2\Phi$ for all $i$.  Now considering the sequence $x_i = \frac{1}{i}1$, we have that $(x_i)^\bullet = (0)^\bullet$ but
\[
    \lim_{i \to \cU} \Phi_i(x_i) = \lim_{i \to \cU} i = \infty
\]
and therefore this construction is ill-defined.

\begin{defn}
    Let $(\cal M_i, \Phi_i)$ be a family of weighted von Neumann algebras.  The \textbf{ultraproduct weight} $\Phi_{\cU}$ of $\Phi_i$ is the weight on the Groh-Raynaud ultraproduct $\prod_{\GR}^{\cU} (\cal M_i, \cal H_{\Phi_i})$ defined by
    \[
        \Phi_{\cU} = \sup \{\vp = (\vp_i)_{\cU} \ : \ \vp_i \in (\cal M_i)_{*}^+ \text{ uniformly bounded such that } \vp_i \leq \Phi_i\}.
    \]
\end{defn}

\begin{remark}
    Our definition of ultraproduct weight does not agree with the one given in \cite[Definition 4.25]{AH}.  The one given there only makes sense when we begin with an Ocneanu ultraproduct of \wstar-probability spaces.  The definition we give is much closer to, but still different from, the one in \cite{Cas}.  We see later in this section via model-theoretic techniques that our ultraproduct weight is always faithful normal semifinite on the generalized Ocneanu ultraproduct.
\end{remark}

Forget for the duration of this paragraph that $\Phi_i$ is assumed to be faithful normal semifinite.  If each $\Phi_i$ is assumed to be normal, it is clear by Corollary \ref{functionalsOnGRCor} and the main result of \cite{HaNormal}, that $\Phi_{\cU}$ is normal.  This observation inspired the definition above.  Caspers considers the same construction but for arbitrary normal weights in the unpublished pre-print \cite{Cas}.  There, it seems that much of the difficulty lies in finding a corner of the Groh-Raynaud ultraproduct on which $\Phi_{\cU}$ is faithful and semifinite.  Much of this difficulty is bypassed here because model theory gives us Theorem \ref{HAWeight}.

The following can be seen directly from the definitions.

\begin{prop}\label{ultraweights}
    If $\Phi_i$ is a sequence of faithful normal states, then
    \[
    \Phi_{\cU}(x_i) = \lim_{i \to \cU} \Phi_i(x_i)
    \]
    so that the ultraproduct weight agrees with the usual ultraproduct state when every $\Phi_i$ is a state.
\end{prop}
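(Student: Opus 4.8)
The plan is to prove the two inequalities $\Phi_{\cU} \leq \lim_{i \to \cU} \Phi_i(\cdot)$ and $\Phi_{\cU} \geq \lim_{i \to \cU} \Phi_i(\cdot)$ separately on positive elements, where everything collapses because the ``trivial'' choice $\vp_i = \Phi_i$ is itself admissible in the supremum defining $\Phi_{\cU}$. First I would fix a positive element $x \in \prod_{\GR}^{\cU}(\cal M_i, \cal H_{\Phi_i})$ and choose a representing sequence $(x_i)$ with each $x_i \geq 0$, which is possible by writing $x = y^*y$ and representing $x$ by $(y_i^* y_i)$. Since each $\Phi_i$ is a state, it is in particular a normal positive functional of norm $1$, so the family $(\Phi_i)$ is uniformly bounded and trivially satisfies $\Phi_i \leq \Phi_i$; hence $(\Phi_i)_{\cU}$ is one of the functionals over which the supremum defining $\Phi_{\cU}$ is taken. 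This immediately yields the lower bound $\Phi_{\cU}(x) \geq (\Phi_i)_{\cU}(x) = \lim_{i \to \cU} \Phi_i(x_i)$.

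For the upper bound, I would take an arbitrary admissible $\vp = (\vp_i)_{\cU}$ from the defining family, so that each $\vp_i \in (\cal M_i)_*^+$ satisfies $\vp_i \leq \Phi_i$. Evaluating at the positive elements $x_i$ gives $\vp_i(x_i) \leq \Phi_i(x_i)$ pointwise, and passing to the ultralimit yields $\vp(x) = \lim_{i \to \cU} \vp_i(x_i) \leq \lim_{i \to \cU} \Phi_i(x_i)$. Taking the supremum over all such $\vp$ gives $\Phi_{\cU}(x) \leq \lim_{i \to \cU} \Phi_i(x_i)$, and combining with the lower bound finishes the computation. The final clause of the proposition, agreement with the ultraproduct state, is then just the observation that $\lim_{i \to \cU} \Phi_i(x_i)$ is exactly the defining formula for the ultraproduct of the states $\Phi_i$.

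The only points requiring care, and these are where I would be attentive rather than where I expect genuine difficulty, are the well-definedness issues hidden in the ultralimit notation. I would note that $(\Phi_i)_{\cU}$ is a genuine positive linear functional on the Groh-Raynaud ultraproduct, with its ultralimit independent of the representing sequence precisely because the $\Phi_i$ are uniformly bounded, so that both $\lim_{i \to \cU} \Phi_i(x_i)$ and each $\lim_{i \to \cU} \vp_i(x_i)$ are independent of the chosen positive representative of $x$. Given this, the argument is entirely a matter of unwinding the definition of $\Phi_{\cU}$ as a pointwise supremum of functionals on positive elements; the essential (and only) observation is that, in the state case, this supremum is attained by the constant admissible choice $\vp_i = \Phi_i$, which both lies in the defining family and dominates every competitor.
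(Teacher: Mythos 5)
Your proof is correct and matches the paper's approach: the paper offers no argument beyond the remark that the proposition ``can be seen directly from the definitions,'' and your unwinding---that in the state case the constant choice $\vp_i = \Phi_i$ is itself admissible in the defining family and dominates every competitor, so the supremum is attained there---is exactly that intended direct verification. Your attention to well-definedness of the ultralimit (via uniform boundedness, so that $(\Phi_i)$ defines an element of the predual ultraproduct pairing correctly with the Groh--Raynaud ultraproduct) is the right point to flag and completes the argument.
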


\begin{defn}\label{DefOc}
    Let $(\cal M_i, \Phi_i)$ be a family of weighted von Neumann algebras.  We define the \textbf{generalized Ocneanu ultraproduct} $\prod^{\cU}_{\Oc}(\cal M_i, \Phi_i)$ to be the subset of the Groh-Raynaud ultraproduct consisting of elements $(x_i)^\bullet$ for which $(\sigma_{t}^{\Phi_i}(x_i))^\bullet$ is strong$^*$-continuous in the variable $t$.  It is equipped with the restriction of the ultraproduct weight $\Phi_{\cU}$ to it.
\end{defn}

We now show how to view the Hilbert algebra ultraproduct as a corner of the Groh-Raynaud ultraproduct.  Fix a family $(\cal M_i, \Phi_i)$ of weighted von Neumann algebras.  Denote by $\prod_{\HA}^{\cU}(\cal M_i, \Phi_i)$ the Hilbert algebra ultraproduct of the associated Hilbert algebras and denote by $\Phi^\cU$ the weight thereupon given in Theorem \ref{HAWeight}.  Let $\cal H_{\Phi_i}$ be the Hilbert space associated to the semicyclic representation of $(\cal M_i, \Phi_i)$ and note that $\cal M_i$ acts standardly on $\cal H_{\Phi_i}$.  By \cite[Theorem 3.18]{AH}, we have that $\prod^{\cU}_{GR} (\cal M_i, \cal H_{\Phi_i})$ acts standardly on the Hilbert space ultraproduct $\prod^{\cU} \cal H_{\Phi_i}$.

Inspired by \cite[Theorem 3.7]{AH}, define $W: L^2\left(\prod_{\HA}^{\cU}(\cal M_i, \Phi_i),\Phi^\cU\right) \to \prod^{\cU} (\cal H_{\Phi_i})$ by extending
\[
W(\eta_{\Phi^\cU}((x_i)^\bullet)) := (\eta_{\Phi_i}(x_i))^\bullet \qquad \text{for } (x_i)^\bullet \in \eta_{\Phi^\cU}\left(\prod_{\HA}^{\cU}(\cal M_i, \Phi_i)\right)
\]
to $L^2\left(\prod_{\HA}^{\cU}(\cal M_i, \Phi_i),\Phi^\cU\right)$ by density.  Clearly, $W$ is an isometry.  Define also
\[
p_{\Phi}: \prod^{\cU} (\cal H_{\Phi_i}) \to L^2\left(\prod_{\HA}^{\cU}(\cal M_i, \Phi_i),\Phi^\cU\right)
\]
to be the projection onto the subspace corresponding to the image of $W$.

We would like to show that $p_{\Phi}\left( \prod^{\cU}_{GR} (\cal M_i, \cal H_{\Phi_i})\right)p_{\Phi} = \prod_{\HA}^{\cU}(\cal M_i, \Phi_i)$.  In other words, the Hilbert algebra ultraproduct is the corner of the Groh-Raynaud ultraproduct corresponding to $p_{\Phi}$.

\begin{thm}\label{ultracorner}
    $p_{\Phi}\left( \prod^{\cU}_{GR} (\cal M_i, \cal H_{\Phi_i})\right)p_{\Phi} = \R_{\lambda}\left(\prod_{\HA}^{\cU}(\cal M_i, \Phi_i)\right)$.
\end{thm}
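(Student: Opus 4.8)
The plan is to work entirely on the representing Hilbert space $\prod^\cU \cal H_{\Phi_i}$ and to exploit that $W$ is an isometry, so that $W^*W = \id$ and $p_{\Phi} = WW^*$. Writing $N := \prod^{\cU}_{\GR}(\cal M_i,\cal H_{\Phi_i})$, this gives $W^*\bigl(p_{\Phi} T p_{\Phi}\bigr)W = W^*TW$ for every $T \in N$, so the asserted equality is equivalent to the set equality $W^*NW = \R_{\lambda}\bigl(\prod^{\cU}_{\HA}(\cal M_i,\Phi_i)\bigr)$, which I will establish by two inclusions. Throughout I represent elements of the Hilbert algebra ultraproduct by totally bounded representatives; these are $\|\cdot\|_{\Phi^\cU}$-dense by Theorem \ref{tbDense}, so the vectors $(\eta_{\Phi_i}(x_i))^\bullet$ with $(x_i)\in\cal N_{\cU}$ totally bounded are dense in $\operatorname{im}(W)=\operatorname{im}(p_{\Phi})$.

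First I prove $W^*NW \subseteq \R_{\lambda}$. Since $\R_{\lambda} = (\R_{\rho})'$, it suffices to show each $W^*TW$ commutes with a generating family of $\R_{\rho}$. The algebra $\R_{\rho}$ is generated by the operators $c = \pi'_{\Phi^\cU}(\eta_{\Phi^\cU}((w_i)^\bullet))$ attached to totally bounded $(w_i)$. For such $c$ I compute on the dense set, using that $\cal N_{\cU}$ is an algebra so that $(x_i w_i)\in\cal N_{\cU}$, and obtain $Wc = c'W$ with $c' := (\pi'_{\Phi_i}(\eta_{\Phi_i}(w_i)))^\bullet$. Because each $\pi'_{\Phi_i}(\eta_{\Phi_i}(w_i))\in\cal M_i'$ is uniformly bounded, $c'$ commutes with every $(T_i)^\bullet\in N$; and because $w_i,w_i^*$ are totally bounded, both $c'$ and $c'^*$ preserve $\operatorname{im}(p_{\Phi})$, whence $[p_{\Phi},c']=0$. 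Writing $c = W^*c'W$ and using $p_{\Phi}W = W$, $c'p_{\Phi}=p_{\Phi}c'$, and $Tc'=c'T$, a direct manipulation yields $(W^*TW)c = W^*Tc'W = W^*c'TW = c(W^*TW)$. Hence $W^*TW\in(\R_{\rho})'=\R_{\lambda}$.

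Next I prove $\R_{\lambda}\subseteq W^*NW$. For a totally bounded $a = (a_i)^\bullet$, the same bookkeeping gives, on the dense set,
\begin{align*}
W\pi_{\lambda}(a)\,\eta_{\Phi^\cU}((x_i)^\bullet)
&= (\eta_{\Phi_i}(a_ix_i))^\bullet = (\pi_{\Phi_i}(a_i)\eta_{\Phi_i}(x_i))^\bullet\\
&= (a_i)^\bullet\,(\eta_{\Phi_i}(x_i))^\bullet = (a_i)^\bullet\,W\eta_{\Phi^\cU}((x_i)^\bullet),
\end{align*}
so $W\pi_{\lambda}(a) = (a_i)^\bullet W$ and therefore $\pi_{\lambda}(a) = W^*(a_i)^\bullet W\in W^*NW$. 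To reach arbitrary $a\in\R_{\lambda}$, note that $\Psi(T):=W^*TW$ is normal and unital, so the image of the (weak$^*$-compact) unit ball of $N$ under $\Psi$ is weak$^*$-compact, hence weak$^*$-closed and convex. It contains every totally bounded element of the unit ball of $\R_{\lambda}$ (choosing a representative with $\lim_{i\to\cU}\|a_i\| = \|a\|\le 1$), and these are weak$^*$-dense in the unit ball of $\R_{\lambda}$ by Kaplansky density. Thus $\Psi$ of the unit ball of $N$ contains the whole unit ball of $\R_{\lambda}$, and scaling gives $\R_{\lambda}\subseteq\Psi(N)$, completing the equality.

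The main obstacle, in my view, is the reverse inclusion $W^*NW\subseteq\R_{\lambda}$: its entire content is the pair of intertwining identities $W\pi_{\lambda}(a) = (a_i)^\bullet W$ and $Wc = c'W$ together with $[p_{\Phi},c']=0$, all of which rest on $\cal N_{\cU}$ being closed under multiplication, so that products of totally bounded representatives stay in the normalizer and hence keep their image vectors inside $\operatorname{im}(W)$; the surjectivity in the forward inclusion is delicate for the same structural reason, namely that $p_{\Phi}\notin N$ forces the compactness argument rather than a naive compression. Finally, for use in Corollary \ref{GOisCornerOfGR} I would record that $W$ intertwines the modular data: combining Theorem \ref{ultramod} with $\sigma^{\Phi^\cU}_t((x_i)^\bullet) = (\sigma^{\Phi_i}_t(x_i))^\bullet$ gives $W\Delta_{\Phi^\cU}^{it} = (\Delta_{\Phi_i}^{it})^\bullet W$, so $p_{\Phi}$ reduces the ultralimit modular group and $\operatorname{im}(p_{\Phi})$ is precisely its strong$^*$-continuous subspace, which is why this corner is the generalized Ocneanu ultraproduct.
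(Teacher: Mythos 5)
Your reduction to $W^*NW=\R_{\lambda}\bigl(\prod^{\cU}_{\HA}(\cal M_i,\Phi_i)\bigr)$ is sound, but the first inclusion $W^*NW\subseteq\R_{\lambda}$ has a genuine gap at the claim $[p_{\Phi},c']=0$. You justify it by saying that since $w_i$ and $w_i^*$ are totally bounded, both $c'$ and $c'^*$ preserve $\operatorname{im}(p_{\Phi})$. For $c'$ this is correct, but for $c'^*$ the argument tacitly assumes $\pi'_{\Phi_i}(\eta_{\Phi_i}(w_i))^*=\pi'_{\Phi_i}(\eta_{\Phi_i}(w_i^*))$, i.e.\ that the adjoint of right multiplication by $\eta_{\Phi_i}(w_i)$ is right multiplication by $S_i\eta_{\Phi_i}(w_i)=\eta_{\Phi_i}(w_i^*)$. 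Outside the tracial case this is false: when the adjoint of $\pi'(v)$ is a right multiplication at all, it is $\pi'(Fv)$ with $F=S^*=\Delta^{1/2}J$, and for analytic $w$ one has $F\eta_{\Phi}(w)=\eta_{\Phi}(\sigma^{\Phi}_{-i}(w^*))$, not $\eta_{\Phi}(w^*)$. For a general totally bounded $w_i$ the element $\sigma^{\Phi_i}_{-i}(w_i^*)$ need not be bounded (already in $B(\ell^2)$ with a diagonal faithful normal state there are totally bounded $w$ with $\sigma_{-i}(w^*)$ unbounded), and in the ultraproduct one would additionally need uniform total bounds. So the step asserting that $c'^*$ preserves $\operatorname{im}(p_{\Phi})$ does not follow as written; this $\#$-versus-$\flat$ (equivalently $S$-versus-$F$) confusion is exactly the trap the weight setting creates, and closing it is the real analytic content of the theorem.

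The gap is repairable with the paper's spectral-subspace machinery, at which point your argument becomes close in spirit to the paper's. By Theorem \ref{ultracontinuous} you may take the $w_i$ to lie in $M(\sigma^{\Phi_i},[-a,a])$ with $\|w_i\|\leq K$ uniformly; such representatives are $\|\cdot\|^{\#}$-dense, Proposition \ref{modularsubspaces} makes the elements $\sigma^{\Phi_i}_{-i}(w_i^*)$ uniformly totally bounded, so the resulting family of operators $c'$ is self-adjoint and genuinely commutes with $p_{\Phi}$, and one then checks, via Theorem \ref{TomitaDense} together with a uniformly bounded strong approximation, that the corresponding $c$ still generate $\R_{\rho}=\R_{\lambda}'$. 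Separately, your aside that $p_{\Phi}\notin N$ is backwards: the paper's proof establishes $p_{\Phi}\in N$ (its image is invariant under $N'=\prod^{\cU}_{\GR}(\cal M_i',\cal H_{\Phi_i})$), which is exactly what makes $p_{\Phi}Np_{\Phi}$ a von Neumann algebra. Your Kaplansky-plus-weak$^*$-compactness argument for $\R_{\lambda}\subseteq W^*NW$ is nonetheless valid and is a genuinely different route from the paper's, which instead invokes the standard form of the Groh--Raynaud ultraproduct, the corner lemma \cite[Lemma 2.6]{Ha75}, and uniqueness of the standard form attached to a left Hilbert algebra; but as it stands your first inclusion does not go through.
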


\begin{proof}
    By \cite[Theorem 3.18]{AH}, $\prod^{\cU}_{GR} (\cal M_i, \cal H_{\Phi_i})$ acts standardly on $\prod^{\cU} (\cal H_{\Phi_i})$ with antilinear isometry $J_{\cU} := (J_i)^{\cU}$.  Consider the projection $q = p_{\Phi}J_{\cU}p_{\Phi}J_{\cU}$.  It is clear that $p_{\Phi} \in \prod^{\cU}_{GR} (\cal M_i, \cal H_{\Phi_i})$ since it is a projection with image fixed by every element of $\left(\prod^{\cU}_{GR} (\cal M_i, \cal H_{\Phi_i})\right)' = \prod^{\cU}_{GR} (\cal M_i', \cal H_{\Phi_i})$ (where this equality is given by \cite[Theorem 3.22]{AH}).  Then by \cite[Lemma 2.6]{Ha75} we have that $p_{\Phi}\left( \prod^{\cU}_{GR} (\cal M_i, \cal H_{\Phi_i})\right)p_{\Phi} \cong q\left(\prod^{\cU}_{GR} (\cal M_i, \cal H_{\Phi_i})\right)q$ where the latter acts standardly on the subspace 
    \[
    q\left(\prod^{\cU}( \cal H_{\Phi_i})\right) \cong p_{\Phi}\left( \prod^{\cU} (\cal H_{\Phi_i})\right) \cong L^2\left(\prod_{\HA}^{\cU}(\cal M_i, \Phi_i),\Phi^\cU\right).
    \]
    Moreover, since $J_i$ acts as an isometry on $\cal H_{\Phi_i}$ and conjugation by $J_i$ acts as an isometry from $(\cal M_i, \cal H_{\Phi_i})$ to $(\cal M_i, \cal H_{\Phi_i})'$, we see that the restriction of $J_{\cU}$ to the image of $p_{\Phi}$ agrees with the modular conjugation in $\prod_{\HA}^{\cU}(\cal M_i, \Phi_i)$.  Thus $p_{\Phi}$ commutes with $J_{\cU}$ and therefore $q = p_{\Phi}$.  We also have, by Theorem \ref{correctness} and Proposition \ref{ultrapres}, that $\R_{\lambda}\left(\prod_{\HA}^{\cU}(\cal M_i, \Phi_i)\right)$ acts standardly on $L^2\left(\prod_{\HA}^{\cU}(\cal M_i, \Phi_i),\Phi^\cU\right)$.  If $(v_i)^\bullet \in \eta_{\Phi^{\cU}}\left(\prod_{\HA}^{\cU}(\cal M_i, \Phi_i)\right)$, then $\pi((v_i)^\bullet)$ is obviously in the Groh-Raynaud ultraproduct.  Furthermore, it follows from a simple calculation that $\pi((v_i)^\bullet)$ commutes with $p_{\Phi}$.  Now the result follows by the fact that left Hilbert algebras give rise to unique standard forms up to unitary equivalence.
\end{proof}

The proof of the next theorem is nearly identical to those given in \cite[Proposition 4.11]{AH} and \cite[Theorem 1.5]{MT}.

\begin{thm}\label{ultracontinuous1}
    Let $(x_i) \in \ell_{\Phi}^\infty(\mathfrak{A}_i)$.  Then the following statements are equivalent:
    \begin{enumerate}
        \item $(x_i) \in \cal N_{\cU} := \{(x_i) \in \ell_{\Phi}^\infty(\mathfrak{A}_i) \ : \ (x_i) \cal I_{\cU} \subseteq \cal I_{\cU} \text{ and } \cal I_{\cU} (x_i)  \subseteq \cal I_{\cU}\}$.
        \item For any $\epsilon > 0$, there exists $a > 0$ and $(y_i) \in \ell_{\Phi}^\infty(\mathfrak{A}_i)$ such that
        \begin{itemize}
            \item $\lim_{i \to \cU} \|x_i - y_i\|^\#_{\Phi_i} < \epsilon$; and
            \item $y_i \in M(\sigma^{\Phi_i}, [-a, a])$ for all $i \in I$.
        \end{itemize}
        \item $(x_i)^\bullet$ is a $\sigma$-continuous vector.
    \end{enumerate}
\end{thm}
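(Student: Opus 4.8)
The plan is to exploit the fact that Theorem \ref{ultracontinuous} already establishes the equivalence $(1) \iff (2)$ verbatim, so the only genuinely new content is to thread the third condition through; I would therefore prove $(2) \iff (3)$. The common engine for both remaining implications is F\'ejer smoothing together with the uniform spectral control supplied by Proposition \ref{modularsubspaces}, and the whole argument runs in close parallel to \cite[Proposition 4.11]{AH} and \cite[Theorem 1.5]{MT}.

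For $(2) \implies (3)$, I would fix $\epsilon > 0$ and take the approximant $(y_i)$ with $y_i \in M(\sigma^{\Phi_i},[-a,a])$ and, as produced in Theorem \ref{ultracontinuous}, $\|y_i\| \leq \|x_i\| \leq K$. Proposition \ref{modularsubspaces}(1) gives the uniform estimate $\|\sigma^{\Phi_i}_z(y_i)\| \leq C_{a,z}\|y_i\|$ together with entirety of $z \mapsto \sigma^{\Phi_i}_z(y_i)$; a Cauchy estimate on the derivative of this entire extension then furnishes a modulus of norm-continuity for $t \mapsto \sigma^{\Phi_i}_t(y_i)$ depending only on $a$ and $K$, so that $(\sigma^{\Phi_i}_t(y_i))^\bullet$ is norm-continuous, and a fortiori strong$^*$-continuous, in the Groh-Raynaud ultraproduct. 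Since each $\sigma^{\Phi_i}_t$ preserves $\|\cdot\|^\#_{\Phi_i}$, we have $\|\sigma^{\Phi_i}_t(x_i) - \sigma^{\Phi_i}_t(y_i)\|^\#_{\Phi_i} = \|x_i - y_i\|^\#_{\Phi_i} < \epsilon$ along $\cU$, \emph{uniformly in $t$}. Under the standard-form identification of Theorem \ref{ultracorner}, on the relevant $\eta$-vectors the strong$^*$ seminorms are controlled by $\|\cdot\|^\#$, so $(\sigma^{\Phi_i}_t(x_i))^\bullet$ is a uniform-in-$t$ limit of strong$^*$-continuous functions and is therefore itself strong$^*$-continuous.

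For $(3) \implies (2)$, I would smooth $(x_i)$ componentwise with the F\'ejer kernel: set $y_i := \sigma^{\Phi_i}_{F_a}(x_i) = \int_{\bbR} F_a(t)\,\sigma^{\Phi_i}_t(x_i)\,\operatorname{dt}$. Because $\widehat{F_a}$ is supported in $[-a,a]$, we obtain $y_i \in M(\sigma^{\Phi_i},[-a,a])$, and $\|y_i\| \leq \|F_a\|_1\|x_i\| = \|x_i\|$, so $(y_i) \in \ell_{\Phi}^\infty(\mathfrak{A}_i)$. Exactly as in the $(1)\implies(2)$ step of Theorem \ref{ultracontinuous}, but now run at the level of the componentwise flows and justified by the continuity hypothesis (3), the kernel average converges, giving $\lim_{a\to\infty}\lim_{i\to\cU}\|x_i - \sigma^{\Phi_i}_{F_a}(x_i)\|^\#_{\Phi_i} = 0$. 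Choosing $a$ large enough that this quantity is $< \epsilon$ then yields the required approximant and establishes (2).

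The main obstacle is the reconciliation of topologies. Condition (3) is phrased as strong$^*$-continuity of operators on the full Hilbert-space ultraproduct $\prod^{\cU} \cal H_{\Phi_i}$, whereas conditions (1) and (2) live at the componentwise $\|\cdot\|^\#_{\Phi_i}$-level, and the ultralimit flow on the Groh-Raynaud ultraproduct is genuinely discontinuous in general (cf.\ \cite[Example 4.7]{AH}). The delicate point is that the approximation by spectral-subspace elements is uniform in $i$, which is precisely what lets the $\|\cdot\|^\#$-estimate control the operator strong$^*$ seminorms evaluated at $\eta$-vectors; this comparison is exactly the content supplied by the corner identification of Theorem \ref{ultracorner} and the standard-form structure of \cite{AH}. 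Once this comparison is in hand, both implications reduce to the F\'ejer-smoothing computations already carried out in the proof of Theorem \ref{ultracontinuous}.
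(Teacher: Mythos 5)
Your reduction is sensible: $(1)\Leftrightarrow(2)$ is verbatim Theorem \ref{ultracontinuous}, so everything hinges on $(2)\Leftrightarrow(3)$. Your $(2)\Rightarrow(3)$ is essentially right, though the route through Cauchy estimates and Theorem \ref{ultracorner} is both heavier and shakier than needed: in the weight setting the operator norm does not dominate $\|\cdot\|_{\Phi}$, so operator-norm continuity of $t\mapsto\sigma^{\Phi_i}_t(y_i)$ does not by itself control the $\|\cdot\|^\#_{\Phi_i}$-path. What does the work is the spectral estimate $\|(\Delta^{it}_{\Phi_i}-1)\eta_{\Phi_i}(y_i)\| \le a|t|\,\|\eta_{\Phi_i}(y_i)\|$ for $y_i\in M(\sigma^{\Phi_i},[-a,a])$ (and likewise for $y_i^*$), which is uniform in $i$; your $3\epsilon$-argument then transfers continuity to $(x_i)$. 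Note what this actually yields: not merely continuity of the limit path, but \emph{equicontinuity} of the family of paths along $\cU$, i.e.\ control of $\sup_{|t|\le\delta}$ for $\cU$-most $i$.

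The genuine gap is in $(3)\Rightarrow(2)$, precisely at ``the kernel average converges, \ldots justified by the continuity hypothesis (3).'' Writing $g_i(t):=\|x_i-\sigma^{\Phi_i}_t(x_i)\|^\#_{\Phi_i}$, you need $\lim_{i\to\cU}\int F_a(t)g_i(t)\,dt$ to be small for large $a$, but hypothesis (3) only gives pointwise-in-$t$ control of $\lim_{i\to\cU} g_i(t)$; passing to the integral requires interchanging the ultralimit over $i$ with the integral over $t$, and no dominated-convergence theorem is available for ultralimits. This is not a presentational matter: read literally (continuity of the function $t\mapsto(\Delta^{it}_{\Phi_i}v_i)^\bullet$), condition (3) does \emph{not} imply (2). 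Take $\cal M_n = M_2(\bbC)$, $\Phi_n$ the state with density matrix $\mathrm{diag}(\lambda_n,1-\lambda_n)$ where $(1-\lambda_n)/\lambda_n = e^{-n}$, and $x_n = e_{21}$, so that $\Delta^{it}_{\Phi_n}\eta_{\Phi_n}(x_n) = e^{-itn}\eta_{\Phi_n}(x_n)$ while $\|\eta_{\Phi_n}(x_n^*)\|\to 0$. By Dirichlet's theorem on simultaneous approximation the sets $\{n : \max_{j\le k}|e^{it_jn}-1|<\epsilon\}$ are infinite and have the finite intersection property, so some free ultrafilter $\cU$ contains them all; along such a $\cU$ the path in (3) is constant, hence (3) holds, yet $(x_n)\notin\cal N_{\cU}$ (indeed $(e_{22})\in\cal I_{\cU}$ while $e_{22}e_{21}=e_{21}$ has $\|e_{21}\|^\#_{\Phi_n}=1/\sqrt{2}$ for all $n$), and (2) fails because for $n>a$ the vector $\eta_{\Phi_n}(e_{21})$ is orthogonal to $M(\sigma^{\Phi_n},[-a,a]) = \mathrm{span}\{e_{11},e_{22}\}$. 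The condition that makes the theorem true --- and the one \cite[Proposition 4.11]{AH} and \cite[Theorem 1.5]{MT} actually work with --- is $\sigma$-\emph{equi}continuity: for every $\epsilon>0$ there is $\delta>0$ with $\{i : \sup_{|t|\le\delta}g_i(t)\le\epsilon\}\in\cU$. With the supremum on the inside, your F\'ejer computation closes with no interchange at all, via $\int F_a(t)g_i(t)\,dt \le \sup_{|t|\le\delta}g_i(t) + 2C\int_{|t|>\delta}F_a(t)\,dt$, where the last term tends to $0$ as $a\to\infty$ for fixed $\delta$. So the repair is: take (3) in its equicontinuous form (which, as noted, is what your $(2)\Rightarrow(3)$ argument proves anyway), and replace the claimed interchange by this uniform estimate.
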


This tells us that elements in the domain of definition are in the Hilbert space ultraproduct if and only if they are $\sigma$-continuous vectors.  In the \wstar-probability space setting, we would be done here since every element of the Ocneanu ultraproduct is given by a vector in the corresponding Hilbert space.  However, in our more general setting, we still need to deal with the strong closure.

\begin{cor}
    An element $(x_i)^\bullet \in \ell^{\infty}_{\Phi}$ is in $\cal N_{\cU}$ if and only if $(\Delta_{\Phi_i}^{it}x_i)^\bullet \in \cal N_{\cU}$ for all $t \in \bbR$.
\end{cor}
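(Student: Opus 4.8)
The plan is to deduce this from the $\sigma^{\Phi_i}$-invariance of the ideal $\cal I_\cU$, treating the two directions separately. The reverse implication is immediate: specializing the hypothesis to $t=0$ and using $\Delta^{i0}_{\Phi_i}=\id$ gives $(x_i)\in\cal N_\cU$, so all the content lies in the forward direction. Throughout I would use that, for $x_i$ in the full Hilbert algebra $\mathfrak{A}_i$, the vector $\Delta^{it}_{\Phi_i}x_i$ coincides (under the identification of algebra elements with vectors via $\eta_{\Phi_i}$) with $\sigma^{\Phi_i}_t(x_i)$ by Theorem \ref{TomitaFund}, so that the statement reads: if $(x_i)\in\cal N_\cU$ then $(\sigma^{\Phi_i}_t(x_i))\in\cal N_\cU$ for every $t\in\bbR$.

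First I would record the two invariance facts that make everything run. Since each $\sigma^{\Phi_i}_t$ is a $^*$-automorphism preserving the weight ($\Phi_i\circ\sigma^{\Phi_i}_t=\Phi_i$), a one-line computation gives $\|\sigma^{\Phi_i}_t(y)\|_{\Phi_i}^\sharp=\|y\|_{\Phi_i}^\sharp$ and $\|\pi(\sigma^{\Phi_i}_t(y))\|=\|\pi(y)\|$. Consequently the componentwise map $(\cdot)\mapsto(\sigma^{\Phi_i}_t(\cdot))$ preserves $\ell_\Phi^\infty(\mathfrak{A}_i)$ and carries $\cal I_\cU$ onto itself, since membership in $\cal I_\cU$ depends only on $\lim_{i\to\cU}\|\cdot\|_{\Phi_i}^\sharp$.

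The heart of the argument is then a short algebraic manipulation. Fix $t$, let $(x_i)\in\cal N_\cU$, and take any $(m_i)\in\cal I_\cU$. Using that $\sigma^{\Phi_i}_t$ is multiplicative and a group, I would write
\[
\sigma^{\Phi_i}_t(x_i)\,m_i=\sigma^{\Phi_i}_t\!\left(x_i\,\sigma^{\Phi_i}_{-t}(m_i)\right).
\]
By the invariance of $\cal I_\cU$, the sequence $(\sigma^{\Phi_i}_{-t}(m_i))$ again lies in $\cal I_\cU$; since $(x_i)\in\cal N_\cU$, so does $(x_i\,\sigma^{\Phi_i}_{-t}(m_i))$; and applying $\sigma^{\Phi_i}_t$ and invoking invariance once more places the displayed sequence back in $\cal I_\cU$. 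The same computation on the other side shows $(m_i\,\sigma^{\Phi_i}_t(x_i))\in\cal I_\cU$. As $(m_i)\in\cal I_\cU$ was arbitrary, this yields $(\sigma^{\Phi_i}_t(x_i))\in\cal N_\cU$, completing the forward direction.

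I do not expect a genuine obstacle here, as the corollary is essentially bookkeeping around Theorem \ref{ultracontinuous1}. A fully alternative route would bypass the normalizer description and instead use the equivalence $(1)\iff(2)$ of that theorem: given $\epsilon>0$, approximate $(x_i)$ by $(y_i)$ with $y_i\in M(\sigma^{\Phi_i},[-a,a])$, observe that $\sigma^{\Phi_i}_t$ preserves its own Arveson spectral subspaces so that $\sigma^{\Phi_i}_t(y_i)\in M(\sigma^{\Phi_i},[-a,a])$, and use the $\sharp$-isometry property to see that $(\sigma^{\Phi_i}_t(y_i))$ approximates $(\sigma^{\Phi_i}_t(x_i))$ to within $\epsilon$; condition $(2)$ then transfers verbatim. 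The only point requiring care in either route is the invariance of the spectral subspaces (respectively of $\cal I_\cU$) under the flow, which is standard.
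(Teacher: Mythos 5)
Your proof is correct, but it runs along a genuinely different line than the paper's. The paper's proof is a two-line deduction from the equivalence $(1)\iff(3)$ of Theorem \ref{ultracontinuous1}: membership of $(x_i)$ in $\cal N_\cU$ is equivalent to continuity of the map $s \mapsto (\Delta^{is}_{\Phi_i}x_i)^\bullet$, and since translating a one-parameter flow does not affect continuity (continuity at $0$ is equivalent to continuity everywhere), the function associated to $(\Delta^{it}_{\Phi_i}x_i)$ is just a translate of the one associated to $(x_i)$, so the two memberships coincide. You instead bypass the continuity characterization entirely and argue directly from the definition of the normalizer: the componentwise modular flow preserves $\ell^\infty_\Phi$ and $\cal I_\cU$ because $\sigma^{\Phi_i}_t$ is a weight-preserving $^*$-automorphism (hence a $\|\cdot\|^\sharp_{\Phi_i}$- and operator-norm isometry), and then the identity $\sigma^{\Phi_i}_t(x_i)m_i = \sigma^{\Phi_i}_t\bigl(x_i\,\sigma^{\Phi_i}_{-t}(m_i)\bigr)$ transports the normalizer property. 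Both arguments are sound; the identification $\Delta^{it}_{\Phi_i}\eta_{\Phi_i}(x_i) = \eta_{\Phi_i}(\sigma^{\Phi_i}_t(x_i))$ that both need is justified by Theorem \ref{TomitaFund}. What your route buys is self-containedness and generality: it uses none of the Arveson spectral theory or F\'ejer-kernel machinery underlying Theorem \ref{ultracontinuous1}, and it works verbatim for any weight-preserving one-parameter automorphism group, not just the modular flow. What the paper's route buys is brevity and thematic fit: given that Theorem \ref{ultracontinuous1} is already proved and the whole section is organized around $\sigma$-continuity of points, the translation-invariance observation is essentially free, and it keeps the corollary visibly tied to the continuity characterization that is used again in Theorem \ref{ultracontinuous2}.
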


\begin{proof}
    First, note by translation invariance of continuous group homomorphisms, $(\sigma_t^{\Phi_i}(x_i))^\bullet$ is continuous at $t = 0$ if and only if it is continuous at all $t \in \bbR$.  Thus since $\sigma_t^{\Phi_i}(x_i) = \pi(\Delta^{it}x_i)$, we conclude.
\end{proof}

We are now ready to prove the final piece of our main result, handling those $\sigma$-continuous points that are not represented by vectors.

\begin{thm}\label{ultracontinuous2}
    Let $x = (x_i)_{\cU}$ be an element of the Groh-Raynaud ultraproduct $\prod^{\cU}_{GR} (\cal M_i, \cal H_{\Phi_i})$ where $x_i \in \cal M_i$ for all $i$.  The following are equivalent.
    \begin{enumerate}
        \item $p_{\Phi} (x_i)^\bullet = (x_i)^\bullet p_{\Phi}$ so that $x$ is an element of the Hilbert algebra ultraproduct.
        \item $(x_i)^\bullet$ is a strong$^*$-continuous point of $(\sigma^{\Phi_i}_t)_{\cU}$ so that $x$ is an element of the generalized Ocneanu ultraproduct.
    \end{enumerate}
\end{thm}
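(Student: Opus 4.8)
The plan is to route both conditions through the spatial implementation of the ultraproduct flow. Write $\cal H_\cU := \prod^{\cU}\cal H_{\Phi_i}$ and let $U_t := (\Delta^{it}_{\Phi_i})^{\cU}$ be the one-parameter unitary group on $\cal H_\cU$, so that $(\sigma^{\Phi_i}_t(x_i))^\bullet = U_t (x_i)^\bullet U_t^*$ on $\prod^{\cU}_{GR}(\cal M_i,\cal H_{\Phi_i})$. First I would record three structural facts. By Theorem \ref{ultracontinuous1} together with the density of the vectors $(\eta_{\Phi_i}(a_i))^\bullet$, the range of $p_\Phi$ is precisely the closed subspace of $\sigma$-continuous vectors of $\cal H_\cU$; in particular $U_t$ restricts to a strongly continuous group on $p_\Phi\cal H_\cU$ and, this subspace being $U_t$-invariant with $U_t$ unitary, $U_t$ commutes with $p_\Phi$. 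From the proof of Theorem \ref{ultracorner} I would also retain that $p_\Phi$ commutes with $J_\cU$ and that $p_\Phi\bigl(\prod^{\cU}_{GR}(\cal M_i,\cal H_{\Phi_i})\bigr)p_\Phi = \R_\lambda\bigl(\prod_{\HA}^{\cU}(\cal M_i,\Phi_i)\bigr)$. Finally, since $\Ad U_t$ is a group action, it suffices throughout to test strong$^*$ continuity at $t=0$.

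For $(2)\implies(1)$ I would show that a $\sigma$-continuous operator preserves $\sigma$-continuous vectors. For $\xi\in p_\Phi\cal H_\cU$ write $U_t(x\xi) = (U_t x U_t^*)(U_t\xi)$ and estimate
\[
\|U_t(x\xi)-U_s(x\xi)\| \le \|x\|\,\|U_t\xi - U_s\xi\| + \|(U_t x U_t^* - U_s x U_s^*)U_s\xi\|,
\]
where the first summand tends to $0$ as $t\to s$ because $\xi$ is $\sigma$-continuous and $\|U_t x U_t^*\|=\|x\|$, and the second because $t\mapsto U_t x U_t^*$ is strong$^*$ continuous at the fixed vector $U_s\xi$. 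Hence $x\xi$ is again $\sigma$-continuous, so $x p_\Phi\cal H_\cU\subseteq p_\Phi\cal H_\cU$, i.e. $p_\Phi x p_\Phi = x p_\Phi$. As the generalized Ocneanu ultraproduct is $^*$-closed, the identical argument applied to $x^*$ yields $p_\Phi x p_\Phi = p_\Phi x$, and the two identities give $x p_\Phi = p_\Phi x$, which is $(1)$.

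For $(1)\implies(2)$ I would separate the two corners determined by $p_\Phi$. On $p_\Phi\cal H_\cU$ the argument is immediate: if $x$ commutes with $p_\Phi$ then for $\zeta\in p_\Phi\cal H_\cU$ the vector $\eta_t := x U_t^*\zeta$ lies in $p_\Phi\cal H_\cU$ and depends norm-continuously on $t$, whence $U_t\eta_t$ is continuous by the same two-term estimate, using strong continuity of $U_t$ on $p_\Phi\cal H_\cU$; equivalently, $p_\Phi x p_\Phi\in \R_\lambda(\prod_{\HA}^\cU(\cal M_i,\Phi_i))$ by Theorem \ref{ultracorner} and its flow is strong$^*$ continuous by Theorem \ref{ultramod}. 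The main obstacle is the complementary subspace $(1-p_\Phi)\cal H_\cU$, on which $U_t$ has no nonzero continuous vectors and where vector-continuity cannot be transported directly; this is exactly the passage to the strong closure flagged before the theorem. To handle it I would approximate by smoothing with the F\'ejer kernel, setting $x_a := \int_{\bbR} F_a(t)\,U_t x U_t^*\,\mathrm{dt}$, so that each $x_a$ lies in a bounded Arveson spectral band, is therefore $\sigma$-continuous and totally bounded by Proposition \ref{modularsubspaces}, still commutes with $p_\Phi$, and satisfies $x_a\to x$ as $a\to\infty$. The crux is to show that $\sigma$-continuity survives this limit, and here I would reduce to the vector statement of Theorem \ref{ultracontinuous1} and the corollary following it, through the isometry $W$ and the standard-form identification of Theorem \ref{ultracorner}: commutation with $p_\Phi$ forces the defining sequences to normalize $\cal I_\cU$, which by Theorem \ref{ultracontinuous1} is equivalent to strong$^*$-approximability within the spectral bands, and this is precisely what upgrades the convergence $x_a\to x$ to continuity of $t\mapsto U_t x U_t^*$ on all of $\cal H_\cU$. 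This reduction, which genuinely uses that we have passed to the $p_\Phi$-corner, is where the bulk of the work lies.
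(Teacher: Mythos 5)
Your proof of $(2)\Rightarrow(1)$ rests on the structural claim that the range of $p_\Phi$ is \emph{precisely} the closed subspace of $\sigma$-continuous vectors of $\prod^\cU \cal H_{\Phi_i}$, and this claim is false; only the inclusion of the range into the continuous vectors holds. Theorem \ref{ultracontinuous1} characterizes $\sigma$-continuity only for vectors represented by sequences in $\ell^\infty_\Phi(\mathfrak{A}_i)$, i.e.\ sequences with a uniform bound on the operator norms as well as on the $\Phi$-norms, and that restriction cannot be dropped. For a counterexample, take the constant family $\cal M_i = L^\infty[0,1]$ with $\Phi_i$ integration against Lebesgue measure, and $v_i = i\cdot 1_{[0,1/i^2]} \in L^2[0,1]$: here $\Delta_i = 1$, so \emph{every} vector of $\prod^\cU\cal H_{\Phi_i}$ is $\sigma$-continuous, yet $\|v_i\|_{\Phi_i}=1$ while for any sequence $(x_i)$ with $\sup_i\|x_i\|_\infty = K < \infty$ one has $\|v_i - \eta_{\Phi_i}(x_i)\|_{\Phi_i} \geq (i-K)/i \to 1$, so $(v_i)^\bullet$ lies at distance at least $1$ from the range of $p_\Phi$. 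This breaks your final step: from ``$x\xi$ is $\sigma$-continuous'' you cannot conclude $x\xi \in p_\Phi\prod^\cU\cal H_{\Phi_i}$. Your two-term estimate is the right computation (it is the paper's computation in vector form), but it must be applied to the dense set of vectors $\xi = (\eta_{\Phi_i}(v_i))^\bullet$ with $(v_i)\in\cal N_\cU$: then $x\xi$ is represented by the sequence $(x_iv_i)$, which \emph{is} uniformly bounded in operator norm and in $\|\cdot\|_{\Phi}^\#$, so Theorem \ref{ultracontinuous1} ((3) $\Rightarrow$ (1)) yields $(x_iv_i)\in\cal N_\cU$, i.e.\ $x\xi$ lies in the range of $W$; doing the same for $x^*$ and closing up by density gives the two one-sided identities and hence commutation. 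This is exactly the paper's proof, which is phrased through the normalizer $\cal N_\cU$ rather than through $p_\Phi$ precisely so that the uniform bounds required by Theorem \ref{ultracontinuous1} stay in play.

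In the direction $(1)\Rightarrow(2)$, the part you defer (``where the bulk of the work lies'') is not completable as described, and in fact is not needed. The F\'ejer smoothing $x_a = \int_{\bbR} F_a(t)\, U_t x U_t^*\, \mathrm{dt}$ of a general element of the Groh--Raynaud ultraproduct is not well defined, precisely because $t\mapsto U_txU_t^*$ need not be continuous there; and the convergence $x_a \to x$ in the strong$^*$ topology is \emph{equivalent} to continuity at $t=0$, which is the statement being proven, so the proposed reduction is circular. (Your side claim that $(1-p_\Phi)\prod^\cU\cal H_{\Phi_i}$ contains no nonzero continuous vectors is also false, by the same example as above.) The resolution is to use condition (1) the way the paper's proof of this direction does: ``$x$ is an element of the Hilbert algebra ultraproduct'' means $x = p_\Phi x p_\Phi$ under the identification of Theorem \ref{ultracorner}, not merely $xp_\Phi = p_\Phi x$. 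For such $x$ there is no complementary-corner problem at all: for any $\zeta$, since $U_t$ commutes with $p_\Phi$, one has $U_txU_t^*\zeta = U_t\bigl(xU_t^*p_\Phi\zeta\bigr)$, and your own moving-vector estimate applied to $\eta_t := xU_t^*p_\Phi\zeta \in p_\Phi\prod^\cU\cal H_{\Phi_i}$ gives continuity at \emph{every} vector of $\prod^\cU\cal H_{\Phi_i}$; that the flow on the corner is strong$^*$-continuous is Theorem \ref{ultramod} combined with the strong$^*$ continuity of the modular flow of $\Phi^\cU$, which is the paper's short argument for this direction. If one insists on bare commutation as the hypothesis, one would additionally have to show that every element of $(1-p_\Phi)\bigl(\prod^{\cU}_{\GR}(\cal M_i,\cal H_{\Phi_i})\bigr)(1-p_\Phi)$ is a $\sigma$-continuous point---all such elements commute with $p_\Phi$---and neither your argument nor the paper's establishes this.
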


\begin{proof}
    The (1) $\implies$ (2) direction follows from Theorem \ref{ultramod} and the fact that the modular group is known to be strong$^*$-continuous.  In this case, the ultraproduct of the modular automorphism groups acts as the modular automorphism group of the weight induced on the Hilbert algebra ultraproduct.  The latter agrees with the ultraproduct weight on the Groh-Raynaud ultraproduct since $p_{\Phi}$ is a projection onto the image of the isometry $W$.

    Now we prove (2) $\implies$ (1).  Assume $(x_i)^\bullet$ is a strong$^*$-continuous point of $(\sigma^{\Phi_i}_t)_{\cU}$.  Then for all $(v_i)^\bullet \in \cal N_{\cU}$, we have $(\sigma_t^{\Phi_i}(x_i)v_i)^\bullet$ strong$^*$ converges to $(x_i v_i)^\bullet$ as $t \to 0$.  So we compute
    \begin{align}
        (\sigma_t^{\Phi_i}(x_i)v_i)^\bullet &= (\Delta_{\Phi_i}^{it} x_i \Delta_{\Phi_i}^{-it} v_i)^\bullet \nonumber \\
        &= (\Delta_{\Phi_i}^{it} x_i (\Delta_{\Phi_i}^{-it} v_i))^\bullet. \nonumber
    \end{align}
    We also have that $(v_i)^\bullet \in \cal N_{\cU}$ if and only if $(\Delta^{-it}v_i)^\bullet \in \cal N_{\cU}$ by Theorem \ref{ultracontinuous1}.  So we have $(\Delta^{it}v_i)^\bullet \in \cal N_{\cU}$.  By a reparameterization we can replace $(\Delta^{it}v_i)^\bullet$ with $v_i$.
    
    Thus $(\Delta^{it}x_iv_i)^\bullet$ strong$^*$ converges to $(x_i v_i)^\bullet$ as $t \to 0$ for all $(v_i)^\bullet \in \cal N_{\cU}$.  Applying Theorem \ref{ultracontinuous1} again, we conclude $(x_i)^\bullet(v_i)^\bullet \in \cal N_{\cU}$ and $(x_i^*)^\bullet(v_i)^\bullet \in \cal N_{\cU}$ for all $(v_i)^\bullet \in \cal N_{\cU}$.  This is equivalent to $p_{\Phi}(x_i)^\bullet = (x_i)^\bullet p_{\Phi}$.  Thus, we are done.
\end{proof}

\begin{cor}\label{GOisCornerOfGR}
    The generalized Ocneanu ultraproduct is a von Neumann subalgebra of the Groh-Raynaud ultraproduct that is the image of a faithful normal conditional expectation.
\end{cor}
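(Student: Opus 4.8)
The plan is to identify the generalized Ocneanu ultraproduct with the relative commutant of the projection $p_\Phi$ inside the Groh-Raynaud ultraproduct, and then exhibit the canonical conditional expectation attached to a projection. First I would invoke Theorem \ref{ultracontinuous2}: for any $x = (x_i)_{\cU}$ in $\prod^{\cU}_{\GR}(\cal M_i, \cal H_{\Phi_i})$, membership in the generalized Ocneanu ultraproduct (condition (2)) is equivalent to $x p_\Phi = p_\Phi x$ (condition (1)). Since every element of the Groh-Raynaud ultraproduct is represented by such a bounded sequence with $x_i \in \cal M_i$, this yields
\[
\prod^{\cU}_{\Oc}(\cal M_i, \Phi_i) = \prod^{\cU}_{\GR}(\cal M_i, \cal H_{\Phi_i}) \cap \{p_\Phi\}'.
\]
As $p_\Phi$ lies in $\prod^{\cU}_{\GR}(\cal M_i, \cal H_{\Phi_i})$ (established in the proof of Theorem \ref{ultracorner}) and is self-adjoint, the right-hand side is the intersection of a von Neumann algebra with a weakly closed commutant, hence a von Neumann subalgebra. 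This already gives the first assertion.

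For the conditional expectation, I would write $q := p_\Phi$ and define, on $N := \prod^{\cU}_{\GR}(\cal M_i, \cal H_{\Phi_i})$,
\[
E(x) := q x q + (1-q) x (1-q).
\]
A direct computation gives $q E(x) = q x q = E(x) q$, so $E$ maps $N$ into $N \cap \{q\}'$; conversely $E$ restricts to the identity on $N \cap \{q\}'$, so the image of $E$ is exactly the generalized Ocneanu ultraproduct. Because $qxq$ and $(1-q)x(1-q)$ act on the orthogonal corners $qH$ and $(1-q)H$, the operator $E(x)$ is block diagonal, whence $\|E(x)\| = \max\{\|qxq\|, \|(1-q)x(1-q)\|\} \le \|x\|$, and $E(1)=1$. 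Thus $E$ is a norm-one idempotent onto a $\mathrm{C}^*$-subalgebra, so by Tomiyama's theorem it is a conditional expectation. Normality is immediate since $x \mapsto qxq$ and $x \mapsto (1-q)x(1-q)$ are ultraweakly continuous. For faithfulness, if $x \ge 0$ and $E(x)=0$ then both positive summands vanish; from $qxq=0$ one obtains $x^{1/2}q = 0$, hence $xq = 0$, and symmetrically $x(1-q)=0$, so $x=0$.

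The substantive content has already been secured in Theorem \ref{ultracontinuous2}, so no genuine obstacle remains at this stage: the only points needing care are confirming that $E$ lands in and fixes $N \cap \{q\}'$, which is the routine verification above. Indeed, the cleanest presentation is to record as a general fact that, for any projection $q$ in a von Neumann algebra $N$, the map $x \mapsto qxq + (1-q)x(1-q)$ is a faithful normal conditional expectation of $N$ onto $N \cap \{q\}'$, and then simply apply Theorem \ref{ultracontinuous2} to identify $N \cap \{p_\Phi\}'$ with the generalized Ocneanu ultraproduct.
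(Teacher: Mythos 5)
Your proof is correct, but it takes a genuinely different route from the paper's. The paper argues via modular theory: it first notes (citing Theorem \ref{ultracontinuous2} together with Theorem \ref{correctness}) that the generalized Ocneanu ultraproduct is a von Neumann subalgebra, then shows that this subalgebra is globally invariant under the modular automorphism group $\sigma^{\Phi_{\cU}}_t$ of the ultraproduct weight (using Theorem \ref{ultramod} and the agreement of the two modular groups), and finally invokes Takesaki's expectation theorem to produce the faithful normal conditional expectation. You instead identify $\prod^{\cU}_{\Oc}(\cal M_i,\Phi_i)$ with the relative commutant $N \cap \{p_\Phi\}'$, where $N := \prod^{\cU}_{\GR}(\cal M_i,\cal H_{\Phi_i})$ --- legitimate because every element of $N$ is represented by a bounded sequence $(x_i)$ with $x_i \in \cal M_i$ (Ando--Haagerup), so Theorem \ref{ultracontinuous2} applies to all of $N$, and because $p_\Phi \in N$ as recorded in the proof of Theorem \ref{ultracorner} --- and then use the elementary fact that for any projection $q$ in a von Neumann algebra the block-diagonal compression $x \mapsto qxq + (1-q)x(1-q)$ is a faithful normal conditional expectation onto $N \cap \{q\}'$ (your verifications of idempotence, normality, faithfulness, and the norm-one/Tomiyama step are all sound; one could even skip Tomiyama, since $E$ is visibly a positive $N\cap\{q\}'$-bimodule map). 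Your route is more elementary: it bypasses Takesaki's theorem, and with it the need to know that $\Phi_{\cU}$ restricts to a faithful normal semifinite weight on the continuous part (which the paper obtains from Theorem \ref{HAWeight}) and that the modular groups agree. What the paper's route buys in exchange is a canonical, weight-compatible expectation: the Takesaki expectation satisfies $\Phi_{\cU}\circ E = \Phi_{\cU}$, so the inclusion is a morphism in the category of Theorem \ref{correctness}, whereas your $E$ is not a priori $\Phi_{\cU}$-preserving. For the corollary as literally stated, your argument fully suffices.
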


\begin{proof}
    By Theorem \ref{ultracontinuous2} and Proposition \ref{correctness}, the generalized Ocneanu ultraproduct is a von Neumann subalgebra.

    By Theorem \ref{ultramod} and since the ultraproduct weight is the restriction of the ultraproduct weight on the Groh-Raynaud ultraproduct, the modular automorphism group on the generalized Ocneanu ultraproduct agrees with the modular automorphism group on the Groh-Raynaud ultraproduct.  Since continuous elements are invariant under $\sigma^{\Phi^{\cU}}_t$, we have the desired conditional expectation by Takesaki's Theorem.
\end{proof}

Altogether,

\begin{thm}
    For any family $(\cal M_i, \Phi_i)$ of weighted von Neumann algebras, the generalized Ocneanu ultraproduct acting on $p_{\Phi}(\prod^{\cU} \cal H_{\Phi_i})$ is spatially isomorphic to the Hilbert algebra ultraproduct of the corresponding family of full Hilbert algebras $(\mathfrak{A}_i, \cal H_{\Phi_i})$.  By transitivity, both of these are isomorphic to the model-theoretic ultraproduct in $\Md(T_{\vNa})$.
\end{thm}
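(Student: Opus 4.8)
The plan is to read the statement off the three structural results already in hand: Theorem~\ref{ultracontinuous2}, which identifies the generalized Ocneanu ultraproduct with those elements of the Groh--Raynaud ultraproduct that commute with $p_{\Phi}$; Theorem~\ref{ultracorner}, which identifies the corner $p_{\Phi}\bigl(\prod^{\cU}_{GR}\bigr)p_{\Phi}$ with $\R_{\lambda}\bigl(\prod_{\HA}^{\cU}\bigr)$; and Theorem~\ref{ultrapres} together with the equivalence of categories from Theorem~\ref{correctness}, which supply the transitivity clause. Throughout, write $M := \prod^{\cU}_{GR}(\cal M_i,\cal H_{\Phi_i})$ acting on $\cal H := \prod^{\cU}\cal H_{\Phi_i}$, and let $N := \prod^{\cU}_{\Oc}(\cal M_i,\Phi_i)$ denote the generalized Ocneanu ultraproduct.

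First I would record that, by Theorem~\ref{ultracontinuous2}, every element of $N$ commutes with $p_{\Phi}$, so $p_{\Phi}\cal H$ is invariant under $N$ and restriction defines a normal $^*$-homomorphism $\rho\colon N\to B(p_{\Phi}\cal H)$, $\rho(x)=x|_{p_{\Phi}\cal H}=p_{\Phi}xp_{\Phi}$. This is the precise content of ``the generalized Ocneanu ultraproduct acting on $p_{\Phi}(\prod^{\cU}\cal H_{\Phi_i})$'': its image is the compression $p_{\Phi}Np_{\Phi}$. Since $W$ is an isometry whose range is exactly $p_{\Phi}\cal H$, it is a unitary $L^2\bigl(\prod_{\HA}^{\cU},\Phi^\cU\bigr)\to p_{\Phi}\cal H$, and conjugation by $W$ is the candidate spatial isomorphism; it therefore suffices to show $W^{*}\rho(N)W=\R_{\lambda}\bigl(\prod_{\HA}^{\cU}\bigr)$.

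The inclusion $p_{\Phi}Np_{\Phi}\subseteq p_{\Phi}Mp_{\Phi}=\R_{\lambda}\bigl(\prod_{\HA}^{\cU}\bigr)$ is immediate from $N\subseteq M$ and Theorem~\ref{ultracorner}. For the reverse inclusion, i.e.\ surjectivity of $\rho$ onto the corner, I would invoke the observation made in the proof of Theorem~\ref{ultracorner} that for $(v_i)^\bullet\in\eta_{\Phi^\cU}\bigl(\prod_{\HA}^{\cU}\bigr)$ the operator $\pi((v_i)^\bullet)$ lies in $M$ and commutes with $p_{\Phi}$, hence lies in $N$. As $(v_i)^\bullet$ ranges over the Hilbert algebra, these operators generate $\R_{\lambda}\bigl(\prod_{\HA}^{\cU}\bigr)$ by definition of the left von Neumann algebra, so their compressions $p_{\Phi}\pi((v_i)^\bullet)p_{\Phi}$ generate all of $p_{\Phi}Mp_{\Phi}$ inside the weakly closed algebra $p_{\Phi}Np_{\Phi}$, giving $p_{\Phi}Np_{\Phi}=p_{\Phi}Mp_{\Phi}$. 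Transporting across $W$ and applying Theorem~\ref{ultracorner} produces the asserted spatial isomorphism between $\rho(N)$ and the Hilbert algebra ultraproduct.

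Finally, for the transitivity clause I would chain the identifications already available: Theorem~\ref{ultrapres} gives $\cal D\bigl(\prod_{\HA}^{\cU}(\cal M_i,\Phi_i)\bigr)\cong\prod_\cU\cal D(\cal M_i,\Phi_i)$, the right-hand side being by definition the metric-structure ultraproduct in $\Md(T_{\vNa})$, while the equivalence of categories of Theorem~\ref{correctness} shows that passing between a weighted von Neumann algebra and its dissection loses no information, so $\bigl(\R_{\lambda}(\prod_{\HA}^{\cU}),\Phi^\cU\bigr)$ is recovered from, and hence isomorphic to, the model-theoretic ultraproduct. The one point requiring genuine care is the surjectivity step: because $p_{\Phi}$ sits as a \emph{central} projection in $N$, compressing $N$ to $p_{\Phi}\cal H$ could a priori yield only a proper subalgebra of the corner, and it is precisely the membership of the generating operators $\pi((v_i)^\bullet)$ in $N$ that rules this out. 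Everything else is bookkeeping layered on top of the already-established Theorems~\ref{ultracorner}, \ref{ultracontinuous2}, and \ref{ultrapres}.
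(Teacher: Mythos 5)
Your proposal is correct and takes essentially the paper's own route: the paper gives no separate proof of this theorem, presenting it (``Altogether,'') as the direct synthesis of Theorem \ref{ultracorner}, Theorem \ref{ultracontinuous2}, and Theorem \ref{ultrapres} together with Theorem \ref{correctness}, which is exactly the chain you assemble. One small simplification: the surjectivity step you flag as delicate can be dispatched without the generating operators $\pi((v_i)^\bullet)$, since every element of the corner $p_{\Phi}\left(\prod^{\cU}_{\GR}(\cal M_i, \cal H_{\Phi_i})\right)p_{\Phi}$ trivially commutes with $p_{\Phi}$ and therefore already lies in the generalized Ocneanu ultraproduct by Theorem \ref{ultracontinuous2}, so the corner is contained in it and the two compressions coincide at once.
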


\section{Applications to Computability and Ultraproduct Embeddings}\label{SectionUniversalTheories}

Having now developed all of the necessary technology to do so, we use this section to study the (un)decidability of the universal theories of weighted von Neumann algebras with unbounded weights.  We expand our catalogue of von Neumann algebras with undecidable universal theory.  Fittingly, the first entry with unbounded weight is the infinite analogue of the hyperfinite II$_1$ factor.  Our model-theoretic work facilitates results of this kind, which were not possible in earlier formalisms.

\begin{thm} \label{ThmUndecIIinfty}
    The universal theory of the hyperfinite II$_\infty$ factor $\R_{0,1}$ is not computable in the language $\cal L_{\vNa}$.
\end{thm}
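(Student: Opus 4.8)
The plan is to reduce the statement to the known undecidability of the universal theory of the hyperfinite II$_1$ factor $\R$. By the negative solution to the Connes Embedding Problem from \cite{MIP} and the model-theoretic consequences drawn in \cite{GH2}, the universal theory of $(\R,\tau)$ in the language of tracial von Neumann algebras is not computable. Since $(\R_{0,1},\operatorname{Tr})$ is a \emph{tracial} weighted von Neumann algebra, it is a model of (the tracial extension of) $T_{\vNa}$ whose modular group is trivial. Thus it suffices to exhibit a \emph{computable} translation $\varphi\mapsto\psi$ sending each tracial universal sentence $\varphi$ about $\R$ to an $\cal L_{\vNa}$-universal sentence $\psi$ about $\R_{0,1}$ with $\psi^{(\R_{0,1},\operatorname{Tr})}=\varphi^{(\R,\tau)}$; any decision procedure for the universal theory of $\R_{0,1}$ would then decide that of $\R$, a contradiction.

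The translation exploits that $\R$ occurs as the corner $p\R_{0,1}p$ cut by any trace-$1$ projection $p$, on which $\operatorname{Tr}$ restricts to the normalized trace $\tau$ and $\|\cdot\|_{\operatorname{Tr}}$ agrees with the tracial $2$-norm. Such a $p$ is totally $1$-bounded, every norm-$\le 1$ element of the corner lies in $S_1(\R_{0,1})$, and each is of the form $pyp$ for some $y\in S_1$. Given $\varphi=\sup_{\|x_1\|\le 1,\dots,\|x_k\|\le 1}\theta(\bar x)$, I would set
\[
\psi \;=\; \sup_{p\in S_1}\ \sup_{\bar y\in S_1}\Bigl(\theta(py_1p,\dots,py_kp)\ \dminus\ C\cdot D(p)\Bigr),
\]
where $D(p)$ is a penalty, built from the symbols $*$, $\times$, $d$, and $\Phi$ of $\cal L_{\vNa}$, measuring the failure of $p$ to be a trace-$1$ projection (so that $D(p)=0$ forces $p=p^{*}=p\times p$ and $\Phi(p)=1$), and $C$ is a constant read off from the syntactically available bound and modulus of continuity of $\theta$. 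Replacing each $\tau$ by $\Phi$ and each $2$-norm by $\|\cdot\|_\Phi$ uses only symbols of $\cal L_{\vNa}$, so $\psi$ is a genuine universal sentence and the passage $\varphi\mapsto\psi$ is plainly effective.

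It then remains to verify $\psi^{\R_{0,1}}=\varphi^{\R}$. The lower bound is immediate: for a genuine trace-$1$ projection the penalty vanishes, $py_ip$ sweeps the full unit ball of $p\R_{0,1}p\cong\R$ as $\bar y$ ranges over $S_1$, and $\theta$ is evaluated against the normalized trace, so the inner supremum equals $\varphi^{\R}$. For the upper bound, when $D(p)$ is small one uses spectral calculus to produce a genuine trace-$1$ projection $p'$ with $\|p-p'\|$ controlled by $D(p)$, whence $\theta(p\bar y p)\le\theta(p'\bar y p')+(\text{modulus})\le\varphi^{\R}+C\,D(p)$ and the penalized term stays $\le\varphi^{\R}$; when $D(p)$ is bounded away from $0$, the boundedness of $\theta$ together with the penalty pushes the term strictly below $\varphi^{\R}$.

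The main obstacle is exactly this last point: forcing a single universal (all-$\sup$) sentence to isolate a corner cut by a projection, which is intrinsically an existential act. The resolution rests on two features of the setup. First, $\R_{0,1}$ is a factor, so every trace-$1$ corner is isomorphic to $\R$ and the particular choice of $p$ is immaterial. Second, the penalty $C\cdot D(p)$, combined with the uniform continuity guaranteed by the moduli attached to the symbols of $\cal L_{\vNa}$, lets approximate projections be absorbed without overshooting $\varphi^{\R}$. The remaining work is bookkeeping: confirming that $C$ is extracted computably from $\varphi$ and that the relativized terms $py_ip$ respect the declared domains and ranges of the sorts $S_n$, both of which are routine once the isomorphism of corners is in hand.
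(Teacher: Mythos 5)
Your proposal follows the same route as the paper: the paper also reduces to the undecidability of the universal theory of $(\R,\tau)$ from \cite{GH2}, cuts corners by trace-$1$ projections, and handles the quantification over trace-$1$ projections by asserting that the set $P_1$ is quantifier-free definable via $\max\{d(p^2,p),d(p^*,p),|\Phi(p)-1|\}$ --- which is exactly your penalty $D(p)$, so your penalized sentence is the natural way to make the paper's $\sup_{p\in P_1}$ literal. The lower-bound half of your argument (factoriality, $\Tr$ restricting to the normalized trace on the corner, surjectivity of $y\mapsto pyp$ onto the corner's unit ball) is fine and matches the paper. The gap is in the step you yourself flag as the crux: in a II$_\infty$ factor it is \emph{false} that small $D(p)$ forces $p$ to be $\|\cdot\|_\Phi$-close to a genuine trace-$1$ projection. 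Take orthogonal projections $q_0,f\in\R_{0,1}$ with $\Tr(q_0)=3/4$ and $\Tr(f)=1/(4\delta)$, and set $p=q_0+\delta f$. Then $p=p^*$, $\Tr(p)=1$, $\|p\|\le 1$, $\Tr(p^2)=3/4+\delta/4\le 1$, so $p\in S_1$ and $D(p)=\|p^2-p\|_\Phi=(1-\delta)\sqrt{\delta}/2\to 0$; yet every projection $e$ with $\Tr(e)=1$ satisfies $\|p-e\|_\Phi\ge \|q_0-e\|_\Phi-\sqrt{\delta}/2\ge (1-\sqrt{\delta})/2$, using $\|e-f\|_\Phi^2\ge|\Tr(e)-\Tr(f)|$ for projections. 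The root cause is that, unlike in a II$_1$ factor where $|\tau(x)|\le\|x\|_2$, the trace on $\R_{0,1}$ is not $\|\cdot\|_\Phi$-continuous on operator-norm-bounded sets, so a $\|\cdot\|_\Phi$-small perturbation can carry a fixed amount of trace; consequently $D(p)$ does not control $d(p,P_1)$, $P_1$ is in fact \emph{not} a definable set via this formula, and your spectral-calculus step collapses. (Note that the paper's own proof asserts precisely this definability, so it shares the same gap.)

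The argument can be repaired, but it needs more than bookkeeping. Option one: strengthen the penalty to $D'(p)=\max\{D(p),\,|\Phi(p^*p)-1|\}$ (in the counterexample $\Phi(p^2)\approx 3/4$, so this term stays large). If $D'(p)\le\epsilon$, write $h=(p+p^*)/2$ and $q=\chi_{[1/2,1]}(h)$; the pointwise bounds $|t-\chi_{[1/2,1]}(t)|\le 2|t^2-t|$ and $(t^2-t)^2\ge t^2/4$ for $t\le 1/2$, together with $\Tr(h^2)\approx 1$, give both $\|p-q\|_\Phi=O(\epsilon)$ \emph{and} $|\Tr(q)-1|=O(\epsilon)$, which is what your sketch needs. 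Option two: keep $D$ but argue value-theoretically that ``deflated'' corners cannot overshoot, because for $c\le 1$ the structure $(\R,c\tau)$ embeds $\Phi$-preservingly into $(\R,\tau)$ in the non-unital language $\cL_{\vNa}$ (via $\R\cong q'\R q'$ for $\tau(q')=c$), so universal sentences do not increase; the sort bound $\|p\|_\Phi\le 1$ blocks closeness to projections of trace $\ge 1+O(\epsilon)$. In either case, the final adjustment to an exact trace-$1$ projection costs $O(\sqrt{\epsilon})$ in $\|\cdot\|_\Phi$ (a projection of trace $1+s$ is at distance exactly $\sqrt{s}$ from $P_1$), so your linear penalty $C\cdot D(p)$ should be replaced by a computable modulus such as $C\sqrt{D'(p)}$; this keeps the sentence universal and the map $\varphi\mapsto\psi$ effective, and with these changes the reduction, and hence the theorem, goes through.
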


\begin{proof}
    Let $\sup_{x}\theta(x)$ be a universal sentence in the language of tracial von Neumann algebras where again we assume all variables range over the unit ball.  Let $P_1$ denote the set of trace $1$ projections in $\R_{0,1}$.  It can be seen that $P_1$ is a quantifier-free definable set by the formula
    \[
    \max \{d(p^2,p), d(p^*,p), |\Phi(p) - 1|\}.
    \]
    Note that for every $p \in P_1$, the set
    \[
    p\R_{0,1}p := \{pxp \ : \ x \in \R_{0,1} \}
    \]
    is a copy of $\R$ and the restriction of the $\Phi$ on $\R_{0,1}$ to this set is the canonical trace on $\R$.  Furthermore, the map $x \mapsto pxp$ sends the unit ball of $\R_{0,1}$ to the unit ball of the corresponding copy of $\R$.  Thus, computing the $\sup_{x} \theta(x)$ in any one copy of $\R$ is the same as computing it in any other.  So consider the universal sentence in $\cal L_{\vNa}$ given by
    \[
    \psi = \sup_{p \in P_1} \sup_{x} \theta(pxp).
    \]
    By assumption, $\psi$ is computable in $\R_{0,1}$.  But
    \[
    \psi^{\R_{0,1}} = (\sup_{x} \theta(x))^{\R},
    \]
    contradicting the uncomputability of the universal theory of $\R$.
\end{proof}

The proof of the above can be made more general to prove the following principle.

\begin{thm}
    Let $(\cal M, \tau_0)$ be a II$_1$ factor equipped with its canonical trace.  Assume $(\cal M, \tau_0)$ has undecidable universal theory.  If $\cal N = \cal M \otimes B(\cal H)$ and the tracial weight $\tau$ is the II$_\infty$ amplification of the trace $\tau_0$, then $(\cal N, \tau)$ also has undecidable universal theory.
\end{thm}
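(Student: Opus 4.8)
The plan is to carry out, at the level of an abstract $(\cal M,\tau_0)$, exactly the reduction used in the proof of Theorem \ref{ThmUndecIIinfty}, replacing $\R_{0,1}$ by $\cal N$ and $\R$ by $\cal M$. Two facts drive that proof and both must be re-established in the present generality: that the trace-$1$ projections form a quantifier-free definable set sitting inside the bottom sort, and that every corner by such a projection is, as a tracial von Neumann algebra, a copy of $(\cal M,\tau_0)$. First I would check that
\[
P_1 := \{p \in S_1(\cal N) \ : \ p = p^* = p^2,\ \tau(p)=1\}
\]
is quantifier-free definable by the predicate $\max\{d(p^2,p),d(p^*,p),|\tau(p)-1|\}$, and that any such $p$ really lies in $S_1(\cal N)$. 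Since $\tau$ is tracial we have $\tau(p^*p)=\tau(pp^*)=\tau(p)=1$, so $\|p\|_\tau^\# = 1$; and for a tracial weight $\|\pi'(\eta_\tau(x))\| = \|x\|$ (right multiplication is $J L_{x^*} J$), so $\|p\|=1$ makes $p$ totally $1$-bounded. Thus $P_1 \subseteq S_1(\cal N)$.

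Next I would verify the corner identification. Taking $p_0 = 1_{\cal M}\otimes e$ for a rank-one $e \in B(\cal H)$ gives $p_0\cal N p_0 = \cal M\otimes e \cong \cal M$ with $\tau$ restricting to $\tau_0$. For arbitrary $p \in P_1$, the factoriality of $\cal N$ together with $\tau(p)=\tau(p_0)=1$ yields a partial isometry $v$ with $v^*v=p_0$, $vv^*=p$; then $x \mapsto vxv^*$ is a $^*$-isomorphism $p_0\cal N p_0 \to p\cal N p$ preserving $\tau$, by the trace property $\tau(vxv^*)=\tau(x)$. Hence $(p\cal N p,\tau|_{p\cal N p})\cong(\cal M,\tau_0)$ for every $p \in P_1$. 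A short computation then shows $x \mapsto pxp$ maps $S_1(\cal N)$ onto the operator-norm unit ball of $p\cal N p$: the image lies in the unit ball since $\|pxp\|\le\|x\|\le 1$, and conversely any $y$ in the unit ball of $p\cal N p$ satisfies $y=pyp$ and $y^*y \le \|y\|^2 p \le p$, so $\tau(y^*y)\le\tau(p)=1$ and $y$ is totally $1$-bounded, giving $y \in S_1(\cal N)$ with $y=pyp$.

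With these in hand the reduction runs as before. Given any universal sentence $\sup_x\theta(x)$ in the language of tracial von Neumann algebras (variables over the unit ball), form the $\cal L_{\vNa}$-sentence
\[
\psi := \sup_{p \in P_1}\ \sup_{x \in S_1}\ \theta(pxp),
\]
which is again universal because $P_1$ is quantifier-free definable, and whose construction from $\theta$ is a purely syntactic, hence effective, transformation. By the surjectivity onto the unit ball and the corner identification, $\sup_{x\in S_1}\theta(pxp) = (\sup_x\theta(x))^{\cal M}$ for each fixed $p$; as this value is independent of $p$, the outer $\sup_{p\in P_1}$ leaves it unchanged, so $\psi^{\cal N} = (\sup_x\theta(x))^{\cal M}$. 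Consequently any procedure computing the universal theory of $(\cal N,\tau)$ would compute $\psi^{\cal N}$, and therefore $(\sup_x\theta(x))^{\cal M}$, to arbitrary precision, contradicting the undecidability of the universal theory of $(\cal M,\tau_0)$.

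I expect the only delicate point to be the bookkeeping around sorts and effectivity: confirming $P_1 \subseteq S_1(\cal N)$, that $pxp$ lands in an appropriate sort, and that relativizing the outer quantifier to the definable set $P_1$ preserves both the universal form of $\psi$ and the computability of $\theta \mapsto \psi$. The operator-algebraic ingredients—Murray–von Neumann equivalence of equal-trace projections in the factor $\cal N$ and trace-invariance of the corner isomorphism—are standard, so the genuine content is simply the abstract form of the same reduction used for $\R_{0,1}$.
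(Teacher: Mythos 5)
Your proposal is correct and takes essentially the same approach as the paper: the paper's own proof consists of the remark that the argument for Theorem \ref{ThmUndecIIinfty} generalizes verbatim with $\R_{0,1}$ replaced by $\cal N$ and $\R$ replaced by $\cal M$, which is precisely the reduction you carry out. Your write-up merely supplies details the paper leaves implicit, namely the Murray--von Neumann equivalence of equal-trace finite projections in the factor $\cal N$ giving the trace-preserving corner isomorphism, and the sort bookkeeping showing $P_1 \subseteq S_1(\cal N)$ and that $x \mapsto pxp$ maps $S_1(\cal N)$ onto the unit ball of $p \cal N p$.
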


Theorem \ref{ThmUndecIIinfty} tells us that the universal theory of $\R_{0,1}$ can compute the Turing degree $\mathbf{0}'$.  Using the techniques and results from \cite{GHHyp}, we will show that, conversely, the universal theory of $\R_{0,1}$ can be computed from $\mathbf{0}'$. We will do this by demonstrating a computable presentation of $\R_{0,1}$ (see \cite[Remark 3.12]{GHHyp}).

\begin{prop}\label{PropCompPres}
    $\R_{0,1}$ admits a computable presentation.
\end{prop}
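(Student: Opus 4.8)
The plan is to realize $\R_{0,1}$ as the von Neumann tensor product $\R\,\overline{\otimes}\,B(\ell^2(\bbN))$ equipped with the semifinite trace $\Phi=\tau\otimes\Tr$, where $\tau$ is the normalized trace on the hyperfinite II$_1$ factor $\R$ and $\Tr$ is the standard trace on $B(\ell^2(\bbN))$, and then to build an explicit computable dense sequence out of the computable presentation of $\R$ recorded in \cite[Remark 3.12]{GHHyp}. The candidate is the countable $^*$-algebra $\cal A_0$ of all finite $\bbQ(i)$-linear combinations of elementary tensors $e^{(m)}_{jk}\otimes E_{pq}$, where the $e^{(m)}_{jk}$ are the matrix units of the dyadic subfactors $M_{2^m}(\bbC)\subseteq\R$ and the $E_{pq}$ are the matrix units of $B(\ell^2(\bbN))$ in the standard basis. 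This family is closed under the $^*$-algebra operations, carries an obvious computable enumeration, and---being finite-rank in the second tensorand---lies entirely in $D_\Phi\cap D_\Phi^*$.

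First I would record the simplifications forced by $\Phi$ being a trace. Traciality gives $\Delta=1$ and $\sigma^\Phi_t=\id$, so every element is analytic for the modular flow and $\|x\|_\Phi^\sharp=\|x\|_\Phi=\sqrt{\Phi(x^*x)}$; right $K$-boundedness of $\eta_\Phi(x)$ coincides with the operator-norm bound $\|x\|\le K$, so membership of $x$ in the sort $S_n$ collapses to the two conditions $\|x\|\le n$ and $\|x\|_\Phi\le n$. Furthermore the real projection $P$ onto $\cal K$ is just $x\mapsto\tfrac12(x+x^*)$ (self-adjoints and skew-adjoints being orthogonal in the tracial inner product), so the distance predicate is $\vA_n(x)=\|\tfrac12(x-x^*)\|_\Phi$. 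Consequently every function and predicate symbol of $\cal L_{\vNa}$ evaluates on $\cal A_0$ to a finite expression in operator norms and in trace values $\Phi(w)$ of words $w$ in the generators.

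Next I would check that all of this data is uniformly computable. For a product $w=\prod_r(a_r\otimes f_r)$ of elementary tensors, $\Phi(w)=\tau\!\left(\prod_r a_r\right)\cdot\Tr\!\left(\prod_r f_r\right)$, and both factors are exact rationals: the first because the $a_r$ sit in a common $M_{2^m}(\bbC)$ on which $\tau$ is the normalized matrix trace, the second because $\Tr$ of a product of matrix units is read off combinatorially as $0$ or $1$. By sesquilinearity, $\Phi(a^*b)\in\bbQ(i)$ is exactly computable for $a,b\in\cal A_0$, so the metrics $d_n$, the weight predicates $\Phi_n$, and the predicates $\vA_n$ are uniformly computable; the operator norm of a given element of $\cal A_0$ is the largest singular value of a concrete finite $\bbQ(i)$-matrix and is computable to arbitrary precision, which settles the norm side-conditions defining the sorts. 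The connecting maps, adjoints, scalar multiplications and products visibly send $\cal A_0$ into $\cal A_0$ computably.

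The crux---and the step I expect to require the most care---is to show that $\cal A_0\cap S_n$ is $\|\cdot\|_\Phi^\sharp$-dense in $S_n$, since density must hold in the $L^2$-type norm on the noncompact sorts rather than in operator norm, and the unit $1\notin D_\Phi$ is unavailable as an anchor. Given $x\in S_n$ I would first compress in the $B(\ell^2(\bbN))$-tensorand: with $q_N=\sum_{p\le N}E_{pp}$, the elements $(1\otimes q_N)\,x\,(1\otimes q_N)$ have operator norm at most $\|x\|\le n$ and converge to $x$ in $\|\cdot\|_\Phi$ as $N\to\infty$, because $x\in D_\Phi$ lies in the $L^2$-space and compression by the increasing projections $1\otimes q_N$ converges there. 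Each such compression lies in the type II$_1$ corner $\R\,\overline{\otimes}\,M_N(\bbC)$, on which $\Phi$ is a finite trace, and there the Kaplansky density theorem lets $\big(\bigcup_m M_{2^m}(\bbC)\big)\otimes M_N(\bbC)$ approximate it in $\|\cdot\|_\Phi$ without increasing operator norm; a final rational approximation of matrix entries lands the approximant in $\cal A_0\cap S_n$. This exhibits $\cal A_0$ as a computable dense sequence on which the metric and all symbols of $\cal L_{\vNa}$ are uniformly computable, which is precisely a computable presentation of the dissection $\cal D(\R_{0,1},\Phi)$; by Theorem \ref{correctness} this presents $\R_{0,1}$ itself, completing the proof.
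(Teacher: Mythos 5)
Your proposal is correct and takes essentially the same approach as the paper: both realize $\R_{0,1}$ as $\R\,\overline{\otimes}\,B(\ell^2(\bbN))$ with the amplified trace and enumerate finite sums of elementary tensors of a computable dense family in $\R$ with the matrix units of $B(\ell^2(\bbN))$ --- the paper invokes the enumeration from \cite[Remark 3.11]{GHHyp} where you use dyadic matrix units with $\bbQ(i)$ coefficients directly, which is the same data. Your additional verifications (the tracial simplifications, exact computability of the trace values, and $\|\cdot\|_\Phi$-density on the sorts via compression by $1\otimes q_N$ plus Kaplansky) simply fill in details the paper's terse proof leaves implicit.
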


\begin{proof}
    Take an effective enumeration $A_k$ of a dense subset of the unit ball of the hyperfinite II$_1$ factor $\cal R$ as in \cite[Remark 3.11]{GHHyp}.  Consider the matrix units of $B(\cal H)$, namely, $e_{i,j}$ for $i, j \in \bbN$.  By a simple zigzag argument, there is a computable enumeration $d_{m} := e_{i,j}$ where $f: \bbN^2 \to \bbN$ is a computable bijection and $m = f(i,j)$.  Using similar zigzag arguments, we can enumerate finite sums of the form $A_{k_1} \otimes d_{m_1} + \ldots + A_{k_N} \otimes d_{m_N}$.  Let $B_{n}$ be such an enumeration.  It is now clear that $B_{n}$ is an effective enumeration of the unit ball of $\R_{0,1}$.  Furthermore, we can effectively compute $\|B_n\|_{\tau}$ by computing the $2$-norms of the left tensors of the diagonal entries in $\R$ and adding them up.
\end{proof}

Synthesizing Theorem \ref{ThmUndecIIinfty} and Proposition \ref{PropCompPres}, we arrive at the following corollary.

\begin{cor}
    The universal theory of $\R_{0,1}$ has Turing degree $\mathbf{0}'$.
\end{cor}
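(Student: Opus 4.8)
The degree assertion unpacks as $\Th_\forall(\R_{0,1}) \equiv_T \mathbf{0}'$, so the plan is to establish the two Turing reductions separately and then combine them. The lower bound is essentially repackaging Theorem \ref{ThmUndecIIinfty}, and the upper bound is where the real work of Proposition \ref{PropCompPres} is spent.

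For $\mathbf{0}' \leq_T \Th_\forall(\R_{0,1})$, I would read a Turing reduction directly off the proof of Theorem \ref{ThmUndecIIinfty}. That proof exhibits a computable map sending a universal sentence $\theta$ in the tracial language to the universal $\cal L_{\vNa}$-sentence $\psi = \sup_{p \in P_1}\sup_x \theta(pxp)$, with the property that $\psi^{\R_{0,1}} = (\sup_x \theta)^{\R}$. Since this map is computable and outputs genuine universal sentences, an oracle for $\Th_\forall(\R_{0,1})$ computes $\Th_\forall(\R)$, giving $\Th_\forall(\R) \leq_T \Th_\forall(\R_{0,1})$. By the negative solution to the Connes Embedding Problem \cite{MIP}, the universal theory of the hyperfinite II$_1$ factor already satisfies $\mathbf{0}' \leq_T \Th_\forall(\R)$, and composing the two reductions yields $\mathbf{0}' \leq_T \Th_\forall(\R_{0,1})$.

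For $\Th_\forall(\R_{0,1}) \leq_T \mathbf{0}'$, I would invoke Proposition \ref{PropCompPres} together with the principle recorded in \cite[Remark 3.12]{GHHyp}. Fix a universal sentence $\sup_{\bar x}\theta(\bar x)$ with $\theta$ quantifier-free. Using the computable presentation, one evaluates $\theta$ to arbitrary precision on tuples drawn from the computable dense sequence $B_n$; by uniform continuity of $\theta$ together with density of $\{B_n\}$, the supremum of these values equals $(\sup_{\bar x}\theta)^{\R_{0,1}}$. This realizes each universal-sentence value as the supremum of a uniformly computable sequence of reals, i.e. as a uniformly left-c.e. real. Since every left-c.e. real is computable from $\mathbf{0}'$ (uniformly in the index of the approximating sequence), we conclude $\Th_\forall(\R_{0,1}) \leq_T \mathbf{0}'$. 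Combining with the previous paragraph gives $\Th_\forall(\R_{0,1}) \equiv_T \mathbf{0}'$.

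The main obstacle is the upper-bound direction, and specifically checking that the enumeration of Proposition \ref{PropCompPres} is a computable presentation in the multi-sorted language $\cal L_{\vNa}$, not merely in the tracial language on the unit ball. Concretely, one must verify that a dense computable sequence can be selected inside each sort $S_n$ of totally $n$-bounded elements, that the metric $\|\cdot\|_\Phi^\#$ and the connecting maps $\iota_{m,n}$ are computable on these sequences, and that the predicates $\Phi_n$ and the auxiliary predicates $\vA_n$ can be approximated effectively. Because the trace on $\R_{0,1}$ is only semifinite, this amounts to tracking how the bounded sorts sit inside the unbounded algebra; once this compatibility is in place, the left-c.e. argument is routine, and the lower-bound direction is immediate from Theorem \ref{ThmUndecIIinfty}.
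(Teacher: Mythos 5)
Your proposal is correct and takes essentially the same route as the paper: the lower bound is precisely the Turing reduction of $\Th_\forall(\R)$ to $\Th_\forall(\R_{0,1})$ extracted from Theorem \ref{ThmUndecIIinfty} (with $\mathbf{0}' \leq_T \Th_\forall(\R)$ supplied by \cite{GH2}, which builds on \cite{MIP}), and the upper bound is Proposition \ref{PropCompPres} combined with \cite[Remark 3.12]{GHHyp}, namely that values of universal sentences over a computable presentation are uniformly left-c.e.\ and hence computable from $\mathbf{0}'$. The paper presents the corollary as a direct synthesis of these two results, so your write-up simply makes explicit the same two reductions, including the compatibility check for the multi-sorted language $\cal L_{\vNa}$ that the paper also leaves implicit.
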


By the same argument as above, we have

\begin{thm}
    Assume $\cal M_0$ is a II$_1$ factor with a computable presentation, then $\cal M := \cal M_0 \otimes B(\cal H)$ with the associated tracial weight has a computable presentation.
\end{thm}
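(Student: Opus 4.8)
The plan is to mimic the construction from Proposition \ref{PropCompPres}, abstracting away from the specific hyperfinite setting. First I would invoke the hypothesis that $\cal M_0$ has a computable presentation, which furnishes an effective enumeration $A_k$ of a dense subset of the unit ball of $\cal M_0$ together with a computable procedure to evaluate the $2$-norms $\|A_k\|_{\tau_0}$ (equivalently, to compute the trace $\tau_0$ on products of the $A_k$). This is exactly the data that drove the proof for $\R$ in Proposition \ref{PropCompPres}; the only feature of the hyperfinite factor used there was that it came pre-equipped with such an enumeration.

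Next I would build the enumeration of $\cal N = \cal M_0 \otimes B(\cal H)$ in the language $\cal L_{\vNa}$ exactly as before. Fixing the matrix units $e_{i,j}$ of $B(\cal H)$, a zigzag argument gives a computable bijection $f : \bbN^2 \to \bbN$ and hence a computable enumeration $d_m := e_{i,j}$ with $m = f(i,j)$. I would then enumerate finite sums $A_{k_1} \otimes d_{m_1} + \cdots + A_{k_N} \otimes d_{m_N}$ by a further zigzag over the index tuples, obtaining a sequence $B_n$ that effectively enumerates a dense subset of the unit ball of $\cal N$. The key point is that the tracial weight $\tau$ on $\cal N$ is the II$_\infty$ amplification of $\tau_0$, so $\|B_n\|_\tau$ is computed by taking the $2$-norms (with respect to $\tau_0$) of the left tensor factors appearing on the diagonal matrix entries and summing them; since $\tau_0$ is computable on the $A_k$ by hypothesis, this summation is a computable procedure. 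This produces a computable presentation of $(\cal N, \tau)$.

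The main obstacle, and the only place where care beyond Proposition \ref{PropCompPres} is needed, is verifying that this data genuinely constitutes a computable presentation in the sense of $\cal L_{\vNa}$ rather than merely in the tracial language. Concretely, I must confirm that the sorts $S_n$ of totally bounded elements, the weight predicates $\Phi_n$, and the $\#$-norm are all effectively approximable on the enumerated elements $B_n$. For a tracial weight this reduces to computing $\tau(x^*x)$ and $\tau(xx^*)$, which the diagonal-sum procedure above handles; one must also check that each $B_n$ lands in a sort $S_n$ that can be read off effectively from its tensor decomposition (the norm bound and right-bound estimates are controlled by the number and norms of the matrix-unit terms, which are computable from the index data). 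Since the argument is structurally identical to Proposition \ref{PropCompPres} with the concrete enumeration of $\cal R$ replaced by the abstract computable presentation of $\cal M_0$, I expect no genuine new difficulty, only the bookkeeping of transporting the effectivity through the $\cal L_{\vNa}$ formalism.
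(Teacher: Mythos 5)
Your proposal is correct and is essentially the paper's own argument: the paper proves this theorem by simply noting it follows ``by the same argument as above,'' i.e.\ by repeating the proof of Proposition~\ref{PropCompPres} with the effective enumeration of the unit ball of $\R$ replaced by the one furnished by the computable presentation of $\cal M_0$, exactly as you do. Your final paragraph on transporting the effectivity through the $\cal L_{\vNa}$ sorts is bookkeeping the paper leaves implicit, but it introduces no new ideas beyond the paper's route.
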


We also have the following corollary by \cite[Theorem 5.2]{GH2}.

\begin{cor}
    $\R_{0,1}$EP has a negative solution. In other words, there is no effectively enumerable subset $T$ of the full theory of $\R_{0,1}$ such that, for any $\cal L_{\vNa}$-structure $\cal M$, if $\cal M \vDash T$ , then $\cal M$ embeds into an ultrapower of $\R_{0,1}$.
\end{cor}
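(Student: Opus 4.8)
The plan is to argue by contradiction, instantiating the general principle of \cite[Theorem 5.2]{GH2} in the language $\cal L_{\vNa}$. Suppose there were an effectively enumerable set $T \subseteq \Th(\R_{0,1})$ of $\cal L_{\vNa}$-conditions such that every model of $T$ embeds into an ultrapower of $\R_{0,1}$. I would fix an arbitrary universal sentence $\sigma = \sup_{\bar x}\theta(\bar x)$ of $\cal L_{\vNa}$, with the variables ranging over the sorts $S_n$, and show that under this hypothesis its value $\sigma^{\R_{0,1}}$ can be computed to arbitrary precision, contradicting Theorem \ref{ThmUndecIIinfty}.

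The crux is the identity
\[
\sigma^{\R_{0,1}} \;=\; \sup\{\sigma^{N} : N \models T\}.
\]
The inequality ``$\leq$'' holds because $\R_{0,1} \models T$, as $T \subseteq \Th(\R_{0,1})$. For ``$\geq$'', each model $N \models T$ embeds, by hypothesis, into some ultrapower $\R_{0,1}^{\cU}$; since the embeddings of Theorem \ref{correctness} are genuine $\cal L_{\vNa}$-substructure inclusions, universal sentences are monotone, giving $\sigma^{N} \leq \sigma^{\R_{0,1}^{\cU}}$, and {\L}o\'{s}'s theorem yields $\sigma^{\R_{0,1}^{\cU}} = \sigma^{\R_{0,1}}$. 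By the completeness theorem for continuous logic, the right-hand side equals $\inf\{r \in \bbQ : T \vdash \sigma \leq r\}$. As $T$ is effectively enumerable and the formal proof relation is computably enumerable, the set of rational provable upper bounds $\{r : T \vdash \sigma \leq r\}$ is c.e.; hence $\sigma^{\R_{0,1}}$ is upper semi-computable, i.e.\ we can enumerate a sequence of rationals decreasing to it.

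On the other hand, Proposition \ref{PropCompPres} furnishes a computable presentation of $\R_{0,1}$, and evaluating $\theta$ on the tuples drawn from this presentation produces a sequence of rational lower bounds increasing to $\sigma^{\R_{0,1}}$; thus the universal theory is also lower semi-computable. Combining the two, $\sigma^{\R_{0,1}}$ would be computable uniformly in $\sigma$, so the universal theory of $\R_{0,1}$ in $\cal L_{\vNa}$ would be computable, contradicting Theorem \ref{ThmUndecIIinfty}. This contradiction shows no such $T$ exists.

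The step requiring the most care is the inequality ``$\geq$'' in the displayed identity together with the invocation of the continuous completeness theorem: one must confirm that the weight-preserving embeddings-with-conditional-expectation of Theorem \ref{correctness} are exactly the model-theoretic substructure maps of $\Md(T_{\vNa})$, so that universal sentences over the unbounded family of sorts $S_n$ are genuinely monotone under them, and that the approximate-provability formalism of continuous logic applies verbatim to this multi-sorted theory with its unbounded weight. Granting these, the argument is precisely the instantiation of \cite[Theorem 5.2]{GH2} to $\R_{0,1}$, and the corollary follows.
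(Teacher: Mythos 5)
Your proposal is correct and takes essentially the same approach as the paper: the paper proves this corollary simply by citing \cite[Theorem 5.2]{GH2}, and your argument is precisely the instantiation of that theorem's proof in $\cal L_{\vNa}$, using Theorem \ref{ThmUndecIIinfty} for the contradiction and Proposition \ref{PropCompPres} for the lower semi-computability of the universal theory. Nothing further is needed.
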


Since $\R_{0,1}$ embeds in any $II_{\infty}$ factor, we may argue as in \cite[Remark 5.4]{GH2} to conclude:

\begin{cor}
    There is no effectively enumerable theory $T$ extending the theory of $II_{\infty}$ factors with the property that every model of $T$ embeds into an ultrapower of $\R_{0,1}$.
\end{cor}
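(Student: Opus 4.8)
The plan is to reduce the statement to the preceding negative solution of $\R_{0,1}$EP (the corollary just proved) rather than re-running the undecidability argument from scratch, following the strategy of \cite[Remark 5.4]{GH2}. Suppose, toward a contradiction, that there is a satisfiable effectively enumerable theory $T$ extending the theory $T_{II_\infty}$ of $II_\infty$ factors such that every model of $T$ embeds into an ultrapower of $\R_{0,1}$. (If $T$ were unsatisfiable the hypothesis would be vacuous, so we may assume $T$ has models.) The goal is to manufacture from $T$ an effectively enumerable \emph{subset} of $\Th(\R_{0,1})$ that still forces embeddability into an ultrapower of $\R_{0,1}$, which is exactly what the previous corollary forbids.

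First I would isolate the universal part of $T$. Let $T'$ be the set of universal conditions $\sup_{\bar x}\varphi(\bar x)\le r$ (with $\bar x$ ranging over the sorts $S_n$ and $r\in\bbQ$) that are provable from $T$. Since $T$ is effectively enumerable, the completeness theorem for continuous logic makes $T'$ effectively enumerable as well, by searching for proofs from $T$. The key point is that $T'\subseteq \Th(\R_{0,1})$: because every model of $T$ is a $II_\infty$ factor and $\R_{0,1}$ embeds into every $II_\infty$ factor, fixing any model $\cal M\models T$ we obtain an $\cal L_{\vNa}$-embedding $\R_{0,1}\hookrightarrow \cal M$, and universal conditions pass downward along embeddings; hence any $\sup_{\bar x}\varphi\le r$ that holds in all models of $T$ already holds in $\R_{0,1}$.

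Next I would verify that $T'$ inherits the embedding property. By the continuous-logic characterization of universal theories, a structure $\cal N$ satisfies $T'$ if and only if $\cal N$ embeds into some model of $T$. Composing with the hypothesised embedding of that model into an ultrapower of $\R_{0,1}$ shows that every model $\cal N$ of $T'$ embeds into an ultrapower of $\R_{0,1}$. Thus $T'$ is an effectively enumerable subset of $\Th(\R_{0,1})$ all of whose models embed into ultrapowers of $\R_{0,1}$, contradicting the negative solution to $\R_{0,1}$EP. This contradiction proves the corollary.

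The main obstacle is justifying the two transfer principles in the $\cal L_{\vNa}$ setting. Concretely, I must check that the inclusion $\R_{0,1}\hookrightarrow\cal M$ really is a morphism in $\Md(T_{\vNa})$ — a weight-preserving embedding admitting a conditional expectation — so that it is an $\cal L_{\vNa}$-embedding along which universal conditions descend; this is where Theorem \ref{correctness} and the existence of the trace-preserving conditional expectation $\R\otimes B(\cal H)\hookrightarrow\cal M_1\otimes B(\cal H)$ onto its image are used. I must also confirm that the classical equivalence ``$\cal N\models T_\forall$ iff $\cal N$ embeds into a model of $T$'' holds in this multi-sorted, weighted continuous logic. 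Once these are in place, the effective enumerability of $T'$ and the reduction to the previous corollary are routine.
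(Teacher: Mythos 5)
Your proposal is correct and follows essentially the same route as the paper: the paper's entire proof is the observation that $\R_{0,1}$ embeds into any II$_\infty$ factor together with a citation of the argument of \cite[Remark 5.4]{GH2}, which is exactly what you have reconstructed --- pass to the universal consequences of $T$, use the embedding $\R_{0,1}\hookrightarrow\cal M\models T$ to see these hold in $\R_{0,1}$, and reduce to the preceding $\R_{0,1}$EP corollary. The two technical points you flag (that the inclusion is a weight-preserving embedding with conditional expectation, hence an $\cal L_{\vNa}$-embedding in the sense of Theorem \ref{correctness}, and the continuous-logic fact that models of $T_\forall$ are exactly the substructures of models of $T$) are precisely the ingredients the paper leaves implicit in its one-line citation.
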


Arguing as in \cite[Corollary 5.5]{GH2} we also have the following.

\begin{cor}
    There is a sequence $\cal M_1, \cal M_2, \ldots$ of separable II$_\infty$ factors, none of which embed into an ultrapower of $\R_{0,1}$, and such that, for all $i < j$, $\cal M_i$ does not embed into an ultrapower of $\cal M_j$.
\end{cor}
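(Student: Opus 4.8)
The plan is to reduce to the tracial case via the amplification functor $\cal N \mapsto \cal N \otimes B(\cal H)$ and to transfer non-embeddings through the value-preserving sentence encoding that already appears in the proof of Theorem \ref{ThmUndecIIinfty}. First I would invoke the tracial analogue \cite[Corollary 5.5]{GH2}: there is a sequence $\cal N_1, \cal N_2, \ldots$ of separable II$_1$ factors, none embedding into an ultrapower of $\R$, with $\cal N_i$ not embedding into $\cal N_j^{\cU}$ for all $i < j$. Setting $\cal M_n := \cal N_n \otimes B(\cal H)$, equipped with the II$_\infty$ amplification of the canonical trace on $\cal N_n$, produces separable II$_\infty$ factors, and $\R_{0,1} = \R \otimes B(\cal H)$ arises in exactly the same way, so the whole comparison takes place inside the image of the amplification functor.

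The transfer rests on two facts. The first is the standard continuous-model-theoretic characterization, valid in $\Md(T_{\vNa})$ by Theorem \ref{correctness}: a weighted von Neumann algebra $\cal A$ embeds (as a morphism of $\Md(T_{\vNa})$, i.e.\ weight-preservingly with a conditional expectation) into an ultrapower of $\cal B$ if and only if $\sigma^{\cal A} \le \sigma^{\cal B}$ for every universal $\cal L_{\vNa}$-sentence $\sigma$. The second is the encoding from the proof of Theorem \ref{ThmUndecIIinfty}: for any universal sentence $\theta$ in the language of tracial von Neumann algebras, the universal $\cal L_{\vNa}$-sentence
\[
\psi := \sup_{p \in P_1} \sup_{x} \theta(pxp)
\]
satisfies $\psi^{\cal N \otimes B(\cal H)} = \theta^{\cal N}$ for every II$_1$ factor $\cal N$, since a trace-$1$ projection $p$ cuts $\cal N \otimes B(\cal H)$ down to a copy of $\cal N$ on which the tracial weight restricts to the trace; the argument given there for $\R$ applies verbatim and uniformly in $\cal N$.

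With these in hand the conclusion is mechanical. If some $\cal M_i$ embedded into an ultrapower of $\cal M_j$ with $i < j$, then $\psi^{\cal M_i} \le \psi^{\cal M_j}$ for every universal $\psi$; choosing $\theta$ to witness that $\cal N_i$ does not embed into $\cal N_j^{\cU}$ (so $\theta^{\cal N_i} > \theta^{\cal N_j}$) and encoding it yields $\theta^{\cal N_i} = \psi^{\cal M_i} \le \psi^{\cal M_j} = \theta^{\cal N_j}$, a contradiction. The identical argument, with $\cal M_j$ replaced by $\R_{0,1}$ and $\cal N_j$ by $\R$, shows that no $\cal M_n$ embeds into an ultrapower of $\R_{0,1}$. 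Separability of each $\cal M_n$ is immediate from separability of $\cal N_n$.

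The main obstacle I anticipate is not the combinatorics of the transfer but the two foundational inputs it quietly uses: verifying that $\psi^{\cal N \otimes B(\cal H)} = \theta^{\cal N}$ holds uniformly in $\cal N$ (one must check that $P_1$ remains quantifier-free definable and that the corner isomorphism $p(\cal N \otimes B(\cal H))p \cong \cal N$ is compatible both with the operator-norm unit ball and with the $\|\cdot\|_{\Phi}$-structure through which $\theta$ is interpreted), and confirming that the ultrapower-embedding characterization is the one appropriate to the conditional-expectation morphisms of $\Md(T_{\vNa})$ rather than to arbitrary normal $*$-embeddings. Both are supplied by the machinery of Sections \ref{SectionAxiomatization}--\ref{SectionOcneanu}, so the corollary is ultimately a formal consequence of Theorem \ref{ThmUndecIIinfty} together with \cite[Corollary 5.5]{GH2}.
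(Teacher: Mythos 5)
Your proof is correct, but it takes a genuinely different route from the paper's. The paper's proof is literally the instruction ``arguing as in \cite[Corollary 5.5]{GH2}'': one re-runs the diagonalization at the II$_\infty$ level. Having constructed $\cal M_1,\dots,\cal M_n$, choose universal $\cal L_{\vNa}$-sentences $\sigma_i$ and rationals $r_i$ with $\sigma_i^{\R_{0,1}} < r_i < \sigma_i^{\cal M_i}$ (possible since $\cal M_i$ does not embed into an ultrapower of $\R_{0,1}$), adjoin the finitely many conditions $\sigma_i \le r_i$ to the effectively enumerable theory of II$_\infty$ factors with tracial weight, and apply the preceding corollary (the negative solution to $\R_{0,1}$EP for theories extending the theory of II$_\infty$ factors) together with downward L\"owenheim--Skolem to get a separable model $\cal M_{n+1}$ not embedding into any ultrapower of $\R_{0,1}$; the adjoined conditions then forbid $\cal M_i \hookrightarrow \cal M_{n+1}^{\cU}$ for $i \le n$. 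You instead take the GH2 tracial sequence as a black box and push it through the amplification $\cal N \mapsto \cal N \otimes B(\cal H)$, transferring non-embeddability via the corner-encoding sentence $\psi_\theta = \sup_{p \in P_1}\sup_x \theta(pxp)$ from Theorem \ref{ThmUndecIIinfty}. Your route buys concreteness (the $\cal M_n$ are explicit amplifications rather than abstract models produced by diagonalization), and at the II$_\infty$ level it needs only the easy, {\L}o\'s-theorem direction of the ultrapower-embedding criterion, the hard direction being invoked only on the tracial side where it is standard. Its cost is exactly the uniformity you flag: you need $\psi_\theta^{\cal N \otimes B(\cal H)} = \theta^{\cal N}$ for every II$_1$ factor in play --- this part is unproblematic, since any two trace-one projections in a II$_\infty$ factor are Murray--von Neumann equivalent, so every trace-one corner of $\cal N \otimes B(\cal H)$ is a copy of $\cal N$, a generalization the paper itself records in the theorem immediately following Theorem \ref{ThmUndecIIinfty} --- and, more delicately, you need $P_1$ to be quantifier-free definable with a single modulus valid across all II$_\infty$ factors with tracial weights and their $\Md(T_{\vNa})$-ultrapowers, so that $\psi_\theta$ is a genuine universal definable predicate to which both monotonicity under embeddings and {\L}o\'s's theorem apply, whereas the paper only ever asserts this definability inside $\R_{0,1}$. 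That uniform version does hold (perturb an almost-projection to a projection by functional calculus in $\|\cdot\|_\Phi$, then use diffuseness of a II$_\infty$ factor to correct the trace to exactly one), so your argument closes, but this verification should be stated explicitly rather than absorbed into ``applies verbatim.''
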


Notice that when $(\cal M, \Phi)$ is a weighted von Neumann algebra, the restriction of $\Phi$ to the centralizer $\cal M_{\Phi}$ is always a faithful normal tracial weight.  In \cite{Ha77}, Haagerup shows that the restriction of $\Phi$ to the centralizer is not always semifinite by providing an example $(\cal M, \Phi)$ where $\cal M_{\Phi}$ is not a semifinite von Neumann algebra. Recall that a von Neumann algebra is said to be semifinite if it admits a faithful normal semifinite tracial weight.

\begin{defn}
    A faithful normal semifinite weight $\Phi$ on $\cal M$ is said to be \textbf{strictly semifinite} if the restriction of $\Phi$ to $\cal M_{\Phi}$ is a semifinite weight.
\end{defn}

\begin{lem}{\cite[Lemma 2.3]{HS}}
    Let $\Phi$ be a faithful normal semifinite lacunary weight on $\cal M$. Then $\Phi$ is strictly semifinite. 
\end{lem}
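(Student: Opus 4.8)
The plan is to show that the lacunary hypothesis supplies a $\Phi$-preserving faithful normal conditional expectation $E\colon \cal M \to \cal M_{\Phi}$ onto the centralizer, and then to transport semifiniteness of $\Phi$ across $E$. Recall that lacunarity means precisely that $1$ is isolated in $\spec(\Delta_{\Phi})$, equivalently that there is $\delta>0$ with $\spec(\sigma^{\Phi})\cap[-\delta,\delta]=\{0\}$ in the Arveson spectrum of the modular automorphism group. The centralizer $\cal M_{\Phi}$ is exactly the fixed-point algebra $M(\sigma^{\Phi},\{0\})$, and I would first record the standard Arveson-theoretic identity $M(\sigma^{\Phi},[-\delta,\delta])=M(\sigma^{\Phi},\{0\})=\cal M_{\Phi}$, which holds because the spectrum of any element is contained in $\spec(\sigma^{\Phi})$ and the gap forces $[-\delta,\delta]\cap\spec(\sigma^{\Phi})=\{0\}$.

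Next I would construct $E$ using the F\'ejer kernel already in play in Section \ref{SectionHAUltraproduct}. Set $E:=\sigma^{\Phi}_{F_{\delta}}$, i.e. $E(x)=\int_{\bbR}F_{\delta}(t)\,\sigma^{\Phi}_{t}(x)\,\mathrm{dt}$. Since $\supp\widehat{F_{\delta}}\subseteq[-\delta,\delta]$, this map lands in $M(\sigma^{\Phi},[-\delta,\delta])=\cal M_{\Phi}$ exactly as in the proof of Theorem \ref{ultracontinuous}; and since $\widehat{F_{\delta}}(0)=1$ while $\sigma^{\Phi}_{t}$ fixes $\cal M_{\Phi}$ pointwise, $E$ restricts to the identity on $\cal M_{\Phi}$ and is therefore idempotent. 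Because $F_{\delta}\geq 0$ and $\|F_{\delta}\|_{1}=\widehat{F_{\delta}}(0)=1$, the map $E$ is unital and completely positive, hence a norm-one projection onto the subalgebra $\cal M_{\Phi}$; Tomiyama's theorem then upgrades $E$ to a normal conditional expectation. The invariance $\Phi\circ\sigma^{\Phi}_{t}=\Phi$ together with $\int_{\bbR}F_{\delta}=1$ gives $\Phi\circ E=\Phi$, and faithfulness of $E$ is immediate from that of $\Phi$.

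Finally I would transfer semifiniteness. Given any $y\in\cal M_{\Phi}$, use semifiniteness of $\Phi$ on $\cal M$ to choose a net $x_{\alpha}\in\mathfrak{n}_{\Phi}$ with $x_{\alpha}\to y$ $\sigma$-weakly; normality of $E$ gives $E(x_{\alpha})\to E(y)=y$ $\sigma$-weakly with each $E(x_{\alpha})\in\cal M_{\Phi}$. The Kadison--Schwarz inequality $E(x_{\alpha})^{*}E(x_{\alpha})\leq E(x_{\alpha}^{*}x_{\alpha})$ combined with $\Phi\circ E=\Phi$ yields $\Phi(E(x_{\alpha})^{*}E(x_{\alpha}))\leq\Phi(x_{\alpha}^{*}x_{\alpha})<\infty$, so $E(x_{\alpha})\in\mathfrak{n}_{\Phi}\cap\cal M_{\Phi}$. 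Hence $\mathfrak{n}_{\Phi}\cap\cal M_{\Phi}$ is $\sigma$-weakly dense in $\cal M_{\Phi}$, which is exactly semifiniteness of $\Phi|_{\cal M_{\Phi}}$, i.e. strict semifiniteness of $\Phi$.

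The main obstacle is the first step: producing the projection onto $\cal M_{\Phi}$ with values genuinely inside the centralizer rather than merely in a neighbourhood of it. This is exactly where lacunarity is indispensable---Takesaki's expectation theorem cannot be invoked to manufacture $E$, since it would require the semifiniteness of $\Phi|_{\cal M_{\Phi}}$ that we are trying to establish---and it explains, in light of Haagerup's example, why a spectral hypothesis is necessary at all. Some care is also needed to justify $\Phi\circ E=\Phi$ for the unbounded weight $\Phi$, which I would handle by reducing to positive elements and invoking normality and lower semicontinuity of $\Phi$ rather than naively exchanging $\Phi$ with the F\'ejer integral.
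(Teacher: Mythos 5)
The paper itself does not prove this lemma: it is imported verbatim from Haagerup--St\o rmer \cite[Lemma 2.3]{HS}, so there is no internal proof to compare against, and your proposal should be judged as a free-standing proof. Judged that way, it is correct, and it is essentially the standard spectral-gap argument. All three pillars check out: (i) lacunarity gives $\delta>0$ with $\spec(\sigma^\Phi)\cap[-\delta,\delta]=\{0\}$ in the Arveson spectrum, and since the Arveson spectrum of every element is contained in $\spec(\sigma^\Phi)$, the spectral subspace $M(\sigma^\Phi,[-\delta,\delta])$ collapses to the fixed-point algebra $\cal M_\Phi$; (ii) the F\'ejer average $E=\sigma^\Phi_{F_\delta}$ therefore lands in $\cal M_\Phi$, fixes it pointwise (as $\widehat{F_\delta}(0)=\|F_\delta\|_1=1$), and is a normal norm-one projection, hence a conditional expectation by Tomiyama; (iii) Kadison--Schwarz together with $\Phi\circ E\leq\Phi$ and $\sigma$-weak density of $\mathfrak{n}_\Phi$ in $\cal M$ pushes semifiniteness down to $\cal M_\Phi$. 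Two refinements would tighten the write-up. First, you never need the full equality $\Phi\circ E=\Phi$ that worries you in the final paragraph: the inequality $\Phi(E(y))\leq\Phi(y)$ for positive $y$ suffices for the Kadison--Schwarz step, and it follows cleanly by writing $\Phi$ as the supremum of the normal positive functionals $\omega\leq\Phi$ it dominates (Haagerup's theorem on normal weights, \cite{HaNormal}) and exchanging each such $\omega$ with the $L^1$ integral, using $\omega(\sigma_t^\Phi(y))\leq\Phi(\sigma_t^\Phi(y))=\Phi(y)$. Second, normality of $E$ deserves a sentence: it holds because the preadjoint action $t\mapsto\omega\circ\sigma^\Phi_t$ is point-norm continuous on $\cal M_*$, so $E$ is the adjoint of a Bochner integral of preadjoint maps. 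Finally, your observation that Takesaki's expectation theorem cannot be invoked to manufacture $E$ --- since its hypothesis is exactly the semifiniteness of $\Phi|_{\cal M_\Phi}$ being proved --- correctly isolates why the spectral-gap averaging, rather than abstract expectation theory, is the essential input, and why Haagerup's type III centralizer example \cite{Ha77} shows some such hypothesis is unavoidable.
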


\begin{lem}\label{CentralizerDef}
    If $(\cal M, \Phi)$ is a weighted von Neumann algebra and $\Phi$ is lacunary, then the projection of $\cal H_{\Phi}$ onto $\eta_{\Phi}(\cal M_{\Phi})$ is a computably definable function.
\end{lem}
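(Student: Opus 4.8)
The plan is to identify the target projection with a spectral projection of the operator $R=\vR$ from Section \ref{SectionTotallyBounded}, and then to exploit the spectral gap furnished by lacunarity to express that spectral projection as a computable continuous function of $R$, which is already computably definable.

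First I would identify the subspace. The centralizer $\cal M_\Phi$ is the fixed-point algebra of the modular group $\sigma^\Phi_t$, and on $\cal H_\Phi$ the automorphism $\sigma^\Phi_t$ is implemented by $\Delta^{it}$; hence a vector is fixed by all $\Delta^{it}$ exactly when it lies in the eigenspace of $\Delta$ at eigenvalue $1$. The relation between $\Delta$ and $R$ recorded in Section \ref{SectionTotallyBounded} (the approximation $\Delta^{it}\approx f_{t,m}(2-R)f_{-t,n}(R)$ forces $\Delta^{it}=\bigl((2-R)/R\bigr)^{it}$, i.e. $\Delta=(2-R)/R$ and $R=2/(\Delta+1)$) shows that the assignment $\Delta\mapsto R$ is a decreasing homeomorphism of spectra sending $\Delta=1$ to $R=1$. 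Consequently the projection $E$ onto the $\Delta$-fixed vectors equals the spectral projection $1_{\{1\}}(R)$.

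Second, I would use lacunarity to pin down the range of $E$. By \cite[Lemma 2.3]{HS}, a lacunary weight is strictly semifinite, so $\Phi|_{\cal M_\Phi}$ is a faithful normal semifinite trace and Takesaki's theorem produces a $\Phi$-preserving normal conditional expectation $\cal E\colon\cal M\to\cal M_\Phi$. This expectation implements $E$ at the Hilbert-space level, $E\eta_\Phi=\eta_\Phi\circ\cal E$, and strict semifiniteness makes $\eta_\Phi(\cal M_\Phi\cap N_\Phi)$ dense in $E\cal H_\Phi$. Therefore the projection onto $\overline{\eta_\Phi(\cal M_\Phi)}$ is precisely $E=1_{\{1\}}(R)$, and the induced function on sorts is $x\mapsto\cal E(x)$, which by Corollary \ref{Rtotalbound} (and $\|1_{\{1\}}\|_\infty\le 1$) maps each $S_n$ into a controlled sort.

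Third, I would establish definability. Lacunarity means $1$ is isolated in $\spec(\Delta)$: there is $\lambda\in(0,1)$ with $\spec(\Delta)\cap(\lambda,\lambda^{-1})=\{1\}$; transporting through $\Delta\mapsto R$ yields a gap isolating $1$ in $\spec(R)\subseteq[0,2]$. Thus $1_{\{1\}}$ is continuous on $\spec(R)$ and, by Stone--Weierstrass, is the uniform limit on $[0,2]$ of polynomials $g_k$ with coefficients in $\bbQ(i)$, whence $g_k(R)\to E$ in operator norm. Since $R$ is a computably definable function in $T_{\vNa-\md}$ (the symbol $\vR$ of Section \ref{SectionDefModular}) with predictable sort bounds, each $g_k(\vR)$ is a definable function between sorts, and the effective norm-convergence $g_k(\vR)\to E$ exhibits $E$ as computably definable. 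The main obstacle I anticipate is the uniformity of this approximation in the lacunarity constant: the hypothesis only supplies some $\lambda$, whereas computable definability wants a computable modulus. I would resolve this by working inside the theory of $\lambda$-lacunary weights---encoding the gap as an axiom with a fixed rational $\lambda$---so that the Jackson-type approximation rate for $1_{\{1\}}$ on the two closed pieces of the spectrum separated from $1$ is computable from $\lambda$ and the polynomials $g_k$ together with their convergence rate are uniformly computable across all models. The remaining bookkeeping---verifying $\Delta=(2-R)/R$ from the relations of Section \ref{SectionTotallyBounded} and checking that $g_k(\vR)$ respects the sort bounds---is routine.
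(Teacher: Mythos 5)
Your proposal is correct and takes essentially the same approach as the paper: the paper's proof likewise identifies the target projection with the spectral projection of $R_\Phi$ at $1$ via the identity $\Delta_\Phi = R_\Phi^{-1}(2-R_\Phi)$, translates lacunarity of $\Delta$ into the spectral gap $\spec(R_\Phi)\cap\bigl(2-\tfrac{2}{\lambda+1},\tfrac{2}{\lambda+1}\bigr)=\{1\}$, realizes the projection by computable functional calculus in $R_\Phi$, and concludes with Takesaki's theorem, with strict semifiniteness supplied by \cite[Lemma 2.3]{HS} just as in your second step. The only differences are cosmetic: the paper uses one explicit smooth bump function $f_\lambda$ supported in the gap (rather than your sequence of $\bbQ(i)$-polynomial approximants with computable rates, which if anything makes the definability bookkeeping more explicit), and your Stone--Weierstrass step should be phrased as uniform approximation on $\spec(R)$, where $1_{\{1\}}$ is continuous, rather than on all of $[0,2]$, where it is not---a slip your later discussion of approximation on the two closed spectral pieces already corrects.
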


\begin{proof}
    Recall that $\Delta_{\Phi} = R_{\Phi}^{-1}(2- R_{\Phi})$ and that the spectra of both $R_{\Phi}$ and $(2 - R_{\Phi})$ are both contained in $[0,2]$. By functional calculus, we have that $\Delta$ is $\lambda$-lacunary if and only if $\spec(R_{\Phi}) \cap (2 - \frac{2}{\lambda + 1}, \frac{2}{\lambda + 1}) = \{1\}$. Suppose $f_{\lambda}(x)$ is a computable function such that $f_{\lambda}(1) = 1$ and $f_{\lambda}$ is supported on $(2 - \frac{2}{\lambda + 1}, \frac{2}{\lambda + 1})$. For concreteness, one can choose $f_{\lambda}$ to be a suitable horizontal scaling of $g(x)$ where 
    \[
    g(x) = \begin{cases} 
    \exp \left(-\frac{1}{1-(1-x)^2} + 1 \right) & \text{if } x \in (0,2) \\
    0 & \text{otherwise }
    \end{cases}.
    \]
    It follows by functional calculus that $f_{\lambda}(R_{\Phi})$ is equal to the projection onto the $\lambda = 1$ eigenspace of $R_{\Phi}$, namely the centralizer. By Takesaki's theorem, this projection is well-defined onto the centralizer.
\end{proof}

Now we prove a generalization of \cite[Proposition 5.3]{AGH}.

\begin{thm}
    Let $(\cal M, \Phi)$ be a weighted von Neumann algebra such that $\Phi$ is lacunary.  Then $(\cal M_{\Phi})^{\cU} \cong  \cal (M^{\cU})_{\Phi_{\cU}}$.
\end{thm}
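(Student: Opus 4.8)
The plan is to show that lacunarity makes the centralizer a \emph{definable set}, so that it automatically commutes with the ultrapower. First I would record the structural consequences of lacunarity. Since $\Phi$ is lacunary it is strictly semifinite by \cite[Lemma 2.3]{HS}, so $\tau := \Phi|_{\cal M_\Phi}$ is a faithful normal semifinite tracial weight and there is a faithful normal $\Phi$-preserving conditional expectation $E : \cal M \to \cal M_\Phi$ by Takesaki's theorem. In particular $(\cal M_\Phi, \tau)$ is a (tracial) weighted von Neumann algebra, and the inclusion $(\cal M_\Phi, \tau) \hookrightarrow (\cal M, \Phi)$ is a morphism in $\Md(T_{\vNa})$ in the sense of Theorem \ref{correctness}; note that $\|x\|_\Phi^\# = \|x\|_\tau^\#$ for $x \in \cal M_\Phi$, so the two sort structures agree on the centralizer. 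Since the ultrapower is functorial on embeddings, it then remains only to identify the image of $(\cal M_\Phi)^\cU$ inside $\cal M^\cU$ with the centralizer $(\cal M^\cU)_{\Phi_\cU}$.

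Next I would characterize the centralizer spectrally and check that it is a definable set. Let $e := f_\lambda(R_\Phi)$ be the projection of Lemma \ref{CentralizerDef}; it is the spectral projection of $\Delta_\Phi$ at the eigenvalue $1$, i.e. the projection of $\cal H_\Phi$ onto $\overline{\eta_\Phi(\cal M_\Phi)}$. For a totally bounded $x$ one has $x \in \cal M_\Phi$ iff $\sigma_t^\Phi(x) = x$ for all $t$, iff $\Delta^{it}\eta_\Phi(x) = \eta_\Phi(x)$ for all $t$, iff $\eta_\Phi(x) = e\,\eta_\Phi(x)$; moreover $e\,\eta_\Phi(x) = \eta_\Phi(E(x))$, so that (using that $E$ commutes with $\ast$ and preserves $\Phi$) the $\|\cdot\|_\Phi^\#$-distance from $x$ to $\cal M_\Phi$ equals $\|x - E(x)\|_\Phi^\# = \|\eta_\Phi(x) - e\,\eta_\Phi(x)\|_\Phi^\#$. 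By Lemma \ref{CentralizerDef} the map $x \mapsto e\,\eta_\Phi(x)$ is definable (it is $f_\lambda$ applied to the definable symbol $\vR$, with $f_\lambda$ assembled from the computable bump $g$, and $\vR$ sends totally bounded elements to totally bounded elements by Corollary \ref{Rtotalbound}), so the distance predicate $x \mapsto \|x - e\,\eta_\Phi(x)\|_\Phi^\#$ is a definable predicate whose zero set is exactly $\cal M_\Phi$. Thus $\cal M_\Phi$ is a definable set.

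Then I would pass to the ultrapower. Because $\vR$ is a definable symbol it commutes with the metric structure ultraproduct, so $R_{\Phi_\cU} = (R_\Phi)^\cU$; consequently $\spec(R_{\Phi_\cU}) \subseteq \spec(R_\Phi)$, whence $\Phi_\cU$ is again $\lambda$-lacunary and $e_{\Phi_\cU} = f_\lambda(R_{\Phi_\cU}) = (e)^\cU$. By Theorem \ref{ultramod} the modular automorphisms commute with the ultraproduct, $\sigma_t^{\Phi_\cU} = (\sigma_t^\Phi)^\cU$, so the spectral characterization above applies verbatim in $(\cal M^\cU, \Phi_\cU)$ and shows that $(\cal M^\cU)_{\Phi_\cU}$ is precisely the zero set in $\cal M^\cU$ of the \emph{same} definable distance predicate. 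Since a definable set commutes with the ultraproduct, this zero set is the ultrapower of the zero sets $\cal M_\Phi$, i.e. the image of $(\cal M_\Phi)^\cU$. The resulting isomorphism is trace-preserving, as $\Phi_\cU$ restricts to the tracial weight $\tau_\cU$ on either side, completing the proof.

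The main obstacle is the definability of the centralizer, and this is exactly where lacunarity is indispensable: without a spectral gap, the eigenvalue-$1$ projection of $\Delta_\Phi$ is the spectral projection at a point only in a measurable, not continuous, sense, so it is not given by any continuous functional-calculus expression in $\vR$ and the whole argument collapses. A secondary point requiring care is that $(\cal M^\cU)_{\Phi_\cU}$ is the centralizer in the generalized Ocneanu / Hilbert algebra ultraproduct, not in the Groh--Raynaud ultraproduct; that the same definable predicate still computes it relies on $\Phi_\cU$ remaining lacunary and on $\sigma^{\Phi_\cU}_t$ being the ultraproduct of the $\sigma^\Phi_t$ (Theorem \ref{ultramod}), together with Theorem \ref{ultrapres} identifying $\cal M^\cU$ with the model-theoretic ultrapower.
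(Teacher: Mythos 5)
Your proposal is correct and follows essentially the same route as the paper: both hinge on Lemma \ref{CentralizerDef} (lacunarity makes the projection onto $\overline{\eta_\Phi(\cal M_\Phi)}$ definable via $f_\lambda(R_\Phi)$) together with Theorem \ref{ultramod} (modular automorphisms commute with the ultraproduct), with your ``definable set commutes with ultrapowers'' packaging replacing the paper's two separate inclusions. A minor merit of your write-up is that you make explicit a point the paper leaves implicit, namely that $R_{\Phi_\cU}=(R_\Phi)^\cU$ forces $\Phi_\cU$ to remain lacunary, so that $f_\lambda(R_{\Phi_\cU})$ really is the eigenvalue-$1$ spectral projection in the ultrapower.
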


\begin{proof}
    First we prove $(\cal M_{\Phi})^{\cU} \subseteq \cal (\cal M^{\cU})_{\Phi_{\cU}}$.  By Takesaki's theorem, there is a conditional expectation $E_{\Phi} : \cal M \to \cal M_{\Phi}$, which is furthermore implemented by a projection on the underlying Hilbert spaces $P_{\Phi} : \cal H_{\cal M} \to \cal H_{\cal M_{\Phi}}$.  Therefore we have a natural identification $\ell^{\infty}_{\Phi}(\cal M_{\Phi}) \hookrightarrow \ell^{\infty}_{\Phi}(\cal M)$.  So consider $x = (x_i) \in (\cal M_{\Phi})^{\cU}$.  Then by Theorem \ref{ultramod}, for all $t \in \bbR$, we have
    \[
    \sigma_t^{\Phi_\cU}(x) = (\sigma_t^{\Phi_i}(x_i))^\bullet = (x_i)^\bullet = x
    \]
    whence the claim follows by taking strong closures.

    Now we want to prove $\cal (\cal M^{\cU})_{\Phi_{\cU}} \subseteq (\cal M_{\Phi})^{\cU}$.  Arguing as in the previous lemma, the lacunary assumption implies the projection from $\cal H_{\cal M}$ onto the closure of $\eta_{\Phi}(\cal M_{\Phi})$ is definable.  Therefore we have $\eta_{\Phi_{\cU}}(\cal (\cal M^{\cU})_{\Phi_{\cU}}) \subseteq \eta_{\Phi_{\cU}}((\cal M_{\Phi})^{\cU})$.  Since $1 \in \cal M_{\Phi}$, we have that $\cal M_{\Phi}$ has the same identity as $\cal M$.  We also assumed that $\Phi$ is lacunary and therefore is semifinite on $\cal M_{\Phi}$, and normality and faithfulness on $\cal M_{\Phi}$ is automatic.  Thus by equivalence of faithful normal semifinite weights and full left Hilbert algebras, by taking strong closures, we are done.
\end{proof}


We conclude with a series of questions and conjectures.  Theorem \ref{CentralizerDef} together with Theorem \ref{ThmUndecIIinfty} immediately raise the following question.

\begin{question}
    For which naturally occurring weighted von Neumann algebras $(\cal M, \Phi)$ do we have $\Phi$ is lacunary, $\cal M_{\Phi}$ is isomorphic to the hyperfinite II$_\infty$ factor and the restriction of $\Phi$ to $\cal M_{\Phi}$ is the canonical tracial weight on it?
\end{question}

At present, little is known about the computability of universal theories of hyperfinite type III$_0$ factors.  One reason for this is the fact (\cite[Theorem 6.13]{AH}) that the class of type III$_0$ von Neumann algebras are not axiomatizable and neither (\cite[Theorem 6.18]{AH}) is the class of III$_0$ factors.  On the other hand, we direct the reader to \cite[Section 5]{Dab}, wherein Dabrowski shows the class of III$_0$ factors is an uncountable union of elementary classes by axiomatizing discrete crossed product decompositions with given parameters.  The reader should also consult \cite{GoldbringHoudayer} for results on existential closures of III$_0$ factors.

We record the following from the discussion following \cite[Lemma 4.1]{CW}.

\begin{thm}\label{centralizerIII0}
    Let $\cal M$ be a type III$_0$ factor and let $\Phi$ be a faithful lacunary weight of infinite multiplicity on $\cal M$.  Then
    \[
    \cal M_{\Phi} = \int_{B}^{\oplus} \cal M(b) \mathrm{d}v_{B}(b)
    \]
    where each $\cal M(b)$ is a type II$_\infty$ factor, and the center $C_{\Phi}$ of the centralizer $\cal M_{\Phi}$ is isomorphic to $L^\infty(B, v_B)$. If $\cal M$ is injective, then $\cal M(b)$ is the hyperfinite II$_\infty$ factor.
\end{thm}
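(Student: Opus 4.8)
The plan is to realize the stated decomposition as the central (direct integral) decomposition of the centralizer $\cal M_\Phi$ and to identify its ingredients with the discrete decomposition---equivalently, a cross-section of the flow of weights---of the type III$_0$ factor $\cal M$, following the Connes--Takesaki structure theory that underlies the discussion in \cite{CW}.

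First I would record that, because $\Phi$ is lacunary, its restriction $\tau := \Phi|_{\cal M_\Phi}$ to the centralizer is a faithful normal \emph{semifinite} tracial weight: this is precisely the strict semifiniteness furnished by \cite[Lemma 2.3]{HS}, already invoked above, since a lacunary weight is strictly semifinite. Hence $\cal M_\Phi$ is a semifinite von Neumann algebra carrying the faithful normal semifinite trace $\tau$. Working over a separable predual, its center $C_\Phi = Z(\cal M_\Phi)$ is abelian, so $C_\Phi \cong L^\infty(B, v_B)$ for a standard measure space $(B, v_B)$, and the central decomposition produces
\[
\cal M_\Phi = \int_B^\oplus \cal M(b)\, \mathrm{d}v_B(b)
\]
with each fiber $\cal M(b)$ a factor; disintegrating $\tau = \int_B^\oplus \tau_b\, \mathrm{d}v_B(b)$ equips $v_B$-a.e.\ fiber with a faithful normal semifinite trace, so each $\cal M(b)$ is semifinite, i.e.\ of type I or type II.

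The heart of the argument---and the step I expect to be the main obstacle---is identifying $(B, v_B, C_\Phi)$ with the base of the flow of weights and pinning down the fibers as type II$_\infty$. Here I would invoke the discrete decomposition of a lacunary weight of infinite multiplicity: by Connes--Takesaki (see also \cite[Chapter XII]{Takesaki}), one has $\cal M \cong \cal M_\Phi \rtimes_\theta \bbZ$ for an automorphism $\theta$ of $\cal M_\Phi$ scaling $\tau$ by a constant $\lambda \in (0,1)$ arising from the spectral gap of $\Delta_\Phi$ and acting ergodically on $C_\Phi = L^\infty(B, v_B)$. Since $\cal M$ is a factor, this action is ergodic, and the pair consisting of $\theta|_{C_\Phi}$ together with the trace scaling is exactly a cross-section model of the flow of weights of $\cal M$; because $\cal M$ is type III$_0$, this flow is \emph{properly} ergodic, so $C_\Phi$ is diffuse and $(B, v_B)$ is genuinely nontrivial. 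The type of the fibers is then forced as follows: the structure theory of the (discrete) core of a type III factor forbids a type I part, excluding type I on a set of positive measure, while the infinite-multiplicity hypothesis is exactly what makes $\tau_b$ infinite on $v_B$-a.e.\ fiber, ruling out type II$_1$. Thus $\cal M(b)$ is a type II$_\infty$ factor for $v_B$-a.e.\ $b$.

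Finally, for the injective case I would pass injectivity of $\cal M$ down to the fibers. As already used above, the centralizer of a lacunary weight is the range of a faithful normal conditional expectation $E_\Phi \colon \cal M \to \cal M_\Phi$ by Takesaki's theorem, and the range of such an expectation inherits injectivity; moreover a direct integral of von Neumann algebras is injective if and only if $v_B$-a.e.\ fiber is. Hence $\cal M(b)$ is an injective type II$_\infty$ factor for $v_B$-a.e.\ $b$, and by Connes' uniqueness of the injective type II$_\infty$ factor each such $\cal M(b)$ is isomorphic to the hyperfinite factor $\R_{0,1}$, completing the proof.
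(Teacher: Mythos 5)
Your proposal is correct in outline, but you should know that the paper does not prove this statement at all: it is explicitly \emph{recorded} from the discussion following \cite[Lemma 4.1]{CW}, which in turn rests on Connes' discrete decomposition of type III$_0$ factors \cite[Lemma 5.3.2]{Connes73} --- exactly the structure theory you set out to reconstruct. So where the paper buys the whole statement with a citation, you are supplying the underlying argument; most of your steps are sound, namely strict semifiniteness of a lacunary weight via \cite[Lemma 2.3]{HS}, the central direct integral over $C_\Phi \cong L^\infty(B, v_B)$ (correctly flagged as needing a separable predual), and the injective case via the expectation $E_\Phi \colon \cal M \to \cal M_\Phi$, a.e.-injectivity of fibers of an injective direct integral, and Connes' uniqueness of the injective II$_\infty$ factor \cite{Connes76}.

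Two points in the middle are soft. First, in the III$_0$ case the automorphism $\theta$ of the discrete decomposition does \emph{not} scale $\tau$ by an exact constant $\lambda \in (0,1)$; that is the III$_\lambda$ picture. For III$_0$ one only has $\tau \circ \theta \leq \lambda \tau$ for some $\lambda < 1$, with $\theta$ ergodic on the diffuse center. Second, and more seriously, the exclusion of type I fibers is precisely the substantive content of the theorem, and ``the structure theory of the discrete core forbids a type I part'' is an assertion, not an argument: a priori a crossed product of a type I algebra by an ergodic $\bbZ$-action can perfectly well be of type III$_0$ (Krieger factors arise this way from abelian algebras), so the trace-scaling structure must be used in an essential way, and that is a genuine theorem of Connes rather than a formal consequence of ergodicity. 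The clean repair is to cite the discrete decomposition theorem at full strength (\cite{Connes73}, or \cite[Chapter XII]{Takesaki}), which already states that the centralizer of a lacunary weight of infinite multiplicity on a type III$_0$ factor is of type II$_\infty$ with diffuse center; the a.e.\ fiber of the central decomposition of a type II$_\infty$ algebra is then automatically a II$_\infty$ factor, and the rest of your argument goes through unchanged.
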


In fact, by \cite[Lemma 5.3.2]{Connes73}, we have:

\begin{thm}\label{III0Lacunary}
    Every type III$_0$ factor admits admits a faithful normal semifinite lacunary weight of infinite multiplicity.
\end{thm}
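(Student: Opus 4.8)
The statement is a structural result from Connes' classification of type III factors, and the natural route is through the flow of weights rather than through any of the model-theoretic machinery developed above. The plan is to pass from the continuous (Takesaki) decomposition of $\cal M$ to a discrete one and to read a lacunary weight off the latter.

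First I would fix a faithful normal semifinite weight $\Phi_0$ on the type III$_0$ factor $\cal M$ and form the continuous core $\cal N = \cal M \rtimes_{\sigma^{\Phi_0}} \bbR$, a type II$_\infty$ von Neumann algebra carrying a trace $\tau$ scaled by the dual flow $\theta_s$ via $\tau \circ \theta_s = e^{-s}\tau$. The restriction of $\theta_s$ to the center $Z(\cal N)$ is the smooth flow of weights, and the hypothesis that $\cal M$ is type III$_0$ is precisely the statement that this flow is aperiodic and properly ergodic on a standard probability space $(X,\mu)$. Under Connes--Takesaki duality, a $\lambda$-lacunary weight of infinite multiplicity corresponds to a discrete decomposition $\cal M \cong P \rtimes_\alpha \bbZ$ with $P$ a properly infinite type II$_\infty$ algebra and $\alpha$ trace-scaling; the lacunarity constant is governed by how far the generating automorphism moves the flow, and the infinite-multiplicity condition is exactly the requirement that $P$ (equivalently, the centralizer) be properly infinite.

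The key step is therefore to produce, from the aperiodic flow $(X,\mu,\theta_s)$, a measurable complete cross-section whose return-time function is bounded below by $\log(1/\lambda)$. This is where I expect the real work to sit: the existence of a cross-section is the Ambrose--Kakutani representation theorem, and forcing the return times into a bounded-below range (in fact into finitely many values) can be arranged by Rudolph's two-valued return-time theorem. Feeding such a section back through the duality reconstructs a weight $\Phi$ whose modular operator $\Delta_\Phi$ has $1$ isolated in its spectrum with gap ratio $\lambda$, that is, $\spec(R_\Phi)\cap\bigl(2-\tfrac{2}{\lambda+1},\tfrac{2}{\lambda+1}\bigr) = \{1\}$ in the notation of Lemma \ref{CentralizerDef}, and whose centralizer is the type II$_\infty$ algebra appearing in Theorem \ref{centralizerIII0}, hence properly infinite.

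Finally I would check that the reconstructed $\Phi$ is genuinely faithful, normal, and semifinite and that infinite multiplicity holds; if the discrete core produced by the cross-section is not already properly infinite, tensoring the entire picture with $B(\ell^2)$ (equivalently, amplifying the weight) repairs this without disturbing lacunarity. The main obstacle, to reiterate, is the measure-theoretic cross-section construction with controlled return times, together with the careful bookkeeping of the Connes--Takesaki dictionary translating ``bounded-below return time'' into ``spectral gap of $\Delta_\Phi$ at $1$''; everything else is a matter of unwinding definitions. An alternative, more algebraic route avoiding flows would use that the Connes invariant satisfies $S(\cal M) = \{0,1\}$ for type III$_0$, so that $1$ is isolated in the $S$-invariant, and then extract a reduced weight on a corner $e\cal M e \cong \cal M$ whose modular spectrum inherits the gap; but pinning the gap down uniformly still reduces to essentially the same dynamical input.
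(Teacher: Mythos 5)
The paper gives no proof of this statement at all: it is imported verbatim from Connes' classification paper as \cite[Lemma 5.3.2]{Connes73}, so there is no internal argument to compare yours against. Your sketch is, in outline, a correct proof, but it follows the later Connes--Takesaki route (the one taken in Takesaki's book, Vol.~III, Chapter~XII): realize the flow of weights of a III$_0$ factor as a properly ergodic nonsingular flow, represent it as a flow built under a ceiling function bounded below, and convert the resulting discrete decomposition $\cal M \cong P \rtimes_\alpha \bbZ$, with $P$ of type II$_\infty$, into a lacunary weight by taking the dual weight of a trace on $P$; your amplification remark does correctly dispose of infinite multiplicity, since tensoring with $B(\ell^2)$ leaves the modular spectrum unchanged and makes the centralizer properly infinite. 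Two caveats on your key step. First, Rudolph's two-valued return-time theorem is a measure-preserving statement, while the flow of weights is in general only nonsingular; what you actually need is Krengel's nonsingular version of the Ambrose--Kakutani theorem giving a ceiling bounded away from zero (two values are not needed, only the lower bound). Second, your parting claim that the algebraic route via $S(\cal M)=\{0,1\}$ ``reduces to essentially the same dynamical input'' has the history and the logic backwards: Connes' original proof of the lemma the paper cites predates the flow of weights entirely and is precisely such a spectral argument, combining weights and projections in their centralizers so that reduced modular spectra on corners $e\cal M e\cong\cal M$ acquire a uniform gap at $1$, with no cross-section theorem anywhere. So your route is legitimate and buys more (the full discrete decomposition), but at the cost of strictly heavier ergodic-theoretic machinery than the argument the paper actually points to; the ``alternative'' you dismissed is the cited proof.
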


It therefore is desirable to understand the theory of a direct integral of tracial weighted von Neumann algebras.  For tracial von Neumann algebras, several results about elementary equivalence of direct integrals are given by Farah-Ghasemi in \cite{FG} and Gao-Jekel in \cite{GaoJekel}.  Notably, in the discussion after \cite[Theorem D]{GaoJekel}, it is suspected extending their results to II$_\infty$ may be difficult due to the lack of axiomatizability of this class.  On the other hand, if we assume that the the weights involved are tracial, this class is, in fact, axiomatizable in our language.

In both \cite{FG} and \cite{GaoJekel}, a key point is that the center of a tracial von Neumann algebra is definable.  By a proof similar to \cite[Corollary 4.3]{FHS1}, we have

\begin{lem}
    If $(\cal M, \Phi)$ is a tracial weighted von Neumann algebra, then the center of $\cal M$ is definable.
\end{lem}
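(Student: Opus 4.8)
The plan is to realize the center as the zero set of a definable commutator predicate and then to dominate the distance to that zero set by the same predicate via a Dixmier averaging argument, exactly along the lines of \cite[Corollary 4.3]{FHS1}. Two things change relative to the tracial-state case: traciality of $\Phi$ simplifies the norms, while semifiniteness of $\Phi$ introduces the only genuinely new point. First I would record that, since $\Phi$ is a trace, $\Phi(xx^*)=\Phi(x^*x)$, so that $\|\cdot\|_\Phi^\# = \|\cdot\|_\Phi$ on $D_\Phi\cap D_\Phi^*$ and $\sigma^\Phi_t=\mathrm{id}$; moreover $\|ua\|_\Phi=\|au\|_\Phi=\|a\|_\Phi$ for every unitary $u\in\cal M$ and every $a\in\cal M_{\tb}$.

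Next I would introduce, for each $n$, the predicate
\[
\psi_n(x) := \sup_{k}\ \sup_{\substack{y \in S_k \\ \|y\|\le 1}} \|xy - yx\|_\Phi, \qquad x \in S_n,
\]
which is a definable predicate: it is a supremum over the bounded sorts of a uniformly continuous formula and is bounded on $S_n$ since $x$ is totally $n$-bounded. Because the totally bounded elements are strong-$^*$ dense (Theorem \ref{tbDense}) and multiplication is separately strong-$^*$ continuous on bounded sets, the zero set of $\psi_n$ inside $S_n$ is exactly $Z(\cal M)\cap S_n$.

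The crux is the upper bound $d_\Phi(x, Z(\cal M)) \le \psi_n(x)$ for the $\|\cdot\|_\Phi$-distance $d_\Phi$. For this I would produce the $\Phi$-preserving conditional expectation $E$ onto the center: since $\sigma^\Phi_t=\mathrm{id}$ the center is globally $\sigma^\Phi$-invariant and $\Phi|_{Z(\cal M)}$ is semifinite, so Takesaki's expectation theorem yields a normal $\Phi$-preserving conditional expectation $E:\cal M\to Z(\cal M)$; equivalently $E$ is the $L^2$-orthogonal projection of $\cal H_\Phi$ onto $\overline{\eta_\Phi(Z(\cal M)\cap D_\Phi)}$, so $d_\Phi(x,Z(\cal M))=\|x-E(x)\|_\Phi$, and since $E$ is a contraction for both $\|\cdot\|$ and $\|\cdot\|_\Phi$ it keeps $x$ inside a fixed totally bounded sort. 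The Dixmier property then places $E(x)$ in the $\|\cdot\|_\Phi$-closed convex hull of the unitary orbit $\{uxu^* : u\in U(\cal M)\}$. Choosing unitaries $u_1,\dots,u_m$ with $\bigl\|\tfrac1m\sum_j u_j x u_j^* - E(x)\bigr\|_\Phi$ small and estimating
\begin{align}
\Bigl\|x - \tfrac1m{\textstyle\sum_j} u_j x u_j^*\Bigr\|_\Phi
&= \Bigl\|\tfrac1m{\textstyle\sum_j}(x - u_j x u_j^*)\Bigr\|_\Phi \nonumber \\
&\le \tfrac1m{\textstyle\sum_j}\|x u_j - u_j x\|_\Phi \nonumber \\
&\le \psi_n(x) \nonumber
\end{align}
(using $\|(x u_j - u_j x)u_j^*\|_\Phi = \|x u_j - u_j x\|_\Phi$ and $\psi_n(x)\ge \sup_{u}\|xu-ux\|_\Phi$, as unitaries are contractions) gives $\|x - E(x)\|_\Phi \le \psi_n(x)$ in the limit.

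Finally I would invoke the standard definability criterion (as in \cite{FHS1}): if $D$ is the zero set of a definable predicate $\psi_n$ and its distance function satisfies $d_\Phi(\cdot,D)\le \alpha(\psi_n(\cdot))$ for some increasing continuous $\alpha$ with $\alpha(0)=0$ (here $\alpha=\mathrm{id}$), then $d_\Phi(\cdot,D)$ is definable and hence $D$ is a definable set; applied sort-by-sort this shows $Z(\cal M)$ is definable. The main obstacle I anticipate lies entirely in the passage to the unbounded weight: verifying that $\Phi|_{Z(\cal M)}$ is semifinite so that $E$ exists and is normal and $\Phi$-preserving, and that the Dixmier averaging — classically a norm statement — can be run so as to converge in $\|\cdot\|_\Phi$ while all iterates $\tfrac1m\sum_j u_j x u_j^*$ remain inside one fixed totally bounded sort. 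Both reduce to working within $\cal M_{\tb}$ and exploiting that unitary conjugation is a $\|\cdot\|_\Phi$-isometry on totally bounded elements, but they are precisely the places where the tracial-state proof of \cite{FHS1} must be re-examined rather than quoted verbatim.
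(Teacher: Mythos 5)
Your proposal follows the same broad strategy the paper points to by citing \cite[Corollary 4.3]{FHS1} (commutator predicate, expectation onto the center, Dixmier averaging), but it has a genuine gap at exactly the step you flag as ``the main obstacle'': the $\Phi$-preserving normal conditional expectation $E:\cal M\to Z(\cal M)$ does not exist in general, because $\Phi|_{Z(\cal M)}$ is typically \emph{not} semifinite, and semifiniteness is a necessary (not merely sufficient) hypothesis for Takesaki's theorem. This is not a verification that can be carried out by ``working within $\cal M_{\tb}$''; it is false precisely in the cases this lemma is quoted for. For any II$_\infty$ factor with its canonical tracial weight --- $(B(\cal H),\Tr)$, the paper's central example $\R_{0,1}$, or the fibers $\cal M(b)$ in Theorem \ref{centralizerIII0} --- one has $Z(\cal M)=\bbC 1$ and $\Phi(1)=\infty$, so $\Phi|_{Z(\cal M)}$ takes only the values $0$ and $\infty$; a $\Phi$-preserving normal expectation would force $\Phi(x)=\Phi(E(x))\in\{0,\infty\}$ for all positive $x$, contradicting semifiniteness of $\Phi$. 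A correct proof must split off the largest central projection $c$ with $\Phi|_{Z(\cal M)c}$ semifinite, run your argument under $c$, and argue differently on $1-c$, where $Z(\cal M)\cap D_\Phi=\{0\}$ and the claim becomes that $\|x\|_\Phi$ itself is dominated by the commutator predicate. That part uses a different mechanism than Dixmier averaging: compress $x$ to a spectral projection $p$ with $\Phi(p)<\infty$ (uniformly controlled by the sort index of $x$) so that $\|x-pxp\|_\Phi$ is small, and commutate against a partial isometry $v$ with $v^*v=p$ and $vv^*\perp p$, which exists because every nonzero central subprojection of $1-c$ has infinite weight; then $\|[pxp,v]\|_\Phi=\|pxp\|_\Phi$, and, unlike unitaries, such $v$ actually lie in the sorts.

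Even on the summand where $E$ exists, two steps need repair. First, when $\Phi(1)=\infty$ no unitary of $\cal M$ belongs to any sort $S_k$, so the inequality $\psi_n(x)\geq\sup_u\|xu-ux\|_\Phi$ cannot be justified ``as unitaries are contractions''; you need Kaplansky density of contractions in $\cal M_{\tb}$ together with strong-$^*$ continuity of $y\mapsto\|xy-yx\|_\Phi$ on bounded sets (this does go through using traciality, but it is an argument, not an observation). Relatedly, $\psi_n$ is an infinite supremum over the sorts $S_k$, and infinite suprema of formulas are not automatically definable predicates --- one must either prove the partial suprema converge uniformly or avoid the predicate altogether, e.g.\ by proving the quantitative commutator bound uniformly in $n$ and invoking the characterization of definable sets as those commuting with ultraproducts. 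Second, Dixmier averaging converges in operator norm, and with $\Phi(1)=\infty$ operator-norm approximation of $E(x)$ gives no control on $\|\cdot\|_\Phi$; your claim that $E(x)$ lies in the $\|\cdot\|_\Phi$-closed convex hull of the unitary orbit is unsupported as written. It can be patched (the averages have $\|\cdot\|_\Phi$ bounded by $\|x\|_\Phi$, a weak $L^2$-limit point must be $\eta_\Phi(E(x))$, and weak lower semicontinuity of the norm then yields $\|x-E(x)\|_\Phi\leq\sup_u\|[x,u]\|_\Phi$), but this is another place where the tracial-state argument of \cite{FHS1} cannot be quoted verbatim.
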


It should thus be possible to prove analogous results about tracial weighted von Neumann algebras. We conjecture a positive answer to the following questions.

\begin{question}
    Assume $(\cal M, \Phi)$ is a tracial weighted von Neumann algebra such that
    \[
    \cal M = \int_{B}^{\oplus} \cal M(b) \mathrm{d}v_{B}(b)
    \]
    where each $\cal M(b)$ is a type II$_\infty$ factor and $\cal M(b_1) \cong \cal M(b_2)$ for all $b_1, b_2 \in B$.  Does the computability of the universal theory of $(\cal M, \Phi)$ imply the computability of the universal theory of $(\cal M(b), \tau_b)$ for all $b \in B$?  What if we instead only assume $\cal M(b_1) \equiv \cal M(b_2)$ for all $b_1, b_2 \in B$?
\end{question}

A positive answer to either of these questions would imply that every hyperfinite III$_0$ factor $\cal M$ admits a weight such that the universal theory of $(\cal M, \Phi)$ is undecidable.  Together with the results in \cite{GH2}, \cite{AGH} and \cite{Connes76}, this would imply that all hyperfinite factors admit such a weight.

\begin{question}
    What other examples of weighted von Neumann algebras with unbounded weights can we find with undecidable universal theory?
\end{question}

Finally,

\begin{question}
    What new insights can the theory developed here reveal about more specific settings? For example, if $(\cal M, \Phi)$ is a II$_1$ factor with an unbounded weight, what if anything, can the theory of $(\cal M, \Phi)$ tell us about the theory of $(\cal M, \tau)$?
\end{question}

\section*{Acknowledgements} We would like to thank Isaac Goldbring, Bradd Hart and Thomas Sinclair for many discussions that led to the ideas in this paper.  We also thank David Sherman for many comments on an early draft of the author's PhD thesis, from which many of the results in this paper are drawn, and for asking or inspiring many of the questions.


\end{document}